\newenvironment{proof}[1][Proof]{\noindent \textbf{#1.} }{\  \rule{0.5em}{0.5em}}
\newtheorem{theorem}{Theorem}[section]
\newtheorem{corollary}[theorem]{Corollary}
\newtheorem{lemma}{Lemma}[section]
\newtheorem{remark}{Remark}[section]
\newtheorem{proposition}{Proposition}[section]
\newtheorem{assumption}{Assumption}
\newcommand{\N}{\mathbb{N}}
\newcommand{\Z}{\mathbb{Z}}
\newcommand{\R}{\mathbb{R}}
\newcommand{\E}{\mathbb{E}}
\renewcommand{\P}{\mathbb{P}}
\newcommand{\dd}{\,\mathrm{d}}
\newcommand{\Indi}{\mbox{\rm{1}}\hspace{-0.25em}\mbox{\rm{l}}}
\newcommand{\Cov}{\mathrm{Cov}}
\newcommand{\Var}{\mathrm{Var}}
\newcommand{\diag}{\mathrm{diag}}
\newcommand{\argmax}{\mathop{\rm arg~max}\limits}
\newcommand{\argmin}{\mathop{\rm arg~min}\limits}
\author{Tetsuya Takabatake
\thanks{Graduate School of Engineering Science, The University of Osaka, 1-3 Machikaneyama, Toyonaka, Osaka, Japan.
Email:~t.takabatake.es@osaka-u.ac.jp.
}
\and 
Jun Yu
\thanks{Department of Finance and Business Economics, Faculty of Business Administration, University of Macau, Avenida da Universidade, Taipa, Macao, China. Email: junyu@um.edu.mo.}
\and 
Chen Zhang
\thanks{Department of Finance and Business Economics, Faculty of Business Administration, University of Macau, Avenida da Universidade, Taipa, Macao, China. Email: chenzhang@um.edu.mo.}
}
\begin{document}

\title{\LARGE Optimal Estimation for General Gaussian Processes}

\maketitle

\begin{abstract}
This paper proposes a novel exact maximum likelihood (ML) estimation method for general Gaussian processes, where all parameters are estimated jointly. The exact ML estimator (MLE) is consistent and asymptotically normally distributed. We prove the local asymptotic normality (LAN) property of the sequence of statistical experiments for general Gaussian processes in the sense of Le Cam, thereby enabling optimal estimation and facilitating statistical inference. The results rely solely on the asymptotic behavior of the spectral density near zero, allowing them to be widely applied. The established optimality not only addresses the gap left by \cite{Adenstedt-1974}, who proposed an efficient but infeasible estimator for the long-run mean $\mu$, but also enables us to evaluate the finite-sample performance of the existing method— the commonly used plug-in MLE, in which the sample mean is substituted into the likelihood. Our simulation results show that the plug-in MLE performs nearly as well as the exact MLE, alleviating concerns that inefficient estimation of $\mu$ would compromise the efficiency of the remaining parameter estimates.
\end{abstract}

\section{Introduction}
Gaussian processes have been widely applied across a broad range of scientific and applied disciplines, including economics, finance, physics, hydrology, and telecommunications. One of their most extensively studied features is the long-memory property, which captures long-range dependence. The autoregressive fractionally integrated moving average (ARFIMA) model was independently introduced by \cite{Granger-1980} and \cite{Hosking-1981} to model this feature. In economics and finance,  long memory has been examined in a wide array of time series, including real output \citep{Diebold-Rudebusch-1989}, income \citep{Diebold-Rudebusch-1991}, stock returns \citep{Lo-1991, Liu-Jing-2018}, interest rates \citep{Shea-1991}, exchange rates \citep{Diebold-Husted-Rush-1991, Cheung-1993}, volatility \citep{Ding-Granger-Engle-1993, Andersen-Bollerslev-1997-b, Andersen-Bollerslev-Diebold-Labys-2003} and bubble detection \citep{Lui-Phillips-Yu-2024}. Various mechanisms have been proposed to explain the emergence of long memory, including cross-sectional aggregation \citep{Granger-1980, Abadir-2002}, regime switching \citep{Potter-1976, Diebold-Inoue-2001}, marginalization \citep{Chevillon-2018}, and network effects \citep{Schennach-2018}.

More recently, a rapidly growing strand of literature has focused on continuous-time Gaussian processes, which can characterize the local behavior and reproduce the rough sample paths observed in volatility and trading volume \citep{Gatheral-Jaisson-Rosenbaum-2018, Fukasawa-Takabatake-Westphal-2022, Wang-Xiao-Yu-2023-JoE, Bolko-Christensen-Pakkanen-Veliyev-2023,  Shi-Yu-Zhang-2024-b, Chong-Todorov-2025}. Two prominent models in this class are fractional Brownian motion (fBm)\citep{Mandelbrot-1965, mandelbrot1968, Gatheral-Jaisson-Rosenbaum-2018} and the fractional Ornstein–Uhlenbeck (fOU) process \citep{Cheridito-Kawaguchi-Maejima-2003, Wang-Xiao-Yu-2023-JoE}. When applied to data, fractional Gaussian noise (fGn, the first-order difference of fBm) and fOU—exhibit anti-persistence \citep{Gatheral-Jaisson-Rosenbaum-2018, Fukasawa-Takabatake-2019, Shi-Yu-Zhang-2024} and short memory \citep{Shi-Yu-Zhang-2024-b, Wang-Xiao-Yu-Zhang-2023}. Several studies have begun to investigate the micro-level origins of this roughness. For example, \cite{eleuch2018} show that in highly endogenous markets, rough volatility may arise from a large number of split orders, while \cite{Jusselin-Rosenbaum-2020} demonstrate that rough volatility emerges naturally under the no-arbitrage condition.

The ML estimation method of non-centered stationary discrete- and continuous-time Gaussian models with long memory, short memory, or anti-persistence, referred to as general Gaussian processes, is the focus of this paper. Since these memory properties are relevant across various applications, our goal is to develop an estimation method that does not impose prior restrictions on the memory type of the process. The choice of ML estimation is motivated by the practical need for accurate estimation of all model parameters, particularly when computing impulse response functions or performing forecasts. In such cases, suboptimal estimators---such as the semi-parametric methods of \cite{geweke1983, Robinson-1995-LWE, phillips2004, shimotsu2010}, the method of moments in \cite{Wang-Xiao-Yu-2023-JoE}, and the composite likelihood approach in \cite{Bennedsen-2024}---are not recommended. For example, \cite{corsi2009} criticized semi-parametric methods for producing significantly biased and inefficient estimates in forecasting applications with ARFIMA models. Moreover, although the ML and Whittle ML estimators are asymptotically equivalent, the ML estimation method generally demonstrates superior finite-sample performance \citep{Cheung-Diebold-1994}.\footnote{\cite{Rao-Yang-2021} propose new frequency-domain quasi-likelihoods that improve the finite-sample behavior of the Whittle ML estimator for short-memory Gaussian processes.}

Considerable progress has been made in developing ML estimation methods, extending from specific parametric models to general Gaussian processes based on discrete-time observations.\footnote{A parallel literature studies parameter estimation for continuous-time fractional models under continuous-record observations; see \cite{Kleptsyna2002}.} For example, \cite{Yajima-1985} established the consistency and asymptotic normality of the MLE for the ARFIMA$(0,d,0)$ model with $d \in (0, 0.5)$, representing the long-memory case. These results were subsequently extended to stationary Gaussian processes with long memory by \cite{Dahlhaus-1989, Dahlhaus-2006}, and further generalized to general Gaussian processes by \cite{Lieberman-2012}. However, these existing methods adopt a two-stage procedure in which $\mu$ is first estimated by the sample mean and then substituted into the likelihood, yielding a so-called plug-in MLE for the remaining parameters. Some implementations apply the MLE to demeaned data \citep{Tsai-Chan-2005, shi2022volatility}, which is essentially equivalent to the plug-in MLE procedure. Regarding the optimality of this procedure, the sample mean is clearly not the efficient estimator for $\mu$. While \cite{Dahlhaus-1989, Dahlhaus-2006} argued for the efficiency of the plug-in MLE by showing that its asymptotic covariance matrix equals the inverse of the Fisher information matrix, they did not explicitly establish the existence of a Crame\'r--Rao lower bound. \cite{Cohen-Gamboa-Lacaux-Loubes-2013} derived the LAN property for centered stationary Gaussian processes with  long memory, short memory, or anti-persistence, providing a minimax lower bound for general estimators; however, their results do not imply the asymptotic efficiency of plug-in MLE. Another concern is that the inefficiency in estimating $\mu$ may impair the finite-sample performance of the plug-in MLE. \cite{Cheung-Diebold-1994} showed that when the $\mu$ is unknown, the finite-sample performance of the MLE for the other parameters deteriorates, even though their asymptotic variances remain the same as in the known-mean case. To date, the problem of obtaining theoretically optimal estimators for all parameters for general Gaussian processes within the ML framework remains unresolved. Only one exception is \cite{Wang-Xiao-Yu-Zhang-2023}, who established the consistency and asymptotic normality of the MLE for all parameters—including $\mu$—in the fOU process. Nonetheless, their framework is model-specific and not readily applicable to other fractional models. Moreover, the efficiency of the MLE was not addressed.

We introduce a novel exact ML method, a term we adopt to distinguish it from the plug-in ML method, for general Gaussian processes, where all the parameters are estimated jointly.  We establish the consistency and asymptotic normality of the exact MLE. These results extend those in \cite{Wang-Xiao-Yu-Zhang-2023} from fOU to general Gaussian processes. Moreover, we establish the LAN property of the sequence of statistical experiments in the Le Cam sense. This result extends that in \cite{Cohen-Gamboa-Lacaux-Loubes-2013} from centered stationary Gaussian processes 
to  ``non-centered'' ones within the long span asymptotics, which is different from several extensions under the high-frequency asymptotics recently made by \cite{Brouste-Fukasawa-2018,Fukasawa-Takabatake-2019,Szymanski-2024,Szymanski-Takabatake-2023-Additive,Chong-Mies-2025}. The LAN property directly yields the efficiency of the exact MLE. The results rely solely on the asymptotic behavior of the spectral density near zero for a discrete record of observations, which allows for broad applicability. \footnote{To ensure our theoretical results are broadly applicable and consistent with discrete observations, we work with the spectral density corresponding to discrete-time data, regardless of whether the underlying process is continuous- or discrete-time. In TYZ (working paper), we demonstrate how to verify the necessary conditions for the discrete spectral density using the continuous-time spectral density for a wide range of continuous-time processes.}

To demonstrate the practical applicability of the proposed method, we conduct three Monte Carlo simulation studies in which our exact ML estimator is applied to three widely studied non-centered processes: the ARFIMA$(0,d,0)$ model, fractional Gaussian noise (fGn), and the fractional Ornstein–Uhlenbeck (fOU) process. Overall, our exact estimator for $\mu$ improves upon the performance of the sample mean. Regarding the performance of plug-in MLE, our simulation results show that the plug-in MLE performs nearly as well as the exact MLE, alleviating concerns that inefficient estimation of $\mu$ would compromise the efficiency of the remaining parameter estimates. In particular, for the ARFIMA$(0,d,0)$ model, the gain in efficiency for estimating $\mu$ aligns with the theoretical result of \cite{Adenstedt-1974}. We also conduct a forecasting horse race for realized volatility using the fOU process with
three alternative estimators: the exact MLE, the plug-in MLE, and the change-of-frequency (CoF) estimator by \cite{Wang-Xiao-Yu-2023-JoE}. As expected, the exact MLE delivers the best forecasting performance, followed by the plug-in MLE, which performs satisfactorily, and then the CoF estimator.

To sum up, we contribute to the literature in the following aspects. 
First, we propose a novel exact ML estimation method for all parameters in a general stationary Gaussian process, establishing its consistency and asymptotic normality.  Second, we prove the LAN property of the sequence of statistical experiments for general stationary Gaussian processes in the Le Cam sense, providing a theoretical foundation of optimal estimation. The LAN property we have established is also essential for building asymptotic optimality of statistical tests and selecting the order of models based on the likelihood function, see Remark~\ref{Remark:App-LAN} for further references. Third, our method serves as a benchmark for evaluating the finite-sample performance of the existing plug-in MLE.  Although the performance gap between the plug-in MLE and the MLE with known \( \mu \) can be substantial in finite samples \citep{Cheung-Diebold-1994}, our analysis shows that this difference is not driven by inefficiencies in estimating \( \mu \).

The remainder of this paper is organized as follows. Section \ref{Sec:main-results} presents the exact ML method and develops asymptotic Properties. In Section \ref{Sec:example}, we provide several examples to which our results can be applied. Section \ref{Sec:Simulation} presents a Monte Carlo study to assess the performance of the estimation method. Section \ref{Sec:con} concludes the paper.

\section{Exact MLE and Asymptotic Properties} 
\label{Sec:main-results}
\subsection{Notation and Exact MLE}

Let $\Theta_{\xi}$ be a convex domain of $\R^{p-1}$ with compact closure and set $\Theta:=\Theta_{\xi}\times(0,\infty)$. 
Write $\theta=(\xi,\sigma)^{\top}\in\Theta$ and $\vartheta=(\theta,\mu)^{\top}=(\xi,\sigma,\mu)^{\top}\in\Theta\times\R$. 
Denote by $\partial_{z}=\partial/\partial{z}$, $\partial_{\omega}=\partial/\partial{\omega}$ and $\partial_{j} := \partial/\partial{\theta_{j}}$ for $j\in\{1,\cdots,p+1\}$. 
For notational simplicity, $\partial_{0}$, $\partial_{z}^{0}$ and $\partial_{\omega}^{0}$ denote the identify operator. 
The derivative operators $\partial_{j_{1},\cdots,j_{k}}^{k}$ are recursively defined by $\partial_{j_{1},\cdots,j_{k}}^{k} := \partial_{j_{1}} \circ \partial_{j_{2},\cdots,j_{k}}^{k-1}$ for $j_{1},\cdots,j_{k}\in\{0,1,\cdots,p+1\}$ and $k\in\N$. 
Moreover, $\mathbf{1}_{n}$ denotes a $n$-dimensional vector whose all elements are equal to $1$ and, for an integrable function $f$ on $[-\pi,\pi]$, $\Sigma_{n}(f)$ denotes the symmetric Toeplitz matrix whose $(i,j)$th elements are equal to the $(i-j)$th Fourier coefficients of $f$.\\

Let us consider a stationary Gaussian time series $\{X^{\vartheta}_{j}\}_{j\in\Z}$ with mean $\mu$ and spectral density function $s^{X}(\omega,\theta)$. 
We may write $s_{\theta}^{X}(\omega):=s^{X}(\omega,\theta)$. 
Let us denote by $\vartheta_{0}=(\xi_{0},\sigma_{0},\mu_{0})^{\top}$ an interior point of $\Theta\times\R$, which may call a true value of the parameter $\vartheta$, and we assume that we observe a realization of $X_{1}^{\vartheta_{0}},\cdots,X_{n}^{\vartheta_{0}}$. \textbf{Let $s_{\xi}^{X}(\omega)= s_{\theta}^{X}(\omega)/\sigma^2$.}
Then, for each $\vartheta=(\theta,\mu)^{\top}\in\Theta\times\R$ and $n\in\N$, we denote by $\P_{\vartheta}^{n}$ a distribution on the Borel space $(\R^{n},\mathcal{B}(\R^{n}))$ under which a random vector $\mathbf{X}_{n} =(X_{1},\cdots,X_{n})^{\top}$ follows a $n$-dimensional Gaussian vector with mean vector $\mu\mathbf{1}_{n}$ and variance-covariance matrix $\Sigma_{n}(s_{\theta}^{X})$. We also denote by $\gamma_{\theta}^{X}(\cdot)$ the autocovariance function of $X^{\vartheta}$. \\

Denote by $\ell_{n}(\vartheta)\equiv\ell_n(\xi,\sigma,\mu)$ the Gaussian log-likelihood function of the observations $\mathbf{X}_{n}$ under the distribution $\P_{\vartheta}^{n}$, which is given by
\begin{align}\label{def:log-Gaussian-likelihood}
    \ell_{n}(\vartheta)
    = -\frac{n}{2}\log(2\pi)
    -\frac{n}{2}\log{\sigma^{2}}
    -\frac{1}{2}\log\mathrm{det}\bigl[\Sigma_{n}(s_{\xi}^{X})\bigr]
    -\frac{1}{2\sigma^{2}}\left(\mathbf{X}_{n}-\mu\mathbf{1}_{n}\right)^{\top}\Sigma_{n}(s_{\xi}^{X})^{-1}\left(\mathbf{X}_{n}-\mu\mathbf{1}_{n}\right),
\end{align}
and then the maximum likelihood estimator~(MLE)\footnote{We have derived alternative expression of the likelihood function using the conditional likelihood based on the Bayes formula, which improve the computational efficiency by avoiding direct computations of the inverse and determinant of large-scale covariance matrix. See \ref{Sec:mle_implement} for details.} is defined by
\begin{equation*}
	\widehat{\vartheta}_{n}^{\mathrm{MLE}}:=(\widehat{\xi}_{n}^{\mathrm{MLE}},\widehat{\sigma}_{n}^{\mathrm{MLE}},\widehat{\mu}_{n}^{\mathrm{MLE}})^{\top}
    \in \argmax_{(\xi,\sigma,\mu)^{\top}\in\Theta_{\xi}\times(0,\infty)\times\R}\ell_{n}(\vartheta).
\end{equation*}
Notice that, from the definition of the MLE, the MLE satisfies the estimating equations
\begin{align*}
    \partial_{\mu}\ell_{n}(\vartheta) = 0\ \ \mbox{and}\ \ \partial_{\sigma}\ell_{n}(\vartheta) = 0,  
\end{align*}
which conclude the equations
\begin{align*}
    \mu = \frac{ \mathbf{1}_{n}^{\top}\Sigma_{n}(s_{\xi}^{X})^{-1}\mathbf{X}_{n} }{ \mathbf{1}_{n}^{\top}\Sigma_{n}(s_{\xi}^{X})^{-1}\mathbf{1}_{n} } =: \mu_{n}(\xi)
    \ \ \mbox{and}\ \ 
    \sigma^{2} = \frac{1}{n}\left(\mathbf{X}_{n}-\mu\mathbf{1}_{n}\right)^{\top}\Sigma_{n}(s_{\xi}^{X})^{-1}\left(\mathbf{X}_{n}-\mu\mathbf{1}_{n}\right) =: \sigma_{n}^{2}(\xi,\mu)
\end{align*}
hold for any $(\xi,\sigma,\mu)^{\top}\in\Theta_{\xi}\times(0,\infty)\times\R$. 
Then the MLE 
$\widehat{\xi}_{n}^{\mathrm{MLE}}$ would be a maximizer of the function $\bar{\ell}_{n}(\xi):=\ell_{n}(\xi,\bar{\sigma}_{n}(\xi),\mu_{n}(\xi))$, where $\bar{\sigma}^{2}_{n}(\xi):=\sigma_{n}^{2}(\xi,\mu_{n}(\xi))$ and $\bar{\sigma}_{n}(\xi):=\sqrt{\bar{\sigma}^{2}_{n}(\xi)}$, over the parameter $\xi\in\Theta_{\xi}$ and the MLEs $\widehat{\mu}_{n}^{\mathrm{MLE}}$ and $\widehat{\sigma}_{n}^{\mathrm{MLE}}$ satisfy the equations
\begin{align*}
    \widehat{\mu}_{n}^{\mathrm{MLE}}=\mu_{n}(\widehat{\xi}_{n}^{\mathrm{MLE}})
    \ \ \mbox{and}\ \ 
    \widehat{\sigma}_{n}^{\mathrm{MLE}}=\bar{\sigma}_{n}(\widehat{\xi}_{n}^{\mathrm{MLE}})=\sqrt{\sigma_{n}^{2}(\widehat{\xi}_{n}^{\mathrm{MLE}},\widehat{\mu}_{n}^{\mathrm{MLE}})}.
\end{align*}
Therefore, we define our proposed estimator $\widehat{\vartheta}_{n} :=(\widehat{\xi}_{n},\widehat{\sigma}_{n},\widehat{\mu}_{n})^{\top}$ by
\begin{align}\label{Def:EMLE}
    \widehat{\xi}_{n}\in \argmax_{\xi\in\Theta_{\xi}}\bar{\ell}_{n}(\xi),\ \ \widehat{\sigma}_{n}:=\bar{\sigma}_{n}(\widehat{\xi}_{n}),
    \ \ \mbox{and}\ \ \widehat{\mu}_{n}:=\mu_{n}(\widehat{\xi}_{n})
\end{align}
and we call $\widehat{\vartheta}_{n} =(\widehat{\xi}_{n},\widehat{\sigma}_{n},\widehat{\mu}_{n})^{\top}$ the {\it exact MLE}  throughout this paper. 
In subsequent sections, we investigate the asymptotic properties of the exact MLE.

\subsection{Consistency and Asymptotic Normality of Exact MLE}\label{Sec:MLE}

We firstly introduce several conditions on the spectral density function $s_{\theta}^{X}(\omega)$ summarized in the following assumption that is used to prove asymptotic properties of the exact MLE and the likelihood ratio process; see Sections~\ref{Sec:MLE} and \ref{Sec:LAN} for details.
\begin{assumption} \label{Assump:DSPD1}
	\begin{enumerate}[label=$(\arabic*)$]
		\item For each $\theta\in\Theta$, $s_{\theta}^{X}(\omega)$ is a non-negative integrable even function in $\omega$ on $[-\pi,\pi]$ with $2\pi$-periodicity. Moreover, it satisfies
        \begin{itemize}
            \item for each $\omega\in[-\pi,\pi]\backslash\{0\}$, $s_{\theta}^{X}(\omega)$ is three times continuously differentiable in $\theta$ on the interior of $\Theta$,
            \item for each $\theta\in\Theta$ and $j\in\{1,\cdots,p\}$, $s_{\theta}^{X}(\omega)$ and $\partial_{j}s_{\theta}^{X}(\omega)$ are continuously differentiable in $\omega$ on $[-\pi,\pi]\setminus\{0\}$. 
        \end{itemize}
        \item \label{SPD.Identifiability.Cond} 
        If $\theta_{1}$ and $\theta_{2}$ are distinct elements of $\Theta$, the set $\{\omega\in [-\pi,\pi]: s_{\theta_{1}}^{X}(\omega)\neq s_{\theta_{2}}^{X}(\omega)\}$ has a positive Lebesgue measure.
        \item\label{dspd-boundedness} 
        There exists a continuous function $\alpha_X:\Theta_\xi\to(-\infty,1)$ such that for any $\iota>0$ and some constants $c_{1,\iota},c_{2,\iota},c_{3,\iota}>0$, which only depends on $\iota$, the following conditions hold for every $(\theta,\omega)\in\Theta\times[-\pi, \pi]\backslash\{0\}$:
		\begin{enumerate}[label=$(\alph*)$]
			\item\label{dspd-boundedness-1}
            $c_{1,\iota}|\omega|^{-\alpha_{X}(\xi)+\iota}\leq s_{\theta}^{X}(\omega)\leq c_{2,\iota}|\omega|^{-\alpha_{X}(\xi)-\iota}$.
			\item For any $j_{1},j_{2},j_{3}\in\{0,1,\cdots,p\}$,
			\begin{equation*}
			\left| \partial_{j_{1},j_{2},j_{3}}^{3} s_{\theta}^{X}(\omega)\right|\leq c_{3, \iota}|\omega|^{-\alpha_{X}(\xi)-\iota}
            \ \ \mbox{and}\ \ 
			\left| \partial_{\omega} \partial_{j_{1}} s_{\theta}^{X}(\omega)\right|\leq c_{3, \iota}|\omega|^{-\alpha_{X}(\xi)-1-\iota}.
			\end{equation*}
		\end{enumerate} 
	\end{enumerate}
\end{assumption}

Assumption~\ref{Assump:DSPD1} is the usual conditions on the ``discrete-time'' spectral density function for stationary Gaussian time series with long/short/anti-persistent memory used in the literature; see the assumptions in \cite{Fox-Taqqu-1986}, \cite{Dahlhaus-1989, Dahlhaus-2006}, \cite{Lieberman-2012}, \cite{Cohen-Gamboa-Lacaux-Loubes-2013} and \cite{Fukasawa-Takabatake-2019} as references. $X_j^{\vartheta}$ is said to have long memory (or long-range dependence) if $0 < \alpha_{X}(\xi) < 1$, 
short memory if $\alpha_{X}(\xi) = 0$, and anti-persistence if $\alpha_{X}(\xi)< 0$. 
The range $\alpha_{X}(\xi) \leq -1$ corresponds to noninvertibility, and our results cover this case as well. Since these memory properties are relevant across various applications, we do not impose prior restrictions on the memory type of the process.

Before stating our main results, we introduce additional notation.  
We write $\widehat{\theta}_{n}:=(\widehat{\xi}_{n},\widehat{\sigma}_{n})^{\top}$ and then we may write $\widehat{\vartheta}_{n}=(\widehat{\theta}_{n},\widehat{\mu}_{n})^{\top}$. 
Define $p\times p$ dimensional matrix $\mathcal{F}_{p}(\theta)$ by
\begin{align}\label{Def:mat-Fp}
    \mathcal{F}_{p}(\theta):= 
     \begin{pmatrix}\displaystyle
        \frac{1}{4\pi}\int_{-\pi}^{\pi}\partial_{i}\log{s_{\theta}^{X}(\omega)} \partial_{j}\log{s_{\theta}^{X}(\omega)} \dd\omega
    \end{pmatrix}_{i,j=1,\ldots,p}
    =
    \begin{pmatrix}
        \mathcal{F}_{p-1}(\xi) & a_{p-1}(\theta) \\
        a_{p-1}(\theta)^{\top} & 2\sigma^{-2}
    \end{pmatrix}
\end{align}
where 
\begin{align*}
    a_{p-1}(\theta):=\frac{1}{2\pi\sigma}\int_{-\pi}^{\pi}\partial_{\xi}\log{s_{\xi}^{X}(\omega)}\dd\omega,
    \ \ \mathcal{F}_{p-1}(\xi) := 
    \begin{pmatrix}\displaystyle
        \frac{1}{4\pi}\int_{-\pi}^{\pi}\partial_{i}\log{s_{\xi}^{X}(\omega)} \partial_{j}\log{s_{\xi}^{X}(\omega)} \dd\omega
    \end{pmatrix}_{i,j=1,\ldots,p-1}.
\end{align*}

\color{black}

Then we assume the following condition on $\mathcal{F}_{p}(\xi)$. 
\begin{assumption}\label{Assump:mat-Fp-invertible}
    The matrix $\mathcal{F}_{p}(\xi)$ is invertible for each $\xi\in\Theta_{\xi}$. 
\end{assumption}

Our first main result is a (weak) consistency and an asymptotic normality of 
the sequence of the MLEs $\{\widehat{\theta}_{n}\}_{n\in\N}$ defined in \eqref{Def:EMLE},  
that is a generalization of, for example, 
Theorems~3.1 and 3.2 in \cite{Dahlhaus-1989} and Theorem~1 in \cite{Lieberman-2012} to the case of general Gaussian processes using the multi-step estimation procedure based on the exact MLE defined in \eqref{Def:EMLE}. 
\begin{theorem} \label{Thm:MLE}
    Under Assumptions~$\ref{Assump:DSPD1}$ and $\ref{Assump:mat-Fp-invertible}$, the sequence of the exact MLEs $\{\widehat{\theta}_{n}\}_{n\in\N}$ is weakly consistent and asymptotically normal, that is, it holds that
	\begin{align*}
		\sqrt{n} (\widehat{\theta}_{n}-\theta) 
        \rightarrow \mathcal{N}_{p}( \mathbf{0}_{p},\mathcal{F}_{p}(\theta)^{-1} )
        \ \ \mbox{as $n\to\infty$}
	\end{align*}
	in law under the distribution $\P_{\vartheta}^{n}$, where $\mathcal{F}_{p}(\theta)$ is the non-singular matrix defined in \eqref{Def:mat-Fp}. 
\end{theorem}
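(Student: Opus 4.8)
The plan is to follow the classical two-step route for M-estimators — first establish weak consistency from a uniform law of large numbers for the concentrated criterion, then derive asymptotic normality from a Taylor expansion of the profiled score — with one essential new ingredient relative to \cite{Dahlhaus-1989} and \cite{Lieberman-2012}: here the location parameter is profiled out by the generalized least squares quantity $\mu_{n}(\xi)$ rather than by the sample mean, so the bulk of the extra work is to show that this substitution disturbs neither the limiting criterion nor the $\sqrt{n}$-asymptotics of $\widehat{\theta}_{n}$.

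For consistency, I would first record that, up to an additive constant and the factor $-n/2$, the concentrated log-likelihood equals $-\tfrac12\bigl[\log\bar{\sigma}^{2}_{n}(\xi)+\tfrac1n\log\det\Sigma_{n}(s_{\xi}^{X})\bigr]$. Using that $\mu_{n}(\xi)\mathbf{1}_{n}=P_{n,\xi}\mathbf{X}_{n}$ with $P_{n,\xi}$ the $\Sigma_{n}(s_{\xi}^{X})^{-1}$-orthogonal projection onto $\mathrm{span}(\mathbf{1}_{n})$ and $(I-P_{n,\xi})\mathbf{1}_{n}=0$, the true mean drops out and
\begin{align*}
\bar{\sigma}^{2}_{n}(\xi)=\tfrac1n\mathbf{Y}_{n}^{\top}\Sigma_{n}(s_{\xi}^{X})^{-1}\mathbf{Y}_{n}-\tfrac1n\frac{\bigl(\mathbf{1}_{n}^{\top}\Sigma_{n}(s_{\xi}^{X})^{-1}\mathbf{Y}_{n}\bigr)^{2}}{\mathbf{1}_{n}^{\top}\Sigma_{n}(s_{\xi}^{X})^{-1}\mathbf{1}_{n}},\qquad \mathbf{Y}_{n}:=\mathbf{X}_{n}-\mu_{0}\mathbf{1}_{n}\sim\mathcal{N}_{n}\bigl(\mathbf{0},\sigma_{0}^{2}\Sigma_{n}(s_{\xi_{0}}^{X})\bigr).
\end{align*}
Toeplitz trace/Szeg\H{o}-type limit theorems of Fox--Taqqu/Dahlhaus type (applicable under Assumption~\ref{Assump:DSPD1} because the relevant products of spectral densities have exponent strictly below $1$) give $\tfrac1n\log\det\Sigma_{n}(s_{\xi}^{X})\to\tfrac1{2\pi}\int_{-\pi}^{\pi}\log s_{\xi}^{X}$ and $\tfrac1n\mathbf{Y}_{n}^{\top}\Sigma_{n}(s_{\xi}^{X})^{-1}\mathbf{Y}_{n}\to\tfrac{\sigma_{0}^{2}}{2\pi}\int_{-\pi}^{\pi}s_{\xi_{0}}^{X}/s_{\xi}^{X}$ in $\P_{\vartheta_{0}}^{n}$-probability, uniformly on the compact set $\Theta_{\xi}$, while the rank-one correction is $o_{p}(1)$ uniformly (here the Adenstedt-type growth of $\mathbf{1}_{n}^{\top}\Sigma_{n}(f)^{-1}\mathbf{1}_{n}$, tamed by $\alpha_{X}<1$, enters). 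Hence $-\tfrac2n\bar{\ell}_{n}(\xi)$ converges uniformly to $\mathrm{const}+\log\sigma_{*}^{2}(\xi)+\tfrac1{2\pi}\int_{-\pi}^{\pi}\log s_{\xi}^{X}$ with $\sigma_{*}^{2}(\xi):=\tfrac{\sigma_{0}^{2}}{2\pi}\int_{-\pi}^{\pi}s_{\xi_{0}}^{X}/s_{\xi}^{X}$; concavity of $\log$ (Jensen against $\dd\omega/2\pi$) shows this limit is minimized at $\xi_{0}$, with equality forcing $\sigma_{0}^{2}s_{\xi_{0}}^{X}\propto s_{\xi}^{X}$ a.e., which via $s_{\theta}^{X}=\sigma^{2}s_{\xi}^{X}$ and the identifiability Assumption~\ref{Assump:DSPD1}\ref{SPD.Identifiability.Cond} yields $\xi=\xi_{0}$. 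A standard argmax argument then gives $\widehat{\xi}_{n}\to_{p}\xi_{0}$, whence $\widehat{\sigma}_{n}=\bar{\sigma}_{n}(\widehat{\xi}_{n})\to_{p}\sigma_{0}$ and $\widehat{\mu}_{n}=\mu_{n}(\widehat{\xi}_{n})\to_{p}\mu_{0}$.

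For asymptotic normality, note that since $\mu_{n}(\xi)$ maximizes $\mu\mapsto\ell_{n}(\xi,\sigma,\mu)$ and the chain-rule term therefore drops, $\widehat{\vartheta}_{n}=(\widehat{\xi}_{n},\widehat{\sigma}_{n},\mu_{n}(\widehat{\xi}_{n}))$ actually solves the full system $\nabla_{\vartheta}\ell_{n}(\widehat{\vartheta}_{n})=0$ (on the event, of probability tending to one by consistency, that $\widehat{\vartheta}_{n}$ is interior). Expanding the $p$-dimensional block $\nabla_{\theta}\ell_{n}(\widehat{\vartheta}_{n})=0$ about $\vartheta_{0}$ gives
\begin{align*}
\sqrt{n}\,(\widehat{\theta}_{n}-\theta_{0})=-\Bigl[\tfrac1n\nabla^{2}_{\theta\theta}\ell_{n}(\bar{\vartheta}_{n})\Bigr]^{-1}\Bigl\{\tfrac1{\sqrt n}\nabla_{\theta}\ell_{n}(\vartheta_{0})+\tfrac1{\sqrt n}\nabla^{2}_{\theta\mu}\ell_{n}(\bar{\vartheta}_{n})\,(\widehat{\mu}_{n}-\mu_{0})\Bigr\}
\end{align*}
with $\bar{\vartheta}_{n}$ between $\widehat{\vartheta}_{n}$ and $\vartheta_{0}$. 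Three facts finish the argument: (i) $\tfrac1n\nabla^{2}_{\theta\theta}\ell_{n}(\bar{\vartheta}_{n})\to_{p}-\mathcal{F}_{p}(\theta_{0})$, from the uniform Toeplitz-trace limit theorems applied to the second derivatives of \eqref{def:log-Gaussian-likelihood} together with consistency (the $\mu$-dependence entering the Hessian being negligible given the rate of $\widehat{\mu}_{n}-\mu_{0}$); (ii) $\tfrac1{\sqrt n}\nabla_{\theta}\ell_{n}(\vartheta_{0})\to_{d}\mathcal{N}_{p}(\mathbf{0}_{p},\mathcal{F}_{p}(\theta_{0}))$, by the central limit theorem for centered Gaussian quadratic forms $\tfrac12[\mathbf{Y}_{n}^{\top}A_{n,j}\mathbf{Y}_{n}-\mathrm{tr}(A_{n,j}\Sigma_{n}(s_{\theta_{0}}^{X}))]$ (reduce to $\mathbf{Z}_{n}\sim\mathcal{N}_{n}(\mathbf{0},I_{n})$ and verify the Lyapunov-type condition $\mathrm{tr}(B_{n}^{4})=o\bigl(\mathrm{tr}(B_{n}^{2})^{2}\bigr)$ via operator-norm bounds on the associated Toeplitz matrices), the limiting covariance being exactly $\mathcal{F}_{p}(\theta_{0})$ because $\partial_{j}\log s_{\theta}^{X}$ captures both the $\xi$- and $\sigma$-scores; and (iii) the correction is $o_{p}(1)$, since a direct variance computation gives $\nabla^{2}_{\theta\mu}\ell_{n}(\bar{\vartheta}_{n})=O_{p}\bigl((\mathbf{1}_{n}^{\top}\Sigma_{n}(s_{\xi_{0}}^{X})^{-1}\mathbf{1}_{n})^{1/2}\bigr)$ and $\widehat{\mu}_{n}-\mu_{0}=O_{p}\bigl((\mathbf{1}_{n}^{\top}\Sigma_{n}(s_{\xi_{0}}^{X})^{-1}\mathbf{1}_{n})^{-1/2}\bigr)$, so their product after the $\tfrac1{\sqrt n}$ scaling is $O_{p}(n^{-1/2})$ whatever the memory exponent — this is the asymptotic orthogonality of $\mu$ to $(\xi,\sigma)$ in the long-span regime. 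Invoking Assumption~\ref{Assump:mat-Fp-invertible} to invert the limiting Hessian and applying Slutsky's theorem yields $\sqrt{n}(\widehat{\theta}_{n}-\theta_{0})\to_{d}\mathcal{N}_{p}(\mathbf{0}_{p},\mathcal{F}_{p}(\theta_{0})^{-1})$.

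I expect the main obstacle to be item (iii) — more precisely, the uniform control of the GLS quantity $\mu_{n}(\xi)$ and of $\partial_{\xi}\mu_{n}(\xi)$ over a neighborhood of $\xi_{0}$. Because $\widehat{\mu}_{n}=\mu_{n}(\widehat{\xi}_{n})$ is evaluated at a random argument, one must control $\mu_{n}(\widehat{\xi}_{n})-\mu_{n}(\xi_{0})$ through $\sup_{|\xi-\xi_{0}|\le\delta_{n}}|\partial_{\xi}\mu_{n}(\xi)|$, which requires sharp Adenstedt-type asymptotics for $\mathbf{1}_{n}^{\top}\Sigma_{n}(f)^{-1}\mathbf{1}_{n}$ and $\mathbf{1}_{n}^{\top}\Sigma_{n}(f)^{-1}\Sigma_{n}(g)\Sigma_{n}(f)^{-1}\mathbf{1}_{n}$ that are uniform in the spectral exponent; the strong sensitivity of $\mathbf{1}_{n}^{\top}\Sigma_{n}(s_{\xi}^{X})^{-1}\mathbf{1}_{n}\asymp n^{1-\alpha_{X}(\xi)}$ to $\xi$ means these bounds — and the accompanying polylogarithmic corrections for models such as ARFIMA$(0,d,0)$, where $\partial_{\xi}\log s_{\xi}^{X}$ is unbounded at the origin — must be established with care on a $\delta_{n}\downarrow0$ neighborhood compatible with $\widehat{\xi}_{n}-\xi_{0}=O_{p}(n^{-1/2})$. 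The remaining ingredients, namely the Toeplitz-trace limit theorems and the Gaussian quadratic-form CLT, are by now standard (cf.\ \cite{Fox-Taqqu-1986}, \cite{Dahlhaus-1989}, \cite{Lieberman-2012}, \cite{Cohen-Gamboa-Lacaux-Loubes-2013}); the only real novelty on that side is that all bounds must be made uniform under the single condition $\alpha_{X}(\xi)<1$ rather than under separate long-, short-, or anti-persistent-memory hypotheses.
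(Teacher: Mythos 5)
Your asymptotic-normality argument is structurally sound and in fact takes a slightly different route from the paper: you expand the full $(p+1)$-dimensional score at $\widehat{\vartheta}_{n}$ and kill the $\theta$--$\mu$ cross term by the rate mismatch $\nabla^{2}_{\theta\mu}\ell_{n}=O_{p}\bigl((\mathbf{1}_{n}^{\top}\Sigma_{n}(s_{\xi_{0}}^{X})^{-1}\mathbf{1}_{n})^{1/2}\bigr)$ versus $\widehat{\mu}_{n}-\mu_{0}=O_{p}\bigl((\mathbf{1}_{n}^{\top}\Sigma_{n}(s_{\xi_{0}}^{X})^{-1}\mathbf{1}_{n})^{-1/2}\bigr)$, whereas the paper never touches the joint Hessian: it Taylor-expands only the concentrated score $\partial_{\xi}\bar{\ell}_{n}$, obtains $\sqrt{n}(\widehat{\xi}_{n}-\xi_{0})$ with limiting precision $\mathcal{G}_{p-1}(\xi_{0})=\mathcal{F}_{p-1}(\xi_{0})-\tfrac12 a_{p-1}a_{p-1}^{\top}$, and then recovers $\widehat{\sigma}_{n}=\bar{\sigma}_{n}(\widehat{\xi}_{n})$ by a delta-method computation, assembling the block inverse of $\mathcal{F}_{p}(\theta_{0})$ at the end. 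The two are equivalent in substance, and you correctly identify that the hard technical content (uniform Adenstedt-type control of $\mu_{n}(\xi)$ and $\partial_{\xi}\mu_{n}(\xi)$ near $\xi_{0}$, which the paper supplies via Sobolev-embedding moment bounds in its Lemmas on $\mathbf{1}_{n}^{\top}\partial_{\xi}^{k}\Sigma_{n}(s_{\xi}^{X})^{-1}\mathbf{1}_{n}$) is the same either way.

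The genuine gap is in your consistency step. You assert that $\tfrac1n\mathbf{Y}_{n}^{\top}\Sigma_{n}(s_{\xi}^{X})^{-1}\mathbf{Y}_{n}\to\tfrac{\sigma_{0}^{2}}{2\pi}\int s_{\xi_{0}}^{X}/s_{\xi}^{X}$ \emph{uniformly on the compact set $\Theta_{\xi}$}, justified by the parenthetical that ``the relevant products of spectral densities have exponent strictly below $1$.'' That is false in general: the product $s_{\xi_{0}}^{X}\cdot(s_{\xi}^{X})^{-1}$ behaves like $|\omega|^{-(\alpha_{X}(\xi_{0})-\alpha_{X}(\xi))}$ near the origin, and whenever $\alpha_{X}(\xi_{0})-\alpha_{X}(\xi)>1$ the limiting integral $\int_{-\pi}^{\pi}s_{\xi_{0}}^{X}/s_{\xi}^{X}\,\mathrm{d}\omega$ diverges and the Toeplitz trace limits do not apply. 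This situation occurs already in the flagship ARFIMA$(0,d,0)$ example: with $d_{0}=0.4$ and $d=-0.4$ one has $\alpha_{X}(\xi_{0})-\alpha_{X}(\xi)=1.6$. The same problem infects your claim that the rank-one correction from profiling out $\mu$ is uniformly $o_{p}(1)$, since its expectation is of order $n^{-1}\,n^{(\alpha_{X}(\xi_{0})-\alpha_{X}(\xi))_{+}}$ up to $n^{\varepsilon}$. This is precisely why the paper restricts to $\Theta_{\xi}(\iota)=\{\xi:\alpha_{X}(\xi)-\alpha_{X}(\xi_{0})\geq-1+\iota\}$, proves uniform convergence and identifiability only there (Step~1), and then runs a separate Step~2 argument---in the spirit of Robinson (1995), Velasco--Robinson (2000) and Lieberman et al.\ (2012)---showing via the operator-norm comparison $\|\Sigma_{n}(s_{\xi_{1}}^{X})^{-1/2}\Sigma_{n}(s_{\xi_{2}}^{X})^{1/2}\|_{\mathrm{op}}\leq C_{1}^{-1}$ that $\inf_{\xi\in\Theta_{\xi}\setminus\Theta_{\xi}(\iota)}L_{n}(\xi)$ exceeds $L(\xi_{0})+\epsilon_{1}$ with probability tending to one, so the unrestricted maximizer cannot escape into the complement. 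Without this second step your argmax argument does not go through, and since consistency feeds every subsequent expansion, the gap propagates to the normality part as well.
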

See Section~\ref{Sec:proof-Thm:MLE-consistency} for the proof of consistency and Section~\ref{Sec:proof-Thm:MLE-asynormal} for the proof of asymptotic normality. 


To prove the asymptotic normality of the MLEs for the joint estimation $\vartheta=(\theta,\mu)^{\top}$ as well as the MLE for $\mu$, 
we need to further assume the precise asymptotic behavior of the spectral density function $s_{\theta}^{X}(\omega)$ around the frequency $\omega=0$ given in the following assumption.
\begin{assumption}\label{Assump:DSPD2}
    In addition to Assumption~$\ref{Assump:DSPD1}$, we further assume that there exists a continuous function $c_{X}:\Theta_{\xi}\to(0,\infty)$ such that for each $\xi\in\Theta_{\xi}$,
    \begin{align*}
        s_{\theta}^{X}(\omega) \sim \sigma^{2}c_{X}(\xi) |\omega|^{-\alpha_{X}(\xi)}\ \ \mbox{as $|\omega|\to 0$}.
    \end{align*}
\end{assumption}
Based on $\mathcal{F}_{p}(\xi)$ defined in \eqref{Def:mat-Fp}, we further define
\begin{align}\label{Def:mat-I}
    \Phi_{n}(\vartheta):=
    \begin{pmatrix}
      n^{-\frac{1}{2}}I_{p} & \mathbf{0}_{p} \\
      \mathbf{0}_{p}^{\top}& n^{-\frac{1}{2}(1-\alpha_{X}(\xi))}
    \end{pmatrix}
    \ \ \mbox{and}\ \ \mathcal{I}(\theta):=
    \begin{pmatrix}
        \mathcal{F}_{p}(\theta)&\mathbf{0}_{p} \\
        \mathbf{0}_{p}^{\top}&\frac{2\pi\Gamma(1-\alpha_{X}(\xi))
    	\sigma^2c_{X}(\xi)}{\mathrm{B}(1-\alpha_{X}(\xi),1-\alpha_{X}(\xi))}
    \end{pmatrix},
\end{align}
where $B(\alpha,\beta)$ is the beta-function. Note that, under Assumption~\ref{Assump:mat-Fp-invertible}, the matrix $\mathcal{I}(\xi)$ is also invertible for each $\xi\in\Theta_{\xi}$. 
Moreover, we also introduce a normalized score function $\zeta_{n}(\vartheta)$ and an observed Fisher information matrix $\mathcal{I}_{n}(\vartheta)$, respectively defined by
\begin{align}\label{Def:obs-FI}
    \zeta_{n}(\vartheta) := \Phi_{n}(\vartheta)^{\top} \partial_{\theta}\ell_{n}(\vartheta)
    \ \ \mbox{and}\ \ 
    \mathcal{I}_{n}(\vartheta) := -\Phi_{n}(\vartheta)^{\top} \partial_{\theta}^{2}\ell_{n}(\vartheta) \Phi_{n}(\vartheta).
\end{align}
Now we can state our second main result of the asymptotic normality of the MLE for the joint parameter $\vartheta=(\theta,\mu)^{\top}$, summarized in the following theorem with its proof given in Section~\ref{Sec:proof-Thm:MLE2}.
\begin{theorem} \label{Thm:MLE2}
    Under Assumptions~$\ref{Assump:mat-Fp-invertible}$ and $\ref{Assump:DSPD2}$, the sequence of the MLEs $\{\widehat{\vartheta}_{n}\}_{n\in\N}$ satisfies the following asymptotic normality:
	\begin{align}\label{MLE-coupling}
		\Phi_{n}(\vartheta)^{-1} (\widehat{\vartheta}_{n}-\vartheta) 
        =
        \mathcal{I}_{n}(\theta)^{-1}\Indi_{\{\det[\mathcal{I}_{n}(\theta)]>0\}}\zeta_{n}(\vartheta) 
        + o_{\P_{\vartheta}^{n}}(1) 
        \rightarrow \mathcal{N}_{p+1}\left(\mathbf{0}_{p+1},\mathcal{I}(\theta)^{-1}\right)
	\end{align}
	in law under the distribution $\P_{\vartheta}^{n}$ as $n\to\infty$, where $\zeta_{n}(\vartheta)$ and $\mathcal{I}_{n}(\theta)$ are the normalized score function and the observed Fisher information matrix defined in \eqref{Def:obs-FI}. 
\end{theorem}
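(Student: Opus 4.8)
The plan is to obtain the coupling \eqref{MLE-coupling} from a single joint Taylor expansion of the full score at the true value $\vartheta$, and then to supply the two classical ingredients that make such an expansion converge: a central limit theorem for the normalized score $\zeta_{n}(\vartheta)$ and a law of large numbers for the normalized observed information. First I would record that, because $\widehat{\xi}_{n}$ maximizes the profile likelihood $\bar{\ell}_{n}$ while $\widehat{\sigma}_{n}=\bar{\sigma}_{n}(\widehat{\xi}_{n})$ and $\widehat{\mu}_{n}=\mu_{n}(\widehat{\xi}_{n})$ are defined so that $\partial_{\sigma}\ell_{n}$ and $\partial_{\mu}\ell_{n}$ vanish, the envelope identity $\partial_{\xi}\bar{\ell}_{n}(\xi)=\partial_{\xi}\ell_{n}(\xi,\bar{\sigma}_{n}(\xi),\mu_{n}(\xi))$ holds; hence, with $\P_{\vartheta}^{n}$-probability tending to one, $\widehat{\vartheta}_{n}$ is an interior critical point of $\ell_{n}$ and solves $\partial_{\theta}\ell_{n}(\widehat{\vartheta}_{n})=\mathbf{0}_{p+1}$. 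Weak consistency of $\widehat{\theta}_{n}$ is Theorem~\ref{Thm:MLE}, and consistency of $\widehat{\mu}_{n}=\mu_{n}(\widehat{\xi}_{n})$ follows from the unbiasedness of the generalized-least-squares functional $\mu_{n}(\cdot)$ together with a bound on its variance uniform over a neighbourhood of $\xi$. A mean-value expansion along the segment from $\vartheta$ to $\widehat{\vartheta}_{n}$, after inserting $\Phi_{n}(\vartheta)\Phi_{n}(\vartheta)^{-1}$, then gives $\Phi_{n}(\vartheta)^{-1}(\widehat{\vartheta}_{n}-\vartheta)=\overline{\mathcal{I}}_{n}^{-1}\zeta_{n}(\vartheta)$ on the event that $\overline{\mathcal{I}}_{n}:=-\Phi_{n}(\vartheta)^{\top}\bigl[\int_{0}^{1}\partial_{\theta}^{2}\ell_{n}(\vartheta+t(\widehat{\vartheta}_{n}-\vartheta))\dd t\bigr]\Phi_{n}(\vartheta)$ is nonsingular.

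For the score, write $Z_{n}:=\mathbf{X}_{n}-\mu\mathbf{1}_{n}$, which under $\P_{\vartheta}^{n}$ is centered Gaussian with covariance $\sigma^{2}\Sigma_{n}(s_{\xi}^{X})$. The $(\xi,\sigma)$-block of $\zeta_{n}(\vartheta)$ is $n^{-1/2}$ times deterministic trace terms plus centered quadratic forms in $Z_{n}$, and its asymptotic normality with covariance $\mathcal{F}_{p}(\theta)$ is exactly the statement underlying Theorem~\ref{Thm:MLE} (the Fox--Taqqu/Dahlhaus central limit theorem for Toeplitz quadratic forms, controlled by Assumption~\ref{Assump:DSPD1}). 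The $\mu$-component is $n^{-\frac{1}{2}(1-\alpha_{X}(\xi))}\sigma^{-2}\mathbf{1}_{n}^{\top}\Sigma_{n}(s_{\xi}^{X})^{-1}Z_{n}$, exactly Gaussian with variance $n^{-(1-\alpha_{X}(\xi))}\sigma^{-2}\mathbf{1}_{n}^{\top}\Sigma_{n}(s_{\xi}^{X})^{-1}\mathbf{1}_{n}$; here Assumption~\ref{Assump:DSPD2}---the precise $|\omega|^{-\alpha_{X}(\xi)}$ behaviour of $s_{\theta}^{X}$ near $\omega=0$---yields the Adenstedt/Samarov--Taqqu-type limit, which is identified with the $(\mu,\mu)$-entry of $\mathcal{I}(\theta)$. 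The two blocks are uncorrelated for every $n$, since each entry of the $(\xi,\sigma)$-block is a constant plus a quadratic form in $Z_{n}$, each entry of the $\mu$-block is linear in $Z_{n}$, and the covariance of a centered Gaussian quadratic form with a Gaussian linear form is a third moment, hence zero. The joint convergence $\zeta_{n}(\vartheta)\to\mathcal{N}_{p+1}(\mathbf{0}_{p+1},\mathcal{I}(\theta))$ in law then follows by the Cram\'er--Wold device applied to quadratic-plus-linear forms in $Z_{n}$.

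For the information, I would show $\mathcal{I}_{n}(\vartheta)\to\mathcal{I}(\theta)$ in $\P_{\vartheta}^{n}$-probability block by block: the $(\xi,\sigma)$-block, $-n^{-1}$ times the corresponding block of $\partial_{\theta}^{2}\ell_{n}(\vartheta)$, converges to $\mathcal{F}_{p}(\theta)$ (the Hessian law of large numbers behind Theorem~\ref{Thm:MLE}); the $(\mu,\mu)$-entry equals the deterministic $n^{-(1-\alpha_{X}(\xi))}\sigma^{-2}\mathbf{1}_{n}^{\top}\Sigma_{n}(s_{\xi}^{X})^{-1}\mathbf{1}_{n}$, handled by the same Toeplitz-inverse asymptotic; and the $(\mu,\xi)$ and $(\mu,\sigma)$ cross-entries, once normalized by $n^{-\frac{1}{2}}n^{-\frac{1}{2}(1-\alpha_{X}(\xi))}$, are mean-zero linear functionals of $Z_{n}$ with variances of order $n^{-(1-\alpha_{X}(\xi))}\mathbf{1}_{n}^{\top}\Sigma_{n}(s_{\xi}^{X})^{-1}\Sigma_{n}(\partial_{j}s_{\xi}^{X})\Sigma_{n}(s_{\xi}^{X})^{-1}\Sigma_{n}(\partial_{j}s_{\xi}^{X})\Sigma_{n}(s_{\xi}^{X})^{-1}\mathbf{1}_{n}=o(1)$ by the bilinear Toeplitz estimates permitted by Assumption~\ref{Assump:DSPD1}---this is the asymptotic orthogonality of $\mu$ from the remaining parameters that produces the block-diagonal $\mathcal{I}(\theta)$. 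To pass from $\mathcal{I}_{n}(\vartheta)$ to $\overline{\mathcal{I}}_{n}$ I would prove stochastic equicontinuity of the normalized Hessian $\vartheta'\mapsto-\Phi_{n}(\vartheta)^{\top}\partial_{\theta}^{2}\ell_{n}(\vartheta')\Phi_{n}(\vartheta)$ over the $\Phi_{n}(\vartheta)$-shrinking neighbourhood containing $\widehat{\vartheta}_{n}$ with probability tending to one, using the third-derivative bounds of Assumption~\ref{Assump:DSPD1}. Finally, since $\mathcal{I}(\theta)$ is positive definite (its $(\xi,\sigma)$-block $\mathcal{F}_{p}(\theta)$ is a positive-definite Gram matrix by Assumption~\ref{Assump:mat-Fp-invertible} and its $(\mu,\mu)$-entry is positive), both $\overline{\mathcal{I}}_{n}$ and $\mathcal{I}_{n}(\theta)$ are eventually invertible with $\overline{\mathcal{I}}_{n}^{-1}\to\mathcal{I}(\theta)^{-1}$ and $\mathcal{I}_{n}(\theta)^{-1}\Indi_{\{\det[\mathcal{I}_{n}(\theta)]>0\}}\to\mathcal{I}(\theta)^{-1}$ in probability; combined with $\zeta_{n}(\vartheta)=O_{\P_{\vartheta}^{n}}(1)$ this yields $\Phi_{n}(\vartheta)^{-1}(\widehat{\vartheta}_{n}-\vartheta)=\mathcal{I}_{n}(\theta)^{-1}\Indi_{\{\det[\mathcal{I}_{n}(\theta)]>0\}}\zeta_{n}(\vartheta)+o_{\P_{\vartheta}^{n}}(1)$, and Slutsky's theorem with the score CLT gives the limit $\mathcal{N}_{p+1}(\mathbf{0}_{p+1},\mathcal{I}(\theta)^{-1})$.

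The main obstacle is the joint handling of the anomalous $\mu$-rate $n^{-\frac{1}{2}(1-\alpha_{X}(\xi))}$: one needs the sharp asymptotics of the Toeplitz-inverse bilinear forms $\mathbf{1}_{n}^{\top}\Sigma_{n}(s_{\xi}^{X})^{-1}\mathbf{1}_{n}$ and their mixed variants---including the noninvertible range $\alpha_{X}(\xi)\le-1$, where this rate is at least $n$---with enough uniformity over a neighbourhood of $\xi$ that replacing $\xi$ by $\widehat{\xi}_{n}$ in $\mu_{n}(\cdot)$ and in $\Sigma_{n}(s_{\xi}^{X})^{-1}$ does not disturb the leading terms; this is where Assumption~\ref{Assump:DSPD2} is indispensable. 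A secondary difficulty is verifying the stochastic equicontinuity of the Hessian in the $\mu$-direction, where the relevant increments are rescaled by $n^{-\frac{1}{2}(1-\alpha_{X}(\xi))}$ rather than $n^{-1/2}$ and the third-order terms must be shown negligible at that scale.
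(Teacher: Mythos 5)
Your route is genuinely different from the paper's. The paper never forms the full $(p+1)\times(p+1)$ Hessian: it exploits the profile structure, obtaining the $(\xi,\sigma)$-block from the expansion $\sqrt{n}(\widehat{\theta}_{n}-\theta_{0})=\mathcal{F}_{p}(\theta_{0})^{-1}\bar{\zeta}_{n}+o_{\P_{\vartheta_{0}}^{n}}(1)$ already proved for Theorem~\ref{Thm:MLE}, and treating $\widehat{\mu}_{n}=\mu_{n}(\widehat{\xi}_{n})$ separately via the decomposition $\mu_{n}(\widehat{\xi}_{n})-\mu_{0}=(\mu_{n}(\widehat{\xi}_{n})-\mu_{n}(\xi_{0}))+(\mu_{n}(\xi_{0})-\mu_{0})$, where the second term is handled by Adenstedt's exact results and the first is shown $o_{\P_{\vartheta_{0}}^{n}}(n^{-\frac{1}{2}(1-\alpha_{X}(\xi_{0}))})$ by Sobolev-type uniform moment bounds on $\partial_{\xi}\mu_{n}(\xi)$ over $n^{-1/2}$-balls (Proposition~\ref{Thm:MLE-mu}, Lemmas~\ref{Lemma:Key-Ineq-mu} and \ref{Lemma:moment-dv-mu-xi}). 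Your joint CLT for $\zeta_{n}(\vartheta)$ is exactly the paper's: diagonalize, reduce to independent terms of the form $\lambda_{j,n}(W_{j,n}^{2}-1)+A_{j,n}W_{j,n}$, and verify a Lindeberg condition; your observation that the quadratic and linear blocks are exactly uncorrelated is the mechanism behind the block-diagonal $\mathcal{I}(\theta)$ in both arguments.

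There is, however, one genuine gap in your sandwich argument. You localize the integrated Hessian $\overline{\mathcal{I}}_{n}$ ``over the $\Phi_{n}(\vartheta)$-shrinking neighbourhood containing $\widehat{\vartheta}_{n}$ with probability tending to one,'' but you only establish \emph{consistency} of $\widehat{\mu}_{n}$, not the rate $\widehat{\mu}_{n}-\mu=O_{\P_{\vartheta}^{n}}(n^{-\frac{1}{2}(1-\alpha_{X}(\xi))})$ that this localization presupposes; concluding the rate from the expansion itself is circular. This matters because the normalized mixed Hessian entry $n^{-\frac{1}{2}}n^{-\frac{1}{2}(1-\alpha_{X}(\xi))}\partial_{\xi}\partial_{\mu}\ell_{n}(\vartheta')$ is affine in $\mu'$ with slope of order $n^{-\frac{1}{2}}n^{-\frac{1}{2}(1-\alpha_{X}(\xi))}\,\mathbf{1}_{n}^{\top}\partial_{\xi}\Sigma_{n}(s_{\xi}^{X})^{-1}\mathbf{1}_{n}\asymp n^{\frac{1}{2}(1-\alpha_{X}(\xi))-\frac{1}{2}+\varepsilon}$, which \emph{diverges} whenever $\alpha_{X}(\xi)<0$; so over a fixed (non-shrinking) $\mu$-neighbourhood the normalized Hessian does not converge, and a preliminary rate for $\widehat{\mu}_{n}$ is indispensable in precisely the anti-persistent and noninvertible cases the theorem is designed to cover. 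The fix is to prove that rate directly from the explicit GLS form of $\mu_{n}(\cdot)$ uniformly over the $n^{-1/2}$-ball containing $\widehat{\xi}_{n}$ --- i.e.\ you still need the paper's Proposition~\ref{Thm:MLE-mu} as an input, at which point your Taylor expansion closes. A second, minor slip: your stated order for the variance of the normalized $(\mu,\xi)$ cross-entry omits the factor $n^{-1}$ coming from squaring the $n^{-1/2}$ normalization; as written, $n^{-(1-\alpha_{X}(\xi))}\mathbf{1}_{n}^{\top}\Sigma_{n}(s_{\xi}^{X})^{-1}\Sigma_{n}(\partial_{j}s_{\xi}^{X})\Sigma_{n}(s_{\xi}^{X})^{-1}\Sigma_{n}(\partial_{j}s_{\xi}^{X})\Sigma_{n}(s_{\xi}^{X})^{-1}\mathbf{1}_{n}$ is $O(n^{\varepsilon})$, not $o(1)$; with the missing $n^{-1}$ restored it is $O(n^{-1+\varepsilon})$ and the asymptotic orthogonality claim is correct.
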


\begin{remark}
\cite{Wang-Xiao-Yu-Zhang-2023} established the consistency and asymptotic normality of the MLE for all parameters in the fOU process. Nonetheless, their proof is model-specific, and their results are encompassed by our more general framework.
\end{remark}

\begin{remark}
The asymptotic normality properties in Theorems~$\ref{Thm:MLE}$ and $\ref{Thm:MLE2}$ show that the sequences of plug-in MLEs for the estimation of $\theta$ with nuisance parameter $\mu$ and the exact MLEs for the estimation of $\vartheta=(\theta,\mu)^\top$ are respectively asymptotically efficient in the Fisher sense, in that their limiting covariance matrices equal the inverse of the Fisher information matrices, given by the limits of the sequences of the matrices $-n^{-1}\partial_{\theta}^{2}\ell_{n}((\theta,\mu_0)^\top)$ and $\mathcal{I}_n(\vartheta)$ defined in \eqref{Def:obs-FI}.  
These Fisher efficiencies have also been discussed in \cite{Dahlhaus-1989,Lieberman-2012} for the plug-in MLE and in \cite{Wang-Xiao-Yu-Zhang-2023} for the exact MLE.  
However, these studies do not establish the minimax optimality proved later.  

One might wonder whether this minimax optimality could already be deduced by passing to the limit from the Cram\'er–Rao inequality formulated for possibly biased estimators.  
However, this is not the case.  
Such finite-sample inequalities control pointwise variances but do not rule out the existence of super-efficient points.  
A classical example is Hodges’s estimator in the i.i.d. Gaussian location model, which is $\sqrt{n}$-consistent and has the same asymptotic variance as the MLE except at a single point where it is super-efficient.  
This example illustrates that Cram\'er–Rao–type arguments alone are insufficient to rule out super-efficient points, indicating that the existence of general lower bounds for estimators cannot be derived solely from such inequalities.

\cite{LeCam-1953} showed that the set of super-efficient points has Lebesgue measure zero, with subsequent extensions by \cite{Bahadur-1964} and \cite{Pfanzagl-1970}.
These results, however, rely on parametric i.i.d. models or other regularity assumptions, and to the best of our knowledge, no extensions of these results to statistical experiments induced by general Gaussian processes have been established.  
Therefore, one cannot rely on the Fisher efficiencies or Cram\'er–Rao–type inequalities alone to establish the minimax optimality in local neighborhoods of the true parameter.  
To overcome this limitation, one needs the LAN property together with the H\'ajek–Le Cam’s local asymptotic minimax theorem \citep{Hajek-1972,LeCam-1972}, which ensures that no estimator can asymptotically achieve a smaller risk than the bound determined by the Fisher information in shrinking neighborhoods of the true parameter.
This motivates the next subsection, where we establish the LAN property for statistical experiments induced by general Gaussian processes and then derive the minimax efficiency of the exact MLE as well as the plug-in MLE.  
\end{remark}

\subsection{LAN property and Asymptotic Efficiency of the Exact MLE}\label{Sec:LAN}

The LAN property is a fundamental concept in asymptotic statistics, originally introduced by \cite{Wald-1943} and further developed by \cite{LeCam-1960}. It plays a central role not only in establishing the asymptotic optimality of estimators but also in facilitating statistical inference. In this section, we establish the LAN property of the sequence of statistical experiments ${(\mathbb{R}^{n}, \mathcal{B}(\mathbb{R}^{n}), {\mathbb{P}{\vartheta}^{n}}{\vartheta \in \Theta \times \mathbb{R}})}_{n \in \mathbb{N}}$ for general Gaussian processes. We then provide several results concerning the local asymptotic minimax efficiency of the exact MLE. Additional applications of the LAN property are also discussed.

\begin{theorem}\label{Thm:LAN}
    Consider the sequence of rate matrices $\{\Phi_{n}(\vartheta)\}_{n\in\N}$ defined in \eqref{Def:mat-Fp}.
        Under Assumptions~$\ref{Assump:mat-Fp-invertible}$ and $\ref{Assump:DSPD2}$, the family of distributions $\{\P^{n}_{\vartheta}\}_{\vartheta\in\Theta\times\R}$ 
        satisfies the following LAN property at each interior point $\vartheta=(\theta,\mu)^{\top}$ of $\Theta\times\R$: 
	    \begin{align*}
		  \left| \log\frac{\mathrm{d}\P^{n}_{\vartheta+\Phi_{n}(\vartheta)u}}{\mathrm{d}\P^{n}_{\vartheta}}
		  -\left(
          u^{\top} \zeta_{n}(\vartheta)-\frac{1}{2}u^{\top}\mathcal{I}(\theta)u
          \right)\right|=o_{\P^{n}_{\theta}}(1)\ \ \mbox{as $n\to\infty$},
	    \end{align*}
        where the invertible matrix $\mathcal{I}(\theta)$ is defined in $\eqref{Def:mat-I}$ and the normalized score function $\zeta_{n}(\vartheta)=\Phi_{n}(\vartheta)^{\top}\ell_{n}(\vartheta)$ satisfies the convergence
	    \begin{equation*}
		  \zeta_{n}(\vartheta)\rightarrow\mathcal{N}(0,\mathcal{I}(\theta))\ \ \mbox{as $n\to\infty$}
	   \end{equation*}
       in law under the distribution $\P_{\vartheta}^{n}$.
\end{theorem}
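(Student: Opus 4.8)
The plan is to work directly with the Gaussian log-likelihood ratio, which for the family $\{\P^{n}_{\vartheta}\}$ is available in closed form (quadratic in $\mathbf{X}_{n}$, transcendental in $\vartheta$), and to decouple the perturbation of the covariance parameter $\theta$ from that of the mean $\mu$. Write $u=(u_{\theta}^{\top},u_{\mu})^{\top}$ with $u_{\theta}\in\R^{p}$, set $\vartheta':=\vartheta+\Phi_{n}(\vartheta)u$, so that $\theta'=\theta+n^{-1/2}u_{\theta}$ and $\mu'=\mu+n^{-(1-\alpha_{X}(\xi))/2}u_{\mu}$, and telescope through the intermediate parameter $(\theta',\mu)$:
\[
  \log\frac{\mathrm{d}\P^{n}_{\vartheta'}}{\mathrm{d}\P^{n}_{\vartheta}}
  =\underbrace{\log\frac{\mathrm{d}\P^{n}_{(\theta',\mu)}}{\mathrm{d}\P^{n}_{(\theta,\mu)}}}_{=:\,L^{\mathrm{cov}}_{n}}
  +\underbrace{\log\frac{\mathrm{d}\P^{n}_{(\theta',\mu')}}{\mathrm{d}\P^{n}_{(\theta',\mu)}}}_{=:\,L^{\mathrm{loc}}_{n}}.
\]
The two measures in $L^{\mathrm{cov}}_{n}$ share the mean $\mu\mathbf{1}_{n}$, so $L^{\mathrm{cov}}_{n}$ is exactly the log-likelihood ratio of the \emph{centered} Gaussian experiment $\{\mathcal{N}(\mathbf{0}_{n},\Sigma_{n}(s^{X}_{\theta}))\}_{\theta}$ evaluated at the residual vector $\mathbf{Y}_{n}:=\mathbf{X}_{n}-\mu\mathbf{1}_{n}$, which under $\P^{n}_{\vartheta}$ has law $\mathcal{N}(\mathbf{0}_{n},\Sigma_{n}(s^{X}_{\theta}))$, i.e.\ the ``null'' of that centered experiment; the two measures in $L^{\mathrm{loc}}_{n}$ share the covariance $\Sigma_{n}(s^{X}_{\theta'})$, so $L^{\mathrm{loc}}_{n}$ is an \emph{exact} Gaussian location shift, with no Taylor remainder.

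\textbf{The covariance term.} For $L^{\mathrm{cov}}_{n}$ I would use the LAN property of the centered Gaussian experiment, obtained either by adapting \cite{Cohen-Gamboa-Lacaux-Loubes-2013} (whose framework covers long/short/anti-persistent memory) to the residuals $\mathbf{Y}_{n}$, or, self-containedly, by a direct second-order Taylor expansion of the centered Gaussian log-likelihood in $\theta'-\theta=n^{-1/2}u_{\theta}$, with the third-order remainder controlled via the uniform derivative bounds in Assumption~\ref{Assump:DSPD1}$(3)$ and the classical trace asymptotics for products of Toeplitz matrices generated by $s^{X}_{\xi}$ and its derivatives. Either way,
\[
  L^{\mathrm{cov}}_{n}=u_{\theta}^{\top}\zeta^{\theta}_{n}(\vartheta)-\tfrac12\,u_{\theta}^{\top}\mathcal{F}_{p}(\theta)\,u_{\theta}+o_{\P^{n}_{\vartheta}}(1),
  \qquad\zeta^{\theta}_{n}(\vartheta):=n^{-1/2}\partial_{\theta}\ell_{n}(\vartheta),
\]
with $\zeta^{\theta}_{n}(\vartheta)\to\mathcal{N}_{p}(\mathbf{0}_{p},\mathcal{F}_{p}(\theta))$ in law under $\P^{n}_{\vartheta}$; here $\zeta^{\theta}_{n}(\vartheta)$ is the $\theta$-block of $\zeta_{n}(\vartheta)$ defined in \eqref{Def:obs-FI}.

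\textbf{The mean term and assembly.} Since $L^{\mathrm{loc}}_{n}$ is a Gaussian shift with covariance $\Sigma_{n}(s^{X}_{\theta'})$, the exact shift identity gives, with $\delta_{n}:=(\mu'-\mu)\mathbf{1}_{n}=n^{-(1-\alpha_{X}(\xi))/2}u_{\mu}\mathbf{1}_{n}$,
\[
  L^{\mathrm{loc}}_{n}=\delta_{n}^{\top}\Sigma_{n}(s^{X}_{\theta'})^{-1}\mathbf{Y}_{n}-\tfrac12\,\delta_{n}^{\top}\Sigma_{n}(s^{X}_{\theta'})^{-1}\delta_{n}.
\]
Replacing $\theta'$ by $\theta$ here costs only $o_{\P^{n}_{\vartheta}}(1)$: continuity of $\alpha_{X}$ yields $n^{(\alpha_{X}(\xi)-\alpha_{X}(\xi'))/2}\to1$, and the normalized inverse-Toeplitz quadratic forms $\mathbf{1}_{n}^{\top}\Sigma_{n}(s^{X}_{\theta'})^{-1}\mathbf{1}_{n}$ and $\mathbf{1}_{n}^{\top}\Sigma_{n}(s^{X}_{\theta'})^{-1}\Sigma_{n}(s^{X}_{\theta})\Sigma_{n}(s^{X}_{\theta'})^{-1}\mathbf{1}_{n}$ are stable under $O(n^{-1/2})$ perturbations of $\xi$. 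The key input is the Adenstedt-type asymptotics (\cite{Adenstedt-1974}; see also the proof of Theorem~\ref{Thm:MLE2})
\[
  n^{-(1-\alpha_{X}(\xi))}\,\mathbf{1}_{n}^{\top}\Sigma_{n}(s^{X}_{\theta})^{-1}\mathbf{1}_{n}\longrightarrow[\mathcal{I}(\theta)]_{p+1,p+1},
\]
the $(p+1,p+1)$-entry of $\mathcal{I}(\theta)$. Hence $L^{\mathrm{loc}}_{n}=u_{\mu}\,\zeta^{\mu}_{n}(\vartheta)-\tfrac12\,u_{\mu}^{2}[\mathcal{I}(\theta)]_{p+1,p+1}+o_{\P^{n}_{\vartheta}}(1)$, where $\zeta^{\mu}_{n}(\vartheta):=n^{-(1-\alpha_{X}(\xi))/2}\partial_{\mu}\ell_{n}(\vartheta)=n^{-(1-\alpha_{X}(\xi))/2}\sigma^{-2}\mathbf{1}_{n}^{\top}\Sigma_{n}(s^{X}_{\xi})^{-1}\mathbf{Y}_{n}$ is the $\mu$-block of $\zeta_{n}(\vartheta)$ and is, under $\P^{n}_{\vartheta}$, an exact centered Gaussian with variance tending to $[\mathcal{I}(\theta)]_{p+1,p+1}$, so $\zeta^{\mu}_{n}(\vartheta)\to\mathcal{N}(0,[\mathcal{I}(\theta)]_{p+1,p+1})$ in law. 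Adding the two pieces and reading off the block decompositions $\mathcal{I}(\theta)=\diag(\mathcal{F}_{p}(\theta),[\mathcal{I}(\theta)]_{p+1,p+1})$ and $\zeta_{n}(\vartheta)=(\zeta^{\theta}_{n}(\vartheta)^{\top},\zeta^{\mu}_{n}(\vartheta))^{\top}$ produces the asserted expansion $\log(\mathrm{d}\P^{n}_{\vartheta'}/\mathrm{d}\P^{n}_{\vartheta})=u^{\top}\zeta_{n}(\vartheta)-\tfrac12\,u^{\top}\mathcal{I}(\theta)u+o_{\P^{n}_{\vartheta}}(1)$. Finally, $\zeta_{n}(\vartheta)\to\mathcal{N}(\mathbf{0}_{p+1},\mathcal{I}(\theta))$ follows by the Cram\'er--Wold device: every linear combination of its coordinates is a linear-plus-quadratic functional of the centered Gaussian vector $\mathbf{Y}_{n}$ and is asymptotically normal by the central limit theorem for quadratic forms of stationary Gaussian sequences (\cite{Fox-Taqqu-1986,Dahlhaus-1989}), while the limiting covariance is block-diagonal because a linear functional and a centered quadratic functional of a Gaussian vector are uncorrelated (the third moments of a centered Gaussian vanish).

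\textbf{Main obstacle.} The genuinely delicate step is the sharp asymptotics of the inverse-Toeplitz quadratic forms entering $L^{\mathrm{loc}}_{n}$ — $\mathbf{1}_{n}^{\top}\Sigma_{n}(s^{X}_{\theta})^{-1}\mathbf{1}_{n}$ and its ``sandwiched'' variant — under the weak hypothesis that only the leading power-law behavior of $s^{X}_{\theta}$ near the origin is prescribed (Assumption~\ref{Assump:DSPD2}), together with enough uniformity over the $O(n^{-1/2})$-neighborhood of $\xi$ to license the substitution $\theta'\rightsquigarrow\theta$; this requires the Adenstedt-type inverse-Toeplitz machinery to be carried out carefully, including the regime $\alpha_{X}(\xi)\le-1$. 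A secondary difficulty, should the covariance term be proved from scratch rather than quoted, is the uniform-in-$\xi$ control of the third-order Taylor remainder, resting on Assumption~\ref{Assump:DSPD1}$(3)$ and on trace bounds for products of Toeplitz matrices. By contrast, the interplay between the two perturbations is mild: the telescoping confines it to the harmless substitution above, and the block-diagonality of $\mathcal{I}(\theta)$ is automatic from the vanishing of Gaussian third moments.
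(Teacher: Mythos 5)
Your proposal is sound and reaches the theorem by a genuinely different route from the paper. The paper performs a single second-order Taylor expansion of $\ell_{n}$ in the \emph{full} parameter $\vartheta$ with an integral-form remainder (see \eqref{Likelihood-Ratio-Taylor}), and then reduces the problem to three convergences: stability of the rate-matrix ratio $R_{n}(\vartheta,v)$, convergence of the observed information $\mathcal{I}_{n}(\vartheta)\to\mathcal{I}(\vartheta)$, and uniform smallness of $\mathcal{I}_{n}(\vartheta+\Phi_{n}(\vartheta)u)-\mathcal{I}_{n}(\vartheta)$ over local neighborhoods, the last of which forces it to compute and bound \emph{all} third-order derivatives of $\ell_{n}$, including the mixed $\mu$--$\theta$ ones, via Lemmas~\ref{Lemma:Key-Ineq-mu} and \ref{Lemma:trace-app-inv} and the Adenstedt asymptotics. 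Your telescoping through $(\theta',\mu)$ eliminates the Taylor expansion in $\mu$ entirely: the mean block becomes an exact Gaussian shift with no remainder, and the covariance block can be delegated to the already-established centered LAN of \cite{Cohen-Gamboa-Lacaux-Loubes-2013} applied to $\mathbf{Y}_{n}$. What you pay for this is precisely the substitution $\theta'\rightsquigarrow\theta$ inside $\mathbf{1}_{n}^{\top}\Sigma_{n}(s^{X}_{\theta'})^{-1}\mathbf{1}_{n}$ and the sandwiched variant, uniformly over $O(n^{-1/2})$ perturbations of $\xi$; this is not a new difficulty, since it is exactly the content of the paper's Lemma~\ref{Lemma:Key-Ineq-mu} (the derivative bounds on $\mathbf{1}_{n}^{\top}\partial_{i}\Sigma_{n}(s^{X}_{\theta})^{-1}\mathbf{1}_{n}$ relative to $\mathbf{1}_{n}^{\top}\Sigma_{n}(s^{X}_{\theta})^{-1}\mathbf{1}_{n}$, up to $n^{\varepsilon}$ factors) combined with a mean-value argument, but you should cite or reprove those bounds explicitly rather than assert "stability." You correctly identify this as the delicate step. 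Note also that, as in the paper's own proof of \eqref{key:LAN:ratio-rate-mat}, the convergence $n^{(\alpha_{X}(\xi)-\alpha_{X}(\xi'))/2}\to 1$ needs more than continuity of $\alpha_{X}$ (a H\"older or differentiability condition), so you inherit the same implicit strengthening of Assumption~\ref{Assump:DSPD1}.

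One point in your final step needs tightening. For the joint CLT of $\zeta_{n}(\vartheta)$ you invoke Cram\'er--Wold plus the Fox--Taqqu/Dahlhaus CLT for quadratic forms, and deduce block-diagonality of the limit from the vanishing of Gaussian third moments. Uncorrelatedness of the linear and quadratic blocks is indeed immediate, but uncorrelatedness does not by itself give \emph{joint} asymptotic normality of a linear-plus-quadratic functional: a fixed linear combination of the coordinates of $\zeta_{n}(\vartheta)$ is not a pure quadratic form, so the quoted CLTs do not apply verbatim. The fix is exactly the paper's argument in Section~\ref{Sec:proof-CLT-score}: diagonalize the quadratic form by an orthogonal $V_{n}$, so that both the quadratic and the linear parts become sums over the same independent standard Gaussian coordinates $W_{j,n}$, write the combination as $\sum_{j}\{a_{j,n}(W_{j,n}^{2}-1)+b_{j,n}W_{j,n}\}$, and verify a Lyapunov condition. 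With that one repair, your argument closes.
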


The proof of Theorem~\ref{Thm:LAN} is left to Section~\ref{Sec:proof-Thm:LAN}.  In the relevant literature, Theorem~2.4 in \cite{Cohen-Gamboa-Lacaux-Loubes-2013} 
provides the LAN property for  centered stationary Gaussian time series with long/short/anti-persistent memory property under Assumptions~\ref{Assump:DSPD1} and \ref{Assump:mat-Fp-invertible}.  Theorem~\ref{Thm:LAN} is a generalization of Theorem~2.4 in \cite{Cohen-Gamboa-Lacaux-Loubes-2013} to general Gaussian processes within the long span asymptotics, which is different from several extensions under the high-frequency asymptotics recently made by \cite{Brouste-Fukasawa-2018,Fukasawa-Takabatake-2019,Szymanski-2024,Szymanski-Takabatake-2023-Additive,Chong-Mies-2025}.

The LAN property is applied to derive asymptotically efficient rates and variances for estimating $\vartheta=(\theta,\mu)^{\top}$. These results are derived from lower bounds of estimation provided by H\'ajek's convolution theorem (see
\cite{Hajek-1972, Ibragimov-Hasminski-1981,LeCam-1972}), recalled below.
\begin{theorem}[Theorem II.12.1 in \cite{Ibragimov-Hasminski-1981}]
\label{Thm:Hajek-LeCam}
Suppose that a family of distributions $\{\mathbb{P}^{n}_{\theta}\}_{\theta\in\Theta}$ satisfies the LAN property at the interior point $\theta$ of $\Theta \subset \mathbb R^{d}$ for a sequence of $d\times d$-rate matrices $\{\Phi_{n}(\vartheta)\}_{n\in\N}$ and a $d\times d$-invertible matrix $\mathcal{I}(\theta)$. Then, for any sequence of estimators $\widehat{\theta}_n$ and any symmetric nonnegative quasi-convex function $L$ on $\R^{d}$ such that $e^{-\varepsilon \|z\|_{\R^{d}}^2} L(z) \to 0$ as $\|z\|_{\R^{d}} \to \infty$ for any $\varepsilon > 0$, we have
\begin{align*}
    \varliminf_{c\to\infty}\varliminf_{n \to \infty}
    \sup_{\theta^{\prime}\in\Theta: \left\|\Phi_{n}(\vartheta)^{-1}(\theta^{\prime}-\theta)\right\|_{\mathbb{R}^{d}}\leq c}
    \mathbb{E}_{\theta^{\prime}}^{n}\left[
    L\left(\Phi_{n}(\vartheta)^{-1}(\widehat{\theta}_{n}-\theta^{\prime})\right)
    \right]
    \geq 
    (2\pi)^{-\frac{d}{2}}\int_{\mathbb{R}^d} L\left(\mathcal{I}(\theta)^{-1/2}z\right) \exp\left(-\frac{|z|^2}{2}\right)\,\mathrm{d}z.
\end{align*}
\end{theorem}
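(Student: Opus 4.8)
The plan is to deduce this lower bound from the LAN property of Theorem~\ref{Thm:LAN} by the standard two–step route of the Hájek--Le Cam theory: first pass from the localized sequence of experiments to its Gaussian–shift limit, then solve the minimax problem exactly in that limit and transfer the value back. Fix an interior point $\theta$ and, for $h\in\R^{d}$, introduce the localized measures $\mathbb{Q}_{n,h}:=\P^{n}_{\theta+\Phi_{n}(\vartheta)h}$, defined once $\theta+\Phi_{n}(\vartheta)h\in\Theta$. The LAN expansion
\[
\log\frac{\mathrm{d}\mathbb{Q}_{n,h}}{\mathrm{d}\mathbb{Q}_{n,0}}
= h^{\top}\zeta_{n}-\tfrac{1}{2}h^{\top}\mathcal{I}(\theta)h+o_{\mathbb{Q}_{n,0}}(1),
\qquad \zeta_{n}\to\mathcal{N}(0,\mathcal{I}(\theta))\ \text{under }\mathbb{Q}_{n,0},
\]
combined with Le Cam's first lemma yields mutual contiguity of $\mathbb{Q}_{n,h}$ and $\mathbb{Q}_{n,0}$ for every $h$, and Le Cam's third lemma identifies the limit law of $Y_{n}:=\mathcal{I}(\theta)^{-1}\zeta_{n}$ under $\mathbb{Q}_{n,h}$ as $\mathcal{N}(h,\mathcal{I}(\theta)^{-1})$. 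Hence the family $(\mathbb{Q}_{n,h})_{h\in\R^{d}}$ converges in the Le Cam sense to the Gaussian shift experiment $\mathcal{G}=\bigl(\mathcal{N}(h,\mathcal{I}(\theta)^{-1})\bigr)_{h\in\R^{d}}$.

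Next I would solve the minimax problem for $\mathcal{G}$. There, observing $Y\sim\mathcal{N}(h,\mathcal{I}(\theta)^{-1})$, the equivariant rule $T=Y$ has constant risk $\E\bigl[L(\mathcal{I}(\theta)^{-1/2}Z)\bigr]$ with $Z\sim\mathcal{N}(0,I_{d})$, which is exactly the claimed right-hand side. To see this is the minimax value I would use a flat-prior (generalized Bayes) argument: bound $\sup_{\|h\|\le c}$ of the risk from below by the average risk under a sequence of priors whose mass spreads over $\R^{d}$ as $c\to\infty$; since $L$ is symmetric and quasi-convex and the posterior in $\mathcal{G}$ is Gaussian, Anderson's lemma shows the posterior mean (i.e.\ $Y$) minimizes posterior risk, and spreading the prior recovers the constant equivariant risk in the limit. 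The growth condition $e^{-\varepsilon\|z\|^{2}}L(z)\to 0$ is used precisely to keep all these integrals finite and to license the limiting arguments.

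For the transfer, given an arbitrary sequence $\widehat\theta_{n}$ set $S_{n}:=\Phi_{n}(\vartheta)^{-1}(\widehat\theta_{n}-\theta)$, so the quantity to be bounded equals $\E_{\mathbb{Q}_{n,h}}\bigl[L(S_{n}-h)\bigr]$ with $h=\Phi_{n}(\vartheta)^{-1}(\theta'-\theta)$. Fix $c$, replace $L$ by the bounded truncation $L_{M}:=\min(L,M)$ (still symmetric and quasi-convex), and use contiguity to see that $(S_{n},Y_{n})$ is uniformly tight under $\mathbb{Q}_{n,0}$; along a subsequence pass to a joint weak limit and, via Le Cam's third lemma (equivalently the asymptotic representation theorem), describe its law under each $\mathbb{Q}_{n,h}$ with $\|h\|\le c$, producing a possibly randomized decision rule in $\mathcal{G}$. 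The portmanteau theorem then gives $\varliminf_{n}\sup_{\|h\|\le c}\E_{\mathbb{Q}_{n,h}}[L_{M}(S_{n}-h)]$ no smaller than the corresponding sup-risk in $\mathcal{G}$, hence no smaller than the minimax value of the previous paragraph applied to $L_{M}$; finally let $c\to\infty$ and then $M\to\infty$, invoking monotone convergence, to recover the asserted bound for $L$ itself.

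The main obstacle is this last transfer step — not the LAN expansion (already available) nor the Gaussian-shift minimax computation (classical). One must make rigorous the passage from an arbitrary, possibly irregular, estimator sequence to a decision rule in the limit experiment while controlling the relevant laws under all shifts $\|h\|\le c$ simultaneously, and then carefully interchange $\varliminf_{n}$, $\sup_{\|h\|\le c}$, and the limits $c\to\infty$ and $M\to\infty$. The truncation $L_{M}$ neutralizes the unboundedness of $L$, and the tail condition $e^{-\varepsilon\|z\|^{2}}L(z)\to 0$ is exactly what makes the order of limits harmless; what remains is careful bookkeeping of the Le Cam third-lemma / asymptotic-representation machinery underlying the convergence of experiments.
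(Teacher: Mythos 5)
The paper does not prove this statement at all: it is quoted verbatim as Theorem~II.12.1 of Ibragimov--Hasminskii (H\'ajek's local asymptotic minimax theorem) and used as a black box, so there is no in-paper proof to compare yours against. Your sketch is a faithful outline of the standard argument behind that classical result --- contiguity from LAN via Le Cam's first lemma, identification of the Gaussian shift limit experiment via the third lemma, the flat-prior/Anderson's-lemma computation of the minimax value in the limit experiment, and the truncation-plus-portmanteau transfer --- which is essentially the route taken in Ibragimov--Hasminskii and in van der Vaart's treatment. One technical caveat in your transfer step: contiguity gives tightness of $Y_{n}=\mathcal{I}(\theta)^{-1}\zeta_{n}$ (it converges in law), but for an \emph{arbitrary} estimator sequence the normalized statistic $S_{n}=\Phi_{n}(\vartheta)^{-1}(\widehat{\theta}_{n}-\theta)$ need not be tight, so you cannot extract a joint weak limit of $(S_{n},Y_{n})$ directly; the standard fix is to pass to the one-point compactification of $\R^{d}$ (or to work with generalized/randomized decision rules on the compactified space), where Prokhorov's theorem always applies and the lower bound survives because $L_{M}$ is bounded. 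With that repair your outline is the correct classical proof.
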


Notice that we have already proved that the sequence of the exact MLEs $\{\widehat{\vartheta}_{n}\}_{n\in\mathbb{N}}$ defined in \eqref{Def:EMLE} satisfies the coupling property \eqref{MLE-coupling} in Theorem~$\ref{Thm:MLE2}$ so that, using the result in Section 7.12.(b) of \cite{Hopfner-2014-Book} in addition to Theorem~\ref{Thm:LAN}, we can conclude that the sequence of exact MLEs is asymptotically efficient in the local asymptotic minimax sense as well as in the Fisher sense, and then it attains the local asymptotic minimax bound of estimation given in Theorem~\ref{Thm:Hajek-LeCam}. 
We summarize the aforementioned result in the following corollary. 
\begin{corollary}[Asymptotic Minimax Optimality]
    Consider the sequence of rate matrices $\{\Phi_{n}(\vartheta)\}_{n\in\N}$ defined in \eqref{Def:mat-Fp} and the matrix $\mathcal{I}(\theta)$ defined in \eqref{Def:mat-I}.
    Under Assumptions~$\ref{Assump:mat-Fp-invertible}$ and $\ref{Assump:DSPD2}$, the sequence of the exact MLEs $\{\widehat{\vartheta}_{n}\}_{n\in\mathbb{N}}$ defined in \eqref{Def:EMLE} attains the local asymptotic minimax bound given in Theorem~$\ref{Thm:Hajek-LeCam}$ at each interior point $\vartheta=(\theta,\mu)^{\top}$ of $\Theta\times\R$. Namely, for any symmetric nonnegative quasi-convex function $L$ on $\R^{p+1}$ such that $e^{-\varepsilon \|z\|_{\R^{p+1}}^2} L(z) \to 0$ as $\|z\|_{\R^{p+1}} \to \infty$ for any $\varepsilon > 0$, we obtain
    \begin{align*}
       &\varliminf_{c\to\infty}\varliminf_{n \to \infty}
       \sup_{\vartheta\in\Theta\times\R: \left\|\Phi_{n}(\vartheta_{0})^{-1}(\vartheta-\vartheta_{0})\right\|_{\mathbb{R}^{p+1}}\leq c}
       \mathbb{E}_{\vartheta}^{n}\left[
       L\left(\Phi_{n}(\vartheta_{0})^{-1}(\widehat{\vartheta}_{n}-\vartheta)\right)
       \right] \\
       &\hspace{1cm}
       = (2\pi)^{-\frac{1}{2}(p+1)} \int_{\mathbb{R}^{p+1}} L\left(\mathcal{I}(\xi_{0})^{-1/2}z\right) \exp\left(-\frac{|z|^2}{2}\right)\dd z.
    \end{align*}
\end{corollary}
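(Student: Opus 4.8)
\section*{Proof proposal}

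The plan is to obtain the corollary by matching the local asymptotic minimax lower bound of Theorem~\ref{Thm:Hajek-LeCam} with an upper bound for the exact MLE, the latter coming from the linearization \eqref{MLE-coupling} of Theorem~\ref{Thm:MLE2} together with the LAN expansion of Theorem~\ref{Thm:LAN}. This is exactly the configuration treated in Section~7.12.(b) of \cite{Hopfner-2014-Book}, so the task reduces to verifying its hypotheses and specializing the base point to $\vartheta_{0}$.

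First I would rewrite the coupling \eqref{MLE-coupling} under $\P^{n}_{\vartheta_{0}}$ in a form free of the random normalization: since $\mathcal{I}_{n}(\theta_{0})$ is consistent for the invertible matrix $\mathcal{I}(\theta_{0})$ (invertibility by Assumption~\ref{Assump:mat-Fp-invertible}) and $\Indi_{\{\det[\mathcal{I}_{n}(\theta_{0})]>0\}}\to1$ in $\P^{n}_{\vartheta_{0}}$-probability, Theorem~\ref{Thm:MLE2} gives
\begin{equation*}
    \Phi_{n}(\vartheta_{0})^{-1}(\widehat{\vartheta}_{n}-\vartheta_{0})
    = \mathcal{I}(\theta_{0})^{-1}\zeta_{n}(\vartheta_{0}) + o_{\P^{n}_{\vartheta_{0}}}(1).
\end{equation*}
This exhibits the exact MLE as asymptotically linear with influence function $\mathcal{I}(\theta_{0})^{-1}$ times the central sequence $\zeta_{n}(\vartheta_{0})$ appearing in Theorem~\ref{Thm:LAN}, i.e. as a regular estimator at the efficient influence function.

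Next I would transfer this expansion to the localized alternatives $\vartheta_{n,u}:=\vartheta_{0}+\Phi_{n}(\vartheta_{0})u$. By the LAN property, $\log(\mathrm{d}\P^{n}_{\vartheta_{n,u}}/\mathrm{d}\P^{n}_{\vartheta_{0}})=u^{\top}\zeta_{n}(\vartheta_{0})-\tfrac{1}{2}u^{\top}\mathcal{I}(\theta_{0})u+o_{\P^{n}_{\vartheta_{0}}}(1)$, so the pair $(\Phi_{n}(\vartheta_{0})^{-1}(\widehat{\vartheta}_{n}-\vartheta_{0}),\ \log(\mathrm{d}\P^{n}_{\vartheta_{n,u}}/\mathrm{d}\P^{n}_{\vartheta_{0}}))$ is, using $\zeta_{n}(\vartheta_{0})\to\mathcal{N}(\mathbf{0}_{p+1},\mathcal{I}(\theta_{0}))$, jointly asymptotically Gaussian. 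Le Cam's third lemma (the contiguity argument underlying \cite{Hopfner-2014-Book}) then yields $\Phi_{n}(\vartheta_{0})^{-1}(\widehat{\vartheta}_{n}-\vartheta_{0})\to\mathcal{N}(u,\mathcal{I}(\theta_{0})^{-1})$ under $\P^{n}_{\vartheta_{n,u}}$, hence $\Phi_{n}(\vartheta_{0})^{-1}(\widehat{\vartheta}_{n}-\vartheta_{n,u})\to\mathcal{I}(\theta_{0})^{-1/2}Z$ with $Z\sim\mathcal{N}(\mathbf{0}_{p+1},I_{p+1})$. Combining this with uniform integrability of $L(\Phi_{n}(\vartheta_{0})^{-1}(\widehat{\vartheta}_{n}-\vartheta_{n,u}))$ — which follows from the display above, the contiguity of $\{\P^{n}_{\vartheta_{n,u}}:\|u\|\le c\}$ to $\P^{n}_{\vartheta_{0}}$, and the growth bound $e^{-\varepsilon\|z\|^{2}}L(z)\to0$ — gives
\begin{equation*}
    \varlimsup_{c\to\infty}\varlimsup_{n\to\infty}\sup_{\|u\|\le c}
    \E^{n}_{\vartheta_{n,u}}\!\left[L\!\left(\Phi_{n}(\vartheta_{0})^{-1}(\widehat{\vartheta}_{n}-\vartheta_{n,u})\right)\right]
    \le (2\pi)^{-\frac{p+1}{2}}\int_{\R^{p+1}} L\!\left(\mathcal{I}(\theta_{0})^{-1/2}z\right)e^{-|z|^{2}/2}\dd z.
\end{equation*}
Since Theorem~\ref{Thm:Hajek-LeCam}, applied at $\vartheta_{0}$ with $d=p+1$, gives the reverse inequality for the corresponding double $\varliminf$ and for every sequence of estimators, both sides agree and the corollary follows.

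The step I expect to be the main obstacle is the uniformity over the shrinking ball $\{\|u\|\le c\}$: one needs the convergence in distribution under $\P^{n}_{\vartheta_{n,u}}$, and the uniform integrability needed to pass $L$ through it, to hold uniformly in $u$ rather than merely pointwise. This is precisely where Hodges-type super-efficiency is ruled out; it hinges on the asymptotic linearity display above (so that the local behavior of $\widehat{\vartheta}_{n}$ is governed by the Gaussian central sequence) together with the uniform contiguity supplied by the LAN expansion. A secondary, lighter point is bookkeeping around the dependence of $\Phi_{n}(\cdot)$ on the parameter through $\alpha_{X}(\cdot)$; since the statement normalizes everywhere by $\Phi_{n}(\vartheta_{0})$ this causes no trouble, but one still uses continuity of $\alpha_{X}$ (Assumption~\ref{Assump:DSPD1}) to ensure that $\mathcal{I}_{n}(\theta_{0})$ and $\zeta_{n}(\vartheta_{0})$ behave consistently along the alternatives.
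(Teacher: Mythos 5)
Your proposal is correct and follows essentially the same route as the paper, which establishes the corollary by combining the coupling property \eqref{MLE-coupling} of Theorem~\ref{Thm:MLE2} with the LAN expansion of Theorem~\ref{Thm:LAN} and invoking the result in Section~7.12.(b) of \cite{Hopfner-2014-Book}. The details you spell out (asymptotic linearity in the central sequence, Le Cam's third lemma under contiguous alternatives, and the uniform integrability needed to match the H\'ajek--Le Cam lower bound) are precisely what that cited result encapsulates.
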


\begin{remark}
Regarding the efficiency of the two-stage procedure, the sample mean is clearly not an efficient estimator for $\mu$. For the remaining parameters, \cite{Dahlhaus-1989, Dahlhaus-2006} showed that the asymptotic covariance matrix of the plug-in MLE equals the inverse of the Fisher information matrix; however, they did not explicitly establish the existence of a Cramér--Rao lower bound. Although the LAN property has been established by \cite{Cohen-Gamboa-Lacaux-Loubes-2013} for centered stationary Gaussian processes with long memory, short memory, or anti-persistence, their results do not imply the asymptotic efficiency of the plug-in MLE.
\end{remark}

\begin{remark}\label{Remark:App-LAN}
 Under additional technical conditions, the LAN property established in this paper can be used to verify, for instance, the asymptotically uniformly most powerful unbiased (AUMPU) property of the likelihood ratio test \citep{Choi-Hall-Schick-1996}, as well as asymptotic properties of model selection criteria—such as the (weak) consistency of model selection based on the (quasi) Bayesian information criterion (BIC) \citep{Eguchi-Masuda-2018}.   
\end{remark}

\subsection{Comparison between Exact MLE and Plug-in MLE}

As an alternative estimator of $\theta$ in the literature, the plug-in MLE~(PMLE) is defined by
\begin{align}\label{Def:PMLE}
    \widehat{\theta}_{n}^{\mathrm{PMLE}} \in \argmax_{\theta\in\Theta_{\ast}}\ell_{n}((\theta,\widetilde{\mu}_{n})^{\top})
\end{align}
using some compact set $\Theta_{\ast}\subset\Theta$ and an estimator $\widetilde{\mu}_{n}$. 
Under similar assumptions to Assumption~\ref{Assump:DSPD1}, \cite{Dahlhaus-1989, Dahlhaus-2006} and \cite{Lieberman-2012} show that the plug-in MLE is weakly consistent, asymptotically normal with the asymptotic variance $\mathcal{F}_{p}(\theta)^{-1}$ at the point $\vartheta_{0}$? (should be $\theta_0$) when the plug-in estimator $\widetilde{\mu}_{n}$ satisfies the assumption
\begin{align}\label{Assump:mu-PMLE}
    \widetilde{\mu}_{n}=\mu+o_{\P_{\vartheta_{0}}^{n}}(n^{-\frac{1}{2}(1-\alpha_{X}(\xi_{0}))})
    \ \ \mbox{as $n\to\infty$}.
\end{align} 
The assumption \eqref{Assump:mu-PMLE} corresponds to Assumption~5 of \cite{Dahlhaus-1989} for the long memory case $\alpha_{X}(\xi_{0})\in(0,1)$ and Assumption~5 of \cite{Lieberman-2012} for the long/short/anti-persistent memory case $\alpha_{X}(\xi_{0})\in(-\infty,1)$. The plug-in MLE shares the same convergence rate and asymptotic variance as our exact MLE, implying that it is also asymptotically efficient.

Notice that Theorem~\ref{Thm:LAN} combining with Theorem~\ref{Thm:Hajek-LeCam} yields the asymptotic minimax lower bound
\begin{align}
    \varliminf_{T\to\infty}\sup_{\|\vartheta-\vartheta_{0}\|_{\R^{p+1}}<r} 
    \E_{\theta}^{n}\left[ \frac{n^{1-\alpha_{X}(\xi)}}{\sigma^2} (\widetilde{\mu}_{n}-\mu)^{2} \right] \geq \frac{2\pi c_{X}(\xi_{0})\Gamma(1-\alpha_{X}(\xi_{0}))}{B(1-\alpha_{X}(\xi_{0})/2,1-\alpha_{X}(\xi_{0})/2)?}
    \label{lower-bound-mu}
\end{align}
for any $r>0$ and any sequences of estimators $\widetilde{\mu}_{n}$, which implies the convergence rate of the estimator satisfying the assumption \eqref{Assump:mu-PMLE} is the same as that of the optimal estimators given in \eqref{lower-bound-mu}, that is, the minimax optimal rate. Although \cite{Adenstedt-1974} proved that the best linear unbiased estimator (BLUE) of $\mu$ satisfies the assumption \eqref{Assump:mu-PMLE} for all $\alpha_{X}(\xi_{0})\in(-\infty,1)$, the BLUE of $\mu$ is infeasible without knowing the true value of $\xi_{0}$. Although the assumption \eqref{Assump:mu-PMLE} is satisfied by the widely used sample mean, its asymptotic variance does not achieve the minimax optimal bound.  For example, \cite{Adenstedt-1974} quantified the asymptotic inefficiency of the sample mean for the ARFIMA$(0,d,0)$ model by computing the ratio of the asymptotic variance of an efficient estimator for $\mu$ to that of the sample mean:
\begin{align}
\text{as.eff}(y, \mu) = \frac{(2d + 1) \Gamma(d + 1) \Gamma(2 - 2d)}{\Gamma(1 - d)}.
\label{aseff_sm}
\end{align}
This metric is always less than $1$, except when $d = 0$. This plug-in approach is equivalent to the implementation that applies the MLE to demeaned data.
 \cite{Lieberman-2012} proposed an alternative estimator of the form
\begin{align}\label{est-mu-LRR12}
    \widetilde{\mu}_n^{(1)} := (\mathbf{1}_{n}^{\top}\Sigma_{n}(s_\ast)^{-1}\mathbf{1}_{n})^{-1} \mathbf{1}_{n}^{\top}\Sigma_{n}(s_\ast)^{-1}\mathbf{X}_{n},
\end{align}
where $s_\ast:=s_{\theta_\ast}$ with any $\theta_\ast=(\xi_\ast,\sigma_\ast)^\top\in\Theta$ satisfying $\alpha(\xi_\ast)=\inf_{\xi\in\Theta_\xi}\alpha(\xi)$ (by compactness of $\Theta_\xi$ there exists at least one such value), or even $s_\ast(\omega):=(1-\cos(\omega))^{\frac{\alpha_\ast}{2}}$ with $\alpha_\ast\leq\inf_{\xi\in\Theta_\xi}\alpha(\xi)$, 
and proved that the estimator in \eqref{est-mu-LRR12} satisfies the assumption \eqref{Assump:mu-PMLE} for all $\alpha_{X}(\xi_{0})\in(-\infty,1)$ using the results in \cite{Adenstedt-1974}.  This estimator also fails to attain the minimax optimal bound, as it inherently relies on a misspecified structure of the autocovariance function embedded in the Toeplitz matrix $\Sigma_n(s_\ast)$.

\section{Examples}
\label{Sec:example}
Our assumptions on the spectral density are very general and apply to many well-known processes, including but not limited to the ARFIMA$(p,d,q)$ process with $|d|<1/2$, the fractional Gaussian noise with an unknown mean and the fOU process.\footnote{For other continuous-time processes—such as the continuous-time autoregressive fractionally integrated moving average (CARFIMA) model—we refer the reader to Tetsuya et al. (WP) for a detailed analysis of their spectral densities.}

\subsection{Non-Centered Gaussian ARFIMA$(p,d,q)$ Process}
The {\it non-centered} ARFIMA$(p,d,q)$ process was introduce by 
\cite{Granger-1980} and \cite{Hosking-1981} independently. For notation simplicity, we start with the ARFIMA$(0,d,0)$ model. 
The non-centered ARFIMA$(0,d,0)$ model is specified as 
\begin{equation}
X_{t}-\mu =\sigma (1-L)^{-d}\epsilon _{t}\text{ with }%
\left\vert d \right\vert <1/2,  \label{arfima}
\end{equation}
where $L$ is the lag operator, $(1-L)^{-d}$ is the fractional difference operator with the memory parameter $d$ and $\epsilon_{t}\overset{\mathrm{iid}}{\sim} N(0,1)$. 
It reduces to a Gaussian white noise when $d=0$.
When $d\in (-1/2,1/2)$, 
the ARFIMA$(0,d,0)$ process is stationary and invertible \citep{bloomfield1985}. 
Let $u_{t}:=(1-L)^{-d}\epsilon _{t}$ be the fractionally integrated process and $\gamma _{u}(j):=\Cov[u_{t},u_{t-j}]$ be its $j$th order auto-covariance. According to \cite{Hosking-1981}, the auto-covariance function of $%
u=\{u_{t}\}_{t\in\Z}$ is expressed by
\begin{equation}
\gamma_{u}(j)=\frac{(-1)^{j}\Gamma (1-2d)}{\Gamma (j-d+1)\Gamma (1-j-d)},\ \ j\in\Z.
\end{equation}%
The long-run variance covariance $\sum_{j=-\infty }^{\infty }\gamma _{u}(j)$ 
$= \infty $ when $d\in \left( 0,1/2\right) $ and $\sum_{j=-\infty
}^{\infty }\gamma_{u}(j)=0$ when $d\in \left( -1/2,0\right) $. Therefore, $%
u_{t}$ has a long memory if $d\in \left( 0,1/2\right) $ and is
anti-persistent if $d\in \left( -1/2,0\right) $. The spectral density of the model is given by
\[s_{\theta}^X(\omega)=\frac{\sigma^2}{2\pi} 
\sim \frac{\sigma^2}{2\pi}|\omega|^{-2d}\ \ \text{ as }|\omega|\rightarrow 0.\]
In this case, $\alpha_X(\xi)=2d, c_X(\xi)=(2\pi)^{-1}$.

Let $p,q\in\N\cup\{0\}$ 
and $\xi:=(\phi_1,\ldots,\phi_p,\psi_1,\ldots,\psi_q)\in\R^{p+q}$. 
The {\it non-centered} ARFIMA$(p,d,q)$ process is defined by 
\begin{align}\label{def:eq-ARFIMA}
    \phi_\xi(L)(X_t-\mu)=\sigma \psi_\xi(L) u_t,\ \ t\in\Z,
\end{align}
where $\phi_\xi(z):=1-\phi_1z-\cdots-\phi_pz^p$ and $\psi_\xi(z):=1+\psi_1z+\cdots+\psi_qz^q$. 
Assume that for each \( \xi \), the functions \( \phi_\xi(z) \) and \( \psi_\xi(z) \) have no common roots in \( \mathbb{C} \), and that all their roots lie outside the unit circle. This implies that \( \phi_\xi(z) \neq 0 \) and \( \psi_\xi(z) \neq 0 \) for \( |z| \leq 1 \). 
Then, for \( |d| < 1/2 \), the difference equation in \eqref{def:eq-ARFIMA} admits a unique stationary solution \( X = \{X_t\}_{t\in\mathbb{Z}} \) of the form
\[
X_t = \mu + \sigma \phi_\xi(L)^{-1} \psi_\xi(L) (1-L)^{-d} \epsilon_t, \quad t \in \mathbb{Z}.
\]

The spectral density function of ARFIMA$(p,d,q)$ is given by
\[
s_{\theta}^X(\omega) = \frac{\sigma^2}{2\pi} |1 - e^{-i\omega}|^{-2d} \frac{|\psi_\xi(e^{-i\omega})|^2}{|\phi_\xi(e^{-i\omega})|^2}.
\]
for $\omega \in (-\pi, \pi]$. It can be shown that
\[
s_{\theta}^X(\omega) =\frac{\sigma^{2}}{2\pi}\left(
2-2\cos\left(  \lambda\right)  \right)  ^{-d}\frac{\left(  1+\sum_{j=1}%
^{q}\psi_{j}\cos\left(  j\omega\right)  \right)  ^{2}+\left(  \sum_{j=1}%
^{q}\psi_j\sin\left(  j\omega\right)  \right)  ^{2}}{\left(  1-\sum_{j=1}^{p}%
\phi_{j}\cos\left(  j\omega\right)  \right)  ^{2}+\left(  \sum_{j=1}%
^{p}\phi_{j}\sin\left(  j\omega\right)  \right)  ^{2}}.
\]
Since the assumption \( \phi_\xi(z) \neq 0 \) for \( |z| \leq 1 \) ensures that the spectral density function of the ARMA\( (p,q) \) process,
\[
f_{\mathrm{ARMA}}(\omega) := \frac{\sigma^2}{2\pi} \frac{|\psi_\xi(e^{-i\omega})|^2}{|\phi_\xi(e^{-i\omega})|^2},
\]
is bounded away from zero on \( [-\pi, \pi] \), the singularity of the spectral density of \( X \) in the vicinity of zero frequency is governed by the ARFIMA\( (0,d,0) \) factor \( |1 - e^{-i\omega}|^{-2d} \). As \( \omega \to 0 \), it exhibits the asymptotic behavior
\[
s_{\theta}^X(\omega) \sim \sigma^2c_X(\theta)|\omega|^{-\alpha_X(\xi)}.
\]

In this case, $\alpha_X(\xi) = 2d, c_X(\xi) = (2\pi)^{-1} \left| \frac{\psi_\xi(1)}{\phi_\xi(1)} \right|^2$ in Assumptions~\ref{Assump:DSPD1} and \ref{Assump:DSPD2}. Hence, our results are applicable to the non-centered Gaussian ARFIMA$(p, d, q)$ process. According to Theorem~\ref{Thm:MLE2}, when $d < 0$, the convergence rate for the exact MLE of $\mu$ is $\frac{1}{2}(1 - \alpha_X(\xi)) > 1/2$, indicating super-consistency.

\subsection{Non-Centered Fractional Gaussian Noise}
The fractional Brownian motion (fBm) with Hurst index $H\in(0,1)$, denoted by $B^H = \{B_t^H\}_{t \in \mathbb{R}}$, is a unique centered Gaussian process that is almost surely equal to zero at $t = 0$ and possesses both stationary increments and $H$-self-similarity properties. Specifically, these properties are expressed as  
\begin{align*}
    B_t^H-B_s^H\overset{d}{=}B_{t-s}^H-B_0^H
    \ \ \mbox{and}\ \
    B_{ct}^H\overset{d}{=}c^HB_t^H
\end{align*}
for any $s, t \in \mathbb{R}$ and $c > 0$, where $\overset{d}{=}$ denotes equality in distribution. 
\cite{mandelbrot1968} demonstrated that the fBm can be represented as a causal moving average process involving the past differential increments of a (two-sided) standard Brownian motion $B = \{B_t\}_{t \in \mathbb{R}}$. This representation is given by  
\begin{equation*}
B^{H}_t = \frac{1}{\Gamma (H+0.5)} \left\{ \int_{-\infty }^{0} \left[ \left(
t-s\right) ^{H-0.5}-(-s)^{H-0.5}\right] \dd B_s + \int_{0}^{t} \left(
t-s\right) ^{H-0.5} \dd B_s \right\}, 
\end{equation*}  
where $\Gamma(x)$ denotes the gamma function, 
\footnote{%
This is also referred to as the Type I fBm.} 
which implies that the fBm reduces to the standard Brownian motion when $H=0.5$.

The fBm is a Gaussian process with mean zero and covariance 
\begin{equation}
\Cov[B^{H}_t,B^{H}_s] =\frac{1}{2}\left( \left\vert
t\right\vert ^{2H}+\left\vert s\right\vert ^{2H}-\left\vert t-s\right\vert
^{2H}\right) ,\;\ \forall t,s\in\mathbb{R}.  \label{cov}
\end{equation}%
The increment of fBm is fGn and denoted by $y_{t}$. Using discrete time
notations, we have 
\begin{equation*}
\epsilon_{t} := \sigma ( B^{H}_t-B^{H}_{t-1}).
\end{equation*}%
We extend the fGn to the non-centered case by defining its expectation as $\mu$. The process is given by:
\begin{equation*}
X_t := \mu + \epsilon_t = \mu + \sigma ( B^{H}_t-B^{H}_{t-1}).
\end{equation*}
The auto-covariance function of $X$ is, $\forall k\geq 0$, 
\begin{equation}
 \gamma_X(k):=\Cov[X_{t},X_{t+k}] =\frac{\sigma ^{2}}{2}\left[
(k+1)^{2H}-2k^{2H}+(k-1)^{2H}\right]\sim\sigma ^{2}H(2H-1)k^{2H-2}\text{ for
large }k,  \label{expr:acf-fGn}
\end{equation}%
where the approximation is based on the Taylor expansion. 
When $H\in (0.5,1)$, the asymptotic behavior in \eqref{expr:acf-fGn} implies that the sequence of the auto-covariances of fGn is not absolutely summable so that the fGn has the long memory property. 
When $H\in (0,0.5)$, we can also verify that $\forall k\neq 0$, $%
\Cov[X_{t},X_{t+k}] <0$ and $\sum_{k=-\infty }^{\infty}\Cov[X_{t},X_{t+k}] =0$ so that the fGn has the anti-persistent memory property.

The spectral density of (non-centered) fGn is
given by \cite{sinai1976}:
\begin{equation}
s_{\theta}^X(\omega )=\sigma^2C_{H}\{2(1-\cos (\omega ))\}\sum_{k=-\infty }^{\infty }\left\vert
2\pi k+\omega \right\vert ^{-1-2H} \text{ for }%
\omega \in [-\pi, \pi ],  \label{sd_fgn0}
\end{equation}%
where $C_{H}:=\left( 2\pi \right)
^{-1}\Gamma (2H+1)\sin (\pi H)$. It can be shown that
\[s_{\theta}^X(\omega ) \sim \sigma^2C_H|\omega|^{1-2H},\,\, \text{when}\,\, \omega \rightarrow 0.\]
In this case, the functions $\alpha_X(\xi)$ and $c_X(\xi)$ in Assumptions~\ref{Assump:DSPD1} and \ref{Assump:DSPD2} are $\alpha_X(\xi)=2H-1$ and $c_X(\xi)=C_H$. Hence, our results are applicable to the non-centered fractional Gaussian noise.  According to Theorem~\ref{Thm:MLE2},  when $H<0.5$, the convergence rate for the exact MLE of $\mu$ is $\frac{1}{2}(1-\alpha_X(\xi))>1/2$, implying super-consistency. This result echoes that for ARFIMA.

 \subsection{Fractional Ornstein-Uhlenbeck Process}
The fractional Ornstein-Uhlenbeck (fOU) process is an extension of the classical Ornstein-Uhlenbeck (OU) process, where the driving noise is replaced by a fBm with Hurst index $H \in (0,1)$. This process is particularly useful for modeling systems that exhibit long-range dependence and locally self-similarity, which cannot be captured by the classical OU process. 
The stationary fOU process with a long-run mean $\mu$ has applications in various fields, including mathematical finance, physics, and time series modeling.

The fOU process $Y=\{Y_t\}_{t\in\mathbb{R}}$ is defined by a unique solution of the following linear SDE (Stochastic Differential Equation):
\begin{equation}\label{def:SDE-fOU}
    \dd Y_t = -\kappa (Y_t - \mu) \dd t + \sigma\dd B_t^H,\ \ t\geq 0,
\end{equation}
with initial condition $Y_0$, where $B^H = \{B_t^H\}_{t\in\mathbb{R}}$ is a fBm with Hurst index $H$. 
The explicit solution of this SDE is given by
\begin{equation}\label{def:sol-fOU}
    Y_t = Y_0 e^{-\kappa t} + \mu (1 - e^{-\kappa t}) + \sigma\int_{0}^{t} e^{-\kappa (t-s)} \dd B_s^H,\ \ t\geq 0,
\end{equation}
where the above stochastic integral can be interpreted as the pathwise Riemann-Stieltjes integral or the Wiener integral associated with the fBm for any $H\in(0,1)$.  
The fOU process reduces to the classical OU process when $H=0.5$ and to the fBm when $\kappa =0$. 
When $\kappa >0$, the stationary solution of the SDE \eqref{def:SDE-fOU}, denoted by $\bar{Y}=\{\bar{Y}_t\}_{t\in\R}$, is given by
\begin{align}\label{def:sol-fOU-stationary}
    \bar{Y}_t := \mu + \sigma\int_{-\infty}^{t} e^{-\kappa (t-s)} \dd B_s^H,\ \ t\in\R. 
\end{align}
For $t\geq 0$, the unique solution of the SDE \eqref{def:SDE-fOU} with the initial condition
\begin{align*}
    Y_0 = \mu + \sigma\int_{-\infty}^{0} e^{s} \dd B_s^H
\end{align*}
is exactly equal to the stationary solution $\bar{Y}$ in \eqref{def:sol-fOU-stationary}, and then the error between $\bar{Y}_t$ and $Y_t$ with arbitrary initial condition $Y_0$ is expressed by
\begin{align*}
    |Y_t-\bar{Y}_t|=|Y_0-\bar{Y}_0|e^{-\kappa t},\ \ t\geq 0,
\end{align*}
which implies that the error between the solutions \eqref{def:sol-fOU} and \eqref{def:sol-fOU-stationary} converges to zero exponentially as $t\to\infty$ for arbitrary initial condition $Y_0$. 
In the rest of this section, we consider the case where a data generating process is the discretely and equidistantly observed time series from the stationary solution given in \eqref{def:sol-fOU-stationary}.

Consider a stationary time series $X=\{X_j\}_{j\in\Z}$ of the form $X_j:=Y_{j\Delta}$ for $j\in\Z$ with the sampling frequency $\Delta$. Notice that the time series $X$ is stationary and its auto-covariance is available from \citet{garnier2018} when $\kappa >0$:
\begin{align} 
\begin{split}
\gamma_{\theta}^{X}(j) = \frac{\sigma^2}{2 \kappa^{2H}} \left( \frac{1}{2} \int_{-\infty}^{ \infty} e^{- \vert s \vert} \vert \kappa j\Delta+s \vert^{2H} \mathrm{d}s - \vert \kappa j\Delta \vert^{2H}\right).
\end{split}
\end{align}
From \citet{Cheridito-Kawaguchi-Maejima-2003}, the autocovariance function of the fOU process exhibits the same order of decay as fGn, decaying hyperbolically for $H \neq 1/2$.
\cite{Cheridito-Kawaguchi-Maejima-2003} and \cite{Hult-2003-PhDThesis} provide the spectral density function of the stationary solution $\bar{Y}$ given by
\begin{equation}
s_{\theta}^{\bar{Y}}\left( z\right) =\sigma^2
C_{H} |z|^{1-2H}(\kappa ^{2}+z ^{2})^{-1}\text{ for }%
z \in (-\infty ,\infty ).  \label{eq:sp_fou_cont}
\end{equation}
For discrete observations $X$, the spectral density is given by \citep{Hult-2003-PhDThesis}
\begin{equation}
s_{\theta}^{X}\left( \omega \right) = \sigma^2C_{H} \Delta ^{2H} \sum_{k=-\infty }^{\infty }\frac{%
\left\vert \omega +2\pi k\right\vert ^{1-2H}}{(\kappa \Delta )^{2}+\left(
\omega +2\pi k\right) ^{2}}\;\text{ for }\omega \in [-\pi, \pi].
\label{eq:spec_fou}
\end{equation}

It can be shown that
\[
s_{\theta}^X(\omega) \sim 
\begin{cases} 
\sigma^2C_{H} \Delta ^{2H} \sum_{k=-\infty }^{\infty }\frac{%
\left\vert 2\pi k\right\vert ^{1-2H}}{(\kappa \Delta )^{2}+\left(2\pi k\right) ^{2}}, & \text{when } \omega \rightarrow 0 \text{ and } 0 < H \leq \frac{1}{2}, \\
\sigma^2C_{H} \Delta ^{2H-2}\kappa^{-2} |\omega|^{1 - 2H}, & \text{when } \omega \rightarrow 0 \text{ and } \frac{1}{2} < H < 1,
\end{cases}
\]
 \citep{Shi-Yu-Zhang-2024-b}. Hence, for fOU, we have
\[
\alpha_X(\xi) = 
\begin{cases} 
0, & \text{when } H \leq \frac{1}{2},\\
2H - 1, & \text{when } H > \frac{1}{2}, 
\end{cases}
\]
and
\[
c_X(\xi) = 
\begin{cases} 
C_{H} \Delta ^{2H} \sum_{k=-\infty }^{\infty }\frac{%
\left\vert 2\pi k\right\vert ^{1-2H}}{(\kappa \Delta )^{2}+\left(2\pi k\right) ^{2}}, & \text{when } H \leq \frac{1}{2},\\
C_{H} \Delta ^{2H-2}\kappa^{-2} & \text{when } H > \frac{1}{2}, 
\end{cases}
\]
in Assumptions~\ref{Assump:DSPD1} and \ref{Assump:DSPD2}. Hence, our results are applicable to the fOU process. However, the function $\alpha_X(\xi)$ exhibits a sharp contrast compared to that of ARFIMA and fGn. According to Theorem~\ref{Thm:MLE2}, the convergence rate for the exact MLE of $\mu$ in fOU is $\sqrt{n}$ when $H\leq 1/2$, as recently reported in \citep{Wang-Xiao-Yu-Zhang-2023}.

\section{Mont Carlo Study}\label{Sec:Simulation}

We consider three data-generating processes (DGPs): ARFIMA$(0,d,0)$, fGn and fOU. For simplicity, the long-run mean $\mu$ is set to 0. The results for $\mu \pm 1$  are provided in Appendix \ref{Sec:robust_mu}. The parameter $d$ in the ARFIMA$(0,d,0)$ model takes 9 values: $\{-0.4, -0.3, \dots, 0, \dots, 0.4\}$, while the parameter $H$ in the fGn and fOU takes 9 values: $H = 0.1, 0.2, \dots, 0.9$. The fOU has an additional parameter $\kappa=10$. The scale parameter $\sigma$ is set to $1$ in all cases. For each DGP, we compare our exact MLE with the two MLEs considered in \cite{Cheung-Diebold-1994}. MLE1 refers to the case where $\mu$ is known, MLE2 refers to our exact MLE, and MLE3 refers to the plug-in MLE, where the sample mean is used as an estimator for $\mu$. The number of replications is set to 1000. The sample size is set to $250$ or $1000$. Reported are the bias, standard error (Std), and root mean squared error (RMSE) across all replications for each method. 

\begin{table}[H]
\caption{Bias and Std of alternative MLEs for $ARFIMA(0,d,0)$: $\mu=0$ and $\sigma=1$. MLE1 is MLE with known $\mu$; MLE2 is our exact MLE; MLE3 is PMLE.}\label{table01}
\centering
\scalebox{0.82}{
    \begin{tabular}{cc|ccc|ccc|ccc}
        \hline
        \hline
        & & MLE1 & MLE2 & MLE3  & MLE1 & MLE2 & MLE3  & MLE1 & MLE2 & MLE3 \\
        \hline
         \multicolumn{11}{c}{$n=250$}\\
         \hline
       &  & \multicolumn{3}{c|}{$d=$ -0.40} & \multicolumn{3}{c|}{$d=$ -0.30}  & \multicolumn{3}{c}{$d=$ -0.20}  \\
        \hline
        $\mu$ & Bias &- & -0.0004 & -0.0003 & - & -0.0005 & -0.0010 & - & 0.0010 & 0.0012 \\
            & Std  & - & 0.0096 & 0.0113 & - & 0.0151 & 0.0162 & - & 0.0244 & 0.0248 \\
            \multicolumn{11}{c}{}\\
        $d$ & Bias & -0.0029 & -0.0140 & -0.0074 & -0.0029 & -0.0157 & -0.0121 & -0.0056 & -0.0194 & -0.0179 \\
            & Std  & 0.0507 & 0.0510 & 0.0500 & 0.0518 & 0.0553 & 0.0542 & 0.0509 & 0.0536 & 0.0529 \\
            \multicolumn{11}{c}{}\\
         $\sigma$ & Bias & -0.0001 & -0.0022 & -0.0012 & -0.0019 & -0.0042 & -0.0038 & -0.0025 & -0.0049 & -0.0047 \\
         & Std  & 0.0445  & 0.0445  & 0.0446  & 0.0447  & 0.0449  & 0.0449  & 0.0447  & 0.0446  & 0.0446  \\
            \hline
        & & \multicolumn{3}{c|}{$d=$ -0.10} & \multicolumn{3}{c|}{$d=$ 0.00}  & \multicolumn{3}{c}{$d=$ 0.10}  \\
        \hline
        $\mu$ & Bias & - & -0.0003 & -0.0004 & - & -0.0018 & -0.0017 & - & 0.0028 & 0.0027 \\
            & Std  & - & 0.0394 & 0.0396 & - & 0.0637 & 0.0636 & - & 0.1024 & 0.1024 \\
            \multicolumn{11}{c}{}\\
        $d$ & Bias & -0.0048 & -0.0189 & -0.0183 & -0.0043 & -0.0185 & -0.0184 & -0.0051 & -0.0189 & -0.0189 \\
            & Std  & 0.0492 & 0.0522 & 0.0519 & 0.0500 & 0.0528 & 0.0527 & 0.0492 & 0.0527 & 0.0527 \\
            \multicolumn{11}{c}{}\\
$\sigma$ & Bias & -0.0040 & -0.0064 & -0.0063 & -0.0027 & -0.0049 & -0.0049 & -0.0034 & -0.0055 & -0.0055 \\
         & Std  & 0.0454  & 0.0455  & 0.0455  & 0.0450  & 0.0451  & 0.0451  & 0.0443  & 0.0442  & 0.0442  \\
            \hline
        & & \multicolumn{3}{c|}{$d=$ 0.20} & \multicolumn{3}{c|}{$d=$ 0.30}  & \multicolumn{3}{c}{$d=$ 0.40}  \\
        \hline
        $\mu$ & Bias & - & -0.0013 & -0.0022 & - & 0.0069 & 0.0093 & - & 0.0301 & 0.0286 \\
            & Std  & - & 0.1811 & 0.1824 & - & 0.3464 & 0.3509 & - & 0.6823 & 0.6887 \\
            \multicolumn{11}{c}{}\\
        $d$ & Bias & -0.0046 & -0.0191 & -0.0191 & -0.0049 & -0.0220 & -0.0219 & -0.0129 & -0.0301 & -0.0299 \\
            & Std  & 0.0493 & 0.0529 & 0.0530 & 0.0467 & 0.0527 & 0.0528 & 0.0412 & 0.0462 & 0.0462 \\
            \multicolumn{11}{c}{}\\
$\sigma$ & Bias & -0.0047 & -0.0067 & -0.0067 & -0.0027 & -0.0049 & -0.0049 & -0.0034 & -0.0051 & -0.0050 \\
         & Std  & 0.0441  & 0.0442  & 0.0442  & 0.0430  & 0.0429  & 0.0429  & 0.0443  & 0.0443  & 0.0443  \\
        \hline
         \multicolumn{11}{c}{$n=1000$}\\
          \hline
       & & \multicolumn{3}{c|}{$d=$ -0.40} & \multicolumn{3}{c|}{$d=$ -0.30}  & \multicolumn{3}{c}{$d=$ -0.20}  \\
        \hline
        $\mu$ & Bias & - & -0.0001 & -0.0001 & - & 0.0001 & 0.0001 & - & -0.0000 & 0.0000 \\
            & Std  & - & 0.0028 & 0.0034 & - & 0.0052 & 0.0056 & - & 0.0088 & 0.0091 \\
            \multicolumn{11}{c}{}\\
        $d$ & Bias & -0.0008 & -0.0045 & -0.0023 & -0.0006 & -0.0046 & -0.0037 & -0.0012 & -0.0047 & -0.0043 \\
            & Std  & 0.0253 & 0.0260 & 0.0258 & 0.0244 & 0.0248 & 0.0247 & 0.0252 & 0.0255 & 0.0254 \\
            \multicolumn{11}{c}{}\\
     $\sigma$ & Bias & -0.0016 & -0.0021 & -0.0019 & -0.0006 & -0.0011 & -0.0010 & 0.0002 & -0.0003 & -0.0002 \\
         & Std  & 0.0217  & 0.0218  & 0.0217  & 0.0226  & 0.0226  & 0.0226  & 0.0217 & 0.0217  & 0.0217  \\
            \hline
        & & \multicolumn{3}{c|}{$d=$ -0.10} & \multicolumn{3}{c|}{$d=$ 0.00}  & \multicolumn{3}{c}{$d=$ 0.10}  \\
        \hline
        $\mu$ & Bias & - & 0.0012 & 0.0013 & - & -0.0001 & -0.0002 & - & -0.0016 & -0.0017 \\
            & Std  & - & 0.0165 & 0.0165 & - & 0.0315 & 0.0315 & - & 0.0597 & 0.0598 \\
            \multicolumn{11}{c}{}\\
        $d$ & Bias & -0.0017 & -0.0057 & -0.0056 & -0.0015 & -0.0055 & -0.0055 & -0.0014 & -0.0054 & -0.0054 \\
            & Std  & 0.0242 & 0.0249 & 0.0249 & 0.0246 & 0.0250 & 0.0250 & 0.0243 & 0.0247 & 0.0247 \\
            \multicolumn{11}{c}{}\\
      $\sigma$ & Bias & -0.0010 & -0.0015 & -0.0015 & -0.0018 & -0.0024 & -0.0024 & -0.0000 & -0.0005 & -0.0005 \\
         & Std  & 0.0221  & 0.0221  & 0.0221  & 0.0219  & 0.0218  & 0.0218  & 0.0228  & 0.0228  & 0.0228  \\
            \hline
        & & \multicolumn{3}{c|}{$d=$ 0.20} & \multicolumn{3}{c|}{$d=$ 0.30}  & \multicolumn{3}{c}{$d=$ 0.40}  \\
        \hline
        $\mu$ & Bias & - & -0.0033 & -0.0031 & - & -0.0061 & -0.0077 & - & 0.0397 & 0.0366 \\
            & Std  & - & 0.1177 & 0.1182 & - & 0.2532 & 0.2555 & - & 0.5473 & 0.5534 \\
           \multicolumn{11}{c}{}\\
        $d$ & Bias & -0.0036 & -0.0078 & -0.0078 & -0.0025 & -0.0070 & -0.0070 & -0.0045 & -0.0089 & -0.0088 \\
            & Std  & 0.0251 & 0.0259 & 0.0259 & 0.0240 & 0.0248 & 0.0248 & 0.0243 & 0.0255 & 0.0255 \\
            \multicolumn{11}{c}{}\\
      $\sigma$ & Bias & -0.0013 & -0.0018 & -0.0018 & -0.0019 & -0.0023 & -0.0023 & -0.0012 & -0.0016 & -0.0015 \\
         & Std  & 0.0228  & 0.0228  & 0.0228  & 0.0224  & 0.0225  & 0.0225  & 0.0221  & 0.0221  & 0.0221  \\
            \hline
        \hline
    \end{tabular}
}
\end{table}

\begin{table}[H]
\caption{Bias, Std and RMSE of alternative MLEs for fGn: $\mu=0$ and $\sigma=1$. MLE1 is MLE with known $\mu$; MLE2 is our exact MLE; MLE3 is PMLE.}
\label{table02}
\centering
\scalebox{0.82}{
    \begin{tabular}{cc|ccc|ccc|ccc}
        \hline
         \hline
        & & MLE1 & MLE2 & MLE3  & MLE1 & MLE2 & MLE3  & MLE1 & MLE2 & MLE3 \\
        \hline
          \multicolumn{11}{c}{$n=250$}\\
          \hline
        & & \multicolumn{3}{c|}{$H=$ 0.10} & \multicolumn{3}{c|}{$H=$ 0.20}  & \multicolumn{3}{c}{$H=$ 0.30}  \\
        \hline
        $\mu$ & Bias & - & 0.0001 & 0.0001 & - & 0.0002 & 0.0001 & - & -0.0000 & -0.0000 \\
            & Std  & - & 0.0031 & 0.0041 & - & 0.0035 & 0.0039 & - & 0.0039 & 0.0040 \\
            \multicolumn{11}{c}{}\\
        $H$ & Bias & 0.0006 & -0.0039 & 0.0002 & -0.0024 & -0.0079 & -0.0060 & -0.0027 & -0.0098 & -0.0091 \\
            & Std  & 0.0232 & 0.0236 & 0.0230 & 0.0302 & 0.0306 & 0.0302 & 0.0352 & 0.0362 & 0.0360 \\
            \multicolumn{11}{c}{}\\
      $\sigma$ & Bias & 0.0065 & -0.0154 & 0.0042 & -0.0033 & -0.0313 & -0.0221 & -0.0017 & -0.0382 & -0.0345 \\
         & Std  & 0.1157 & 0.1149  & 0.1153 & 0.1519  & 0.1501  & 0.1498  & 0.1833  & 0.1824  & 0.1820  \\
            \hline
        & & \multicolumn{3}{c|}{$H=$ 0.40} & \multicolumn{3}{c|}{$H=$ 0.50}  & \multicolumn{3}{c}{$H=$ 0.60}  \\
        \hline
        $\mu$ & Bias & - & -0.0001 & -0.0001 & - & -0.0000 & -0.0000 & - & -0.0003 & -0.0003 \\
            & Std  & - & 0.0038 & 0.0038 & - & 0.0040 & 0.0040 & - & 0.0040 & 0.0040 \\
            \multicolumn{11}{c}{}\\
        $H$ & Bias & -0.0021 & -0.0097 & -0.0094 & -0.0015 & -0.0108 & -0.0107 & -0.0037 & -0.0139 & -0.0139 \\
            & Std  & 0.0389 & 0.0399 & 0.0398 & 0.0406 & 0.0425 & 0.0424 & 0.0414 & 0.0430 & 0.0430 \\
            \multicolumn{11}{c}{}\\
     $\sigma$ & Bias & 0.0083 & -0.0319 & -0.0308 & 0.0147 & -0.0357 & -0.0357 & 0.0080 & -0.0497 & -0.0499 \\
         & Std  & 0.2155 & 0.2125  & 0.2122  & 0.2348 & 0.2330  & 0.2327  & 0.2497 & 0.2449  & 0.2449  \\
            \hline
        & & \multicolumn{3}{c|}{$H=$ 0.70} & \multicolumn{3}{c|}{$H=$ 0.80}  & \multicolumn{3}{c}{$H=$ 0.90}  \\
        \hline
        $\mu$ & Bias & - & -0.0000 & 0.0000 & - & -0.0002 & -0.0002 & - & 0.0001 & 0.0000 \\
            & Std  & - & 0.0040 & 0.0040 & - & 0.0041 & 0.0042 & - & 0.0038 & 0.0038 \\
            \multicolumn{11}{c}{}\\
        $H$ & Bias & -0.0018 & -0.0129 & -0.0129 & -0.0005 & -0.0139 & -0.0139 & -0.0067 & -0.0212 & -0.0210 \\
            & Std  & 0.0416 & 0.0437 & 0.0438 & 0.0402 & 0.0433 & 0.0433 & 0.0379 & 0.0405 & 0.0405 \\
            \multicolumn{11}{c}{}\\
      $\sigma$ & Bias & 0.0284 & -0.0398 & -0.0397 & 0.0483 & -0.0437 & -0.0430 & 0.0377 & -0.0942 & -0.0926 \\
         & Std  & 0.2771 & 0.2706  & 0.2709  & 0.3256 & 0.3173  & 0.3179  & 0.4020 & 0.3565  & 0.3577  \\
          \hline
           \multicolumn{11}{c}{$n=1000$}\\
          \hline
        & & \multicolumn{3}{c|}{$H=$ 0.10} & \multicolumn{3}{c|}{$H=$ 0.20}  & \multicolumn{3}{c}{$H=$ 0.30}  \\
        \hline
        $\mu$ & Bias & - & -0.0000 & -0.0000 & - & -0.0000 & -0.0001 & - & -0.0001 & -0.0001 \\
            & Std  & - & 0.0008 & 0.0011 & - & 0.0012 & 0.0014 & - & 0.0015 & 0.0015 \\
            \multicolumn{11}{c}{}\\
        $H$ & Bias & 0.0000 & -0.0011 & -0.0001 & 0.0006 & -0.0011 & -0.0007 & -0.0006 & -0.0027 & -0.0025 \\
            & Std  & 0.0115 & 0.0116 & 0.0116 & 0.0152 & 0.0153 & 0.0153 & 0.0171 & 0.0175 & 0.0174 \\
            \multicolumn{11}{c}{}\\
      $\sigma$ & Bias & 0.0013 & -0.0043 & 0.0005 & 0.0065 & -0.0026 & -0.0003 & 0.0012 & -0.0099 & -0.0089 \\
         & Std  & 0.0575 & 0.0575  & 0.0580 & 0.0787 & 0.0789  & 0.0788  & 0.0887 & 0.0895  & 0.0894  \\
            \hline
        & & \multicolumn{3}{c|}{$H=$ 0.40} & \multicolumn{3}{c|}{$H=$ 0.50}  & \multicolumn{3}{c}{$H=$ 0.60}  \\
        \hline
        $\mu$ & Bias & - & -0.0001 & -0.0001 & - & -0.0001 & -0.0001 & - & -0.0002 & -0.0002 \\
            & Std  & - & 0.0018 & 0.0018 & - & 0.0020 & 0.0020 & - & 0.0023 & 0.0023 \\
            \multicolumn{11}{c}{}\\
        $H$ & Bias & -0.0002 & -0.0028 & -0.0027 & -0.0010 & -0.0037 & -0.0037 & -0.0003 & -0.0033 & -0.0033 \\
            & Std  & 0.0192 & 0.0194 & 0.0194 & 0.0189 & 0.0192 & 0.0192 & 0.0206 & 0.0210 & 0.0210 \\
            \multicolumn{11}{c}{}\\
       $\sigma$ & Bias & 0.0022 & -0.0116 & -0.0113 & -0.0017 & -0.0169 & -0.0169 & 0.0060 & -0.0116 & -0.0116 \\
         & Std  & 0.1037 & 0.1035  & 0.1034  & 0.1075  & 0.1072  & 0.1072  & 0.1230 & 0.1231  & 0.1231  \\
            \hline
        & & \multicolumn{3}{c|}{$H=$ 0.70} & \multicolumn{3}{c|}{$H=$ 0.80}  & \multicolumn{3}{c}{$H=$ 0.90}  \\
        \hline
        $\mu$ & Bias & - & 0.0000 & -0.0000 & - & 0.0000 & 0.0001 & - & -0.0001 & -0.0001 \\
            & Std  & - & 0.0027 & 0.0027 & - & 0.0031 & 0.0031 & - & 0.0034 & 0.0034 \\
            \multicolumn{11}{c}{}\\
        $H$ & Bias & -0.0001 & -0.0035 & -0.0035 & 0.0007 & -0.0030 & -0.0030 & -0.0018 & -0.0065 & -0.0064 \\
            & Std  & 0.0203 & 0.0208 & 0.0208 & 0.0203 & 0.0208 & 0.0208 & 0.0211 & 0.0216 & 0.0216 \\
            \multicolumn{11}{c}{}\\
       $\sigma$ & Bias & 0.0079 & -0.0135 & -0.0134 & 0.0185 & -0.0089 & -0.0086 & 0.0155 & -0.0289 & -0.0283 \\
         & Std  & 0.1302 & 0.1296  & 0.1296  & 0.1500 & 0.1490  & 0.1490  & 0.2205 & 0.2136  & 0.2138  \\
            \hline
        \hline
    \end{tabular}
}
\end{table}

Table \ref{table01} reports the results for ARFIMA and Table \ref{table02} reports the results for fGn. From Tables \ref{table01}-\ref{table02}, we observe the following findings. First, in terms of convergence rates, the performance of MLE2 aligns well with our asymptotic theory. For $\mu$, the convergence rate is $n^{-(1 - \alpha_X(\xi))/2}$, which becomes slower as $d$ increases toward $1/2$ from $-1/2$ (or as $H$ increases toward $1$ from $0$). A similar pattern can be observed for the sample mean, as it shares the same convergence rate as given in (\ref{Assump:mu-PMLE}) and (\ref{lower-bound-mu}). For the remaining parameters, the convergence rate remains at the root-$n$. Second, MLE2 always performs better than MLE3, except when $d = 0$ in ARFIMA or $H = 1/2$ in fGn, where the two methods perform nearly identically. Third, interestingly, this superior performance in estimating $\mu$ by MLE2 does not translate into better performance in estimating other parameters. Using the true value of $\mu$, MLE1 does not lead to a better performance in estimating other parameters. The three ML methods lead to a similar finite sample performance for parameters other than $\mu$.

To see how the relative inefficiency of the sample mean over MLE2 of $\mu$, the two dashed lines in Figure \ref{fig:asy_eff_mu} plot the ratio of the sample variance of MLE2 for $\mu$ to that of MLE3 as a function of $d$  for the two models when $n = 1000$. Clearly, the relative inefficiency goes up rapidly as $d$ nears $-0.5$ in ARFIMA and fGn. For comparison, also plotted by the solid line is the theoretical asymptotic inefficiency given in (\ref{aseff_sm}). The red dashed line is closely aligned with the theory. 

\begin{figure}[H]
    \centering
    \includegraphics[scale=0.3]{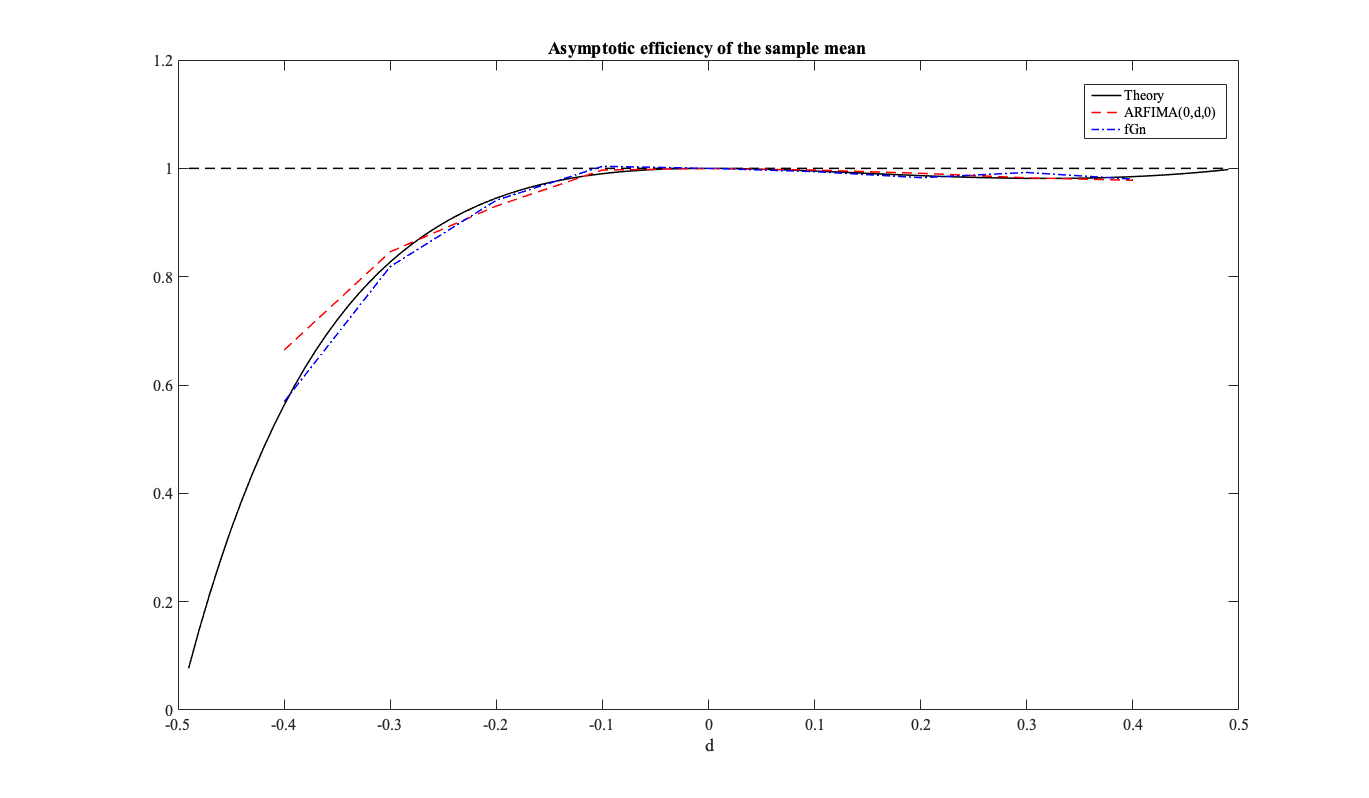}
    \caption{Relative inefficiency of the sample mean over the exact MLE as a function of $d$ for ARFIMA$(0,d,0)$ and fGn when $n=1000$. For fGn, $d=H-1/2$.}
   \label{fig:asy_eff_mu}
\end{figure}

\begin{table}[H]
\caption{Bias, Std and RMSE of alternative MLEs for fOU: $\kappa=10$, $\sigma=1$ and $n=250$. MLE1 is MLE with known $\mu$; MLE2 is our exact MLE; MLE3 is PMLE.}\label{table05}
\centering
\scalebox{0.85}{
    \begin{tabular}{cc|ccc|ccc|ccc}
        \hline
        & & MLE1 & MLE2 & MLE3  & MLE1 & MLE2 & MLE3  & MLE1 & MLE2 & MLE3 \\
        \hline
        & & \multicolumn{3}{c|}{$H=$ 0.10} & \multicolumn{3}{c|}{$H=$ 0.20}  & \multicolumn{3}{c}{$H=$ 0.30}  \\
        \hline
        $\mu$ & Bias & - & -0.0021 & -0.0022 & - & 0.0004 & -0.0004 & - & 0.0014 & 0.0003 \\
            & Std  & - & 0.1166 & 0.1005 & - & 0.1047 & 0.0978 & - & 0.0984 & 0.0956 \\
            & RMSE & - & 0.1166 & 0.1005 & - & 0.1047 & 0.0978 & - & 0.0984 & 0.0956 \\
            \multicolumn{11}{c}{}\\
        $H$ & Bias & 0.0036 & 0.0080 & 0.0081 & 0.0036 & 0.0081 & 0.0084 & 0.0038 & 0.0078 & 0.0082 \\
            & Std  & 0.0297 & 0.0307 & 0.0306 & 0.0402 & 0.0403 & 0.0403 & 0.0462 & 0.0455 & 0.0455 \\
            & RMSE & 0.0300 & 0.0317 & 0.0317 & 0.0404 & 0.0411 & 0.0411 & 0.0463 & 0.0462 & 0.0462 \\
            \multicolumn{11}{c}{}\\
        $\kappa$ & Bias & -0.2783 & 3.3820 & 3.3933 & 1.0258 & 4.3478 & 4.3914 & 1.7384 & 4.7370 & 4.7953 \\
            & Std  & 8.0909 & 10.0533 & 9.9980 & 7.3880 & 8.5961 & 8.5568 & 7.1869 & 8.0556 & 8.0456 \\
            & RMSE & 8.0957 & 10.6069 & 10.5582 & 7.4588 & 9.6331 & 9.6179 & 7.3941 & 9.3451 & 9.3662 \\
            \multicolumn{11}{c}{}\\
      $\sigma$ & Bias & 0.0265 & 0.0521 & 0.0528 & 0.0376 & 0.0648 & 0.0663 & 0.0479 & 0.0721 & 0.0742 \\
         & Std  & 0.1596 & 0.1743 & 0.1743 & 0.2296 & 0.2409 & 0.2409 & 0.2774 & 0.2796 & 0.2794 \\
         & RMSE & 0.1618 & 0.1819 & 0.1821 & 0.2327 & 0.2495 & 0.2499 & 0.2815 & 0.2887 & 0.2891\\
            \hline
        & & \multicolumn{3}{c|}{$H=$ 0.40} & \multicolumn{3}{c|}{$H=$ 0.50}  & \multicolumn{3}{c}{$H=$ 0.60}  \\
        \hline
        $\mu$ & Bias & - & 0.0020 & 0.0006 & - & 0.0018 & 0.0003 & - & 0.0022 & 0.0005 \\
            & Std  & - & 0.0939 & 0.0938 & - & 0.0899 & 0.0919 & - & 0.0862 & 0.0902 \\
            & RMSE & - & 0.0940 & 0.0938 & - & 0.0899 & 0.0919 & - & 0.0863 & 0.0902 \\
            \multicolumn{11}{c}{}\\
        $H$ & Bias & 0.0041 & 0.0075 & 0.0079 & 0.0046 & 0.0071 & 0.0075 & 0.0048 & 0.0059 & 0.0065 \\
            & Std  & 0.0500 & 0.0488 & 0.0487 & 0.0527 & 0.0509 & 0.0509 & 0.0538 & 0.0516 & 0.0515 \\
            & RMSE & 0.0501 & 0.0493 & 0.0493 & 0.0530 & 0.0514 & 0.0514 & 0.0541 & 0.0519 & 0.0519 \\
            \multicolumn{11}{c}{}\\
        $\kappa$ & Bias & 2.1900 & 4.8790 & 4.9283 & 2.4532 & 4.8017 & 4.8287 & 2.4756 & 4.4301 & 4.4500 \\
            & Std  & 7.1367 & 7.7810 & 7.7719 & 7.1795 & 7.5057 & 7.5117 & 7.0800 & 7.2298 & 7.2463 \\
            & RMSE & 7.4651 & 9.1841 & 9.2027 & 7.5870 & 8.9102 & 8.9298 & 7.5003 & 8.4791 & 8.5035 \\
            \multicolumn{11}{c}{}\\
      $\sigma$ & Bias & 0.0577 & 0.0776 & 0.0801 & 0.0699 & 0.0831 & 0.0860 & 0.0801 & 0.0834 & 0.0875 \\
         & Std  & 0.3104 & 0.3088 & 0.3086 & 0.3485 & 0.3373 & 0.3378 & 0.3763 & 0.3568 & 0.3581 \\
         & RMSE & 0.3157 & 0.3184 & 0.3188 & 0.3554 & 0.3474 & 0.3486 & 0.3847 & 0.3664 & 0.3686\\
            \hline
        & & \multicolumn{3}{c|}{$H=$ 0.70} & \multicolumn{3}{c|}{$H=$ 0.80}  & \multicolumn{3}{c}{$H=$ 0.90}  \\
        \hline
        $\mu$ & Bias & - & 0.0024 & 0.0006 & - & 0.0024 & 0.0005 & - & 0.0027 & 0.0004 \\
            & Std  & - & 0.0831 & 0.0892 & - & 0.0794 & 0.0882 & - & 0.0776 & 0.0873 \\
            & RMSE & - & 0.0831 & 0.0892 & - & 0.0794 & 0.0882 & - & 0.0777 & 0.0873 \\
            \multicolumn{11}{c}{}\\
        $H$ & Bias & 0.0040 & 0.0039 & 0.0045 & 0.0038 & 0.0013 & 0.0023 & 0.0033 & -0.0016 & -0.0009 \\
            & Std  & 0.0531 & 0.0510 & 0.0509 & 0.0516 & 0.0494 & 0.0494 & 0.0415 & 0.0425 & 0.0422 \\
            & RMSE & 0.0532 & 0.0512 & 0.0511 & 0.0518 & 0.0494 & 0.0494 & 0.0416 & 0.0425 & 0.0422 \\
            \multicolumn{11}{c}{}\\
        $\kappa$ & Bias & 2.0427 & 3.6373 & 3.6150 & 1.2981 & 2.2510 & 2.2808 & -0.4930 & -0.0475 & -0.1009 \\
            & Std  & 6.7179 & 6.8232 & 6.8247 & 6.2694 & 6.1526 & 6.1850 & 3.9814 & 4.3394 & 4.3388 \\
            & RMSE & 7.0215 & 7.7321 & 7.7230 & 6.4023 & 6.5514 & 6.5921 & 4.0118 & 4.3397 & 4.3399 \\
            \multicolumn{11}{c}{}\\
  $\sigma$ & Bias & 0.0862 & 0.0809 & 0.0850 & 0.1143 & 0.0839 & 0.0922 & 0.1704 & 0.1174 & 0.1247 \\
         & Std  & 0.3987 & 0.3789 & 0.3800 & 0.4584 & 0.4182 & 0.4205 & 0.5160 & 0.5082 & 0.5106 \\
         & RMSE & 0.4079 & 0.3874 & 0.3894 & 0.4724 & 0.4266 & 0.4305 & 0.5434 & 0.5216 & 0.5256 \\
        \hline
    \end{tabular}
}
\end{table}

\begin{table}[H]
\caption{Bias, Std and RMSE of alternative MLEs for fOU: $\kappa=10$, $\sigma=1$ and $n=1000$. MLE1 is MLE with known $\mu$; MLE2 is our exact MLE; MLE3 is PMLE.}\label{table06}
\centering
\scalebox{0.85}{
    \begin{tabular}{cc|ccc|ccc|ccc}
        \hline
        & & MLE1 & MLE2 & MLE3  & MLE1 & MLE2 & MLE3  & MLE1 & MLE2 & MLE3 \\
        & & \multicolumn{3}{c|}{$H=$ 0.10} & \multicolumn{3}{c|}{$H=$ 0.20}  & \multicolumn{3}{c}{$H=$ 0.30}  \\
        \hline
        $\mu$ & Bias & - & 0.0010 & 0.0008 & - & 0.0012 & 0.0011 & - & 0.0014 & 0.0012 \\
            & Std  & - & 0.0289 & 0.0303 & - & 0.0319 & 0.0325 & - & 0.0368 & 0.0368 \\
            & RMSE & - & 0.0289 & 0.0303 & - & 0.0320 & 0.0326 & - & 0.0368 & 0.0368 \\
            \multicolumn{11}{c}{}\\
        $H$ & Bias & 0.0010 & 0.0016 & 0.0015 & 0.0008 & 0.0014 & 0.0014 & 0.0009 & 0.0014 & 0.0014 \\
            & Std  & 0.0145 & 0.0145 & 0.0144 & 0.0191 & 0.0190 & 0.0190 & 0.0222 & 0.0220 & 0.0220 \\
            & RMSE & 0.0145 & 0.0145 & 0.0145 & 0.0192 & 0.0190 & 0.0190 & 0.0222 & 0.0220 & 0.0220 \\
            \multicolumn{11}{c}{}\\
        $\kappa$ & Bias & -0.1356 & 0.4662 & 0.4094 & 0.3459 & 0.9577 & 0.9334 & 0.5840 & 1.1698 & 1.1679 \\
            & Std  & 3.7385 & 3.7798 & 3.7731 & 3.3303 & 3.3923 & 3.3894 & 3.3122 & 3.3685 & 3.3658 \\
            & RMSE & 3.7410 & 3.8085 & 3.7953 & 3.3482 & 3.5249 & 3.5155 & 3.3633 & 3.5658 & 3.5626 \\
            \multicolumn{11}{c}{}\\
        $\sigma$ & Bias & 0.0065 & 0.0100 & 0.0094 & 0.0059 & 0.0095 & 0.0094 & 0.0067 & 0.0097 & 0.0099 \\
         & Std  & 0.0747 & 0.0748 & 0.0747 & 0.1021 & 0.1017 & 0.1018 & 0.1221 & 0.1215 & 0.1215 \\
         & RMSE & 0.0750 & 0.0755 & 0.0753 & 0.1023 & 0.1022 & 0.1022 & 0.1223 & 0.1219 & 0.1219\\
            \hline
        & & \multicolumn{3}{c|}{$H=$ 0.40} & \multicolumn{3}{c|}{$H=$ 0.50}  & \multicolumn{3}{c}{$H=$ 0.60}  \\
        \hline
        $\mu$ & Bias & - & 0.0015 & 0.0014 & - & 0.0016 & 0.0015 & - & 0.0017 & 0.0016 \\
            & Std  & - & 0.0420 & 0.0420 & - & 0.0475 & 0.0481 & - & 0.0534 & 0.0551 \\
            & RMSE & - & 0.0420 & 0.0421 & - & 0.0475 & 0.0481 & - & 0.0534 & 0.0551 \\
            \multicolumn{11}{c}{}\\
        $H$ & Bias & 0.0010 & 0.0014 & 0.0014 & 0.0012 & 0.0013 & 0.0014 & 0.0015 & 0.0012 & 0.0013 \\
            & Std  & 0.0243 & 0.0241 & 0.0241 & 0.0259 & 0.0255 & 0.0255 & 0.0269 & 0.0265 & 0.0265 \\
            & RMSE & 0.0243 & 0.0241 & 0.0241 & 0.0259 & 0.0256 & 0.0256 & 0.0270 & 0.0266 & 0.0265 \\
            \multicolumn{11}{c}{}\\
        $\kappa$ & Bias & 0.7287 & 1.2778 & 1.2806 & 0.8155 & 1.3301 & 1.3322 & 0.8647 & 1.3207 & 1.3197 \\
            & Std  & 3.3593 & 3.3948 & 3.3937 & 3.4057 & 3.4304 & 3.4298 & 3.4398 & 3.4390 & 3.4412 \\
            & RMSE & 3.4374 & 3.6273 & 3.6273 & 3.5020 & 3.6793 & 3.6795 & 3.5468 & 3.6839 & 3.6856 \\
            \multicolumn{11}{c}{}\\
   $\sigma$ & Bias & 0.0078 & 0.0099 & 0.0101 & 0.0093 & 0.0102 & 0.0103 & 0.0120 & 0.0105 & 0.0107 \\
         & Std  & 0.1388 & 0.1376 & 0.1375 & 0.1539 & 0.1521 & 0.1520 & 0.1700 & 0.1670 & 0.1669 \\
         & RMSE & 0.1390 & 0.1380 & 0.1379 & 0.1542 & 0.1524 & 0.1524 & 0.1704 & 0.1673 & 0.1673 \\
            \hline
        & & \multicolumn{3}{c|}{$H=$ 0.70} & \multicolumn{3}{c|}{$H=$ 0.80}  & \multicolumn{3}{c}{$H=$ 0.90}  \\
        \hline
        $\mu$ & Bias & - & 0.0017 & 0.0017 & - & 0.0017 & 0.0017 & - & 0.0018 & 0.0016 \\
            & Std  & - & 0.0597 & 0.0631 & - & 0.0663 & 0.0723 & - & 0.0728 & 0.0827 \\
            & RMSE & - & 0.0597 & 0.0631 & - & 0.0663 & 0.0723 & - & 0.0728 & 0.0827 \\
            \multicolumn{11}{c}{}\\
        $H$ & Bias & 0.0015 & 0.0009 & 0.0010 & 0.0017 & 0.0004 & 0.0004 & 0.0029 & 0.0004 & 0.0007 \\
            & Std  & 0.0274 & 0.0271 & 0.0271 & 0.0271 & 0.0267 & 0.0267 & 0.0225 & 0.0225 & 0.0225 \\
            & RMSE & 0.0274 & 0.0271 & 0.0271 & 0.0272 & 0.0267 & 0.0267 & 0.0227 & 0.0225 & 0.0225 \\
            \multicolumn{11}{c}{}\\
        $\kappa$ & Bias & 0.7907 & 1.2154 & 1.2087 & 0.6244 & 0.9236 & 0.9076 & 0.0312 & 0.1706 & 0.1774 \\
            & Std  & 3.4432 & 3.4458 & 3.4471 & 3.2790 & 3.2548 & 3.2523 & 2.0054 & 1.9812 & 1.9411 \\
            & RMSE & 3.5329 & 3.6538 & 3.6529 & 3.3379 & 3.3833 & 3.3766 & 2.0056 & 1.9885 & 1.9492 \\
            \multicolumn{11}{c}{}\\
     $\sigma$ & Bias & 0.0140 & 0.0106 & 0.0109 & 0.0230 & 0.0123 & 0.0125 & 0.0530 & 0.0268 & 0.0296 \\
         & Std  & 0.1878 & 0.1848 & 0.1845 & 0.2200 & 0.2122 & 0.2124 & 0.2549 & 0.2464 & 0.2479 \\
         & RMSE & 0.1883 & 0.1851 & 0.1849 & 0.2212 & 0.2126 & 0.2127 & 0.2603 & 0.2478 & 0.2497 \\
        \hline
    \end{tabular}
}
\end{table}

Tables \ref{table05}-\ref{table06} report the results for fOU, from which we observe the following findings. First, in terms of convergence rates, the performance of MLE2 aligns well with our asymptotic theory. For $\mu$, the convergence rate is root-$n$ when $H \leq 1/2$, and transitions to $n^{1 - H}$ when $H > 1/2$. The standard deviation of the estimator for $\mu$ decreases substantially as the sample size increases from $250$ to $1000$ when $H \leq 1/2$; however, as $H$ approaches $1$, the percentage reduction becomes markedly smaller. A similar pattern can be observed for the sample mean, as it shares the same convergence rate as given in (\ref{Assump:mu-PMLE}) and (\ref{lower-bound-mu}). For the remaining parameters, the convergence rate remains at the  root-$n$. Second, we see a clear dominance of MLE2 over MLE3 in terms of finite sample performance of estimates of $\mu$, when both $n$ is large and $H$ is near either zero or one. Third, this superior performance in estimating $\mu$ by MLE2 does not translate into a better performance in estimating other parameters. Using the true value of $\mu$, MLE1 does not lead to better performance in estimating the other 3 parameters. The three ML methods lead to a similar finite sample performance for parameters other than $\mu$.\footnote{In the online supplement (Section \ref{Sec:fore_fou}), we conduct a forecasting horse race for realized volatility using the fOU process with three alternative estimators: MLE2, MLE3, and the CoF estimator. As expected, MLE2 delivers the best forecasting performance, followed by MLE3 and then the CoF estimator.}

\section{Conclusion}
\label{Sec:con}
 Gaussian processes have gained significant attention due to their broad applicability across various scientific and applied disciplines. To obtain the MLE, two common approaches are typically employed. The first approach maximizes the likelihood assuming \( \mu \) is known and set to $0$, which results in an unrealistic MLE. The second approach uses the sample mean as an estimator for \( \mu \), leading to a plug-in MLE. However, both methods fail to address the inefficiency of the estimator for \( \mu \), and concerns have been raised about the finite sample performance of the plug-in MLE. \cite{Adenstedt-1974} proposed an efficient but infeasible estimator for \( \mu \).

In this paper, we introduce a novel exact ML method for all the parameters for general Gaussian processes with long-memory, short-memory, or anti-persistence properties. We prove that the exact MLE exhibits the properties of consistency and asymptotic normality. We also establish the LAN property of the sequence of statistical experiments for general
Gaussian processes in the sense of Le Cam, which directly yields efficiency.  Our method offers a comprehensive understanding of MLE for fractional Gaussian models: first, we show that the estimators for all parameters are optimal, effectively complementing the infeasible estimator for $\mu$ proposed by \cite{Adenstedt-1974}; second, we evaluate the difference between the plug-in MLE, exact MLE, and the MLE with known $\mu$.  The plug-in MLE performs as good as the exact MLE for all parameters except for \( \mu \). The discrepancy between plug-in MLE and the MLE with known $\mu$ is not due to an inefficient estimator for \( \mu \).

The Whittle MLE is asymptotically equivalent to the exact MLE under certain regularity conditions. Although its finite-sample performance is generally inferior to that of the exact MLE, the performance gap narrows as the sample size increases. At the same time, the computational burden of the exact MLE increases substantially due to the need to invert the covariance matrix at each evaluation of the likelihood function—a step that the Whittle method avoids. The Whittle ML method remains an attractive alternative. However, existing theoretical results for the Whittle MLE primarily pertain to ARFIMA models and do not extend to continuous-time models. In future work, we aim to investigate the optimality of the Whittle MLE for general Gaussian processes.

\noindent 
\bibliographystyle{Chicago}
\bibliography{Reference}

\appendix
\section{Appendix: Proof of Theorems}
\subsection{Proof of Consistency in Theorem~\ref{Thm:MLE}}\label{Sec:proof-Thm:MLE-consistency}
Let $\mathbf{Z}_{n}:=\mathbf{X}_{n}-\mu_{0}\mathbf{1}_{n}$ and
\begin{align*}
    \bar{\ell}_{n}(\xi) := \ell_{n}((\xi,\sigma_{n}(\xi),\mu_{n}(\xi))^{\top})
    = -\frac{n}{2}\log(2\pi)
    -\frac{n}{2}\log{\sigma^{2}}
    -\frac{1}{2}\log\mathrm{det}\bigl[\Sigma_{n}(s_{\xi}^{X})\bigr]
    -\frac{1}{2\sigma^{2}}\bar{\sigma}^{2}_{n}(\xi),
\end{align*}
where $\theta=(\xi,\sigma)^{\top}$. Additionally, we introduce
\begin{align}\label{expr:sig2-xi}
    \tilde{\sigma}^{2}_{n}(\xi) := \sigma_{n}^{2}((\xi,\mu_{0})^{\top})
    = \frac{1}{n}\left(\mathbf{X}_{n}-\mu_{0}\mathbf{1}_{n}\right)^{\top}\Sigma_{n}(s_{\xi}^{X})^{-1}\left(\mathbf{X}_{n}-\mu_{0}\mathbf{1}_{n}\right)
    \ \ \mbox{and}\ \ 
    \sigma^{2}(\xi) := \frac{\sigma^{2}_{0}}{2\pi}\int_{-\pi}^{\pi}\frac{s_{\xi_{0}}^{X}(\omega)}{s_{\xi}^{X}(\omega)}\dd\omega.
\end{align}

Let $\iota\in(0,1)$. 
For $\alpha_{X}(\xi)$, we introduce a restricted parameter space of $\Theta_{\xi}$ by 
\begin{align*}
    \Theta_{\xi}(\iota) := \{\xi\in\Theta_{\xi}:\alpha_{X}(\xi)-\alpha_{X}(\xi_{0})\geq -1+\iota\}.
\end{align*}
For a $\R$-valued function $f$ on some set $A$, we write $f_{\pm}(a):=\max\{\pm f(a),0\}$ for $a\in A$. Then we have $f=f_{+}-f_{-}$.

Recall that $\bar{\ell}_{n}(\xi) = \ell_{n}((\xi,\sigma_{n}(\xi),\mu_{n}(\xi))^{\top})$ that can be written as
\begin{align}
    \bar{\ell}_{n}(\xi) =& -\frac{n}{2}\log(2\pi) -\frac{n}{2}\log{\bar{\sigma}^{2}_{n}(\xi)} 
    -\frac{1}{2}\log\mathrm{det}\bigl[\Sigma_{n}(s_{\xi}^{X})\bigr]
    -\frac{1}{2\bar{\sigma}^{2}_{n}(\xi)}n\bar{\sigma}^{2}_{n}(\xi) 
    \nonumber \\
    =& -\frac{n}{2}(1+\log(2\pi)) -\frac{n}{2}\log{\bar{\sigma}^{2}_{n}(\xi)} 
    -\frac{1}{2}\log\mathrm{det}\bigl[\Sigma_{n}(s_{\xi}^{X})\bigr]. 
    \label{expr:ell-bar}
\end{align}
Moreover, we also recall a restricted parameter space of $\Theta_{\xi}$ defined by 
\begin{align*}
    \Theta_{\xi}(\iota) := \{\xi\in\Theta_{\xi}:\alpha_{X}(\xi)-\alpha_{X}(\xi_{0})\geq -1+\iota\}.
\end{align*}

Set $L_{n}(\xi):=-\frac{2}{n}\bar{\ell}_{n}(\xi)$ and $\sigma^{2}(\xi):=\frac{\sigma^{2}_{0}}{2\pi}\int_{-\pi}^{\pi}\frac{s_{\xi_{0}}^{X}(\omega)}{s_{\xi}^{X}(\omega)}\dd\omega$. Now we introduce a function $L(\xi)$ by
\begin{align*}
    L(\xi) := (1+\log(2\pi)) + \log\left( \frac{1}{2\pi}\int_{-\pi}^{\pi}\frac{s_{\xi_{0}}^{X}(\omega)}{s_{\xi}^{X}(\omega)}\dd\omega \right)
    +\frac{1}{2\pi}\int_{-\pi}^{\pi}\log{s_{\xi}^{X}(\omega)}\dd\omega,
\end{align*}
which is actually a limit function of $L_{n}(\xi)$; see \eqref{suff2:consistency-uniform-conv} for details. 
Note that $L(\xi)$ is finite on $\Theta_{\xi}(\iota)$ for any $\iota\in(0,1)$ and we can write
\begin{align}\label{expr:lim-ell-bar-diff}
    L(\xi) - L(\xi_{0})
    = \log\left(\int_{-\pi}^{\pi}\frac{s_{\xi_{0}}^{X}(\omega)}{s_{\xi}^{X}(\omega)}\frac{\dd\omega}{2\pi} \right)
    -\int_{-\pi}^{\pi}\log\frac{s_{\xi_{0}}^{X}(\omega)}{s_{\xi}^{X}(\omega)}\frac{\dd\omega}{2\pi},
\end{align}
so that Jensen's inequality of the strictly concave function and the identifiability condition on the family of the spectral density functions $\{s_{\theta}^{X}\}_{\theta\in\Theta}$ in Assumption~\ref{Assump:DSPD1} give 
\begin{align}\label{suff1:consistency-identifiability}
    \inf_{\xi\in\Theta_{\xi}(\iota),\|\xi-\xi_{0}\|_{\R^{p-1}}\geq \varepsilon} L(\xi) > L(\xi_{0}),\ \ \forall \varepsilon>0,~\forall\iota\in(0,1).
\end{align}

To prove consistency of $\{\widehat{\xi}_{n}\}_{n\in\N}$, we firstly prove the uniform convergence
\begin{align}\label{suff2:consistency-uniform-conv}
    \sup_{\xi\in\Theta_{\xi}(\iota)} |L_{n}(\xi)-L(\xi)| = o_{\P^{n}_{\vartheta_{0}}}(1) \ \ \mbox{as $n\to\infty$}.
\end{align}
Note that we can write
\begin{align}\label{expr:suff2-consistency-error}
    L_{n}(\xi)-L(\xi)
    = \left( \log \bar{\sigma}^{2}_{n}(\xi) - \log \sigma^{2}(\xi) \right)
    +\left( \frac{1}{n} \log\mathrm{det}\bigl[\Sigma_{n}(s_{\xi}^{X})\bigr] - \frac{1}{2\pi}\int_{-\pi}^{\pi}\log{s_{\xi}^{X}(\omega)}\dd\omega \right).
\end{align}

Here we recall the uniform convergence version of Szeg\"o's theorem \citep{Lieberman-2012}:
\begin{align}\label{Szego-Thm}
    \sup_{\xi\in\Theta_{\xi}}\left| \frac{1}{n}\log\det\bigl[\Sigma_{n}(s_{\xi}^{X})\bigr] - \frac{1}{2\pi}\int_{-\pi}^{\pi}\log{s_{\xi}^{X}(\omega)}\dd\omega \right| = o(1)\ \ \mbox{as $n\to\infty$}.
\end{align}
Moreover, we can also show the uniform convergence
\begin{align}\label{unifconv-sig2-bar}
    \sup_{\xi\in\Theta_{\xi}(\iota)} \left| \bar{\sigma}^{2}_{n}(\xi)-\sigma^{2}(\xi) \right| = o_{\P_{\vartheta_{0}}^{n}}(1)\ \ \mbox{as $n\to\infty$},
\end{align}
whose proof is left to Section~\ref{Sec:proof-unifconv-sig2-bar} in Online Appendix. Then we also obtain
\begin{align}\label{unifconv-log-sig2-bar}
    \sup_{\xi\in\Theta_{\xi}(\iota)} \left| \log\bar{\sigma}^{2}_{n}(\xi) - \log\sigma^{2}(\xi) \right| = o_{\P^{n}_{\vartheta_{0}}}(1) \ \ \mbox{as $n\to\infty$},
\end{align}
which can be proved using \eqref{unifconv-sig2-bar} immediately. However, we also give a detailed proof in Section~\ref{Sec:proof-unifconv-log-sig2-bar} in Online Appendix for completeness. 
Then we conclude \eqref{suff2:consistency-uniform-conv} using \eqref{expr:lim-ell-bar-diff}, \eqref{expr:suff2-consistency-error}, \eqref{Szego-Thm} and \eqref{unifconv-log-sig2-bar}.\\

Now we give a proof of consistency of $\{\widehat{\xi}_{n}\}_{n\in\N}$ using \eqref{suff1:consistency-identifiability} and \eqref{suff2:consistency-uniform-conv}. For each $\iota\in(0,1)$, we define
\begin{align*}
    \widehat{\xi}_{n}(\iota) = \argmax_{\xi\in\Theta_{\xi}(\iota)} \bar{\ell}_{n}(\xi) = \argmin_{\xi\in\Theta_{\xi}(\iota)} L_{n}(\xi).
\end{align*}
Similarly to \cite{Robinson-1995-LWE}, \cite{Velasco-Robinson-2000} and \cite{Lieberman-2012}, we divide the proof of consistency into the following two steps.\\

\noindent\textbf{Step~1}: 
We prove that for each $\iota\in(0,1)$, $\widehat{\xi}_{n}(\iota)$ is a consistent estimator of $\xi$, {\it i.e.} $\widehat{\xi}_{n}(\iota)\to \xi_{0}$ in $\P_{\vartheta_{0}}^{n}$-probability.

\begin{proof}[Proof of Step~1]
    The conclusion immediately follows from \eqref{suff1:consistency-identifiability}, \eqref{suff2:consistency-uniform-conv} and the definition of $\widehat{\xi}_{n}(\iota)$.
\end{proof}

~\\
\noindent\textbf{Step~2}:
We prove that there exists $\iota\in(0,1)$ such that $\widehat{\xi}_{n}-\widehat{\xi}_{n}(\iota)\to 0$ in $\P_{\vartheta_{0}}^{n}$-probability as $n\to\infty$.\\

\begin{proof}[Proof of Step~2]
	If $\Theta_{\xi}=\Theta_{\xi}(\iota)$ holds for some $\iota\in(0,1)$, the equality $\widehat{\xi}_{n}=\widehat{\xi}_{n}(\iota)$ holds so that we immediately conclude the assertion of Step~2. 
    In the rest of the proof, we assume that $\Theta_{\xi}\setminus\Theta_{\xi}(\iota)$ is a nonempty set for any $\iota\in(0,1)$. 
    Then, for any $\iota\in(0,1)$ and $\epsilon,\epsilon_{1}>0$, we can show
	\begin{align}
	  \P_{\vartheta_{0}}^{n}\left[\|\widehat{\xi}_{n}-\widehat{\xi}_{n}(\iota)\|_{\mathbb{R}^2}>\epsilon\right] 
      &\leq \P_{\vartheta_{0}}^{n}\biggl[ \inf_{\xi\in\Theta_{\xi}\setminus\Theta_{\xi}(\iota)} L_{n}(\xi) 
      \leq \inf_{\xi\in\Theta_{\xi}(\iota)} L_{n}(\xi) \biggr] 
      \nonumber \\
      &\leq \P_{\vartheta_{0}}^{n}\bigl[\inf_{\xi\in\Theta_{\xi}\setminus\Theta_{\xi}(\iota)} L_{n}(\xi) \leq L(\xi_{0})+\epsilon_{1}\bigr] 
      +\P_{\vartheta_{0}}^{n} \bigl[|L_{n}(\widehat{\xi}_{n}(\iota)) - L(\xi_{0})| \geq\epsilon_{1} \bigr].
      \label{suff1:consistency-Step2}
	\end{align}
    Note that for any $\epsilon_{1},\epsilon_{2}>0$, the second term of \eqref{suff1:consistency-Step2} is dominated by
    \begin{align}
        \P_{\vartheta_{0}}^{n} \bigl[|L_{n}(\widehat{\xi}_{n}(\iota)) - L(\xi_{0})| \geq\epsilon_{1} \bigr] 
        \leq& \P_{\vartheta_{0}}^{n} \bigl[|\widehat{\xi}_{n}(\iota) - \xi_{0}| \geq\epsilon_{2} \bigr] 
        + \P_{\vartheta_{0}}^{n} \biggl[\sup_{\xi\in\Theta_{\xi}(\iota)} |L_{n}(\xi) - L(\xi)|\geq\epsilon_{1} \biggr]
        \nonumber \\
        &+ \P_{\vartheta_{0}}^{n} \bigl[|\widehat{\xi}_{n}(\iota) - \xi_{0}| < \epsilon_{2}, |L(\widehat{\xi}_{n}(\iota)) - L(\xi_{0})| \geq\epsilon_{1} \bigr].
        \label{suff2:consistency-Step2}
    \end{align}
    Then the continuity of $L(\xi)$ on $\Theta_{\xi}(\iota)$ shows that for any $\epsilon_{1}>0$, there exists $\epsilon_{2}>0$ such that the third term of \eqref{suff2:consistency-Step2} is equal to zero. 
    Moreover, we can also note that the first and second terms of \eqref{suff2:consistency-Step2} are negligible as $n\to\infty$ using the result in Step~1 and the uniform convergence \eqref{suff2:consistency-uniform-conv}, respectively.

     Finally, we evaluate the first term of \eqref{suff1:consistency-Step2}. 
     Since $\alpha(\xi_{1})\geq\alpha(\xi_{2})$ for any $\xi_{1}\in\Theta_{\xi}(\iota)$ and $\xi_{1}\in\Theta_{\xi}\setminus\Theta_{\xi}(\iota)$, using Lemma~5.3 in \cite{Dahlhaus-1989} and Lemma~6 in the full version of \cite{Lieberman-2012}, we can show that there exists a constant $C_{1}>0$ such that for any $\xi_{1}\in\Theta_{\xi}(\iota)$ and $\xi_{1}\in\Theta_{\xi}\setminus\Theta_{\xi}(\iota)$,
     \begin{align}\label{ineq1:consistency-Step3}
         \frac{\bar{\sigma}^{2}_{n}(\xi_{1})}{\bar{\sigma}^{2}_{n}(\xi_{2})}
         \leq \sup_{\mathbf{x}\in\R^{n}} \frac{ \|\Sigma_{n}(s_{\xi_{1}}^{X})^{-\frac{1}{2}}\mathbf{x}\|_{\R^{n}} }{ \|\Sigma_{n}(s_{\xi_{2}}^{X})^{-\frac{1}{2}}\mathbf{x}\|_{\R^{n}} }
         = \|\Sigma_{n}(s_{\xi_{1}}^{X})^{-\frac{1}{2}}\Sigma_{n}(s_{\xi_{2}}^{X})^{\frac{1}{2}}\|_{\mathrm{op}}
         \leq \frac{1}{C_{1}}.
     \end{align}
     Set $\xi_{1}(\iota):=-1+\alpha(\xi_{0})-\iota$ and
     \begin{align*}
         r_{n,1}(\xi):=\log\bar{\sigma}^{2}_{n}(\xi) - \log\sigma^{2}(\xi),\ \ 
         r_{n,2}(\xi):=\frac{1}{n}\log\mathrm{det}\bigl[\Sigma_{n}(s_{\xi}^{X})\bigr] - \frac{1}{2\pi}\int_{-\pi}^{\pi} \log{s_{\xi}^{X}(\omega)}\dd\omega.
     \end{align*}
     Since $\alpha(\xi_{1}(\iota))\geq\alpha(\xi_{2})$ for any $\xi_{2}\in\Theta_{\xi}\setminus\Theta_{\xi}(\iota)$, the inequality \eqref{ineq1:consistency-Step3} yields
     \begin{align*}
		L_{n}(\xi_{2})
        &\geq (1+\log(2\pi)) + \log{C_{1}} + \log\bar{\sigma}^{2}_{n}(\xi_{1}(\iota)) +  \frac{1}{n}\log\mathrm{det}\bigl[\Sigma_{n}(s_{\xi_{2}}^{X})\bigr] \\
        &\geq (1+\log(2\pi)) + \log{C_{1}} + \log\sigma^{2}(\xi_{1}(\iota)) + \frac{1}{2\pi}\int_{-\pi}^{\pi} \log{s_{\xi_{2}}^{X}(\omega)}\dd\omega 
        + r_{n,1}(\xi_{1}(\iota)) + r_{n,2}(\xi_{2}),
	 \end{align*}
     and
     \begin{align*}
         \log\sigma^{2}(\xi_{1}(\iota)) + \frac{1}{2\pi}\int_{-\pi}^{\pi} \log{s_{\xi_{2}}^{X}(\omega)}\dd\omega 
         &\geq \log\left(\frac{c_{+}}{2\pi c_{-}}\int_{-\pi}^{\pi}|\omega|^{-1+\iota}\dd\omega\right) +  \frac{1}{2\pi}\int_{-\pi}^{\pi} \log\bigl(c_{-}|\omega|^{-\alpha(\xi_{2})}\bigr)\dd\omega \\
         &\geq \log\left(\frac{c_{+}}{\pi c_{-}}\right) + \log\left(\frac{\pi^{\iota}}{\iota}\right) + \log{c_{-}} - \alpha(\xi_{1}(\iota))(\log{\pi}-1).
     \end{align*}
     Therefore, on the set $A_{1}(\delta_{1})\cap A_{2}(\delta_{2})$ with $A_{1}(\delta):=\{\sup_{\xi\in\Theta_{\xi}(\iota)}|r_{n,1}(\xi)|<\delta\}$ and $A_{2}(\delta):=\{\sup_{\xi\in\Theta_{\xi}}|r_{n,2}(\xi)|<\delta\}$, we obtain $\inf_{\xi\in\Theta_{\xi}\setminus\Theta_{\xi}(\iota)} L_{n}(\xi)\geq L(\iota,\delta_{1},\delta_{2})$, where 
     \begin{align*}
         L(\iota,\delta_{1},\delta_{2}):=(1+\log(2\pi)) + \log{C_{1}} + \log\left(\frac{c_{+}}{\pi c_{-}}\right) 
         + \log\left(\frac{\pi^{\iota}}{\iota}\right) + \log{c_{-}} - \alpha(\xi_{1}(\iota))(\log{\pi}-1) - (\delta_{1}+\delta_{2}).
     \end{align*}
     Since $L(\iota,\delta_{1},\delta_{2})$ diverges to infinity as $\iota\to 0$, for any $\epsilon_{1},\delta_{1},\delta_{2}>0$, there exists $\iota\equiv\iota(\epsilon_{1},\delta_{1},\delta_{2})\in(0,1)$ such that $L(\iota,\delta_{1},\delta_{2})>L(\xi_{0})+\epsilon_{1}$. Then we obtain
     \begin{align*}
&\P_{\vartheta_{0}}^{n}\bigl[\inf_{\xi\in\Theta_{\xi}\setminus\Theta_{\xi}(\iota)} L_{n}(\xi) \leq L(\xi_{0})+\epsilon_{1}\bigr] \\
         &\quad\leq \sum_{j=1}^{2}\P_{\vartheta_{0}}^{n}\bigl[A_{j}(\delta_{j})^{\mathrm{c}}\bigr] 
         + \P_{\vartheta_{0}}^{n}\bigl[ A_{1}(\delta_{1})\cap A_{2}(\delta_{2})\cap \bigl\{ L(\iota,\delta_{1},\delta_{2}) \leq L(\xi_{0})+\epsilon_{1} \bigr\} \bigr] \to 0 
     \end{align*}
     as $n\to\infty$ using \eqref{Szego-Thm} and \eqref{unifconv-log-sig2-bar}.
     This completes the proof of Step~2 and the consistency result.
\end{proof}

\subsection{Proof of Asymptotic Normality in Theorem~\ref{Thm:MLE}}\label{Sec:proof-Thm:MLE-asynormal}

Before proving the asymptotic normality of the sequence of the exact MLEs, we summarize notations used in the proof and prepare several limit theorems repeatedly used in the proof. 
Recall that 
\begin{align*}
    \bar{\ell}_{n}(\xi) = -\frac{n}{2}(1+\log(2\pi)) -\frac{n}{2}\log{\bar{\sigma}^{2}_{n}(\xi)} 
    -\frac{1}{2}\log\mathrm{det}\bigl[\Sigma_{n}(s_{\xi}^{X})\bigr],
\end{align*}
see \eqref{expr:ell-bar}. 
Then we can show
\begin{align}
    -\frac{2}{n}\partial_{i}\bar{\ell}_{n}(\xi) 
    =& \frac{1}{\bar{\sigma}^{2}_{n}(\xi)}\partial_{i}\bar{\sigma}^{2}_{n}(\xi)
    +\frac{1}{n}\mathrm{Tr}\left[\Sigma_{n}(\partial_{i}s_{\xi}^{X})\Sigma_{n}(s_{\xi}^{X})^{-1}\right], 
    \label{expr:Ln-dv1} \\
    -\frac{2}{n}\partial_{i,j}^{2}\bar{\ell}_{n}(\xi) 
    =& -\left( -\frac{ \partial_{i}\bar{\sigma}^{2}_{n}(\xi) }{ \bar{\sigma}^{2}_{n}(\xi) } \right) 
    \left( -\frac{ \partial_{j}\bar{\sigma}^{2}_{n}(\xi) }{ \bar{\sigma}^{2}_{n}(\xi) } \right)
    + \frac{1}{\bar{\sigma}^{2}_{n}(\xi)}\partial_{i,j}^{2}\bar{\sigma}^{2}_{n}(\xi) 
    \nonumber \\
    &+\frac{1}{n}\mathrm{Tr}\left[\Sigma_{n}(\partial_{i,j}^{2}s_{\xi}^{X})\Sigma_{n}(s_{\xi}^{X})^{-1}\right]
    -\frac{1}{n}\mathrm{Tr}\left[ \Sigma_{n}(\partial_{i}s_{\xi}^{X}) \Sigma_{n}(s_{\xi}^{X})^{-1}\Sigma_{n}(\partial_{j}s_{\xi}^{X})\Sigma_{n}(s_{\xi}^{X})^{-1} \right].
    \label{expr:Ln-dv2}
\end{align}

Here notice that $-\frac{2}{n}\partial_{i}\bar{\ell}_{n}(\xi)$ in the expression \eqref{expr:Ln-dv1} can be decomposed by
\begin{align*}
    -\frac{2}{n}\partial_{i}\bar{\ell}_{n}(\xi) 
    = \left( \frac{1}{\bar{\sigma}^{2}_{n}(\xi)} - \frac{1}{\sigma_{0}^{2}} \right) \partial_{i}\bar{\sigma}^{2}_{n}(\xi)
    + \frac{1}{\sigma_{0}^{2}}\partial_{i}\bar{\sigma}^{2}_{n}(\xi)
    -\frac{1}{n}\mathrm{Tr}\left[\Sigma_{n}(\partial_{i}s_{\xi}^{X})\Sigma_{n}(s_{\xi}^{X})^{-1}\right] 
\end{align*}
so that we obtain the expression
\begin{align}\label{expr:ell-bar-dv1}
    n^{-\frac{1}{2}}\partial_{\xi}\bar{\ell}_{n}(\xi_{0}) 
    =& - \frac{ \sqrt{n} }{ 2\sigma_{0}^{2} } (\bar{\sigma}^{2}_{n}(\xi_{0})-\sigma_{0}^{2})
    \left( -\frac{ \partial_{\xi}\bar{\sigma}^{2}_{n}(\xi_{0}) }{ \bar{\sigma}^{2}_{n}(\xi_{0}) } \right)
    - \frac{\sqrt{n}}{2\sigma_{0}^{2}}\left( \partial_{\xi}\bar{\sigma}^{2}_{n}(\xi_{0})
    -\E_{\vartheta_{0}}^{n}\left[\partial_{\xi}\tilde{\sigma}^{2}_{n}(\xi_{0})\right] \right)
    \nonumber \\
    =& - \frac{ \sqrt{n} }{ 2\sigma_{0}^{2} } (\bar{\sigma}^{2}_{n}(\xi_{0})-\sigma_{0}^{2}) a_{p-1}(\xi_{0})
    - \frac{\sqrt{n}}{2\sigma_{0}^{2}}\left( \partial_{\xi}\bar{\sigma}^{2}_{n}(\xi_{0})
    -\E_{\vartheta_{0}}^{n}\left[\partial_{\xi}\tilde{\sigma}^{2}_{n}(\xi_{0})\right] \right) 
    + o_{\P_{\vartheta_{0}}^{n}}(1),
\end{align}
where we used \eqref{unifconv-sig2-bar} and Lemma~\ref{Lemma:unifconv-sig2-dv1-dv2} in the last equality. 
Moreover, combining \eqref{expr:ell-bar-dv1} with Lemma~\ref{Lemma:CLT-sig2}, we can show that
\begin{align}\label{CLT-ell-bar}
    n^{-\frac{1}{2}}\partial_{\xi}\bar{\ell}_{n}(\xi_{0}) \to \mathcal{N}(0,V_{p-1}(\xi_{0}))
\end{align}
in law under the distribution $\P_{\vartheta_{0}}^{n}$ as $n\to\infty$, where
\begin{align*}
    V_{p-1}(\xi_{0}) :=& \lim_{n\to\infty} \Var_{\vartheta_{0}}^{n}\left[ 
    - \frac{ \sqrt{n} }{ 2\sigma_{0}^{2} } (\bar{\sigma}^{2}_{n}(\xi_{0})-\sigma_{0}^{2}) a_{p-1}(\xi_{0}) 
    - \frac{\sqrt{n}}{2\sigma_{0}^{2}}\left( \partial_{\xi}\bar{\sigma}^{2}_{n}(\xi_{0})
    - \E_{\vartheta_{0}}^{n}\left[\partial_{\xi}\tilde{\sigma}^{2}_{n}(\xi_{0})\right] \right) \right] \\
    =& \frac{1}{2}a_{p-1}(\xi_{0})a_{p-1}(\xi_{0})^{\top} + \mathcal{F}_{p-1}(\xi_{0}) + 2a_{p-1}(\xi_{0})\left(-\frac{1}{2}a_{p-1}(\xi_{0})\right)^{\top} 
    = \mathcal{G}_{p-1}(\xi_{0}).
\end{align*}
Moreover, using consistency of $\{\widehat{\xi}_{n}\}_{n\in\N}$ as $n\to\infty$, we can show that $ n^{-\frac{1}{2}}\partial_{\xi}\bar{\ell}_{n}(\widehat{\xi}_{n})=o_{\P_{\vartheta_{0}}^{n}}(1)$ as $n\to\infty$ so that, using the Taylor theorem and \eqref{CLT-ell-bar}, we obtain 
\begin{align}\label{Taylor-ell-bar-dv1}
    \int_{0}^{1}\mathcal{G}_{p-1,n}(\xi_{0}+u(\widehat{\xi}_{n}-\xi_{0}))\dd u \sqrt{n}(\widehat{\xi}_{n}-\xi_{0})
    =  \frac{1}{\sqrt{n}}\partial_{\xi}\bar{\ell}_{n}(\xi_{0})
     -\frac{1}{\sqrt{n}}\partial_{\xi}\bar{\ell}_{n}(\widehat{\xi}_{n}) 
    \to \mathcal{N}(0,V_{p-1}(\xi_{0}))
\end{align}
in law under the distribution $\P_{\vartheta_{0}}^{n}$ as $n\to\infty$, where $\mathcal{G}_{p-1,n}(\xi):=-n^{-1}\partial_{\xi}^2\bar{\ell}_{n}(\xi)$. 
Then, combining the uniform convergence in \eqref{unifconv-ell-bar-dv2} with the continuity of the function $\xi\mapsto \mathcal{G}^{i,j}(\xi,\theta_{0})$ at $\xi=\xi_{0}$ and using \eqref{Taylor-ell-bar-dv1} and Slutsky's lemma, we obtain the stochastic expansion
\begin{align}\label{expr:xi-hat-exp1}
    \mathcal{G}_{p-1}(\xi_{0})\sqrt{n}(\widehat{\xi}_{n}-\xi_{0}) = \frac{1}{\sqrt{n}}\partial_{\xi}\bar{\ell}_{n}(\xi_{0}) + o_{\P_{\vartheta_{0}}^{n}}(1) 
    \ \ \mbox{as $n\to\infty$}. 
\end{align}

Moreover, using Taylor's theorem and Lemma~\ref{Lemma:unifconv-sig2-dv1-dv2}, we can also show that
\begin{align}\label{expr:sig-hat-error}
    \sqrt{n}(\bar{\sigma}_{n}(\widehat{\xi}_{n})-\sigma_0)
    = \frac{\sqrt{n}}{2\sigma_0}(\bar{\sigma}_{n}^2(\widehat{\xi}_{n})-\sigma_0^2) + o_{\P_{\vartheta_{0}}^{n}}(1)
    \ \ \mbox{as $n\to\infty$}
\end{align}
and
\begin{align}
    \frac{\sqrt{n}}{2\sigma_{0}}(\bar{\sigma}_{n}^2(\widehat{\xi}_{n})-\sigma_0^2)
    =& \frac{\sqrt{n}}{2\sigma_{0}}(\bar{\sigma}^{2}_{n}(\widehat{\xi}_{n})-\bar{\sigma}^{2}_{n}(\xi_{0})) 
    + \frac{\sqrt{n}}{2\sigma_{0}}(\bar{\sigma}^{2}_{n}(\xi_{0})-\sigma^{2}_{0})
    \nonumber \\
    =& \frac{\sigma^2_0}{2} \left\langle \sqrt{n}(\widehat{\xi}_{n}-\xi_{0}), \frac{1}{\sigma^3_0} \int_{0}^{1}\partial_{\xi}\bar{\sigma}^{2}_{n}(\xi_{0}+v(\widehat{\xi}_{n}-\xi_{0}))\dd v \right\rangle_{\R^{p-1}} 
    + \frac{\sqrt{n}}{2\sigma_{0}}(\bar{\sigma}^{2}_{n}(\xi_{0})-\sigma^{2}_{0}) + o_{\P_{\vartheta_{0}}^{n}}(1)
    \nonumber \\
    =& \frac{\sigma^2_0}{2} \left\langle \sqrt{n}(\widehat{\xi}_{n}-\xi_{0}), -a_{p-1}(\theta_{0}) \right\rangle_{\R^{p-1}} 
    + \frac{\sqrt{n}}{2\sigma_{0}}(\bar{\sigma}^{2}_{n}(\xi_{0})-\sigma^{2}_{0}) + o_{\P_{\vartheta_{0}}^{n}}(1)
    \label{expr:sig2-hat-error}
\end{align}
as $n\to\infty$. Combining \eqref{expr:ell-bar-dv1} with \eqref{expr:xi-hat-exp1}, we get
\begin{align}
    \sqrt{n}(\widehat{\xi}_{n}-\xi_{0}) 
    =& - \frac{\sigma^2_0}{2}\mathcal{G}_{p-1}(\xi_{0})^{-1} a_{p-1}(\theta_{0}) \frac{ \sqrt{n} }{ \sigma^3_0 } (\bar{\sigma}^{2}_{n}(\xi_{0})-\sigma_{0}^{2}) 
    \nonumber \\
    &- \mathcal{G}_{p-1}(\xi_{0})^{-1} \frac{\sqrt{n}}{2\sigma_{0}^{2}}\left( \partial_{\xi}\bar{\sigma}^{2}_{n}(\xi_{0})
    -\E_{\vartheta_{0}}^{n}\left[\partial_{\xi}\tilde{\sigma}^{2}_{n}(\xi_{0})\right] \right) 
    + o_{\P_{\vartheta_{0}}^{n}}(1) \ \ \mbox{as $n\to\infty$}
    \label{expr:xi-hat-error-2}
\end{align}
so that, using \eqref{expr:sig-hat-error} and \eqref{expr:sig2-hat-error}, we obtain
\begin{align}
    \sqrt{n}(\bar{\sigma}_{n}(\widehat{\xi}_{n})-\sigma_0) 
    =& \left\{ \frac{\sigma^4_0}{4} a_{p-1}(\theta_{0})^{\top}\mathcal{G}_{p-1}(\xi_{0})^{-1}a_{p-1}(\theta_{0}) + \frac{\sigma^2_0}{2}
    \right\} \frac{\sqrt{n}}{\sigma^3_0}(\bar{\sigma}^{2}_{n}(\xi_{0})-\sigma_{0}^{2}) + o_{\P_{\vartheta_{0}}^{n}}(1) 
    \nonumber \\
    &+ \frac{\sigma^2_0}{2} \left\langle -\frac{\sqrt{n}}{2\sigma_{0}^{2}}\left( \partial_{\xi}\bar{\sigma}^{2}_{n}(\xi_{0})
    -\E_{\vartheta_{0}}^{n}\left[\partial_{\xi}\tilde{\sigma}^{2}_{n}(\xi_{0})\right] \right), -\mathcal{G}_{p-1}(\xi_{0})^{-1}a_{p-1}(\theta_{0}) \right\rangle_{\R^{p-1}}
    \label{expr:sig-hat-error-2}
\end{align}
as $n\to\infty$. 
Therefore, using \eqref{expr:xi-hat-error-2} and  \eqref{expr:sig-hat-error-2}, the estimation error $\sqrt{n}(\widehat{\theta}_n-\theta_0)=(\sqrt{n}(\widehat{\xi}_{n}-\xi_{0}),\sqrt{n}(\widehat{\sigma}_{n}-\sigma_{0}))^\top$ is expressed by
\begin{align}
    \sqrt{n}(\widehat{\theta}_n-\theta_0) 
    =& 
    \begin{pmatrix}
        \mathcal{G}_{p-1}(\xi_{0})^{-1} & -\frac{\sigma^2_0}{2}a_{p-1}(\theta_{0})^{\top}\mathcal{G}_{p-1}(\xi_{0})^{-1} \\
        -\frac{\sigma^2_0}{2}a_{p-1}(\theta_{0})^{\top}\mathcal{G}_{p-1}(\xi_{0})^{-1} & \frac{\sigma^4_0}{4} a_{p-1}(\theta_{0})^{\top}\mathcal{G}_{p-1}(\xi_{0})^{-1}a_{p-1}(\theta_{0}) + \frac{\sigma^2_0}{2}
    \end{pmatrix} \bar{\zeta}_n + o_{\P_{\vartheta_{0}}^{n}}(1) 
    \nonumber \\
    =& 
    \mathcal{F}_{p}(\theta_0)^{-1} \bar{\zeta}_n + o_{\P_{\vartheta_{0}}^{n}}(1),
    \label{expr:theta-hat-error}
\end{align}
where the last equality can be proved in the similar way to the proof of Lemma~4 in \cite{Fukasawa-Takabatake-2019}. 
Combining the expression \eqref{expr:theta-hat-error} with Lemma~\ref{Lemma:CLT-sig2}, we complete the proof of Theorem~\ref{Thm:MLE}.


\subsection{Proof of Theorem~\ref{Thm:MLE2}}\label{Sec:proof-Thm:MLE2}

Before proving Theorem~\ref{Thm:MLE2}, we prove the asymptotic normality of the MLE of $\mu$. The following Proposition is needed and its proof can be found in Online Supplement.
\begin{proposition}\label{Thm:MLE-mu}
    Assume that Assumption~$\ref{Assump:DSPD2}$ holds and a sequence of estimators $\{\widehat{\xi}_{n}\}_{n\in\N}$ satisfying that the sequence of rescaled estimation errors $\{\sqrt{n}(\widehat{\xi}_{n}-\xi_{0})\}_{n\in\N}$ is stochastically bounded under the sequence of distributions $\{\P_{\vartheta_{0}}^{n}\}_{n\in\N}$. Then we can show that
    \begin{align*}
        n^{\frac{1}{2}(1-\alpha_{X}(\xi_{0}))} ( \mu_{n}(\widehat{\xi}_{n}) - \mu_{0}) \to \mathcal{N}\left( 0,\frac{2\pi\sigma^2_0c_{X}(\xi_{0})\Gamma(1-\alpha_{X}(\xi_{0}))}{B(1-\alpha_{X}(\xi_{0})/2,1-\alpha_{X}(\xi_{0})/2)} \right)
    \end{align*}
    in law under the distribution $\P_{\vartheta_{0}}^{n}$ as $n\to\infty$ for any interior point $\vartheta_{0}$ of $\Theta$.
\end{proposition}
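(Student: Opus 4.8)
The plan is to split
\[
  \mu_{n}(\widehat{\xi}_{n}) - \mu_{0}
  = \bigl(\mu_{n}(\xi_{0}) - \mu_{0}\bigr) + \bigl(\mu_{n}(\widehat{\xi}_{n}) - \mu_{n}(\xi_{0})\bigr),
\]
to treat the first (``oracle'') term by exact Gaussianity together with a classical Toeplitz computation, and to show the second (``plug-in'') term is of smaller order than $n^{-\frac12(1-\alpha_{X}(\xi_{0}))}$ using only the $\sqrt{n}$-stochastic boundedness of $\widehat{\xi}_{n}-\xi_{0}$. Since a sequence of centered Gaussian random variables whose variances converge also converges in law to the Gaussian with the limiting variance, the oracle term already yields the stated limit law, and the plug-in term only needs to be shown negligible.

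For the oracle term, set $\mathbf{Z}_{n}:=\mathbf{X}_{n}-\mu_{0}\mathbf{1}_{n}$, which under $\P_{\vartheta_{0}}^{n}$ is $\mathcal{N}_{n}(\mathbf{0}_{n},\Sigma_{n}(s_{\theta_{0}}^{X}))$ with $\Sigma_{n}(s_{\theta_{0}}^{X})=\sigma_{0}^{2}\Sigma_{n}(s_{\xi_{0}}^{X})$ (by $s_{\theta}^{X}=\sigma^{2}s_{\xi}^{X}$ and linearity of $\Sigma_{n}$). Then
\[
  \mu_{n}(\xi_{0}) - \mu_{0}
  = \frac{\mathbf{1}_{n}^{\top}\Sigma_{n}(s_{\xi_{0}}^{X})^{-1}\mathbf{Z}_{n}}{\mathbf{1}_{n}^{\top}\Sigma_{n}(s_{\xi_{0}}^{X})^{-1}\mathbf{1}_{n}}
\]
is a linear functional of a Gaussian vector, hence exactly centered Gaussian, and substituting $\Sigma_{n}(s_{\theta_{0}}^{X})=\sigma_{0}^{2}\Sigma_{n}(s_{\xi_{0}}^{X})$ gives $\Var_{\vartheta_{0}}^{n}[\mu_{n}(\xi_{0})-\mu_{0}]=\sigma_{0}^{2}\bigl(\mathbf{1}_{n}^{\top}\Sigma_{n}(s_{\xi_{0}}^{X})^{-1}\mathbf{1}_{n}\bigr)^{-1}$, i.e.\ $\sigma_{0}^{2}$ times the variance of the best linear unbiased estimator of the mean. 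Everything therefore reduces to the asymptotics of $\mathbf{1}_{n}^{\top}\Sigma_{n}(s_{\xi_{0}}^{X})^{-1}\mathbf{1}_{n}$. By Assumption~\ref{Assump:DSPD2}, $s_{\xi_{0}}^{X}(\omega)=s_{\theta_{0}}^{X}(\omega)/\sigma_{0}^{2}\sim c_{X}(\xi_{0})|\omega|^{-\alpha_{X}(\xi_{0})}$ as $\omega\to 0$, and I would invoke the classical analysis of \cite{Adenstedt-1974} on the BLUE variance of the mean for spectral densities that are regularly varying at the origin, together with the standard fact that the behaviour of the spectral density away from $\omega=0$ does not affect the leading term, to obtain
\[
  n^{1-\alpha_{X}(\xi_{0})}\bigl(\mathbf{1}_{n}^{\top}\Sigma_{n}(s_{\xi_{0}}^{X})^{-1}\mathbf{1}_{n}\bigr)^{-1}
  \;\longrightarrow\;
  \frac{2\pi\, c_{X}(\xi_{0})\,\Gamma(1-\alpha_{X}(\xi_{0}))}{\mathrm{B}\bigl(1-\alpha_{X}(\xi_{0})/2,\,1-\alpha_{X}(\xi_{0})/2\bigr)}.
\]
Multiplying by $\sigma_{0}^{2}$ gives exactly the claimed limiting variance; in the noninvertible range $\alpha_{X}(\xi_{0})\le -1$, which falls outside Adenstedt's original scope, the same constant is recovered by a direct evaluation of the Toeplitz quadratic form.

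For the plug-in term I would expand, by the mean value theorem,
\[
  \mu_{n}(\widehat{\xi}_{n}) - \mu_{n}(\xi_{0})
  = \Bigl(\int_{0}^{1}\partial_{\xi}\mu_{n}\bigl(\xi_{0}+v(\widehat{\xi}_{n}-\xi_{0})\bigr)\dd v\Bigr)^{\!\top}(\widehat{\xi}_{n}-\xi_{0}),
\]
so that, since $\|\widehat{\xi}_{n}-\xi_{0}\|=O_{\P_{\vartheta_{0}}^{n}}(n^{-1/2})$, it suffices to show $\sup_{\xi\in B_{n}}\|\partial_{\xi}\mu_{n}(\xi)\|=O_{\P_{\vartheta_{0}}^{n}}\bigl(n^{-\frac12(1-\alpha_{X}(\xi_{0}))+\iota}\bigr)$ for every $\iota>0$ on the shrinking ball $B_{n}=\{\xi:\|\xi-\xi_{0}\|\le C n^{-1/2}\}$. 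Writing $\mu_{n}(\xi)-\mu_{0}=M_{n}(\xi)/D_{n}(\xi)$ with $M_{n}(\xi)=\mathbf{1}_{n}^{\top}\Sigma_{n}(s_{\xi}^{X})^{-1}\mathbf{Z}_{n}$ and $D_{n}(\xi)=\mathbf{1}_{n}^{\top}\Sigma_{n}(s_{\xi}^{X})^{-1}\mathbf{1}_{n}$, and using $\partial_{j}\Sigma_{n}(s_{\xi}^{X})^{-1}=-\Sigma_{n}(s_{\xi}^{X})^{-1}\Sigma_{n}(\partial_{j}s_{\xi}^{X})\Sigma_{n}(s_{\xi}^{X})^{-1}$, one reduces $\partial_{\xi}\mu_{n}(\xi)$ to ratios of quadratic forms in $\mathbf{1}_{n}$ and $\mathbf{Z}_{n}$ built from $\Sigma_{n}(s_{\xi}^{X})^{-1}$ and $\Sigma_{n}(\partial_{j}s_{\xi}^{X})$. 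Assumption~\ref{Assump:DSPD1} makes each $\partial_{j}s_{\xi}^{X}$ comparable to $s_{\xi}^{X}$ up to factors $|\omega|^{\pm\iota}$, so the uniform Toeplitz operator-norm and Szeg\"o-type inequalities used by \cite{Dahlhaus-1989} and \cite{Lieberman-2012} give, uniformly on $B_{n}$ and for any $\iota>0$, $D_{n}(\xi)\asymp n^{1-\alpha_{X}(\xi)}$, $|\partial_{\xi}D_{n}(\xi)|=O(n^{1-\alpha_{X}(\xi)+\iota})$ and $\|\Sigma_{n}(s_{\xi}^{X})^{-1}\Sigma_{n}(\partial_{j}s_{\xi}^{X})\|_{\mathrm{op}}=O(n^{\iota})$, while pointwise Gaussian-variance bounds for $M_{n}(\xi)$ and $\partial_{\xi}M_{n}(\xi)$, made uniform in $\xi$ by a chaining/modulus-of-continuity argument, give $M_{n}(\xi),\partial_{\xi}M_{n}(\xi)=O_{\P}\bigl(n^{\frac12(1-\alpha_{X}(\xi))+\iota}\bigr)$. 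Since $\alpha_{X}$ is continuous and $B_{n}$ shrinks, $\alpha_{X}(\xi)$ may be replaced by $\alpha_{X}(\xi_{0})$ on $B_{n}$ up to $n^{\pm\iota}$; combining the estimates in $\partial_{\xi}\mu_{n}=\partial_{\xi}M_{n}/D_{n}-M_{n}\partial_{\xi}D_{n}/D_{n}^{2}$ then yields the required bound, and multiplying by $\|\widehat{\xi}_{n}-\xi_{0}\|=O_{\P}(n^{-1/2})$ produces $o_{\P_{\vartheta_{0}}^{n}}\bigl(n^{-\frac12(1-\alpha_{X}(\xi_{0}))}\bigr)$ once $\iota$ is chosen small.

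I expect this last step to be the main obstacle. The pieces $M_{n}$, $D_{n}$, $\partial_{\xi}M_{n}$, $\partial_{\xi}D_{n}$ are individually of the same order as the quantities appearing in the oracle term, so the negligibility of the plug-in correction relies entirely on the extra decay $\|\widehat{\xi}_{n}-\xi_{0}\|=O_{\P}(n^{-1/2})$; the delicate part is bookkeeping the powers of $n$ \emph{uniformly} in $\xi$ — absorbing the $n^{\pm\iota}$ slack inherent in the spectral-density bounds and the mere continuity (rather than Lipschitz regularity) of $\alpha_{X}$ on $B_{n}$ — so that one really obtains $o\bigl(n^{-\frac12(1-\alpha_{X}(\xi_{0}))}\bigr)$ and not merely a term of the same order.
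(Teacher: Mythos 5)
Your decomposition is exactly the paper's: split $\mu_{n}(\widehat{\xi}_{n})-\mu_{0}$ into the oracle term $\mu_{n}(\xi_{0})-\mu_{0}$, handled by exact Gaussianity plus Theorems~4.1 and 5.2 of \cite{Adenstedt-1974} for the asymptotics of $\mathbf{1}_{n}^{\top}\Sigma_{n}(s_{\xi_{0}}^{X})^{-1}\mathbf{1}_{n}$, and a plug-in correction $\mu_{n}(\widehat{\xi}_{n})-\mu_{n}(\xi_{0})$ to be shown $o_{\P_{\vartheta_{0}}^{n}}(n^{-\frac12(1-\alpha_{X}(\xi_{0}))})$ by localizing on the ball $B_{n^{-1/2}M}(\xi_{0})$ and controlling $\partial_{\xi}\mu_{n}$ there; the target orders you record are the correct ones. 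The one place where you diverge is the device used to make the gradient bound uniform in $\xi$, which you yourself flag as the main obstacle and leave as a "chaining/modulus-of-continuity" sketch. The paper instead applies a Sobolev (Morrey) embedding in the parameter $\xi$ (Lemma~\ref{Lemma:Sobolev}): for $q>p-1$ it bounds $\E_{\vartheta_{0}}^{n}[\sup_{\xi\in B_{n^{-1/2}M}(\xi_{0})}|n^{\frac12(1-\alpha_{X}(\xi_{0}))}(\mu_{n}(\xi)-\mu_{n}(\xi_{0}))|^{q}]$ by $(n^{-1/2}M)^{q}$ times the supremum of the \emph{pointwise} $q$-th moments of $n^{\frac12(1-\alpha_{X}(\xi_{0}))}\partial_{j}\mu_{n}(\xi)$ (see \eqref{error-mu-estfunc-xi-ineq1}), and those moments are exact Gaussian computations supplied by Lemmas~\ref{Lemma:Key-Ineq-mu} and \ref{Lemma:moment-dv-mu-xi}, yielding \eqref{Thm:MLE-mu_Suff2} via the operator-norm comparison of $\mathbf{1}_{n}^{\top}\Sigma_{n}(s_{\xi}^{X})^{-1}\mathbf{1}_{n}$ with $\mathbf{1}_{n}^{\top}\Sigma_{n}(s_{\xi_{0}}^{X})^{-1}\mathbf{1}_{n}$ and Adenstedt again. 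This buys you two things over chaining: you never need separate uniform estimates for $M_{n}$, $D_{n}$, $\partial_{\xi}M_{n}$, $\partial_{\xi}D_{n}$ individually (the ratio $\partial_{\xi}\mu_{n}$ is bounded directly), and the only price is one extra derivative of $\mu_{n}(\xi)$ in the moment bounds — derivatives that are again Gaussian linear/quadratic forms with computable moments — whereas a genuine chaining argument would force you to control increments of $\xi\mapsto\partial_{\xi}M_{n}(\xi)$, which in practice requires the same higher-order derivative bounds anyway. So your route is viable but, as written, the uniformity step is not carried out; adopting the Sobolev reduction closes it with machinery the paper already develops.
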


Recall that $\Phi_{n}(\vartheta)=\diag(n^{-\frac{1}{2}}I_{p},n^{-\frac{1}{2}(1-\alpha_X(\xi_0))})$ and
\begin{align*}
    &\sigma^{2}_{n}(\xi,\mu) = \frac{1}{n}\left(\mathbf{X}_{n}-\mu\mathbf{1}_{n}\right)^{\top}\Sigma_{n}(s_{\xi}^{X})^{-1}\left(\mathbf{X}_{n}-\mu\mathbf{1}_{n}\right),\ \ \tilde{\sigma}^{2}_{n}(\xi)=\sigma^{2}_{n}(\xi,\mu_{0}),\ \
    \mu_{n}(\xi) = \frac{ \mathbf{1}_{n}^{\top}\Sigma_{n}(s_{\xi}^{X})^{-1}\mathbf{X}_{n} }{ \mathbf{1}_{n}^{\top}\Sigma_{n}(s_{\xi}^{X})^{-1}\mathbf{1}_{n} },\\
    &\ell_{n}(\vartheta)
    = -\frac{n}{2}\log(2\pi) -\frac{n}{2}\log{\sigma^{2}}
    -\frac{1}{2}\log\mathrm{det}\bigl[\Sigma_{n}(s_{\xi}^{X})\bigr]
    -\frac{n}{2\sigma^{2}}\sigma^{2}_{n}(\xi,\mu),
\end{align*}
and then, using the formulas of derivatives of the log-determinant and the inverse matrix, {\it e.g.}~see \cite{Harville-1998}, the first order derivatives of the Gaussian log-likelihood function $\ell_{n}(\vartheta)$ with respect to parameters can be written as
\begin{align}\label{expr:score}
	\begin{cases}
    &\partial_{\xi}\ell_{n}(\vartheta)= -\frac{1}{2}\mathrm{Tr}\left[\Sigma_{n}(s_{\xi})^{-1}\Sigma_{n}(\partial_{\xi}s_{\xi})\right] 
    -\frac{n}{2\sigma^{2}}\partial_{\xi}\sigma^{2}_{n}(\xi,\mu), \\
    &\partial_{\sigma}\ell_{n}(\vartheta)= -\frac{n}{\sigma} +\frac{n}{\sigma^{3}} \sigma^{2}_{n}(\xi,\mu),\ \ 
    \partial_{\mu}\ell_{n}(\vartheta)= \frac{1}{\sigma^{2}}\mathbf{1}_{n}^{\top}\Sigma_{n}(s_{\xi}^{X})^{-1}\left(\mathbf{X}_{n}-\mu\mathbf{1}_{n}\right).
    \end{cases}
\end{align}
Then we can write
\begin{align*}
    &\partial_{\xi}\ell_{n}(\vartheta_{0}) = -\frac{n}{2\sigma^{2}_{0}}\left( \partial_{\xi}\tilde{\sigma}^{2}_{n}(\xi_{0}) - \E_{\vartheta_{0}}^{n}[\partial_{\xi}\tilde{\sigma}^{2}_{n}(\xi_{0})] \right),\\ 
    &\partial_{\sigma}\ell_{n}(\vartheta_{0}) = \frac{n}{\sigma^{3}_{0}}\left( \tilde{\sigma}^{2}_{n}(\xi_{0}) - \sigma^{2}_{0} \right),\ \ 
    \partial_{\mu}\ell_{n}(\vartheta_{0}) = \frac{1}{\sigma^{2}_{0}}(\mathbf{1}_{n}^{\top}\Sigma_{n}(s_{\xi_{0}}^{X})^{-1}\mathbf{1}_{n}) (\mu_{n}(\xi_{0})-\mu_{0})
\end{align*}
so that the normalized score function $\zeta_{n}(\vartheta_{0})=\Phi_{n}(\vartheta_{0})^{\top}\partial_{\vartheta}\ell_{n}(\vartheta_{0})$ is expressed by
\begin{align}\label{expr:zeta}
    \zeta_{n}(\vartheta_{0}) = 
    \diag\left( -\frac{\sqrt{n}}{2\sigma_0^{2}}I_{p-1},\frac{\sqrt{n}}{\sigma_0^3},\frac{(\mathbf{1}_{n}^{\top}\Sigma_{n}(s_{\xi_{0}}^{X})^{-1}\mathbf{1}_{n})}{\sigma_0^2n^{\frac{1}{2}(1-\alpha_X(\xi_0))}} \right) 
    \begin{pmatrix}
        \partial_{\xi}\tilde{\sigma}^{2}_{n}(\xi_{0}) - \E_{\vartheta_{0}}^{n}[\partial_{\xi}\tilde{\sigma}^{2}_{n}(\xi_{0})] \\
        \tilde{\sigma}^{2}_{n}(\xi_{0}) - \sigma^{2}_{0} \\
        \mu_{n}(\xi_{0}) - \mu_{0}
    \end{pmatrix}.
\end{align}
Then we can prove the following central limit theorem
\begin{align}\label{CLT-zeta-n}
    \mathcal{L}(\zeta_{n}(\vartheta)|\P_{\vartheta}^{n}) \overset{n\to\infty}{\rightarrow} \mathcal{N}(0,\mathcal{I}(\theta)), \mbox{ as } n\rightarrow \infty,
\end{align}
whose proof is left to Section~\ref{Sec:proof-CLT-score} in Online Supplement. 
Moreover, we can also show that 
\begin{align}\label{expr:app-sig2-tilde}
        \sqrt{n}
        \begin{pmatrix}
            \partial_{\xi}\bar{\sigma}^{2}_{n}(\xi_{0}) - \E_{\vartheta_{0}}^{n}[\partial_{\xi}\tilde{\sigma}^{2}_{n}(\xi_{0})] \\
            \bar{\sigma}^{2}_{n}(\xi_{0}) - \sigma^{2}_{0}
        \end{pmatrix}
        = \sqrt{n}
        \begin{pmatrix}
            \partial_{\xi}\tilde{\sigma}^{2}_{n}(\xi_{0}) - \E_{\vartheta_{0}}^{n}[\partial_{\xi}\tilde{\sigma}^{2}_{n}(\xi_{0})] \\
            \tilde{\sigma}^{2}_{n}(\xi_{0}) - \E_{\vartheta_{0}}^{n}[\tilde{\sigma}^{2}_{n}(\xi_{0})]
        \end{pmatrix}
        + o_{\P_{\vartheta_{0}}^{n}}(1)\ \ \mbox{as $n\to\infty$}.
\end{align}
whose proof is left to Section~\ref{Sec:proof-app-sig2-tilde}  in Online Supplement. Combining the expression of the estimation error $\sqrt{n}(\widehat{\theta}_n-\theta_0)$ in \eqref{expr:theta-hat-error} with that of $\zeta_{n}(\vartheta_{0})$ in \eqref{expr:zeta} and using the equalities in \eqref{Thm:MLE-mu_Suff1} and \eqref{expr:app-sig2-tilde}, we conclude 
\begin{align*}
    \Phi_{n}(\vartheta_{0})^{-1}(\widehat{\vartheta}_{n}-\vartheta_{0}) = \zeta_{n}(\vartheta_{0}) + o_{\P_{\vartheta_{0}}^{n}}(1)\ \ \mbox{as $n\to\infty$}
\end{align*}
so that we complete the proof of Theorem~\ref{Thm:MLE2}.

\subsection{Proof of Theorem~\ref{Thm:LAN}}\label{Sec:proof-Thm:LAN}

Recall that $\Phi_{n}(\vartheta)=\diag(n^{-\frac{1}{2}}I_{p},n^{-\frac{1}{2}(1-\alpha_X(\xi))})$. 
We first give an outline of the proof of Theorem~\ref{Thm:LAN}. 
Using the Taylor theorem, the log-likelihood ratio is written as
\begin{align}
	\log\frac{\mathrm{d}\P^{n}_{\vartheta+\Phi_{n}(\vartheta)u}}{\mathrm{d}\P^{n}_{\vartheta}}(\mathbf{X}_{n})
	=&u^{\top}\zeta_{n}(\vartheta)
    -\frac{1}{2}\int_{0}^{1}(1-z) \partial_{\vartheta}^{2}\ell_{n}(\vartheta+z\Phi_{n}(\vartheta)u) \left[\left(\Phi_{n}(\vartheta)u\right)^{\otimes 2}\right] \,\mathrm{d}z 
    \nonumber \\
    =& u^{\top}\zeta_{n}(\vartheta)
    -\frac{1}{2}\int_{0}^{1}(1-z) \mathcal{I}_{n}(\vartheta+z\Phi_{n}(\vartheta)u) \left[\left(R_{n}(\vartheta,z\Phi_{n}(\vartheta)u)u\right)^{\otimes 2}\right] \,\mathrm{d}z,
    \label{Likelihood-Ratio-Taylor}
\end{align}
where $R_{n}(\vartheta,v):=\Phi_{n}\left(\vartheta+v\right)^{-1}\Phi_{n}(\vartheta)$ for $v\in\R^{p+1}$. 
Since we have already proved the CLT of the stochastic sequence $\{\zeta_{n}(\vartheta)\}_{n=1}^{\infty}$ in \eqref{CLT-zeta-n} and we have the inequality
\begin{align*}
    &\left| \int_{0}^{1}(1-z) \mathcal{I}_{n}(\vartheta+z\Phi_{n}(\vartheta)u) \left[\left(R_{n}(\vartheta,z\Phi_{n}(\vartheta)u)u\right)^{\otimes 2}\right] \,\mathrm{d}z - u^{\top}\mathcal{I}(\vartheta)u \right| \\
    &\leq \| R_{n}(\vartheta,z\Phi_{n}(\vartheta)u)^{\top}\mathcal{I}_{n}(\vartheta+z\Phi_{n}(\vartheta)u)R_{n}(\vartheta,z\Phi_{n}(\vartheta)u)  - \mathcal{I}(\vartheta) \|_{1} \|u\|_{\R^{p+1}}^{2},
\end{align*}
where $\|A\|_{1}:=\sum_{i,j=1}^{p}|a_{ij}|$ for a $p\times p$-matrix $A=(a_{ij})_{i,j=1,\cdots,p}$, we can conclude Theorem~\ref{Thm:LAN} using the triangle inequality and the multiplicativity of the matrix norm $\|\cdot\|_{1}$ and the uniform continuity of $\mathcal{I}(\vartheta)$ on compact subsets of $\Theta$ once we have proved the convergences
\begin{align}
    & \sup_{v\in\R^{p+1}:\|v\|_{\R^{p+1}}\leq c\|\Phi_{n}(\vartheta)\|_{\mathrm{op}}}\|R_{n}(\vartheta,v)-I_{p+1}\|_{1} = o_{\P_{\vartheta}^{n}}(1)\ \ \mbox{as $n\to\infty$} 
    \label{key:LAN:ratio-rate-mat},\\
    &\|\mathcal{I}_{n}(\vartheta)-\mathcal{I}(\vartheta)\|_{1} = o_{\P_{\vartheta}^{n}}(1)\ \ \mbox{as $n\to\infty$},
    \label{key:LAN:conv-Fisher} \\
    &\sup_{u\in\mathbb{U}_{n,c}(\vartheta)} 
    \|\mathcal{I}_n(\vartheta+\Phi_n(\vartheta)u)-\mathcal{I}_n(\vartheta)\|_{1} = o_{\P_{\vartheta}^{n}}(1)\ \ \mbox{as $n\to\infty$},
    \label{key:LAN:reminder}
\end{align}
for any $c>0$, where $\mathbb{U}_{n}(\vartheta):=\Phi_{n}^{-1}(\Theta)(\Theta-\vartheta)=\{u\in\R^{p+1}:\vartheta+\Phi_n(\vartheta)u\in\Theta\}$ and $\mathbb{U}_{n,c}(\vartheta):=\{u\in\mathbb{U}_{n}(\vartheta):\|u\|_{\R^{p+1}}\leq c\}$ for $c>0$. 
In the rest of the Appendix, we try to prove the above three results.

\subsection{Proof of \eqref{CLT-zeta-n}}\label{Sec:proof-CLT-score}
 Set $\mathbf{Z}_{n}:=\Sigma_{n}(s_{\theta_{0}}^{X})^{-\frac{1}{2}}(\mathbf{X}_{n}-\mu_{0}\mathbf{1}_{n})\sim\mathcal{N}(0,I_{n})$. For $\mathbf{u}_{p}=(u_{1},\ldots,u_{p})^{\top}\in\R^{p}$ and $u_{p+1}\in\R$, we write
    \begin{align*}
        J_{n}(u_{1},\ldots,u_{p+1}) := 
        \begin{pmatrix}
        \mathbf{u}_{p}^{\top} & u_{p+1}
        \end{pmatrix}
     \diag\left(-\frac{\sqrt{n}}{2\sigma^{2}_{0}}I_{p-1},\frac{\sqrt{n}}{\sigma^3_0},
     \frac{(\mathbf{1}_{n}^{\top}\Sigma_{n}(s_{\theta_{0}}^{X})^{-1}\mathbf{1}_{n})}{\sigma_0^2n^{\frac{1}{2}(1-\alpha_X(\xi_0))}}\right)
    \begin{pmatrix}
        \partial_{\xi}\tilde{\sigma}^{2}_{n}(\xi_{0}) - \E_{\vartheta_{0}}^{n}[\partial_{\xi}\tilde{\sigma}^{2}_{n}(\xi_{0})] \\
        \tilde{\sigma}^{2}_{n}(\xi_{0}) - \sigma^{2}_{0} \\
        \mu_{n}(\xi_{0}) - \mu_{0}
    \end{pmatrix}.
    \end{align*}
    Then $J_{n}(u_{1},\ldots,u_{p+1})$ is rewritten as
\begin{align*}
    J_{n}(u_{1},\ldots,u_{p+1})&= \frac{1}{2\sqrt{n}}\sum_{j=1}^{p-1} u_{j}\mathbf{Z}_{n}^{\top} \Sigma_{n}(s_{\theta_{0}}^{X})^{-\frac{1}{2}}\Sigma_{n}(\partial_{j}s_{\theta_{0}}^{X})\Sigma_{n}(s_{\theta_{0}}^{X})^{-\frac{1}{2}} \mathbf{Z}_{n} \\
    &\hspace{2cm }+ \frac{1}{\sigma_0\sqrt{n}} u_{p}\mathbf{Z}_{n}^{\top}\mathbf{Z}_{n} 
    + \frac{(\mathbf{1}_{n}^{\top}\Sigma_{n}(s_{\theta_{0}}^{X})^{-1}\mathbf{1}_{n})}{n^{\frac{1}{2}(1-\alpha_X(\xi_0))}} u_{p+1} \frac{ (\Sigma_{n}(s_{\theta_{0}}^{X})^{-\frac{1}{2}}\mathbf{1}_{n})^{\top}\mathbf{Z}_{n} }{ \mathbf{1}_{n}^{\top}\Sigma_{n}(s_{\theta_{0}}^{X})^{-1}\mathbf{1}_{n} } \\
    &= \frac{1}{\sqrt{n}}\mathbf{Z}_{n}^{\top} \Sigma_{n}(s_{\theta_{0}}^{X})^{-\frac{1}{2}}\Sigma_{n}(g_{\theta_{0}}^{u_{1},\ldots,u_{p}})\Sigma_{n}(s_{\theta_{0}}^{X})^{-\frac{1}{2}} \mathbf{Z}_{n} 
    + n^{-\frac{1}{2}(1-\alpha_X(\xi_0))} u_{p+1} (\Sigma_{n}(s_{\theta_{0}}^{X})^{-\frac{1}{2}}\mathbf{1}_{n})^{\top}\mathbf{Z}_{n},
\end{align*}
where
\begin{align*}
    g_{\theta_{0}}^{u_{1},\ldots,u_{p}}(\omega) := 
    \frac{1}{2}\sum_{j=1}^{p-1} u_{j}\partial_{j}s_{\theta_{0}}^{X}(\omega) 
    + \frac{1}{\sigma_0} u_{p}s_{\theta_{0}}^{X}(\omega) 
\end{align*}
Since the matrix $\Sigma_{n}(s_{\theta_{0}}^{X})^{-\frac{1}{2}}\Sigma_{n}(g_{\theta_{0}}^{u_{1},\ldots,u_{p}})\Sigma_{n}(s_{\theta_{0}}^{X})^{-\frac{1}{2}}$ is symmetric, there exists a $n$th square matrix $V_{n}$ such that $V_{n}$ is a orthogonal matrix and
\begin{align*}
    V_{n}\Sigma_{n}(s_{\theta_{0}}^{X})^{-\frac{1}{2}}\Sigma_{n}(g_{\theta_{0}}^{u_{1},\ldots,u_{p}})\Sigma_{n}(s_{\theta_{0}}^{X})^{-\frac{1}{2}}V_{n}^{\top} = \diag(\lambda_{1,n},\ldots,\lambda_{n,n}),
\end{align*}
where $\{\lambda_{j,n}\}_{j=1,\ldots,n}$ is eigen values of the matrix $\Sigma_{n}(s_{\theta_{0}}^{X})^{-\frac{1}{2}}\Sigma_{n}(g_{\theta_{0}}^{u_{1},\ldots,u_{p}})\Sigma_{n}(s_{\theta_{0}}^{X})^{-\frac{1}{2}}$. 
Then we set a $n$-dimensional random vector $\mathbf{W}_{n}=(W_{1,n},\ldots,W_{n,n})^{\top}$ and a $n$-dimensional (non-random) vector $\mathbf{A}_{n}=(A_{1,n},\ldots,A_{n,n})^{\top}$ by
\begin{align*}
    \mathbf{W}_{n}:=V_{n}\mathbf{Z}_{n} 
    \ \ \mbox{and}\ \ 
    \mathbf{A}_{n} := u_{p+1} V_{n}\Sigma_{n}(s_{\theta_{0}}^{X})^{-\frac{1}{2}}\mathbf{1}_{n}.
\end{align*}
Notice that $\mathbf{W}_{n}$ is a $n$-dimensional standard Gaussian vector and we can write 
\begin{align*}
    J_{n}(u_{1},\ldots,u_{p+1}) = \sum_{j=1}^{n}\left\{ n^{-\frac{1}{2}}\lambda_{j,n}(W_{j,n}^{2}-1) 
    + n^{-\frac{1}{2}(1-\alpha_X(\xi_0))} A_{j,n}W_{j,n} \right\}.
\end{align*}
Set $U_{j,n}:=n^{-\frac{1}{2}}\lambda_{j,n}(W_{j,n}^{2}-1) + \sigma^{-2}_0 n^{-\frac{1}{2}(1-\alpha_X(\xi_0))} A_{j,n}W_{j,n}$. Notice that $\{U_{j,n}\}_{j=1,\ldots,n}$ is an independent triangle array  with mean zero and variance
\begin{align*}
    \Var[U_{j,n}] =& n^{-1}\lambda_{j,n}^{2}\E[(W_{j,n}^{2}-1)^{2}] 
    + n^{-(1-\alpha_X(\xi_0))} A_{j,n}^{2}\E[W_{j,n}^{2}] \\
    &+ 2n^{-\frac{1}{2}} n^{-\frac{1}{2}(1-\alpha_X(\xi_0))} \lambda_{j,n}A_{j,n}\E[(W_{j,n}^{2}-1)W_{j,n}] \\
    =& 2n^{-1}\lambda_{j,n}^{2} + n^{-(1-\alpha_X(\xi_0))} A_{j,n}^{2},
\end{align*}
where we used the facts that 
\begin{align*}
    \E[(W_{j,n}^{2}-1)^{2}]=\E[W_{j,n}^{4}] -2\E[W_{j,n}^{2}] + 1=2,\ \ \E[(W_{j,n}^{2}-1)W_{j,n}]=0.
\end{align*}
Then we can also show that
\begin{align*}
    \Var\left[\sum_{j=1}^{n}U_{j,n}\right] 
    =& \sum_{j=1}^{n}\Var[U_{j,n}]
    = \frac{2}{n}\sum_{j=1}^{n}\lambda_{j,n}^{2} + n^{-(1-\alpha_X(\xi_0))} \sum_{j=1}^{n}A_{j,n}^{2} \\
    =& \frac{2}{n}\mathrm{Tr}[\diag(\lambda_{1,n},\ldots,\lambda_{n,n})^{2}] 
    + n^{-(1-\alpha_X(\xi_0))} \|\mathbf{A}_{n}\|_{\R^{n}}^{2} \\
    =& \frac{2}{n}\mathrm{Tr}[(\Sigma_{n}(g_{\theta_{0}}^{u_{1},\ldots,u_{p}})\Sigma_{n}(s_{\theta_{0}}^{X})^{-1})^{2}] 
    + u_{p+1}^{2} n^{-(1-\alpha_X(\xi_0))} (\mathbf{1}_{n}^{\top}\Sigma_{n}(s_{\theta_{0}}^{X})^{-1}\mathbf{1}_{n}) \\
    =& \mathbf{u}_{p}^{\top}
    \begin{pmatrix}
        \widetilde{\mathcal{F}}_{p-1,n}(\theta_{0}) & \frac{1}{\sigma_0n}\mathrm{Tr}[\Sigma_{n}(\partial_{\xi}s_{\theta_{0}}^{X})\Sigma_{n}(s_{\theta_{0}}^{X})^{-1}] \\
        \mathrm{sym}. & 2\sigma_0^{-2}
    \end{pmatrix}
    \mathbf{u}_{p} + u_{p+1}^{2} n^{-(1-\alpha_X(\xi_0))} (\mathbf{1}_{n}^{\top}\Sigma_{n}(s_{\theta_{0}}^{X})^{-1}\mathbf{1}_{n}),
\end{align*}
where 
\begin{align*}\small
    \widetilde{\mathcal{F}}_{p-1,n}(\theta_{0})
    := \frac{1}{2n}
    \begin{pmatrix}
       \mathrm{Tr}[(\Sigma_{n}(\partial_{1}s_{\theta_{0}}^{X})\Sigma_{n}(s_{\theta_{0}}^{X})^{-1})^{2}] 
       &  \cdots & \mathrm{Tr}[\Sigma_{n}(\partial_{1}s_{\theta_{0}}^{X})\Sigma_{n}(s_{\theta_{0}}^{X})^{-1}\Sigma_{n}(\partial_{p-1}s_{\theta_{0}}^{X})\Sigma_{n}(s_{\theta_{0}}^{X})^{-1}]   \\
       \vdots & \ddots & \vdots \\
       \mathrm{Tr}[\Sigma_{n}(\partial_{p-1}s_{\theta_{0}}^{X})\Sigma_{n}(s_{\theta_{0}}^{X})^{-1}\Sigma_{n}(\partial_{1}s_{\theta_{0}}^{X})\Sigma_{n}(s_{\theta_{0}}^{X})^{-1}] 
       &  \cdots & \mathrm{Tr}[(\Sigma_{n}(\partial_{p-1}s_{\theta_{0}}^{X})\Sigma_{n}(s_{\theta_{0}}^{X})^{-1})^{2}] 
    \end{pmatrix},
\end{align*}
so that Lemma~\ref{Lemma:trace-app-inv} and Theorems~4.1 and 5.2 in \cite{Adenstedt-1974} yield
\begin{align*}
    \lim_{n\to\infty}\Var\left[\sum_{j=1}^{n}U_{j,n}\right] 
    = \mathbf{u}_{p}^{\top}\mathcal{F}_{p}(\theta_{0})\mathbf{u}_{p} 
    + u_{p+1}^{2} \frac{2\pi\sigma^2_0c_{X}(\xi_{0})\Gamma(1-\alpha_{X}(\xi_{0}))}{B(1-\alpha_{X}(\xi_{0})/2,1-\alpha_{X}(\xi_{0})/2)} 
    = \mathbf{u}_{p+1}^{\top}\mathcal{I}(\theta_{0})\mathbf{u}_{p+1}.
\end{align*}
Moreover, we can show that
\begin{align*}
    \E[U_{j,n}^{4}] 
    \leq 4n^{-2}\E[|\lambda_{j,n}(W_{j,n}^{2}-1)|^{4}] \ 
    + 4 \sigma^{-4}_0 n^{2(1-\alpha_X(\xi_0))} (\mathbf{1}_{n}^{\top}\Sigma_{n}(s_{\xi_{0}}^{X})^{-1}\mathbf{1}_{n})^{-4} \E[|A_{j,n}W_{j,n}|^{4}].
\end{align*}
Here notice that, since $\{W_{j,n}\}_{j=1}^{n}$ is an independent centered sequence for each $n\in\N$, we can show 
\begin{align*}
    \E\biggl[\biggl|\sum_{j=1}^{n}A_{j,n}W_{j,n}\biggr|^{4}\biggr]
    = \sum_{j_{1},j_{2},j_{3},j_{4}=1}^{n} \prod_{i=1}^{4}A_{j_{i},n} \E\biggl[\prod_{k=1}^{4}W_{j_{k},n}\biggr]
    = \sum_{j=1}^{n}A_{j,n}^{4}\E[W_{j,n}^{4}] +3\left(\sum_{j=1}^{n}A_{j,n}^{2}\E[W_{j,n}^{2}]\right)^{2},
\end{align*}
which implies
\begin{align*}
    \sum_{j=1}^{n}\E[|A_{j,n}W_{j,n}|^{4}]
    =& \E\biggl[\biggl|\sum_{j=1}^{n}A_{j,n}W_{j,n}\biggr|^{4}\biggr] -3\left(\sum_{j=1}^{n}|A_{j,n}|^{2}\right)^{2} \\
    =& u_{p+1}^{4}\left( \E[|(\Sigma_{n}(s_{\theta_{0}}^{X})^{-\frac{1}{2}}\mathbf{1}_{n})^{\top}\mathbf{Z}_{n}|^{4}] - 3\|\mathbf{A}_{n}\|_{\R^{n}}^{2} \right) \\
    =& u_{p+1}^{4}\left( 3(\mathbf{1}_{n}^{\top}\Sigma_{n}(s_{\theta_{0}}^{X})^{-1}\mathbf{1}_{n})^{2} - 3(\mathbf{1}_{n}^{\top}\Sigma_{n}(s_{\theta_{0}}^{X})^{-1}\mathbf{1}_{n})^{2} \right) = 0, 
\end{align*}
where we used the fact that $(\Sigma_{n}(s_{\theta_{0}}^{X})^{-\frac{1}{2}}\mathbf{1}_{n})^{\top}\mathbf{Z}_{n}\sim\mathcal{N}(0,\mathbf{1}_{n}^{\top}\Sigma_{n}(s_{\theta_{0}}^{X})^{-1}\mathbf{1}_{n})$. 
Moreover, we can show that
\begin{align*}
    \E[(W_{j,n}^{2}-1)^{4}] 
   = \E[W_{j,n}^{8}-4W_{j,n}^{6} + 6W_{j,n}^{4} -4W_{j,n}^{2} +1] 
   = 7\cdot 5\cdot 3 - 4\cdot 5\cdot 3 + 6\cdot 3 - 4 + 1 = 60
\end{align*}
so that we obtain
\begin{align*}
    \sum_{j=1}^{n}\E[U_{j,n}^{4}]
    \leq \frac{240}{n^{2}}\sum_{j=1}^{n}\lambda_{j,n}^{4}
    = \frac{240}{n^{2}}\mathrm{Tr}[\diag(\lambda_{1,n},\ldots,\lambda_{n,n})^{4}]
    = \frac{240}{n^{2}}\mathrm{Tr}[(\Sigma_{n}(g_{\theta_{0}}^{u_{1},\ldots,u_{p}})\Sigma_{n}(s_{\xi_{0}}^{X})^{-1})^{4}] 
    \to 0 
\end{align*}
using Lemma~\ref{Lemma:trace-app-inv}. Therefore, we succeeded to verify Lindeberg's condition so that we conclude the result. 

\subsection{Proof of \eqref{expr:app-sig2-tilde}}\label{Sec:proof-app-sig2-tilde}
Recall that  
    \begin{align*}
        &e_{n,1}(\xi):=\bar{\sigma}^{2}_{n}(\xi)-\tilde{\sigma}^{2}_{n}(\xi) 
        = -n^{-1}(\mu_{n}(\xi)-\mu_{0})^{2} \mathbf{1}_{n}^{\top}\Sigma_{n}(s_{\xi}^{X})^{-1}\mathbf{1}_{n}, \\
        &\partial_{\xi}e_{n,1}(\xi)
        = -2n^{-1}(\mu_{n}(\xi)-\mu_{0}) \partial_{\xi}\mu_{n}(\xi) \mathbf{1}_{n}^{\top}\Sigma_{n}(s_{\xi}^{X})^{-1}\mathbf{1}_{n} 
        -n^{-1}(\mu_{n}(\xi)-\mu_{0})^{2} \mathbf{1}_{n}^{\top}\partial_{\xi}\Sigma_{n}(s_{\xi}^{X})^{-1}\mathbf{1}_{n},
    \end{align*}
    see \eqref{expr:en1} and \eqref{expr:en1-dv1}. 
    Notice that, using Lemmas~\ref{Lemma:Key-Ineq-mu} and \ref{Lemma:moment-dv-mu-xi} and the Cauchy-Schwarz inequality, we can show that 
    \begin{align*}
        \sqrt{n}e_{n,1}(\xi_{0})=o_{\P_{\vartheta_{0}}^{n}}(1)
        \ \ \mbox{and}\ \ 
        \sqrt{n}\partial_{\xi}e_{n,1}(\xi_{0})=o_{\P_{\vartheta_{0}}^{n}}(1)
        \ \ \mbox{as $n\to\infty$}.
    \end{align*}
    Moreover, $\E_{\vartheta_{0}}^{n}[\tilde{\sigma}^{2}_{n}(\xi_{0})]=n^{-1}\mathrm{Tr}[\Sigma_{n}(s_{\vartheta_{0}}^{X})\Sigma_{n}(s_{\xi_{0}}^{X})^{-1}]=\sigma^{2}_{0}$ holds so that we complete the proof of \eqref{expr:app-sig2-tilde}.

\newpage
\section{Online Supplement to ``Optimal Estimation for General Gaussian Processes'' by Tetsuya Takabatake, Jun Yu, Chen Zhang (Not for Publication)}
\subsection{Some Useful Lemmas}
In this subsection, we summarize preliminary results used in the proof of theorems given in Section~\ref{Sec:main-results}. 
The following two lemmas are useful to prove Theorems~\ref{Thm:MLE} and \ref{Thm:MLE2} and frequently used in the proof of Theorems~\ref{Thm:MLE} and \ref{Thm:MLE2}. 

\begin{lemma}\label{Lemma:Key-Ineq-mu}
    Under Assumption~$\ref{Assump:DSPD1}$, we can show that for any $i,j,k\in\{1,2,\cdots,p-1\}$,  $\varepsilon>0$ and $\theta\in\Theta$, 
    \begin{align*}
        &\left|\mathbf{1}_{n}^{\top}\partial_{i}\Sigma_{n}(s_{\theta}^{X})^{-1}\mathbf{1}_{n}\right|
        \leq C (\mathbf{1}_{n}^{\top}\Sigma_{n}(s_{\theta}^{X})^{-1}\mathbf{1}_{n}) n^{\varepsilon} \\
        &\left|\mathbf{1}_{n}^{\top}\partial_{i,j}^{2}\Sigma_{n}(s_{\theta}^{X})^{-1}\mathbf{1}_{n}\right|
        \leq C (\mathbf{1}_{n}^{\top}\Sigma_{n}(s_{\theta}^{X})^{-1}\mathbf{1}_{n}) n^{\varepsilon}, \\
        &\left|\mathbf{1}_{n}^{\top}\partial_{i,j,k}^{3}\Sigma_{n}(s_{\theta}^{X})^{-1}\mathbf{1}_{n}\right|
        \leq C (\mathbf{1}_{n}^{\top}\Sigma_{n}(s_{\theta}^{X})^{-1}\mathbf{1}_{n}) n^{\varepsilon}
    \end{align*}
    and
    \begin{align*}
        &\mathbf{1}_{n}^{\top}\Sigma_{n}(s_{\theta}^{X})^{-1} \Sigma_{n}(s_{\theta_{0}}^{X}) \Sigma_{n}(s_{\theta}^{X})^{-1}\mathbf{1}_{n}
        \leq C (\mathbf{1}_{n}^{\top}\Sigma_{n}(s_{\theta}^{X})^{-1}\mathbf{1}_{n}) n^{(\alpha_{X}(\xi_{0})-\alpha_{X}(\xi))_{+}}, \\
        &\mathbf{1}_{n}^{\top} \partial_{i}\Sigma_{n}(s_{\theta}^{X})^{-1} \Sigma_{n}(s_{\theta_{0}}^{X}) \partial_{i}\Sigma_{n}(s_{\theta}^{X})^{-1} \mathbf{1}_{n}
        \leq C (\mathbf{1}_{n}^{\top}\Sigma_{n}(s_{\theta}^{X})^{-1}\mathbf{1}_{n}) n^{(\alpha_{X}(\xi_{0})-\alpha_{X}(\xi))_{+}+\varepsilon}, \\
        &\mathbf{1}_{n}^{\top} \partial_{i,j}^{2}\Sigma_{n}(s_{\theta}^{X})^{-1} \Sigma_{n}(s_{\theta_{0}}^{X}) \partial_{i,j}^{2}\Sigma_{n}(s_{\theta}^{X})^{-1} \mathbf{1}_{n}
        \leq C (\mathbf{1}_{n}^{\top}\Sigma_{n}(s_{\theta}^{X})^{-1}\mathbf{1}_{n}) n^{(\alpha_{X}(\xi_{0})-\alpha_{X}(\xi))_{+}+\varepsilon}, \\
        &\mathbf{1}_{n}^{\top} \partial_{i,j,k}^{3}\Sigma_{n}(s_{\theta}^{X})^{-1} \Sigma_{n}(s_{\theta_{0}}^{X}) \partial_{i,j,k}^{3}\Sigma_{n}(s_{\theta}^{X})^{-1} \mathbf{1}_{n}
        \leq C (\mathbf{1}_{n}^{\top}\Sigma_{n}(s_{\theta}^{X})^{-1}\mathbf{1}_{n}) n^{(\alpha_{X}(\xi_{0})-\alpha_{X}(\xi))_{+}+\varepsilon}.
    \end{align*}
\end{lemma}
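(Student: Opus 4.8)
The plan is to reduce all six bounds to two basic facts about the behaviour of the quadratic form $\mathbf{1}_n^\top \Sigma_n(s_\theta^X)^{-1}\mathbf{1}_n$ and of traces of products of Toeplitz matrices. The central quantity is $D_n(\theta):=\mathbf{1}_n^\top\Sigma_n(s_\theta^X)^{-1}\mathbf{1}_n$, and by Assumption~\ref{Assump:DSPD1}\ref{dspd-boundedness}\ref{dspd-boundedness-1} together with the spectral-density estimates of \cite{Adenstedt-1974} (Theorems 4.1 and 5.2) one has $D_n(\theta)\asymp n^{1-\alpha_X(\xi)}$ up to $n^{\pm\iota}$ for every $\iota>0$. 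First I would record the standard Toeplitz calculus facts I will use repeatedly: differentiating $\Sigma_n(s_\theta^X)^{-1}=-\Sigma_n(s_\theta^X)^{-1}\Sigma_n(\partial_i s_\theta^X)\Sigma_n(s_\theta^X)^{-1}$, and higher derivatives obtained by the product rule, so that $\partial^k_{i_1,\dots,i_k}\Sigma_n(s_\theta^X)^{-1}$ is a finite sum of terms of the form $\pm\Sigma_n(s_\theta^X)^{-1}\Sigma_n(f_1)\Sigma_n(s_\theta^X)^{-1}\cdots\Sigma_n(f_m)\Sigma_n(s_\theta^X)^{-1}$ where each $f_\ell$ is one of $s_\theta^X$ or its $\theta$-derivatives up to order $k$, all of which satisfy the majorisation $|f_\ell(\omega)|\le c_\iota|\omega|^{-\alpha_X(\xi)-\iota}$ by Assumption~\ref{Assump:DSPD1}\ref{dspd-boundedness}.

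For the first block of three inequalities — bounds on $|\mathbf 1_n^\top\partial^k\Sigma_n(s_\theta^X)^{-1}\mathbf 1_n|$ — the idea is to write $\mathbf 1_n^\top\partial^k\Sigma_n(s_\theta^X)^{-1}\mathbf 1_n$ as a sum of terms $\mathbf 1_n^\top\Sigma_n(s_\theta^X)^{-1}\Sigma_n(f_1)\cdots\Sigma_n(f_m)\Sigma_n(s_\theta^X)^{-1}\mathbf 1_n$ and insert, for each intermediate factor, the operator-norm/eigenvalue comparison $\Sigma_n(f_\ell)\preceq \|f_\ell/s_\theta^X\|_\infty\,\Sigma_n(s_\theta^X)$ — except that $f_\ell/s_\theta^X$ is not bounded uniformly; it is only bounded by $C|\omega|^{-\iota-\iota'}$ near zero, which is where the $n^\varepsilon$ slack enters. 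One then uses a sandwiching argument: for a nonnegative integrable symbol $g$ with $g(\omega)\le C\,h(\omega)\,s_\theta^X(\omega)$ where $h(\omega)=|\omega|^{-\varepsilon}$, one gets $\mathbf x^\top\Sigma_n(g)\mathbf x\le C\,\mathbf x^\top\Sigma_n(h s_\theta^X)\mathbf x$ and then bounds $\Sigma_n(hs_\theta^X)$ by $n^\varepsilon\Sigma_n(s_\theta^X)$ up to constants; this last step is precisely the kind of estimate used in \cite{Dahlhaus-1989} (Lemma 5.3) and in the full version of \cite{Lieberman-2012} (Lemma 6), which are available to me. Collapsing the chain from both ends leaves a single factor $\mathbf 1_n^\top\Sigma_n(s_\theta^X)^{-1}\mathbf 1_n = D_n(\theta)$ times $C n^\varepsilon$, as claimed. (The sign/combinatorial bookkeeping from the product rule only contributes a finite constant depending on $k\le 3$.)

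For the second block — the ``sandwich'' bounds of the form $\mathbf 1_n^\top A_n\Sigma_n(s_{\theta_0}^X)A_n^\top\mathbf 1_n$ with $A_n=\partial^k\Sigma_n(s_\theta^X)^{-1}$ — the extra input is the cross-comparison $\Sigma_n(s_{\theta_0}^X)\preceq C\,\|s_{\theta_0}^X/(h\,s_\theta^X)\|_\infty\,\Sigma_n(h\,s_\theta^X)$, where now the ratio $s_{\theta_0}^X/s_\theta^X$ behaves like $|\omega|^{-\alpha_X(\xi_0)+\alpha_X(\xi)}$ near zero, contributing the factor $n^{(\alpha_X(\xi_0)-\alpha_X(\xi))_+}$ — again the precise mechanism is Lemma 5.3 of \cite{Dahlhaus-1989} / Lemma 6 of \cite{Lieberman-2012}, with the positive-part exponent arising because the comparison is only useful when $s_{\theta_0}^X/s_\theta^X$ actually blows up at $0$. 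After replacing $\Sigma_n(s_{\theta_0}^X)$ by $C n^{(\alpha_X(\xi_0)-\alpha_X(\xi))_+}\,\Sigma_n(s_\theta^X)$ (up to $n^\varepsilon$), the middle $\Sigma_n(s_\theta^X)$ cancels one of the two $\Sigma_n(s_\theta^X)^{-1}$ factors sitting inside each $A_n$, and what remains is $\mathbf 1_n^\top \Sigma_n(s_\theta^X)^{-1}(\text{bounded operator})\Sigma_n(s_\theta^X)^{-1}\mathbf 1_n$, which is handled exactly as in the first block to yield $C\,D_n(\theta)\,n^{(\alpha_X(\xi_0)-\alpha_X(\xi))_++\varepsilon}$. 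I expect the main obstacle to be the careful propagation of the $|\omega|^{-\iota}$ losses through the chain of comparisons — i.e.\ checking that the accumulated polynomial-in-$|\omega|$ corrections from finitely many factors still collapse into a single arbitrarily small power $n^\varepsilon$, uniformly in $\theta\in\Theta$ and uniformly over the derivative indices; this is bookkeeping-heavy but follows the template already established in \cite{Dahlhaus-1989} and \cite{Lieberman-2012}, so no genuinely new estimate is required.
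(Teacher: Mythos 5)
Your proposal follows essentially the same route as the paper's proof: expand $\partial^{k}\Sigma_{n}(s_{\theta}^{X})^{-1}$ by the chain rule into chains of the form $\Sigma_{n}(s_{\theta}^{X})^{-1}\Sigma_{n}(f_{1})\cdots\Sigma_{n}(f_{m})\Sigma_{n}(s_{\theta}^{X})^{-1}$, then collapse the chain via symbol-comparison bounds of the type $\|\Sigma_{n}(s_{\theta}^{X})^{-1/2}\Sigma_{n}(g)^{1/2}\|_{\mathrm{op}}^{2}\leq Cn^{\varepsilon}$ (resp.\ $Cn^{(\alpha_{X}(\xi_{0})-\alpha_{X}(\xi))_{+}+\varepsilon}$ for the $\Sigma_{n}(s_{\theta_{0}}^{X})$ sandwich), which is exactly the paper's appeal to Lemma~5.3 of \cite{Dahlhaus-1989} and Lemma~6 of \cite{Lieberman-2012}, combined with Cauchy--Schwarz across the chain. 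One detail you should make explicit: the Loewner comparison $\Sigma_{n}(f_{\ell})\preceq \|f_{\ell}/s_{\theta}^{X}\|_{\infty}\Sigma_{n}(s_{\theta}^{X})$ as you state it is false when $f_{\ell}=\partial_{i}s_{\theta}^{X}$ changes sign (the left-hand side is then indefinite), so each derivative symbol must first be split into its positive and negative parts $(\partial_{i}s_{\theta}^{X})_{\pm}$ before the comparison and the Cauchy--Schwarz step are applied; this is precisely how the paper handles it, and with that addition your argument goes through.
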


\begin{lemma}\label{Lemma:moment-dv-mu-xi}
    For any $q\in\N$, there exists a positive constant $C_{q}$ such that for any $i,j,k\in\{1,2,\cdots,p-1\}$,  $\varepsilon>0$ and $\theta\in\Theta$, 
    \begin{align*}
        &\E_{\vartheta_{0}}^{n}\left[ \left|\mu_{n}(\xi)-\mu_{0}\right|^{q} \right]
        \leq C_{q} (\mathbf{1}_{n}^{\top}\Sigma_{n}(s_{\xi}^{X})^{-1}\mathbf{1}_{n})^{-\frac{q}{2}} n^{\frac{q}{2}(\alpha_{X}(\xi_{0})-\alpha_{X}(\xi))_{+}}, \\
        &\E_{\vartheta_{0}}^{n}\left[ \left|\partial_{i}\mu_{n}(\xi)\right|^{q} \right]
        \leq C_{q} (\mathbf{1}_{n}^{\top}\Sigma_{n}(s_{\xi}^{X})^{-1}\mathbf{1}_{n})^{-\frac{q}{2}} n^{\frac{q}{2}(\alpha_{X}(\xi_{0})-\alpha_{X}(\xi))_{+}+\varepsilon}, \\
        &\E_{\vartheta_{0}}^{n}\left[ \left|\partial_{i,j}^{2}\mu_{n}(\xi)\right|^{q} \right] 
        \leq C_{q} (\mathbf{1}_{n}^{\top}\Sigma_{n}(s_{\xi}^{X})^{-1}\mathbf{1}_{n})^{-\frac{q}{2}} n^{\frac{q}{2}(\alpha_{X}(\xi_{0})-\alpha_{X}(\xi))_{+}+\varepsilon}, \\
        &\E_{\vartheta_{0}}^{n}\left[ \left|\partial_{i,j,k}^{3}\mu_{n}(\xi)\right|^{q} \right]
        \leq C_{q} (\mathbf{1}_{n}^{\top}\Sigma_{n}(s_{\xi}^{X})^{-1}\mathbf{1}_{n})^{-\frac{q}{2}} n^{\frac{q}{2}(\alpha_{X}(\xi_{0})-\alpha_{X}(\xi))_{+}+\varepsilon}.
    \end{align*}
\end{lemma}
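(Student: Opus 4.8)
The plan is to use that, under $\P_{\vartheta_{0}}^{n}$, the vector $\mathbf{Z}_{n}:=\mathbf{X}_{n}-\mu_{0}\mathbf{1}_{n}$ is $\mathcal{N}(0,\Sigma_{n}(s_{\theta_{0}}^{X}))$ and that $\mu_{n}(\xi)-\mu_{0}$ together with all its $\xi$-derivatives are \emph{linear} functionals of $\mathbf{Z}_{n}$, hence centered Gaussian random variables. For a centered Gaussian $Y$ one has $\E[|Y|^{q}]=c_{q}\,\Var[Y]^{q/2}$ with $c_{q}:=\E[|\mathcal{N}(0,1)|^{q}]<\infty$, so every moment bound reduces to a variance bound, and each required variance bound is supplied by Lemma~\ref{Lemma:Key-Ineq-mu}. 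Throughout, differentiation of $\xi\mapsto\Sigma_{n}(s_{\xi}^{X})^{-1}$ up to order three is legitimate by Assumption~\ref{Assump:DSPD1}$(1)$ (differentiation under the integral defining the Fourier coefficients, together with positive-definiteness of $\Sigma_{n}(s_{\xi}^{X})$, which follows from the lower bound in Assumption~\ref{Assump:DSPD1}$(3)(a)$).

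First, $\mu_{n}(\xi)-\mu_{0}=\bigl(\mathbf{1}_{n}^{\top}\Sigma_{n}(s_{\xi}^{X})^{-1}\mathbf{1}_{n}\bigr)^{-1}\mathbf{1}_{n}^{\top}\Sigma_{n}(s_{\xi}^{X})^{-1}\mathbf{Z}_{n}$ is centered Gaussian with variance
\[
\frac{\mathbf{1}_{n}^{\top}\Sigma_{n}(s_{\xi}^{X})^{-1}\Sigma_{n}(s_{\theta_{0}}^{X})\Sigma_{n}(s_{\xi}^{X})^{-1}\mathbf{1}_{n}}{\bigl(\mathbf{1}_{n}^{\top}\Sigma_{n}(s_{\xi}^{X})^{-1}\mathbf{1}_{n}\bigr)^{2}}\le C\,\bigl(\mathbf{1}_{n}^{\top}\Sigma_{n}(s_{\xi}^{X})^{-1}\mathbf{1}_{n}\bigr)^{-1}n^{(\alpha_{X}(\xi_{0})-\alpha_{X}(\xi))_{+}},
\]
by the fourth inequality of Lemma~\ref{Lemma:Key-Ineq-mu}. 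Multiplying by $c_{q}$ and raising to the power $q/2$ yields the first asserted bound, with no $\varepsilon$-loss.

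Next write $A(\xi):=\Sigma_{n}(s_{\xi}^{X})^{-1}$ and $D(\xi):=\mathbf{1}_{n}^{\top}A(\xi)\mathbf{1}_{n}>0$, so that $\mu_{n}(\xi)-\mu_{0}=D(\xi)^{-1}\mathbf{1}_{n}^{\top}A(\xi)\mathbf{Z}_{n}$. Differentiating by the Leibniz/quotient rule, each $\partial_{S}\mu_{n}(\xi)$ with $|S|\le 3$ is a \emph{finite} sum (with a bound on the number of summands depending only on $|S|$) of terms of the form
\[
\Biggl(\prod_{\ell}\frac{\mathbf{1}_{n}^{\top}\,\partial_{T_{\ell}}A(\xi)\,\mathbf{1}_{n}}{D(\xi)}\Biggr)\cdot\frac{\mathbf{1}_{n}^{\top}\,\partial_{S'}A(\xi)\,\mathbf{Z}_{n}}{D(\xi)},\qquad |S'|\le|S|,\ \ |T_{\ell}|\le|S|.
\]
By Minkowski's inequality in $L^{q}(\P_{\vartheta_{0}}^{n})$ it suffices to bound the $L^{q}$-norm of each such term. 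Each deterministic prefactor is at most $C\,n^{\varepsilon}$ by the first three inequalities of Lemma~\ref{Lemma:Key-Ineq-mu}. The stochastic factor $\mathbf{1}_{n}^{\top}\partial_{S'}A(\xi)\mathbf{Z}_{n}$ is centered Gaussian with variance $\mathbf{1}_{n}^{\top}\partial_{S'}A(\xi)\,\Sigma_{n}(s_{\theta_{0}}^{X})\,\partial_{S'}A(\xi)\,\mathbf{1}_{n}$ (using symmetry of $\partial_{S'}A(\xi)$), which by the last four inequalities of Lemma~\ref{Lemma:Key-Ineq-mu} is at most $C\,D(\xi)\,n^{(\alpha_{X}(\xi_{0})-\alpha_{X}(\xi))_{+}+\varepsilon}$; hence its $L^{q}$-norm is at most $C_{q}\,D(\xi)^{1/2}n^{\frac12(\alpha_{X}(\xi_{0})-\alpha_{X}(\xi))_{+}+\varepsilon}$. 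Dividing by $D(\xi)=\mathbf{1}_{n}^{\top}\Sigma_{n}(s_{\xi}^{X})^{-1}\mathbf{1}_{n}$ and collecting the finitely many (hence harmless) powers of $n^{\varepsilon}$—legitimate since $\varepsilon>0$ is arbitrary—gives, for $|S|\in\{1,2,3\}$,
\[
\bigl\|\partial_{S}\mu_{n}(\xi)\bigr\|_{L^{q}(\P_{\vartheta_{0}}^{n})}\le C_{q}\,\bigl(\mathbf{1}_{n}^{\top}\Sigma_{n}(s_{\xi}^{X})^{-1}\mathbf{1}_{n}\bigr)^{-1/2}\,n^{\frac12(\alpha_{X}(\xi_{0})-\alpha_{X}(\xi))_{+}+\varepsilon},
\]
which is exactly the claimed bound for $\partial_{i}\mu_{n}$, $\partial_{i,j}^{2}\mu_{n}$ and $\partial_{i,j,k}^{3}\mu_{n}$ after relabelling $q\varepsilon\rightsquigarrow\varepsilon$ and raising to the $q$-th power.

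The only genuine obstacle is bookkeeping: one must check that the Leibniz expansion never produces a quadratic form (in $\mathbf{1}_{n}$ and $\Sigma_{n}(s_{\theta_{0}}^{X})$) outside the list handled by Lemma~\ref{Lemma:Key-Ineq-mu}. This is automatic because that lemma is phrased directly in terms of $\partial_{S}\Sigma_{n}(s_{\theta}^{X})^{-1}$ for $|S|\le 3$—it has already absorbed the expansion of derivatives of the inverse into products of $\Sigma_{n}(\partial s_{\theta}^{X})$'s—and we differentiate at most three times, so $|S'|\le|S|\le 3$ and $|T_{\ell}|\le 3$ everywhere. No analytic input beyond Gaussianity and Lemma~\ref{Lemma:Key-Ineq-mu} is required.
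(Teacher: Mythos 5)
Your proposal is correct and follows essentially the same route as the paper: reduce every bound to the variance of a centered Gaussian linear form in $\mathbf{Z}_{n}$ via the exact formula $\E[|Y|^{q}]=c_{q}\Var[Y]^{q/2}$, and feed all the deterministic quadratic forms produced by the quotient rule into Lemma~\ref{Lemma:Key-Ineq-mu}. The only difference is bookkeeping — you characterize the general term of the Leibniz expansion inductively, whereas the paper writes out the explicit formulas for $\partial_{i,j}^{2}\mu_{n}$ and $\partial_{i,j,k}^{3}\mu_{n}$ and bounds the higher-order moments recursively in terms of the lower-order ones — but the decomposition, the Gaussianity argument, and the key lemma are identical.
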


To prove consistency and the asymptotic normality of the exact MLE in Theorem~\ref{Thm:MLE}, we need to verify uniform convergences of $\sigma^{2}(\xi)$ and its derivatives. Then we repeatedly use the following Sobolev inequality, 
which can be proved using Theorem~4.12 of \cite{Adams-Fournier-2003-Sobolev} and the Fubini theorem. 
\begin{lemma}[The Sobolev Inequality]\label{Lemma:Sobolev}
    Let $d\in\mathbb{N}$, $\mathring{\Theta}_{\ast}$ be a bounded open cube in $\mathbb{R}^{d}$, $\Theta_\ast$ be the closure of $\Theta_{\ast}$, and $\{(\mathcal{X}_{n},\mathcal{A}_{n},\mathbb{P}^{n}_{\ast})\}_{n\in\mathbb{N}}$ be a sequence of complete probability spaces.
    Assume that $\{u_{n}(\theta,x_n)\}_{(\theta,x_n)\in\Theta_{\ast}\times\mathcal{X}_n}$ is a sequence of pathwise continuously differentiable random fields, {\it i.e.} for each $n\in\mathbb{N}$, it holds that
    \begin{itemize}
        \item the function $\theta\mapsto u_{n}(\theta,\omega_{n})$ is continuously differentiable on $\mathring\Theta_{\ast}$ and uniformly continuous on $\Theta_\ast$ for $\mathbb{P}^{n}_{\ast}$-a.s. $x_n\in\mathcal{X}_{n}$, 
        \item the functions $u_{n}(\cdot,\cdot)$ and $\partial_\theta u_{n}(\cdot,\cdot)$ on $\Theta_\ast\times\mathcal{X}_n$ are $\mathcal{B}(\Theta_\ast)\otimes\mathcal{X}_n$-measurable, where $\mathcal{B}(\Theta_\ast)$ denotes the Borel $\sigma$-algebra on the set $\Theta_\ast$. 
    \end{itemize}
    Then for any $q\in\mathbb{N}$ satisfying $q>d$, there exists a positive constant $C_1=C_1(q,d)$, which is independent of $n$, $x_n$ and $u_n(\cdot,\cdot)$, such that 
    \begin{align*}
        &\sup_{\theta^{\prime}\in\Theta_\ast} \left|u_{n}(\theta^{\prime},x_{n})\right|^{q}
        \leq C_1 \left[
        \int_{\Theta_\ast}
        \left|u_{n}(\theta^{\prime},x_{n})\right|^{q}\,\mathrm{d}\theta^{\prime}
        + \int_{\Theta_\ast}
        \left\|\partial_{\theta}u_{n}(\theta^{\prime},x_{n})\right\|_{\mathbb{R}^{d}}^{q}\,\mathrm{d}\theta^{\prime}
        \right]
    \end{align*}
    hold for any $n\in\mathbb{N}$ and $\mathbb{P}^{n}_{\ast}$-a.s. $\omega_{n}\in\mathcal{X}_{n}$. 
    In particular, we get
    \begin{align*}
        \E^{\mathbb{P}^{n}_{\ast}}\biggl[ \sup_{\theta^{\prime}\in\Theta_\ast} \left|u_{n}(\theta^{\prime},\cdot)\right|^{q} \biggr]
        \leq C_2 \sup_{\theta^{\prime}\in\Theta_\ast} \left\{
        \E^{\mathbb{P}^{n}_{\ast}}\bigl[ \bigl|u_{n}(\theta^{\prime},\cdot)\bigr|^{q} \bigr]
        + \E^{\mathbb{P}^{n}_{\ast}}\bigl[ \left\|\partial_{\theta}u_{n}(\theta^{\prime},\cdot)\right\|_{\mathbb{R}^{d}}^{q} \bigr]
        \right\}
    \end{align*}
    for the positive constant $C_2 := C_1(q,d) m_d(\Theta_\ast)$, where $m_d(A)$ denotes the Lebesgue measure of a measurable set $A$ of $\R^d$. 
\end{lemma}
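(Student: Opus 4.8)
The plan is to reduce the statement to the classical Sobolev embedding theorem applied fibrewise in the sample point $x_n$, and then pass to expectations by Tonelli's theorem.

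First I would fix $n\in\N$ and work on the event of full $\mathbb{P}^{n}_{\ast}$-measure on which $\theta\mapsto u_{n}(\theta,x_n)$ is continuously differentiable on $\mathring\Theta_{\ast}$ and uniformly continuous on $\Theta_{\ast}$. On this event, if $\int_{\Theta_{\ast}}\|\partial_{\theta}u_{n}(\theta',x_n)\|_{\R^{d}}^{q}\dd\theta'=\infty$ the first inequality is vacuous, so I may assume it is finite; the $L^{q}$-norm of $u_{n}(\cdot,x_n)$ over $\Theta_{\ast}$ is then automatically finite, since uniform continuity on the compact set $\Theta_{\ast}$ makes $u_{n}(\cdot,x_n)$ bounded. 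Because $u_{n}(\cdot,x_n)$ is $C^{1}$ on the open cube $\mathring\Theta_{\ast}$, its classical gradient coincides with its distributional gradient there, hence $u_{n}(\cdot,x_n)\in W^{1,q}(\mathring\Theta_{\ast})$, and by equivalence of the $\ell^{q}$ and $\ell^{2}$ norms on $\R^{d}$ the quantity $\int_{\Theta_{\ast}}|u_{n}(\theta',x_n)|^{q}\dd\theta'+\int_{\Theta_{\ast}}\|\partial_{\theta}u_{n}(\theta',x_n)\|_{\R^{d}}^{q}\dd\theta'$ is comparable, up to a factor depending only on $q$ and $d$, to $\|u_{n}(\cdot,x_n)\|_{W^{1,q}}^{q}$.

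Next I would invoke Theorem~4.12 of \cite{Adams-Fournier-2003-Sobolev}: a cube satisfies the cone condition, and since $q>d$ one has the embedding $W^{1,q}(\mathring\Theta_{\ast})\hookrightarrow C(\Theta_{\ast})$ together with a norm bound $\sup_{\theta'\in\Theta_{\ast}}|v(\theta')|\le C_{0}\|v\|_{W^{1,q}}$ for a constant $C_{0}=C_{0}(q,d)$ depending only on $q$, $d$, and the fixed cube. Applying this to $v=u_{n}(\cdot,x_n)$, which is its own continuous representative, raising to the $q$-th power, and substituting the comparison from the previous step yields the first display with a constant $C_{1}=C_{1}(q,d)$ independent of $n$, $x_n$, and the field $u_n$; since $x_n$ ranged over a set of full measure, the bound holds $\mathbb{P}^{n}_{\ast}$-a.s. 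For the final assertion of the lemma I would take $\mathbb{E}^{\mathbb{P}^{n}_{\ast}}$ of the pathwise inequality; the joint measurability hypotheses on $u_{n}$ and $\partial_{\theta}u_{n}$ and the nonnegativity of the integrands let Tonelli's theorem exchange $\mathbb{E}^{\mathbb{P}^{n}_{\ast}}$ with $\int_{\Theta_{\ast}}$, after which bounding each resulting integrand pointwise by its supremum over $\theta'\in\Theta_{\ast}$ and integrating the constant over $\Theta_{\ast}$ produces the factor $m_{d}(\Theta_{\ast})$, giving the claim with $C_{2}=C_{1}\,m_{d}(\Theta_{\ast})$.

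I expect the only genuinely delicate step to be verifying the hypotheses of the deterministic embedding theorem — namely, that $C^{1}$-ness on the open cube together with $L^{q}$-integrability of the gradient places $u_{n}(\cdot,x_n)$ in $W^{1,q}$, and that the continuous representative supplied by the embedding coincides with $u_{n}(\cdot,x_n)$ itself rather than some a.e.-modification — and in keeping careful track that $C_{1}$ depends only on $q$ and $d$, through the embedding constant of the fixed cube, and not on $n$ or the realization. The reduction to the fibrewise statement and the Tonelli exchange in the expectation version are routine.
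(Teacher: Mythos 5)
Your proposal is correct and follows essentially the same route as the paper, which proves this lemma by citing Theorem~4.12 of Adams--Fournier applied pathwise (noting that a bounded open cube satisfies the requisite geometric condition on its boundary) followed by the Fubini--Tonelli theorem to pass to expectations. Your additional care in verifying that $u_n(\cdot,x_n)\in W^{1,q}(\mathring\Theta_\ast)$ and that the continuous representative is $u_n(\cdot,x_n)$ itself fills in details the paper leaves implicit, but introduces no new idea.
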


\begin{remark}\rm
    We provide a clarification regarding the proof of Lemma~\ref{Lemma:Sobolev}.
    In the Sobolev inequality and the Sobolev embedding theorem, the geometric structure of the domain and its boundary plays an essential role.
    The version of the Sobolev embedding theorem given in Theorem~4.12 of \cite{Adams-Fournier-2003-Sobolev} is stated under the assumption that the domain $\Theta_\ast$ satisfies the {\it strong locally Lipschitz condition}; see Section~4.9 of \cite{Adams-Fournier-2003-Sobolev} for its definition.
    However, since $\Theta_\ast$ is bounded, it suffices to assume that $\Theta_\ast$ has a {\it locally Lipschitz boundary}, that is, for each point $\theta_\ast \in \partial\Theta_\ast$, there exists a neighborhood $U(\theta_\ast)$ such that $U(\theta_\ast) \cap \partial\Theta_\ast$ is the graph of a Lipschitz continuous function.
    In particular, any bounded open cube in $\mathbb{R}^{d}$ has a locally Lipschitz boundary.
    Therefore, Theorem~4.12 in \cite{Adams-Fournier-2003-Sobolev} directly implies the result stated in Lemma~\ref{Lemma:Sobolev}.
\end{remark}

Finally, we recall Lemma~3 of \cite{Lieberman-2012}, which provides a precise approximation error bound for the trace of the product of Toeplitz matrices and the inverses of (possibly different) Toeplitz matrices. The entries of these matrices are defined via the Fourier transforms of the spectral density function and their derivatives with respect to model parameters.
This result is particularly useful for evaluating the cumulants of quadratic forms of Gaussian vectors arising from stationary Gaussian time series, as well as the cumulants of their derivatives with respect to model parameters, where the matrix in the quadratic form is given by the inverse of a Toeplitz matrix associated with a spectral density function.

Before stating the result of  Lemma~3 of \cite{Lieberman-2012}, we prepare notation. 
Let $\Pi:=[-\pi,\pi]$. For non-negative sequences $\{a_{n}\}_{n\in\mathbb{N}}$ and $\{b_{n}\}_{n\in\mathbb{N}}$, we write $a_{n}\lesssim b_{n}$ if there exists a constant $C>0$ such that $a_{n}\leq Cb_{n}$ for sufficiently large $n$. 
For a set $\Theta_{\ast}$ of $\mathbb{R}^{q}$ and sequences of positive functions $\{a_{n}(\theta)\}_{n\in\mathbb{N}}$ and $\{b_{n}(\theta)\}_{n\in\mathbb{N}}$ on $\Theta_{\ast}$, we write $a_{n}(\theta)\lesssim_{u} b_{n}(\theta)$ (resp.~$a_{n}(\theta)=o_{u}(b_{n}(\theta))$ as $n\to\infty$) {compact} uniformly on $\Theta_{\ast}$ if $\sup_{\theta\in\mathcal{K}}|a_{n}(\theta)/b_{n}(\theta)|\lesssim 1$ (resp.~$\sup_{\theta\in\mathcal{K}}|a_{n}(\theta)/b_{n}(\theta)|=o(1)$ as $n\to\infty$) for any compact subset $\mathcal{K}$ of $\Theta_{\ast}$. 
Moreover, for a set $A$ of $\mathbb{R}^{r}$ and sequences of functions $\{a_{n}(x,\theta)\}_{n\in\mathbb{N}}$ and $\{b_{n}(x,\theta)\}_{n\in\mathbb{N}}$ on $\mathbb{R}^{r}\times\Theta_{\ast}$ which are always positive on $A\times\Theta_{\ast}$, we write $a_{n}(x,\theta)\lesssim_{u} b_{n}(x,\theta)$ uniformly on $A\times\Theta_{\ast}$ if $\sup_{x\in A}|a_{n}(x,\theta)/b_{n}(x,\theta)|\lesssim_{u} 1$ {compact} uniformly on $\Theta_{\ast}$. 
Finally, we introduce the following function spaces ${\mathcal{F}_{\gamma}}$ and ${\mathcal{F}_{\gamma}^{(1)}}$ depending on some continuous function $\gamma:\Theta\to(-\infty,1)$:
	\begin{align*}
		&{\mathcal{F}_{\gamma}}:=\left\{f\in L^{1}(\Pi\times\Theta):~|x|^{{\gamma(\theta)}}|f(x,{\theta})|\lesssim_{u} 1~\mbox{uniformly on $\Pi_{0}\times\Theta$}\right\},\\ 
		&{\mathcal{F}_{\gamma}^{(1)}}:=\left\{f\in {\mathcal{F}_{\gamma}}\cap C^{1}_{0}(\Pi\times\Theta):|x|^{{\gamma(\theta)}+1}\left|\frac{\mathrm{d}f}{\mathrm{d}x}(x,{\theta})\right|\lesssim_{u} 1~\mbox{uniformly on $\Pi_{0}\times\Theta$}\right\},
	\end{align*}
	where $L^{1}(\Pi\times\Theta)$ (resp.~$C^{1}_{0}(\Pi\times\Theta)$) denotes the set of functions $f(x,{\theta})$ on $\Pi\times\Theta$ such that ${x}\mapsto f(x,{\theta})$ is integrable on $\Pi$~(resp.~{continuously} differentiable on $\Pi_{0}$) for each ${\theta}\in\Theta$.

\begin{lemma}[cf. Lemma~3 of \cite{Lieberman-2012}]\label{Lemma:trace-app-inv}
    Let $q\in\mathbb{N}$ {and $\alpha_{j}$ and $\beta_{j}$ be continuous functions on $\Theta_\xi$ to $(-\infty,1)$ for each $j=1,\cdots,p$}. 
	For all $j=1,\cdots,q$, we assume {
		$g_{j}\in{\mathcal{F}_{\alpha_{j}}^{(1)}}$, $h_{j}\in{\mathcal{F}_{\beta_{j}}^{(1)}}$} and $x\mapsto g_{j}(x,{\theta})$ and $x\mapsto h_{j}(x,{\theta})$ can be extended to periodic functions on $\mathbb{R}$ with period $2\pi$ for each ${\theta}\in\Theta$. 
	Set $\psi_{q}(\underline{\xi}):=\sum_{r=1}^{q}(\alpha(\xi_{2r-1})-\alpha(\xi_{2r}))$ for $\underline{\xi}=(\xi_{1},\ldots,\xi_{2r})^{\top}$. Then we obtain
	\begin{equation*}
		n^{-\psi_{q}(\underline{\xi})-\epsilon}\left|
		\mathrm{Tr}\left[\prod_{r=1}^{q}\Sigma_{n}(f_{r,\theta_{2r-1}})\Sigma_{n}(f_{\theta_{2r}})^{-1}\right]
	   	-\frac{n}{2\pi}\int_{-\pi}^{\pi}\prod_{r=1}^{q}\frac{f_{r,\theta_{2r-1}}(x)}{f_{\theta_{2r}}^{n}(x)}\,\mathrm{d}x
	   	\right| = o(1)
	\end{equation*}
	as $n\to\infty$ uniformly on compact subsets of $\Theta_{\xi}(\iota)$ for any $\epsilon>0$ and $\iota\in(0,1)$. 
\end{lemma}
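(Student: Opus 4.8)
The plan is to follow the argument of Lemma~3 in \cite{Lieberman-2012}, extending it in three directions: to numerator and denominator factors carrying possibly distinct singularity exponents $\alpha_{j}$ and $\beta_{j}$, to uniformity of the convergence over compact subsets of $\Theta_{\xi}(\iota)$, and to the parameter dependence of all the symbols. The proof naturally splits into two stages: first, replacing each inverse Toeplitz matrix $\Sigma_{n}(f_{\theta_{2r}})^{-1}$ by a Toeplitz matrix with symbol $1/f_{\theta_{2r}}$ (equivalently, by the regularised symbol $f^{n}_{\theta_{2r}}$ appearing in the statement) while showing that the accumulated error is of smaller order than $n^{\psi_{q}(\underline{\xi})+\epsilon}$; and second, estimating the trace of the resulting product of genuine Toeplitz matrices against the corresponding Riemann integral.

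For the first stage I would argue by a telescoping expansion: in $\mathrm{Tr}\bigl[\prod_{r}\Sigma_{n}(f_{r,\theta_{2r-1}})\Sigma_{n}(f_{\theta_{2r}})^{-1}\bigr]$, substitute the inverses one at a time, so that the error becomes a sum of $q$ traces each containing one factor $\Sigma_{n}(f_{\theta_{2r}})^{-1}-\Sigma_{n}(1/f_{\theta_{2r}})$. Each such term is bounded in the spirit of the splitting estimates of \cite{Dahlhaus-1989} (Lemmas~5.3--5.4) in their uniform form recorded in \cite{Lieberman-2012}: one uses the operator-norm bound $\|\Sigma_{n}(f_{\theta})^{-1}\|_{\mathrm{op}}\lesssim_{u}n^{(\alpha_{X}(\xi))_{+}+\epsilon}$, the Hilbert--Schmidt control of $\Sigma_{n}(f_{\theta})^{-1}-\Sigma_{n}(1/f_{\theta})$ furnished by the magnitude and first-derivative bounds built into membership in $\mathcal{F}^{(1)}_{\alpha_{j}}$ and $\mathcal{F}^{(1)}_{\beta_{j}}$ (which themselves follow from Assumption~\ref{Assump:DSPD1}), and the periodic extension of the symbols that makes the Fourier-coefficient calculus clean. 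The restriction to $\Theta_{\xi}(\iota)$ is essential here: it keeps the negative powers of $n$ produced by these bounds from being overwhelmed, and keeps $\prod_{r}f_{r,\theta_{2r-1}}/f^{n}_{\theta_{2r}}$ integrable near the origin, its size there being of order $|\omega|^{-\psi_{q}(\underline{\xi})}$.

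For the second stage, writing the trace of $\prod_{r=1}^{q}\Sigma_{n}(k_{r,\theta})$ with $k_{r,\theta}$ integrable and piecewise smooth (each of the order of a power of $|\omega|$ near $0$) as a multiple Fourier sum of the products of Fourier coefficients, I would compare it with $\tfrac{n}{2\pi}\int_{-\pi}^{\pi}\prod_{r}k_{r,\theta}$; this is the Avram / Fox--Taqqu-type expansion used in \cite{Fox-Taqqu-1986} and \cite{Lieberman-2012}. The remainder is handled by splitting the frequency range into a shrinking window $|\omega|\le\delta_{n}$ around the singularity and its complement: on the complement all symbols and their derivatives are uniformly bounded, so a standard Dirichlet-kernel estimate gives a remainder of size $O(n^{\epsilon})$; on the window the magnitude bounds from $\mathcal{F}_{\gamma}$ give a contribution of order $n\,\delta_{n}^{\,1-\psi_{q}(\underline{\xi})}$, and choosing $\delta_{n}$ a small negative power of $n$ balances the two into a total remainder of order $n^{\psi_{q}(\underline{\xi})+\epsilon}$. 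Keeping every estimate in its $\lesssim_{u}$ form, and using continuity of $\alpha_{j},\beta_{j}$ together with compactness to bound $\psi_{q}(\underline{\xi})$ locally, then delivers the asserted convergence uniformly on compact subsets of $\Theta_{\xi}(\iota)$.

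I expect the main obstacle to be the first stage: controlling $\Sigma_{n}(f_{\theta})^{-1}$ when $f_{\theta}$ has a strong (up to non-invertible) singularity at $\omega=0$, and doing so with constants uniform in $\theta$ over compact sets. This requires carrying the parameter dependence through Dahlhaus's splitting lemmas, and it is exactly where the full strength of the regularity conditions---both the two-sided bounds $c_{1,\iota}|\omega|^{-\alpha_{X}(\xi)+\iota}\le s_{\theta}^{X}(\omega)\le c_{2,\iota}|\omega|^{-\alpha_{X}(\xi)-\iota}$ and the derivative bounds of Assumption~\ref{Assump:DSPD1}---is used. Once the inverses are disposed of, the remaining Fourier-analytic estimate is routine and essentially the one already carried out in \cite{Lieberman-2012}.
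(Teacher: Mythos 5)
The paper does not actually prove Lemma~\ref{Lemma:trace-app-inv}: the statement is quoted from Lemma~3 of \cite{Lieberman-2012}, and the only justification offered is a remark that the original argument rested on Theorem~2 of \cite{Lieberman-Phillips-2004}, whose proof was shown by \cite{Ginovyan-Sahakyan-2013} to contain non-trivial errors, and that the gap is closed by a corrected and slightly generalized trace-approximation theorem in \cite{Takabatake-2024-JTSA-main}. So your proposal is not paralleling a proof in the paper; it is an attempt to reconstruct from scratch the argument the paper deliberately outsources to the corrected literature.

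As a reconstruction, your two-stage outline --- telescope each $\Sigma_{n}(f_{\theta})^{-1}$ into a Toeplitz matrix with symbol $1/f_{\theta}$ plus an error, then apply a Fox--Taqqu/Avram-type trace approximation to the resulting product of genuine Toeplitz matrices --- is indeed the classical strategy of \cite{Fox-Taqqu-1986}, \cite{Dahlhaus-1989} and \cite{Lieberman-2012}, and your identification of the uniform control of $\Sigma_{n}(f_{\theta})^{-1}$ over compact parameter sets as the main obstacle is apt. The genuine gap is that the steps you dispatch ``in the spirit of'' Dahlhaus's splitting lemmas, and the final estimate you call ``routine,'' are exactly where the published implementations of this strategy broke down: bounding the traces containing factors $\Sigma_{n}(f_{\theta})^{-1}-\Sigma_{n}(1/f_{\theta})$, and the trace approximation for products involving inverse Toeplitz matrices whose symbols have power singularities of varying (possibly non-invertible) order, is the content of the erroneous Theorem~2 of \cite{Lieberman-Phillips-2004}, and repairing it required a separate paper. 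Your sketch neither locates nor fixes the defective estimate; it reproduces the outline of an argument known to be incomplete at precisely the point you treat as standard. A correct write-up must either carry out the corrected bounds of \cite{Takabatake-2024-JTSA-main} (or an equivalent uniform trace-approximation theorem), or simply cite that result, as the paper does.
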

We provide a remark on the proof of Lemma~3 in \cite{Lieberman-2012}.
The proof relies critically on Theorem~2 of \cite{Lieberman-Phillips-2004}, whose proof, however, contains non-trivial errors, as pointed out by \cite{Ginovyan-Sahakyan-2013}.
One of the authors has recently succeeded in correcting these errors and in establishing a slightly generalized version of Theorem~2 in \cite{Lieberman-Phillips-2004}; see \cite{Takabatake-2024-JTSA-main} for details.
As a result, the assertion of Lemma~3 in \cite{Lieberman-2012} remains valid and can now be justified using Theorem~1 in the supplementary article of \cite{Takabatake-2024-JTSA-main}, in place of the original Theorem~2 in \cite{Lieberman-Phillips-2004}.

Here we prepare the uniform convergence of $\partial_{i}\bar{\sigma}^{2}_{n}(\xi)$ and $ \partial_{i,j}^{2}\bar{\sigma}^{2}_{n}(\xi)$ on $\Theta_{\xi}(\iota)$, whose proof is left to Section~\ref{Sec:proof-Lemma:unifconv-sig2-dv1-dv2}.
\begin{lemma}\label{Lemma:unifconv-sig2-dv1-dv2}
    Under Assumption~$\ref{Assump:DSPD1}$, we can show
    \begin{align*}
        &\sup_{\xi\in\Theta_{\xi}(\iota)} \left| \partial_{i}\bar{\sigma}^{2}_{n}(\xi) - \partial_{i}\sigma^{2}(\xi) \right| = o_{\P_{\vartheta_{0}}^{n}}(1), \\
        &\sup_{\xi\in\Theta_{\xi}(\iota)} \left| \partial_{i,j}^{2}\bar{\sigma}^{2}_{n}(\xi) - \partial_{i,j}^{2}\sigma^{2}(\xi) \right| = o_{\P_{\vartheta_{0}}^{n}}(1)
    \end{align*}
    for each $i,j=1,\ldots,p-1$ and $\iota\in(0,1)$, where $\sigma^{2}(\xi)$ is defined in \eqref{expr:sig2-xi}. 
    In particular, we can write
    \begin{align*}
        &\partial_{i}\sigma^{2}(\xi) = \frac{\sigma^{2}_{0}}{2\pi}\int_{-\pi}^{\pi} \left( -\frac{\partial_{i}s_{\xi}^{X}(\omega)}{s_{\xi}^{X}(\omega)} \right)
         \left( \frac{s_{\xi_{0}}^{X}(\omega)}{s_{\xi}^{X}(\omega)} \right)\dd\omega 
        = -\frac{\sigma^{2}_{0}}{2\pi}\int_{-\pi}^{\pi} \partial_{i}\log s_{\xi}^{X}(\omega)
         \left( \frac{s_{\xi_{0}}^{X}(\omega)}{s_{\xi}^{X}(\omega)} \right)\dd\omega, \\
        &\partial_{i,j}^{2}\sigma^{2}(\xi) 
        = \frac{\sigma^{2}_{0}}{2\pi}\int_{-\pi}^{\pi} \left( -\frac{\partial_{i,j}^{2}s_{\xi}^{X}(\omega)}{s_{\xi}^{X}(\omega)} \right) \left( \frac{s_{\xi_{0}}^{X}(\omega)}{s_{\xi}^{X}(\omega)} \right)\dd\omega
        + \frac{\sigma^{2}_{0}}{2\pi}\int_{-\pi}^{\pi} 2\bigl( \partial_{i}\log s_{\xi}^{X}(\omega) \bigr) \bigl( \partial_{j}\log s_{\xi}^{X}(\omega) \bigr)\left( \frac{s_{\xi_{0}}^{X}(\omega)}{s_{\xi}^{X}(\omega)} \right)\dd\omega.
    \end{align*}
\end{lemma}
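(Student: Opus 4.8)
The plan is to reduce the two claimed uniform convergences to uniform laws of large numbers for suitable quadratic forms of the Gaussian vector $\mathbf{Z}_n:=\Sigma_n(s^X_{\theta_0})^{-1/2}(\mathbf{X}_n-\mu_0\mathbf{1}_n)\sim\mathcal N(0,I_n)$, and to control the difference between $\bar\sigma^2_n(\xi)$ and $\tilde\sigma^2_n(\xi):=\sigma_n^2((\xi,\mu_0)^\top)$ using the estimates on $\mu_n(\xi)-\mu_0$ from Lemmas~\ref{Lemma:Key-Ineq-mu} and \ref{Lemma:moment-dv-mu-xi}. First I would record the explicit formulas for the derivatives: differentiating $\tilde\sigma^2_n(\xi)=\tfrac1n\mathbf{Z}_n^\top\Sigma_n(s^X_{\theta_0})^{1/2}\Sigma_n(s^X_\xi)^{-1}\Sigma_n(s^X_{\theta_0})^{1/2}\mathbf{Z}_n$ in $\xi$, using $\partial_i\Sigma_n(s^X_\xi)^{-1}=-\Sigma_n(s^X_\xi)^{-1}\Sigma_n(\partial_i s^X_\xi)\Sigma_n(s^X_\xi)^{-1}$ (and the analogous second-order formula), so that $\partial_i\tilde\sigma^2_n(\xi)$ and $\partial^2_{i,j}\tilde\sigma^2_n(\xi)$ are each a fixed finite sum of normalized quadratic forms $\tfrac1n\mathbf{Z}_n^\top A_n(\xi)\mathbf{Z}_n$, where $A_n(\xi)$ is a product of Toeplitz matrices $\Sigma_n(\cdot)$ built from $s^X_{\theta_0}$, $s^X_\xi$, and derivatives of $s^X_\xi$, together with inverses $\Sigma_n(s^X_\xi)^{-1}$.

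The core step is then a uniform LLN: for each such $A_n(\xi)$ I would show
\[
\sup_{\xi\in\Theta_\xi(\iota)}\Bigl|\tfrac1n\mathbf{Z}_n^\top A_n(\xi)\mathbf{Z}_n-\tfrac1n\operatorname{Tr}A_n(\xi)\Bigr|=o_{\P^n_{\vartheta_0}}(1),
\]
and separately the deterministic convergence $\tfrac1n\operatorname{Tr}A_n(\xi)\to\tfrac1{2\pi}\int_{-\pi}^{\pi}(\text{corresponding product of spectral densities})\,\mathrm d\omega$ uniformly on $\Theta_\xi(\iota)$; the latter is exactly Lemma~\ref{Lemma:trace-app-inv} (with $\psi_q(\underline\xi)\le 0$ on $\Theta_\xi(\iota)$, so the extra $n^{\psi_q+\epsilon}$ factor is harmless for small $\epsilon$), and identifying the limit integrals with the displayed formulas for $\partial_i\sigma^2(\xi)$ and $\partial^2_{i,j}\sigma^2(\xi)$ is a routine bookkeeping of which factors of $\partial_i\log s^X_\xi$ appear. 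For the stochastic fluctuation term I would fix $\xi$, compute the variance of $\tfrac1n\mathbf{Z}_n^\top A_n(\xi)\mathbf{Z}_n$ as $\tfrac{2}{n^2}\operatorname{Tr}[(A_n(\xi)A_n(\xi)^\top)]$ (up to symmetrization), bound it by $o(1)$ again via Lemma~\ref{Lemma:trace-app-inv}, and then upgrade pointwise convergence to uniform convergence over the compact set $\Theta_\xi(\iota)$ by the Sobolev inequality of Lemma~\ref{Lemma:Sobolev}: this requires moment bounds of order $q>p-1$ for $\tfrac1n\mathbf{Z}_n^\top A_n(\xi)\mathbf{Z}_n-\tfrac1n\operatorname{Tr}A_n(\xi)$ and for its $\xi$-derivative, both of which reduce, after expanding, to traces of products of at most finitely many Toeplitz matrices and their inverses and are handled by the same lemma uniformly on $\Theta_\xi(\iota)$.

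Finally I would pass from $\tilde\sigma^2_n$ to $\bar\sigma^2_n$. Writing $\bar\sigma^2_n(\xi)-\tilde\sigma^2_n(\xi)=-n^{-1}(\mu_n(\xi)-\mu_0)^2\,\mathbf{1}_n^\top\Sigma_n(s^X_\xi)^{-1}\mathbf{1}_n$ and differentiating in $\xi$, Lemmas~\ref{Lemma:Key-Ineq-mu} and \ref{Lemma:moment-dv-mu-xi} together with the Cauchy--Schwarz inequality and another application of Lemma~\ref{Lemma:Sobolev} show that $\sup_{\xi\in\Theta_\xi(\iota)}|\partial_i(\bar\sigma^2_n-\tilde\sigma^2_n)(\xi)|$ and $\sup_{\xi\in\Theta_\xi(\iota)}|\partial^2_{i,j}(\bar\sigma^2_n-\tilde\sigma^2_n)(\xi)|$ are $o_{\P^n_{\vartheta_0}}(1)$ (indeed these differences carry a genuine $n^{-1+(\alpha_X(\xi_0)-\alpha_X(\xi))_++\varepsilon}$ gain on $\Theta_\xi(\iota)$). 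Combining the three pieces gives the claim. The main obstacle I anticipate is the uniform-in-$\xi$ control: one must verify the measurability/continuity hypotheses of Lemma~\ref{Lemma:Sobolev} and obtain the required $q$-th moment bounds for the derivatives of the quadratic forms uniformly on $\Theta_\xi(\iota)$, which forces careful use of Lemma~\ref{Lemma:trace-app-inv} with the $\psi_q$ exponent kept negative by the restriction to $\Theta_\xi(\iota)$; the pointwise CLT-type variance estimates are comparatively routine.
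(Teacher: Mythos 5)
Your proposal follows essentially the same route as the paper: the same three-way split into the plug-in difference $\bar\sigma^2_n-\tilde\sigma^2_n$ (controlled via Lemmas~\ref{Lemma:Key-Ineq-mu} and \ref{Lemma:moment-dv-mu-xi}), the deterministic trace bias (Lemma~\ref{Lemma:trace-app-inv}), and the centered quadratic-form fluctuation, with uniformity over $\Theta_\xi(\iota)$ obtained from the Sobolev inequality of Lemma~\ref{Lemma:Sobolev}; the paper computes the higher moments of the centered quadratic forms via the Leonov--Shiryaev cumulant formula, which is the precise version of your "expand into traces" step. One small imprecision: $\psi_q$ need not be $\le 0$ on $\Theta_\xi(\iota)$ (it is only bounded by a multiple of $1-\iota$ per factor), but since each $q$-fold trace carries an $n^{-q}$ normalization the error $o(n^{\psi_q+\epsilon})$ still vanishes, so your conclusion stands.
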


Define $(p-1)$th square matrix valued continuous functions $\mathcal{G}_{p-1,n}(\xi):=(\mathcal{G}_{n}^{i,j}(\xi))_{i,j=1,\ldots,p-1}$ and $\mathcal{G}_{p-1}(\xi,\theta_{0}):=(\mathcal{G}^{i,j}(\xi,\theta_{0}))_{i,j=1,\ldots,p-1}$ by $\mathcal{G}_{n}^{i,j}(\xi):=-\frac{1}{n}\partial_{i,j}^{2}\bar{\ell}_{n}(\xi)$ and 
\begin{align*}
    \mathcal{G}^{i,j}(\xi,\theta_{0}) :=& -\frac{1}{2}\left( -\frac{\partial_{i}\sigma^{2}(\xi)}{\sigma^{2}(\xi)} \right) 
    \left( -\frac{\partial_{j}\sigma^{2}(\xi)}{\sigma^{2}(\xi)} \right) 
    + \frac{1}{2\sigma^{2}(\xi)}\partial_{i,j}^{2}\sigma^{2}(\xi) \\
    &+\frac{1}{4\pi}\int_{-\pi}^{\pi} \frac{\partial_{i,j}^{2}s_{\xi}^{X}(\omega)}{s_{\xi}^{X}(\omega)} \dd\omega 
    -\frac{1}{4\pi}\int_{-\pi}^{\pi}  \left( \partial_{i}\log s_{\xi}^{X}(\omega) \right) \left( \partial_{j}\log s_{\xi}^{X}(\omega) \right) \dd\omega
\end{align*}
for each $i,j=1,\ldots,p-1$. Notice that, using the expressions of $\partial_{i}\sigma^{2}(\xi_{0})$ and $ \partial_{i,j}^{2}\sigma^{2}(\xi_{0})$ in Lemma~\ref{Lemma:unifconv-sig2-dv1-dv2}, we can write
\begin{align*}
    \mathcal{G}_{p-1}(\xi_{0}) := \mathcal{G}_{p-1}(\xi_{0},\theta_{0}) = -\frac{1}{2}a_{p-1}(\xi_{0})a_{p-1}(\xi_{0})^{\top} + \mathcal{F}_{p-1}(\xi_{0}).
\end{align*}
Moreover, using the expression in \eqref{expr:Ln-dv2}, Lemma~\ref{Lemma:trace-app-inv}, the uniform convergence in \eqref{unifconv-sig2-bar} and Lemma~\ref{Lemma:unifconv-sig2-dv1-dv2}, we obtain
\begin{align}\label{unifconv-ell-bar-dv2}
    \sup_{\xi\in\Theta_{\xi}(\iota)} | \mathcal{G}_{n}^{i,j}(\xi) - \mathcal{G}^{i,j}(\xi,\theta_{0}) | = o_{\P_{\vartheta_{0}}^{n}}(1)
    \ \ \mbox{as $n\to\infty$}
\end{align}
for each $i,j=1,\ldots,p-1$.\\

Moreover, we also prepare the following central limit theorem, whose proof is omitted since a stronger result than Lemma~\ref{Lemma:CLT-sig2} is proved in \eqref{CLT-zeta-n} and it can be proved as a corollary of \eqref{CLT-zeta-n}. 
\begin{lemma}\label{Lemma:CLT-sig2}
    Under Assumption~$\ref{Assump:DSPD1}$, we can show
    \begin{align*}
        \bar{\zeta}_n := \diag\left(-\frac{\sqrt{n}}{2\sigma^{2}_{0}}I_{p-1},\frac{\sqrt{n}}{\sigma^3_0}\right)
        \begin{pmatrix}
            \partial_{\xi}\bar{\sigma}^{2}_{n}(\xi_{0}) - \E_{\vartheta_{0}}^{n}[\partial_{\xi}\tilde{\sigma}^{2}_{n}(\theta_{0})] \\
            \bar{\sigma}^{2}_{n}(\xi_{0}) - \sigma^{2}_{0} 
        \end{pmatrix}
        \to \mathcal{N}(0,\mathcal{F}_{p}(\theta_{0}))
        \ \ \mbox{as $n\to\infty$},
    \end{align*}
    where the matrix $\mathcal{F}_{p}(\theta)$ is defined in \eqref{Def:mat-Fp}. 
\end{lemma}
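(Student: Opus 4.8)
The plan is to obtain Lemma~\ref{Lemma:CLT-sig2} as an immediate corollary of the central limit theorem \eqref{CLT-zeta-n} for the full normalized score $\zeta_{n}(\vartheta_{0})$, rather than re-running a Lindeberg argument from scratch. The key observation is that $\bar\zeta_{n}$ coincides, up to an $o_{\P_{\vartheta_{0}}^{n}}(1)$ term, with the first $p$ coordinates of $\zeta_{n}(\vartheta_{0})$.

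First I would recall from \eqref{expr:zeta} that the first $p$ coordinates of $\zeta_{n}(\vartheta_{0})$ equal $\diag(-\tfrac{\sqrt{n}}{2\sigma_{0}^{2}}I_{p-1},\tfrac{\sqrt{n}}{\sigma_{0}^{3}})$ applied to the centered vector $\bigl(\partial_{\xi}\tilde\sigma^{2}_{n}(\xi_{0})-\E_{\vartheta_{0}}^{n}[\partial_{\xi}\tilde\sigma^{2}_{n}(\xi_{0})],\ \tilde\sigma^{2}_{n}(\xi_{0})-\sigma_{0}^{2}\bigr)^{\top}$, where one uses the exact computation $\E_{\vartheta_{0}}^{n}[\tilde\sigma^{2}_{n}(\xi_{0})]=n^{-1}\mathrm{Tr}[\Sigma_{n}(s^{X}_{\vartheta_{0}})\Sigma_{n}(s^{X}_{\xi_{0}})^{-1}]=\sigma_{0}^{2}$ already noted in the proof of \eqref{expr:app-sig2-tilde}. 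Then, since the leading $\diag$ block of $\bar\zeta_{n}$ is exactly the same constant matrix, the approximation \eqref{expr:app-sig2-tilde} shows that $\bar\zeta_{n}$ equals precisely this truncated version of $\zeta_{n}(\vartheta_{0})$ plus an $o_{\P_{\vartheta_{0}}^{n}}(1)$ remainder.

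Next I would exploit the block structure of $\mathcal{I}(\theta_{0})$ in \eqref{Def:mat-I}: since $\mathcal{I}(\theta_{0})=\diag\bigl(\mathcal{F}_{p}(\theta_{0}),\ \tfrac{2\pi\Gamma(1-\alpha_{X}(\xi_{0}))\sigma_{0}^{2}c_{X}(\xi_{0})}{B(1-\alpha_{X}(\xi_{0}),1-\alpha_{X}(\xi_{0}))}\bigr)$, the marginal of the Gaussian limit in \eqref{CLT-zeta-n} on its first $p$ coordinates is exactly $\mathcal{N}(0,\mathcal{F}_{p}(\theta_{0}))$. Applying the continuous mapping theorem to the coordinate projection $\R^{p+1}\to\R^{p}$ gives convergence in law of the first $p$ coordinates of $\zeta_{n}(\vartheta_{0})$ to $\mathcal{N}(0,\mathcal{F}_{p}(\theta_{0}))$, and Slutsky's lemma then absorbs the $o_{\P_{\vartheta_{0}}^{n}}(1)$ remainder from the previous step to conclude $\bar\zeta_{n}\to\mathcal{N}(0,\mathcal{F}_{p}(\theta_{0}))$.

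There is essentially no obstacle here: the substantive work---verifying Lindeberg's condition for the quadratic-plus-linear form in i.i.d.\ standard Gaussian coordinates and identifying the limiting variance via Lemma~\ref{Lemma:trace-app-inv} together with Theorems~4.1 and 5.2 of \cite{Adenstedt-1974}---has already been carried out in the proof of \eqref{CLT-zeta-n}. The only points requiring any care are the matching of the centering terms and the negligibility of the $\tilde\sigma^{2}_{n}$-versus-$\bar\sigma^{2}_{n}$ discrepancy, both of which are supplied by \eqref{expr:app-sig2-tilde} and the exact value of $\E_{\vartheta_{0}}^{n}[\tilde\sigma^{2}_{n}(\xi_{0})]$.
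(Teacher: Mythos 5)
Your proposal is correct and follows exactly the route the paper itself indicates: the paper omits the proof of Lemma~\ref{Lemma:CLT-sig2} precisely on the grounds that it is a corollary of the stronger result \eqref{CLT-zeta-n}, and you supply the missing details — projection onto the first $p$ coordinates, the block-diagonal structure of $\mathcal{I}(\theta_{0})$ in \eqref{Def:mat-I}, the identity $\E_{\vartheta_{0}}^{n}[\tilde\sigma^{2}_{n}(\xi_{0})]=\sigma_{0}^{2}$, and the replacement of $\tilde\sigma^{2}_{n}$ by $\bar\sigma^{2}_{n}$ via \eqref{expr:app-sig2-tilde} plus Slutsky. The only bookkeeping point worth noting is that \eqref{CLT-zeta-n} as stated invokes Assumption~\ref{Assump:DSPD2} for the $\mu$-coordinate, whereas the lemma is claimed under Assumption~\ref{Assump:DSPD1} alone; this is harmless because setting $u_{p+1}=0$ in the Lindeberg argument of Section~\ref{Sec:proof-CLT-score} removes every use of the Adenstedt results, so the restricted CLT indeed needs only Assumption~\ref{Assump:DSPD1}.
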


\begin{proof}[Proof of Lemma~\ref{Lemma:Key-Ineq-mu}]
First, by the chain rule we have
\begin{align*}
    \partial_{i} \Sigma_{n}(s_{\xi}^{X})^{-1} 
    =& - \Sigma_{n}(s_{\xi}^{X})^{-1}\Sigma_{n}(\partial_{i}s_{\xi}^{X})\Sigma_{n}(s_{\xi}^{X})^{-1}, \\
    \partial_{i,j}^{2} \Sigma_{n}(s_{\xi}^{X})^{-1} 
    =& \Sigma_{n}(s_{\xi}^{X})^{-1}\Sigma_{n}(\partial_{j}s_{\xi}^{X})\Sigma_{n}(s_{\xi}^{X})^{-1} \Sigma_{n}(\partial_{i}s_{\xi}^{X})\Sigma_{n}(s_{\xi}^{X})^{-1} \\
    & - \Sigma_{n}(s_{\xi}^{X})^{-1}\Sigma_{n}(\partial_{i,j}^{2}s_{\xi}^{X})\Sigma_{n}(s_{\xi}^{X})^{-1}  
    + \Sigma_{n}(s_{\xi}^{X})^{-1}\Sigma_{n}(\partial_{i}s_{\xi}^{X})\Sigma_{n}(s_{\xi}^{X})^{-1} \Sigma_{n}(\partial_{j}s_{\xi}^{X})\Sigma_{n}(s_{\xi}^{X})^{-1}, \\
    \partial_{i,j,k}^{3} \Sigma_{n}(s_{\xi}^{X})^{-1} 
    =& -\Sigma_{n}(s_{\xi}^{X})^{-1}\Sigma_{n}(\partial_{k}s_{\xi}^{X})\Sigma_{n}(s_{\xi}^{X})^{-1}  \Sigma_{n}(\partial_{j}s_{\xi}^{X})\Sigma_{n}(s_{\xi}^{X})^{-1} \Sigma_{n}(\partial_{i}s_{\xi}^{X})\Sigma_{n}(s_{\xi}^{X})^{-1} \\
    & + \Sigma_{n}(s_{\xi}^{X})^{-1}\Sigma_{n}(\partial_{j,k}^{2}s_{\xi}^{X})\Sigma_{n}(s_{\xi}^{X})^{-1} \Sigma_{n}(\partial_{i}s_{\xi}^{X})\Sigma_{n}(s_{\xi}^{X})^{-1} \\
    & -\Sigma_{n}(s_{\xi}^{X})^{-1}\Sigma_{n}(\partial_{j}s_{\xi}^{X})\Sigma_{n}(s_{\xi}^{X})^{-1} \Sigma_{n}(\partial_{k}s_{\xi}^{X})\Sigma_{n}(s_{\xi}^{X})^{-1} \Sigma_{n}(\partial_{i}s_{\xi}^{X})\Sigma_{n}(s_{\xi}^{X})^{-1} \\
    & + \Sigma_{n}(s_{\xi}^{X})^{-1}\Sigma_{n}(\partial_{j}s_{\xi}^{X})\Sigma_{n}(s_{\xi}^{X})^{-1} \Sigma_{n}(\partial_{i,k}^{2}s_{\xi}^{X})\Sigma_{n}(s_{\xi}^{X})^{-1} \\
    & -\Sigma_{n}(s_{\xi}^{X})^{-1}\Sigma_{n}(\partial_{j}s_{\xi}^{X})\Sigma_{n}(s_{\xi}^{X})^{-1} \Sigma_{n}(\partial_{i}s_{\xi}^{X})\Sigma_{n}(s_{\xi}^{X})^{-1} \Sigma_{n}(\partial_{k}s_{\xi}^{X})\Sigma_{n}(s_{\xi}^{X})^{-1} \\
    & + \Sigma_{n}(s_{\xi}^{X})^{-1} \Sigma_{n}(\partial_{k}s_{\xi}^{X})\Sigma_{n}(s_{\xi}^{X})^{-1} \Sigma_{n}(\partial_{i,j}^{2}s_{\xi}^{X})\Sigma_{n}(s_{\xi}^{X})^{-1} 
    - \Sigma_{n}(s_{\xi}^{X})^{-1}\Sigma_{n}(\partial_{i,j,k}^{3}s_{\xi}^{X})\Sigma_{n}(s_{\xi}^{X})^{-1} \\
    & + \Sigma_{n}(s_{\xi}^{X})^{-1} \Sigma_{n}(\partial_{i,j}^{2}s_{\xi}^{X})\Sigma_{n}(s_{\xi}^{X})^{-1} \Sigma_{n}(s_{\xi}^{X})^{-1} \Sigma_{n}(\partial_{k}s_{\xi}^{X}) \\
    & -\Sigma_{n}(s_{\xi}^{X})^{-1}\Sigma_{n}(\partial_{k}s_{\xi}^{X})\Sigma_{n}(s_{\xi}^{X})^{-1}  \Sigma_{n}(\partial_{i}s_{\xi}^{X})\Sigma_{n}(s_{\xi}^{X})^{-1} \Sigma_{n}(\partial_{j}s_{\xi}^{X})\Sigma_{n}(s_{\xi}^{X})^{-1} \\
    & + \Sigma_{n}(s_{\xi}^{X})^{-1}\Sigma_{n}(\partial_{i,k}^{2}s_{\xi}^{X})\Sigma_{n}(s_{\xi}^{X})^{-1} \Sigma_{n}(\partial_{j}s_{\xi}^{X})\Sigma_{n}(s_{\xi}^{X})^{-1} \\
    & -\Sigma_{n}(s_{\xi}^{X})^{-1}\Sigma_{n}(\partial_{i}s_{\xi}^{X})\Sigma_{n}(s_{\xi}^{X})^{-1} \Sigma_{n}(\partial_{k}s_{\xi}^{X})\Sigma_{n}(s_{\xi}^{X})^{-1} \Sigma_{n}(\partial_{j}s_{\xi}^{X})\Sigma_{n}(s_{\xi}^{X})^{-1} \\
    & + \Sigma_{n}(s_{\xi}^{X})^{-1}\Sigma_{n}(\partial_{i}s_{\xi}^{X})\Sigma_{n}(s_{\xi}^{X})^{-1} \Sigma_{n}(\partial_{j,k}^{2}s_{\xi}^{X})\Sigma_{n}(s_{\xi}^{X})^{-1} \\
    & -\Sigma_{n}(s_{\xi}^{X})^{-1}\Sigma_{n}(\partial_{i}s_{\xi}^{X})\Sigma_{n}(s_{\xi}^{X})^{-1} \Sigma_{n}(\partial_{j}s_{\xi}^{X})\Sigma_{n}(s_{\xi}^{X})^{-1} \Sigma_{n}(\partial_{k}s_{\xi}^{X})\Sigma_{n}(s_{\xi}^{X})^{-1}.
\end{align*}

In the rest of the proof, we only prove the second assertion because, from the above expressions of the derivatives $\partial_{i}\Sigma_{n}(s_{\xi}^{X})^{-1}$, $\partial_{i,j}^{2}\Sigma_{n}(s_{\xi}^{X})^{-1}$ and $\partial_{i,j,k}^{3}\Sigma_{n}(s_{\xi}^{X})^{-1}$, we can see that the other assertions can be proved similarly. 
Notice that we can show
\begin{align*}
    \left| \mathbf{1}_{n}^{\top}\partial_{i,j}^{2}\Sigma_{n}(s_{\theta}^{X})^{-1}\mathbf{1}_{n} \right|
    \leq& \mathbf{1}_{n}^{\top} \Sigma_{n}(s_{\theta}^{X})^{-1} \Sigma_{n}(|\partial_{i,j}^{2}s_{\theta}^{X}|) \Sigma_{n}(s_{\theta}^{X})^{-1}\mathbf{1}_{n} \\
    &+ 2\sum_{m_{1},m_{2}\in\{+,-\}} \left| \mathbf{1}_{n}^{\top} \Sigma_{n}(s_{\theta}^{X})^{-1} \Sigma_{n}((\partial_{j}s_{\theta}^{X})_{m_{2}}) \Sigma_{n}(s_{\theta}^{X})^{-1}  \Sigma_{n}((\partial_{i}s_{\theta}^{X})_{m_{1}}) \Sigma_{n}(s_{\theta}^{X})^{-1}\mathbf{1}_{n} \right|.
\end{align*}
and
\begin{align*}
    &\sum_{m_{1},m_{2}\in\{+,-\}} \left| \mathbf{1}_{n}^{\top} \Sigma_{n}(s_{\theta}^{X})^{-1} \Sigma_{n}((\partial_{j}s_{\theta}^{X})_{m_{2}}) \Sigma_{n}(s_{\theta}^{X})^{-1}  \Sigma_{n}((\partial_{i}s_{\theta}^{X})_{m_{1}}) \Sigma_{n}(s_{\theta}^{X})^{-1}\mathbf{1}_{n} \right| \\
    &\leq \sum_{m_{1},m_{2}\in\{+,-\}} \left\| \Sigma_{n}(s_{\theta}^{X})^{-\frac{1}{2}} \Sigma_{n}((\partial_{j}s_{\theta}^{X})_{m_{1}}) \Sigma_{n}(s_{\theta}^{X})^{-1}\mathbf{1}_{n} \right\|_{\R^{n}} 
    \left\| \Sigma_{n}(s_{\theta}^{X})^{-\frac{1}{2}} \Sigma_{n}((\partial_{i}s_{\theta}^{X})_{m_{1}}) \Sigma_{n}(s_{\theta}^{X})^{-1}\mathbf{1}_{n} \right\|_{\R^{n}} \\
    &\leq \sum_{m_{1},m_{2}\in\{+,-\}} \left\| \Sigma_{n}(s_{\theta}^{X})^{-\frac{1}{2}} \Sigma_{n}((\partial_{j}s_{\theta}^{X})_{m_{1}})^{\frac{1}{2}} \right\|_{\mathrm{op}}^{2} 
    \left\| \Sigma_{n}(s_{\theta}^{X})^{-\frac{1}{2}} \Sigma_{n}((\partial_{i}s_{\theta}^{X})_{m_{1}})^{\frac{1}{2}} \right\|_{\mathrm{op}}^{2}
    \left\| \Sigma_{n}(s_{\theta}^{X})^{-\frac{1}{2}} \mathbf{1}_{n} \right\|_{\R^{n}}^{2}
\end{align*}
so that we conclude the second assertions using  
Lemma~5.3 in \cite{Dahlhaus-1989} and Lemma~6 in the full version of \cite{Lieberman-2012}. 
Therefore we finish the proof. 
\end{proof}

~\\
\begin{proof}[Proof of Lemma~\ref{Lemma:moment-dv-mu-xi}]
Since we have
\begin{align}\label{expr:mu-xi-error}
    \mu_{n}(\xi)-\mu_{0} =  \frac{ \mathbf{1}_{n}^{\top}\Sigma_{n}(s_{\xi}^{X})^{-1}(\mathbf{X}_{n}-\mu_{0}\mathbf{1}_{n}) }{ \mathbf{1}_{n}^{\top}\Sigma_{n}(s_{\xi}^{X})^{-1}\mathbf{1}_{n} }
    \sim \mathcal{N}\left( 0, \frac{ \mathbf{1}_{n}^{\top}\Sigma_{n}(s_{\theta}^{X})^{-1} \Sigma_{n}(s_{\theta_{0}}^{X}) \Sigma_{n}(s_{\theta}^{X})^{-1}\mathbf{1}_{n} }{ (\mathbf{1}_{n}^{\top}\Sigma_{n}(s_{\theta}^{X})^{-1}\mathbf{1}_{n})^{2} } \right)
\end{align}
under the distribution $\P_{\vartheta_{0}}^{n}$, we can show that 
\begin{align*}
    \E_{\vartheta_{0}}^{n}\left[ \left|\mu_{n}(\xi)-\mu_{0}\right|^{q} \right] = 2^{\frac{q}{2}}\Gamma\left(\frac{q+1}{2}\right)\pi^{-\frac{1}{2}} \left| \frac{ (\mathbf{1}_{n}^{\top}\Sigma_{n}(s_{\theta}^{X})^{-1} \Sigma_{n}(s_{\theta_{0}}^{X}) \Sigma_{n}(s_{\theta}^{X})^{-1}\mathbf{1}_{n})^{\frac{1}{2}} }{ \mathbf{1}_{n}^{\top}\Sigma_{n}(s_{\theta}^{X})^{-1}\mathbf{1}_{n} } \right|^{q} 
\end{align*}
so that the first assertion follows from Lemma~\ref{Lemma:Key-Ineq-mu}.\\ 

Set $\mathbf{Z}_{n}:=\mathbf{X}_{n}-\mu_{0}\mathbf{1}_{n}$. First notice that the derivatives of $\mu_{n}(\xi)$ with respect to $\xi$ can be expressed by 
    \begin{align}
        \partial_{i} \mu_{n}(\xi)
        =& \frac{ \mathbf{1}_{n}^{\top}\partial_{i}\Sigma_{n}(s_{\xi}^{X})^{-1} \mathbf{X}_{n} }{ \mathbf{1}_{n}^{\top}\Sigma_{n}(s_{\xi}^{X})^{-1}\mathbf{1}_{n} }
        - \mu_{n}(\xi)\frac{ \mathbf{1}_{n}^{\top} \partial_{i}\Sigma_{n}(s_{\xi}^{X})^{-1} \mathbf{1}_{n} }{ \mathbf{1}_{n}^{\top}\Sigma_{n}(s_{\xi}^{X})^{-1}\mathbf{1}_{n} } 
        \nonumber \\
        =& \frac{ \mathbf{1}_{n}^{\top}\partial_{i}\Sigma_{n}(s_{\xi}^{X})^{-1} \mathbf{Z}_{n} }{ \mathbf{1}_{n}^{\top}\Sigma_{n}(s_{\xi}^{X})^{-1}\mathbf{1}_{n} }
        -(\mu_{n}(\xi)-\mu_{0}) \frac{ \mathbf{1}_{n}^{\top}\partial_{i}\Sigma_{n}(s_{\xi}^{X})^{-1} \mathbf{1}_{n} }{ \mathbf{1}_{n}^{\top}\Sigma_{n}(s_{\xi}^{X})^{-1}\mathbf{1}_{n} },
        \label{expr:mu-xi-dv1}
    \end{align} 
    \begin{align}
        \partial_{i,j}^{2} \mu_{n}(\xi)
        =& \frac{ \mathbf{1}_{n}^{\top}\partial_{i,j}^{2}\Sigma_{n}(s_{\xi}^{X})^{-1} \mathbf{Z}_{n} }{ \mathbf{1}_{n}^{\top}\Sigma_{n}(s_{\xi}^{X})^{-1}\mathbf{1}_{n} } 
        - \left( \frac{ \mathbf{1}_{n}^{\top}\partial_{i}\Sigma_{n}(s_{\xi}^{X})^{-1} \mathbf{Z}_{n} }{ \mathbf{1}_{n}^{\top}\Sigma_{n}(s_{\xi}^{X})^{-1}\mathbf{1}_{n} } \right) \left( \frac{ \mathbf{1}_{n}^{\top}\partial_{j}\Sigma_{n}(s_{\xi}^{X})^{-1} \mathbf{1}_{n} }{ \mathbf{1}_{n}^{\top}\Sigma_{n}(s_{\xi}^{X})^{-1}\mathbf{1}_{n} } \right)
        - \partial_{j}\mu_{n}(\xi) \frac{ \mathbf{1}_{n}^{\top} \partial_{i}\Sigma_{n}(s_{\xi}^{X})^{-1} \mathbf{1}_{n} }{ \mathbf{1}_{n}^{\top}\Sigma_{n}(s_{\xi}^{X})^{-1}\mathbf{1}_{n} } 
        \nonumber \\
        & - (\mu_{n}(\xi)-\mu_{0})\left[
        \frac{ \mathbf{1}_{n}^{\top}\partial_{i,j}^{2}\Sigma_{n}(s_{\xi}^{X})^{-1} \mathbf{1}_{n} }{ \mathbf{1}_{n}^{\top}\Sigma_{n}(s_{\xi}^{X})^{-1}\mathbf{1}_{n} } - \left( \frac{ \mathbf{1}_{n}^{\top}\partial_{i}\Sigma_{n}(s_{\xi}^{X})^{-1} \mathbf{Z}_{n} }{ \mathbf{1}_{n}^{\top}\Sigma_{n}(s_{\xi}^{X})^{-1}\mathbf{1}_{n} } \right) \left( \frac{ \mathbf{1}_{n}^{\top}\partial_{j}\Sigma_{n}(s_{\xi}^{X})^{-1} \mathbf{1}_{n} }{ \mathbf{1}_{n}^{\top}\Sigma_{n}(s_{\xi}^{X})^{-1}\mathbf{1}_{n} } \right)
        \right]
        \label{exp:dv2-mu-xi}
    \end{align}
    and
    \begin{align}
        \partial_{i,j,k}^{3} \mu_{n}(\xi)
        =& \frac{ \mathbf{1}_{n}^{\top}\partial_{i,j,k}^{3}\Sigma_{n}(s_{\xi}^{X})^{-1} \mathbf{Z}_{n} }{ \mathbf{1}_{n}^{\top}\Sigma_{n}(s_{\xi}^{X})^{-1}\mathbf{1}_{n} } 
        - \left( \frac{ \mathbf{1}_{n}^{\top}\partial_{i,j}^{2}\Sigma_{n}(s_{\xi}^{X})^{-1} \mathbf{Z}_{n} }{ \mathbf{1}_{n}^{\top}\Sigma_{n}(s_{\xi}^{X})^{-1}\mathbf{1}_{n} } \right)\left( \frac{ \mathbf{1}_{n}^{\top}\partial_{k}\Sigma_{n}(s_{\xi}^{X})^{-1} \mathbf{1}_{n} }{ \mathbf{1}_{n}^{\top}\Sigma_{n}(s_{\xi}^{X})^{-1}\mathbf{1}_{n} } \right) \nonumber \\
        &- \left( \frac{ \mathbf{1}_{n}^{\top}\partial_{i,k}^{2}\Sigma_{n}(s_{\xi}^{X})^{-1} \mathbf{Z}_{n} }{ \mathbf{1}_{n}^{\top}\Sigma_{n}(s_{\xi}^{X})^{-1}\mathbf{1}_{n} } \right)\left( \frac{ \mathbf{1}_{n}^{\top}\partial_{j}\Sigma_{n}(s_{\xi}^{X})^{-1} \mathbf{1}_{n} }{ \mathbf{1}_{n}^{\top}\Sigma_{n}(s_{\xi}^{X})^{-1}\mathbf{1}_{n} } \right) 
        -\left( \frac{ \mathbf{1}_{n}^{\top}\partial_{i}\Sigma_{n}(s_{\xi}^{X})^{-1} \mathbf{Z}_{n} }{ \mathbf{1}_{n}^{\top}\Sigma_{n}(s_{\xi}^{X})^{-1}\mathbf{1}_{n} } \right) \left( \frac{ \mathbf{1}_{n}^{\top}\partial_{j,k}^{2}\Sigma_{n}(s_{\xi}^{X})^{-1} \mathbf{1}_{n} }{ \mathbf{1}_{n}^{\top}\Sigma_{n}(s_{\xi}^{X})^{-1}\mathbf{1}_{n} } \right) \nonumber \\
        &+2 \left( \frac{ \mathbf{1}_{n}^{\top}\partial_{i}\Sigma_{n}(s_{\xi}^{X})^{-1} \mathbf{Z}_{n} }{ \mathbf{1}_{n}^{\top}\Sigma_{n}(s_{\xi}^{X})^{-1}\mathbf{1}_{n} } \right) 
        \left( \frac{ \mathbf{1}_{n}^{\top}\partial_{j}\Sigma_{n}(s_{\xi}^{X})^{-1} \mathbf{1}_{n} }{ \mathbf{1}_{n}^{\top}\Sigma_{n}(s_{\xi}^{X})^{-1}\mathbf{1}_{n} } \right) 
        \left( \frac{ \mathbf{1}_{n}^{\top}\partial_{k}\Sigma_{n}(s_{\xi}^{X})^{-1} \mathbf{1}_{n} }{ \mathbf{1}_{n}^{\top}\Sigma_{n}(s_{\xi}^{X})^{-1}\mathbf{1}_{n} } \right) 
        - \partial_{j,k}^{2}\mu_{n}(\xi) \left( \frac{ \mathbf{1}_{n}^{\top} \partial_{i}\Sigma_{n}(s_{\xi}^{X})^{-1} \mathbf{1}_{n} }{ \mathbf{1}_{n}^{\top}\Sigma_{n}(s_{\xi}^{X})^{-1}\mathbf{1}_{n} } \right) \nonumber \\
        &- \partial_{j}\mu_{n}(\xi) \left[
        \frac{ \mathbf{1}_{n}^{\top}\partial_{i,k}^{2}\Sigma_{n}(s_{\xi}^{X})^{-1} \mathbf{1}_{n} }{ \mathbf{1}_{n}^{\top}\Sigma_{n}(s_{\xi}^{X})^{-1}\mathbf{1}_{n} } - \left( \frac{ \mathbf{1}_{n}^{\top}\partial_{i}\Sigma_{n}(s_{\xi}^{X})^{-1} \mathbf{1}_{n} }{ \mathbf{1}_{n}^{\top}\Sigma_{n}(s_{\xi}^{X})^{-1}\mathbf{1}_{n} } \right)
        \left( \frac{ \mathbf{1}_{n}^{\top}\partial_{k}\Sigma_{n}(s_{\xi}^{X})^{-1} \mathbf{1}_{n} }{ \mathbf{1}_{n}^{\top}\Sigma_{n}(s_{\xi}^{X})^{-1}\mathbf{1}_{n} } \right)
        \right] \nonumber \\
        &- \partial_{k}\mu_{n}(\xi)\left[
        \frac{ \mathbf{1}_{n}^{\top}\partial_{i,j}^{2}\Sigma_{n}(s_{\xi}^{X})^{-1} \mathbf{1}_{n} }{ \mathbf{1}_{n}^{\top}\Sigma_{n}(s_{\xi}^{X})^{-1}\mathbf{1}_{n} } - \left( \frac{ \mathbf{1}_{n}^{\top}\partial_{i}\Sigma_{n}(s_{\xi}^{X})^{-1} \mathbf{Z}_{n} }{ \mathbf{1}_{n}^{\top}\Sigma_{n}(s_{\xi}^{X})^{-1}\mathbf{1}_{n} } \right) \left( \frac{ \mathbf{1}_{n}^{\top}\partial_{j}\Sigma_{n}(s_{\xi}^{X})^{-1} \mathbf{1}_{n} }{ \mathbf{1}_{n}^{\top}\Sigma_{n}(s_{\xi}^{X})^{-1}\mathbf{1}_{n} } \right) \right] \nonumber \\
        & - (\mu_{n}(\xi)-\mu_{0})\left[
        \frac{ \mathbf{1}_{n}^{\top}\partial_{i,j,k}^{3}\Sigma_{n}(s_{\xi}^{X})^{-1} \mathbf{1}_{n} }{ \mathbf{1}_{n}^{\top}\Sigma_{n}(s_{\xi}^{X})^{-1}\mathbf{1}_{n} } 
        -\left( \frac{ \mathbf{1}_{n}^{\top}\partial_{i,j}^{2}\Sigma_{n}(s_{\xi}^{X})^{-1} \mathbf{1}_{n} }{ \mathbf{1}_{n}^{\top}\Sigma_{n}(s_{\xi}^{X})^{-1}\mathbf{1}_{n} } \right) 
        \left( \frac{ \mathbf{1}_{n}^{\top}\partial_{k}\Sigma_{n}(s_{\xi}^{X})^{-1} \mathbf{1}_{n} }{ \mathbf{1}_{n}^{\top}\Sigma_{n}(s_{\xi}^{X})^{-1}\mathbf{1}_{n} } \right)
        \right. \nonumber \\
        &\left. \hspace{2.5cm}  - \left( \frac{ \mathbf{1}_{n}^{\top}\partial_{i,k}^{2}\Sigma_{n}(s_{\xi}^{X})^{-1} \mathbf{1}_{n} }{ \mathbf{1}_{n}^{\top}\Sigma_{n}(s_{\xi}^{X})^{-1}\mathbf{1}_{n} } \right) \left( \frac{ \mathbf{1}_{n}^{\top}\partial_{j}\Sigma_{n}(s_{\xi}^{X})^{-1} \mathbf{1}_{n} }{ \mathbf{1}_{n}^{\top}\Sigma_{n}(s_{\xi}^{X})^{-1}\mathbf{1}_{n} } \right) 
        - \left( \frac{ \mathbf{1}_{n}^{\top}\partial_{i}\Sigma_{n}(s_{\xi}^{X})^{-1} \mathbf{1}_{n} }{ \mathbf{1}_{n}^{\top}\Sigma_{n}(s_{\xi}^{X})^{-1}\mathbf{1}_{n} } \right) \left( \frac{ \mathbf{1}_{n}^{\top}\partial_{j,k}^{2}\Sigma_{n}(s_{\xi}^{X})^{-1} \mathbf{1}_{n} }{ \mathbf{1}_{n}^{\top}\Sigma_{n}(s_{\xi}^{X})^{-1}\mathbf{1}_{n} } \right) \right. \nonumber \\ 
        &\left. \hspace{2.5cm} +2 \left( \frac{ \mathbf{1}_{n}^{\top}\partial_{i}\Sigma_{n}(s_{\xi}^{X})^{-1} \mathbf{1}_{n} }{ \mathbf{1}_{n}^{\top}\Sigma_{n}(s_{\xi}^{X})^{-1}\mathbf{1}_{n} } \right)
        \left( \frac{ \mathbf{1}_{n}^{\top}\partial_{j}\Sigma_{n}(s_{\xi}^{X})^{-1} \mathbf{1}_{n} }{ \mathbf{1}_{n}^{\top}\Sigma_{n}(s_{\xi}^{X})^{-1}\mathbf{1}_{n} } \right) \left( \frac{ \mathbf{1}_{n}^{\top}\partial_{k}\Sigma_{n}(s_{\xi}^{X})^{-1} \mathbf{1}_{n} }{ \mathbf{1}_{n}^{\top}\Sigma_{n}(s_{\xi}^{X})^{-1}\mathbf{1}_{n} } \right)
        \right].
        \label{exp:dv3-mu-xi}
    \end{align}
Then we can show that 
\begin{align*}
    \E_{\vartheta_{0}}^{n}\left[ \left|\partial_{i}\mu_{n}(\xi)\right|^{q} \right]
    \leq& 2^{q} \left( \E_{\vartheta_{0}}^{n}\left[ \left| \frac{ \mathbf{1}_{n}^{\top}\partial_{i}\Sigma_{n}(s_{\theta}^{X})^{-1} \mathbf{Z}_{n} }{ \mathbf{1}_{n}^{\top}\Sigma_{n}(s_{\theta}^{X})^{-1}\mathbf{1}_{n} } \right|^{q} \right]
    + \E_{\vartheta_{0}}^{n}\left[ \left|\mu_{n}(\xi)-\mu_{0}\right|^{q} \right] \left| \frac{ \mathbf{1}_{n}^{\top}\partial_{i}\Sigma_{n}(s_{\theta}^{X})^{-1} \mathbf{1}_{n} }{ \mathbf{1}_{n}^{\top}\Sigma_{n}(s_{\theta}^{X})^{-1}\mathbf{1}_{n} } \right|^{q} \right) \\
    =& 2^{q} \left( \left| \frac{ (\mathbf{1}_{n}^{\top}\partial_{i}\Sigma_{n}(s_{\theta}^{X})^{-1} \Sigma_{n}(s_{\theta_{0}}^{X}) \partial_{i}\Sigma_{n}(s_{\theta}^{X})^{-1} \mathbf{1}_{n})^{\frac{1}{2}} }{ \mathbf{1}_{n}^{\top}\Sigma_{n}(s_{\theta}^{X})^{-1}\mathbf{1}_{n} } \right|^{q}
    + \E_{\vartheta_{0}}^{n}\left[ \left|\mu_{n}(\xi)-\mu_{0}\right|^{q} \right] \left| \frac{ \mathbf{1}_{n}^{\top}\partial_{i}\Sigma_{n}(s_{\theta}^{X})^{-1} \mathbf{1}_{n} }{ \mathbf{1}_{n}^{\top}\Sigma_{n}(s_{\theta}^{X})^{-1}\mathbf{1}_{n} } \right|^{q} \right) 
\end{align*}
so that we conclude the second assertion of Lemma~\ref{Lemma:moment-dv-mu-xi} using Lemma~\ref{Lemma:Key-Ineq-mu} and the first assertion of Lemma~\ref{Lemma:moment-dv-mu-xi}. 
Moreover, using Lemma~\ref{Lemma:Key-Ineq-mu}, we can also show that there exists a positive constant $C_{q}$ such that for any $\theta\in\Theta$ and $\varepsilon>0$, 
\begin{align*}
    \E_{\vartheta_{0}}^{n}\left[ \left|\partial_{i,j}^{2}\mu_{n}(\xi)\right|^{q} \right]
    \leq& C_{q} \left( \E_{\vartheta_{0}}^{n}\left[ \left| \frac{ \mathbf{1}_{n}^{\top}\partial_{i,j}^{2} \Sigma_{n}(s_{\xi}^{X})^{-1} \mathbf{Z}_{n} }{ \mathbf{1}_{n}^{\top}\Sigma_{n}(s_{\xi}^{X})^{-1}\mathbf{1}_{n} } \right|^{q} \right]
    + \E_{\vartheta_{0}}^{n}\left[ \left| \frac{ \mathbf{1}_{n}^{\top}\partial_{i}\Sigma_{n}(s_{\xi}^{X})^{-1} \mathbf{Z}_{n} }{ \mathbf{1}_{n}^{\top}\Sigma_{n}(s_{\xi}^{X})^{-1}\mathbf{1}_{n} } \right|^{q} \right] n^{\varepsilon} \right) \\
    & + C_{q}n^{\varepsilon} \left( 
    \E_{\vartheta_{0}}^{n}\left[ \left|\partial_{j}\mu_{n}(\xi)\right|^{q} \right] 
    + \E_{\vartheta_{0}}^{n}\left[ \left|\mu_{n}(\xi)-\mu_{0}\right|^{q} \right] \right)
\end{align*}
and
\begin{align*}
    &\E_{\vartheta_{0}}^{n}\left[ \left|\partial_{i,j,k}^{3}\mu_{n}(\xi)\right|^{q} \right] \\
    &\leq C_{q} \max_{i,j,k\in\{1,\cdots,p\}} \left( 
    \E_{\vartheta_{0}}^{n}\left[ \left| \frac{ \mathbf{1}_{n}^{\top}\partial_{i,j,k}^{3} \Sigma_{n}(s_{\xi}^{X})^{-1} \mathbf{Z}_{n} }{ \mathbf{1}_{n}^{\top}\Sigma_{n}(s_{\xi}^{X})^{-1}\mathbf{1}_{n} } \right|^{q} \right]
    + \E_{\vartheta_{0}}^{n}\left[ \left| \frac{ \mathbf{1}_{n}^{\top}\partial_{i,j}^{2} \Sigma_{n}(s_{\xi}^{X})^{-1} \mathbf{Z}_{n} }{ \mathbf{1}_{n}^{\top}\Sigma_{n}(s_{\xi}^{X})^{-1}\mathbf{1}_{n} } \right|^{q} \right] n^{\varepsilon}
    + \E_{\vartheta_{0}}^{n}\left[ \left| \frac{ \mathbf{1}_{n}^{\top}\partial_{i}\Sigma_{n}(s_{\xi}^{X})^{-1} \mathbf{Z}_{n} }{ \mathbf{1}_{n}^{\top}\Sigma_{n}(s_{\xi}^{X})^{-1}\mathbf{1}_{n} } \right|^{q} \right] n^{\varepsilon} \right) \\
    &\quad + C_{q} n^{\varepsilon} \max_{i,j\in\{1,\cdots,p\}} \left( 
    \E_{\vartheta_{0}}^{n}\left[ \left|\partial_{i,j}^{2}\mu_{n}(\xi)\right|^{q} \right]
    + \E_{\vartheta_{0}}^{n}\left[ \left|\partial_{i}\mu_{n}(\xi)\right|^{q} \right]
    + \E_{\vartheta_{0}}^{n}\left[ \left|\mu_{n}(\xi)-\mu_{0}\right|^{q} \right]  \right).
\end{align*}
Since we can also compute the absolute moments of Gaussian random variables by
\begin{align*}
    \E_{\vartheta_{0}}^{n}\left[ \left| \mathbf{1}_{n}^{\top}\partial_{i,j}^{2} \Sigma_{n}(s_{\xi}^{X})^{-1} \mathbf{Z}_{n} \right|^{q} \right] 
    =& 2^{\frac{q}{2}}\Gamma\left(\frac{q+1}{2}\right)\pi^{-\frac{1}{2}} (\mathbf{1}_{n}^{\top} \partial_{i,j}^{2}\Sigma_{n}(s_{\xi}^{X})^{-1} \Sigma_{n}(s_{\theta_{0}}^{X}) \partial_{i,j}^{2}\Sigma_{n}(s_{\xi}^{X})^{-1} \mathbf{1}_{n})^{\frac{q}{2}}, \\
    \E_{\vartheta_{0}}^{n}\left[ \left| \mathbf{1}_{n}^{\top}\partial_{i,j,k}^{3} \Sigma_{n}(s_{\xi}^{X})^{-1} \mathbf{Z}_{n} \right|^{q} \right] 
    =& 2^{\frac{q}{2}}\Gamma\left(\frac{q+1}{2}\right)\pi^{-\frac{1}{2}} (\mathbf{1}_{n}^{\top} \partial_{i,j,k}^{3}\Sigma_{n}(s_{\xi}^{X})^{-1} \Sigma_{n}(s_{\theta_{0}}^{X}) \partial_{i,j,k}^{3}\Sigma_{n}(s_{\xi}^{X})^{-1} \mathbf{1}_{n})^{\frac{q}{2}},
\end{align*}
we can also conclude the third and the fourth assertions of Lemma~\ref{Lemma:moment-dv-mu-xi} using Lemma~\ref{Lemma:Key-Ineq-mu} and the other assertion of Lemma~\ref{Lemma:moment-dv-mu-xi} similarly. 
Therefore, the proof is complete. 
\end{proof}

\subsection{Proof of Lemma~\ref{Lemma:unifconv-sig2-dv1-dv2}}\label{Sec:proof-Lemma:unifconv-sig2-dv1-dv2}

Recall that 
\begin{align*}
    \bar{\sigma}^{2}_{n}(\xi) 
    = \frac{1}{n}\left(\mathbf{X}_{n}-\mu_{n}(\xi)\mathbf{1}_{n}\right)^{\top}\Sigma_{n}(s_{\xi}^{X})^{-1}\left(\mathbf{X}_{n}-\mu_{n}(\xi)\mathbf{1}_{n}\right),\ \  \mu_{n}(\xi) = \frac{ \mathbf{1}_{n}^{\top}\Sigma_{n}(s_{\xi}^{X})^{-1}\mathbf{X}_{n} }{ \mathbf{1}_{n}^{\top}\Sigma_{n}(s_{\xi}^{X})^{-1}\mathbf{1}_{n} },
\end{align*}
and $e_{n,1}(\xi):=\bar{\sigma}^{2}_{n}(\xi)-\tilde{\sigma}^{2}_{n}(\xi)$, $e_{n,2}(\xi):=\E_{\vartheta_{0}}^{n}[\tilde{\sigma}^{2}_{n}(\xi)]-\sigma^{2}(\xi)$,  $e_{n,3}(\xi):=\tilde{\sigma}^{2}_{n}(\xi)-\E_{\vartheta_{0}}^{n}[\tilde{\sigma}^{2}_{n}(\xi)]$, and
\begin{align*}
    \partial_{\xi}e_{n,1}(\xi)
    = -2n^{-1}(\mu_{n}(\xi)-\mu_{0}) \partial_{\xi}\mu_{n}(\xi) \mathbf{1}_{n}^{\top}\Sigma_{n}(s_{\xi}^{X})^{-1}\mathbf{1}_{n} 
    -n^{-1}(\mu_{n}(\xi)-\mu_{0})^{2} \mathbf{1}_{n}^{\top}\partial_{\xi}\Sigma_{n}(s_{\xi}^{X})^{-1}\mathbf{1}_{n},
\end{align*}
see \eqref{expr:en1-dv1}. 
In the rest of the proof, we will show the error terms $e_{n,i}(\xi)$, $i=1,2,3$ are negligible uniformly in $\xi\in\Theta_{\xi}(\iota)$ as $n\to\infty$.\\

First, we evaluate the first term $e_{n,1}(\xi)$. Notice that we can show 
\begin{align*}
    \partial_{i,j}^{2}e_{n,1}(\xi) 
    =& -2n^{-1}\partial_{j}\mu_{n}(\xi)\partial_{i}\mu_{n}(\xi) (\mathbf{1}_{n}^{\top}\Sigma_{n}(s_{\theta}^{X})^{-1}\mathbf{1}_{n})
     -2n^{-1}(\mu_{n}(\xi)-\mu_{0}) \partial_{i,j}^{2}\mu_{n}(\xi) (\mathbf{1}_{n}^{\top}\Sigma_{n}(s_{\theta}^{X})^{-1}\mathbf{1}_{n}) \\
    & -2n^{-1}(\mu_{n}(\xi)-\mu_{0}) \partial_{i}\mu_{n}(\xi) (\mathbf{1}_{n}^{\top}\partial_{j}\Sigma_{n}(s_{\theta}^{X})^{-1}\mathbf{1}_{n}) \\
    & -2n^{-1}(\mu_{n}(\xi)-\mu_{0})\partial_{j}\mu_{n}(\xi) (\mathbf{1}_{n}^{\top}\partial_{i}\Sigma_{n}(s_{\theta}^{X})^{-1}\mathbf{1}_{n}) 
    -n^{-1}(\mu_{n}(\xi)-\mu_{0})^{2} \mathbf{1}_{n}^{\top}\partial_{i,j}^{2}\Sigma_{n}(s_{\theta}^{X})^{-1}\mathbf{1}_{n}
\end{align*}
and
\begin{align*}
    \partial_{i,j,k}^{3}e_{n,1}(\xi) 
    =& -2n^{-1}\bigl( \partial_{j,k}^{2}\mu_{n}(\xi) \partial_{i}\mu_{n}(\xi) + \partial_{j}\mu_{n}(\xi) \partial_{i,k}^{2}\mu_{n}(\xi) \bigr) \mathbf{1}_{n}^{\top}\Sigma_{n}(s_{\theta}^{X})^{-1}\mathbf{1}_{n} \\
    &-2n^{-1} \partial_{j}\mu_{n}(\xi)\partial_{i}\mu_{n}(\xi) (\mathbf{1}_{n}^{\top}\partial_{k}\Sigma_{n}(s_{\theta}^{X})^{-1}\mathbf{1}_{n}) \\
    &-2n^{-1} \bigl\{ \partial_{k}\mu_{n}(\xi) \partial_{i,j}^{2}\mu_{n}(\xi) + (\mu_{n}(\xi)-\mu_{0}) \partial_{i,j,k}^{3}\mu_{n}(\xi) \bigr\} \mathbf{1}_{n}^{\top}\Sigma_{n}(s_{\theta}^{X})^{-1}\mathbf{1}_{n} \\
    &-2n^{-1} (\mu_{n}(\xi)-\mu_{0}) \partial_{i,j}^{2}\mu_{n}(\xi) (\mathbf{1}_{n}^{\top}\partial_{k}\Sigma_{n}(s_{\theta}^{X})^{-1}\mathbf{1}_{n}) \\
    &-2n^{-1} \left\{ \partial_{k}\mu_{n}(\xi)\partial_{i}\mu_{n}(\xi) + (\mu_{n}(\xi)-\mu_{0}) \partial_{i,k}^{2}\mu_{n}(\xi) \right\} (\mathbf{1}_{n}^{\top}\partial_{j}\Sigma_{n}(s_{\theta}^{X})^{-1}\mathbf{1}_{n}) \\
    & -2n^{-1}(\mu_{n}(\xi)-\mu_{0}) \partial_{i}\mu_{n}(\xi) (\mathbf{1}_{n}^{\top}\partial_{j,k}^{2}\Sigma_{n}(s_{\theta}^{X})^{-1}\mathbf{1}_{n}) \\
    &-2n^{-1} \left\{ \partial_{k}\mu_{n}(\xi)\partial_{j}\mu_{n}(\xi) + (\mu_{n}(\xi)-\mu_{0}) \partial_{j,k}^{2}\mu_{n}(\xi) \right\} (\mathbf{1}_{n}^{\top}\partial_{i}\Sigma_{n}(s_{\theta}^{X})^{-1}\mathbf{1}_{n}) \\
    & -2n^{-1}(\mu_{n}(\xi)-\mu_{0}) \partial_{j}\mu_{n}(\xi) (\mathbf{1}_{n}^{\top}\partial_{i,k}^{2}\Sigma_{n}(s_{\theta}^{X})^{-1}\mathbf{1}_{n}) \\
    &-2n^{-1}(\mu_{n}(\xi)-\mu_{0}) \partial_{k}\mu_{n}(\xi) (\mathbf{1}_{n}^{\top}\partial_{i,j}^{2}\Sigma_{n}(s_{\theta}^{X})^{-1}\mathbf{1}_{n})
    -n^{-1}(\mu_{n}(\xi)-\mu_{0})^{2} \mathbf{1}_{n}^{\top}\partial_{i,j,k}^{3}\Sigma_{n}(s_{\theta}^{X})^{-1}\mathbf{1}_{n}.
\end{align*}
Similar calculations to \eqref{error-mu-estfunc-xi-ineq1} using the Sobolev inequality in Lemma~\ref{Lemma:Sobolev} and the Fubini theorem yield that for each $q>p-1$ and $\iota\in(0,1)$, there exists a positive constant $C_{q}$ such that for any $i,j\in\{1,2,\cdots,p-1\}$,
\begin{align}\label{Sobolev-en1-dv2}
     \E_{\vartheta_{0}}^{n}\left[ \sup_{\xi\in\Theta_{\xi}(\iota)}|\partial_{i,j}^{2}e_{n,1}(\xi)|^{q} \right] 
     \leq& C_{q}
     \sup_{\xi\in\Theta_{\xi}(\iota)}\left( \E_{\vartheta_{0}}^{n}\left[|\partial_{i,j}^{2}e_{n,1}(\xi)|^{q}\right] 
     + \sum_{k=1}^{p}\E_{\vartheta_{0}}^{n}\left[|\partial_{i,j,k}^{3}e_{n,1}(\xi)|^{q}\right] \right).
\end{align}
Using the above expressions of $\partial_{i,j}^{2}e_{n,1}(\xi)$ and $\partial_{i,j,k}^{3}e_{n,1}(\xi)$, Lemmas~\ref{Lemma:Key-Ineq-mu} and \ref{Lemma:moment-dv-mu-xi} and the Cauchy-Schwarz inequality, the RHS of \eqref{Sobolev-en1-dv2} goes to zero as $n\to\infty$ so that we conclude $e_{n,1}(\xi)$ is negligible uniformly in $\xi\in\Theta_{\xi}(\iota)$ as $n\to\infty$.\\

Next, we evaluate the second term $e_{n,2}(\xi)$. Since we have the equality \\
$\E_{\vartheta_{0}}^{n}[\tilde{\sigma}^{2}_{n}(\xi)] = \sigma^{2}_{0}n^{-1}\mathrm{Tr}[\Sigma_{n}(s_{\xi_{0}}^{X})\Sigma_{n}(s_{\xi}^{X})^{-1}]$, we can show
\begin{align*}
    \partial_{i}\E_{\vartheta_{0}}^{n}[\tilde{\sigma}^{2}_{n}(\xi)] 
    =& -\frac{\sigma^{2}_{0}}{n} \mathrm{Tr}[\Sigma_{n}(s_{\xi_{0}}^{X})\Sigma_{n}(s_{\xi}^{X})^{-1}\Sigma_{n}(\partial_{i}s_{\xi}^{X})\Sigma_{n}(s_{\xi}^{X})^{-1}], \\
    \partial_{i,j}^{2}\E_{\vartheta_{0}}^{n}[\tilde{\sigma}^{2}_{n}(\xi)] 
    =& \frac{2\sigma^{2}_{0}}{n} \mathrm{Tr}[\Sigma_{n}(s_{\xi_{0}}^{X})\Sigma_{n}(s_{\xi}^{X})^{-1}\Sigma_{n}(\partial_{j}s_{\xi}^{X})\Sigma_{n}(s_{\xi}^{X})^{-1}\Sigma_{n}(\partial_{i}s_{\xi}^{X})\Sigma_{n}(s_{\xi}^{X})^{-1}] \\
    & -\frac{\sigma^{2}_{0}}{n} \mathrm{Tr}[\Sigma_{n}(s_{\xi_{0}}^{X})\Sigma_{n}(s_{\xi}^{X})^{-1}\Sigma_{n}(\partial_{i,j}^{2}s_{\xi}^{X})\Sigma_{n}(s_{\xi}^{X})^{-1}], 
\end{align*}
where we used the facts that $\mathrm{Tr}[A]=\mathrm{Tr}[A^{\top}]$ and $\mathrm{Tr}[AB]=\mathrm{Tr}[BA]$ hold for any square matrices $A$ and $B$ in the last inequality, so that, using the expressions of $\partial_{i}\sigma^{2}(\xi)$ and $\partial_{i,j}^{2}\sigma^{2}(\xi)$ in Lemma~\ref{Lemma:unifconv-sig2-dv1-dv2} and Lemma~\ref{Lemma:trace-app-inv}, we conclude $\partial_{i}e_{n,2}(\xi)$ and $\partial_{i,j}^{2}e_{n,2}(\xi)$ vanish uniformly on $\Theta_{\xi}(\iota)$ as $n\to\infty$.\\

Finally, we consider the third term $e_{n,3}(\xi)$. 
The Sobolev inequality in Lemma~\ref{Lemma:Sobolev} yields that for any $\iota\in(0,1)$ and $2q>p-1$, there exists a positive constant $C_{2q}$ such that for any $i,j,k=1,\ldots,p-1$, 
\begin{align}
    &\E_{\vartheta_{0}}^{n}\left[ \sup_{\xi\in\Theta_{\xi}(\iota)}|\partial_{i}e_{n,3}(\xi)|^{2q} \right] 
    \leq C_{2q}
    \sup_{\xi\in\Theta_{\xi}(\iota)}\left( \E_{\vartheta_{0}}^{n}\left[|\partial_{i}e_{n,3}(\xi)|^{2q}\right] 
    + \sum_{j=1}^{p-1}\E_{\vartheta_{0}}^{n}\bigl[|\partial_{i,j}^{2}e_{n,3}(\xi)|^{2q}\bigr] \right), 
    \label{Sobolev-en3-dv1} \\
    &\E_{\vartheta_{0}}^{n}\left[ \sup_{\xi\in\Theta_{\xi}(\iota)}|\partial_{i,j}^{2}e_{n,3}(\xi)|^{2q} \right] 
    \leq C_{2q}
    \sup_{\xi\in\Theta_{\xi}(\iota)}\left( \E_{\vartheta_{0}}^{n}\left[|\partial_{i,j}^{2}e_{n,3}(\xi)|^{2q}\right] 
    + \sum_{j=1}^{p-1}\E_{\vartheta_{0}}^{n}\bigl[|\partial_{i,j,k}^{3}e_{n,3}(\xi)|^{2q}\bigr] \right). 
    \label{Sobolev-en3-dv2} 
\end{align}
Notice that we can show
\begin{align*}
        &\partial_{i}\tilde{\sigma}^{2}_{n}(\xi) 
        = -\frac{1}{n}\left(\mathbf{X}_{n}-\mu_{0}\mathbf{1}_{n}\right)^{\top} 
        \Sigma_{n}(s_{\xi}^{X})^{-1} \Sigma_{n}(\partial_{i}s_{\xi}^{X})\Sigma_{n}(s_{\xi}^{X})^{-1} 
        \left(\mathbf{X}_{n}-\mu_{0}\mathbf{1}_{n}\right) \\
        &\partial_{i,j}^{2}\tilde{\sigma}^{2}_{n}(\xi) 
        = \frac{2}{n}\left(\mathbf{X}_{n}-\mu_{0}\mathbf{1}_{n}\right)^{\top} 
        \Sigma_{n}(s_{\xi}^{X})^{-1}\Sigma_{n}(\partial_{i}s_{\xi}^{X})\Sigma_{n}(s_{\xi}^{X})^{-1}\Sigma_{n}(\partial_{j}s_{\xi}^{X})\Sigma_{n}(s_{\xi}^{X})^{-1} 
        \left(\mathbf{X}_{n}-\mu_{0}\mathbf{1}_{n}\right) \\
        &\hspace{2cm} -\frac{1}{n}\left(\mathbf{X}_{n}-\mu_{0}\mathbf{1}_{n}\right)^{\top} 
        \Sigma_{n}(s_{\xi}^{X})^{-1} \Sigma_{n}(\partial_{i,j}^{2}s_{\xi}^{X})\Sigma_{n}(s_{\xi}^{X})^{-1} 
        \left(\mathbf{X}_{n}-\mu_{0}\mathbf{1}_{n}\right), \\
        &\partial_{i,j,k}^{3}\tilde{\sigma}^{2}_{n}(\xi) 
        = -\frac{2}{n}\left(\mathbf{X}_{n}-\mu_{0}\mathbf{1}_{n}\right)^{\top} 
        \Sigma_{n}(s_{\xi}^{X})^{-1}\Sigma_{n}(\partial_{k}s_{\xi}^{X})\Sigma_{n}(s_{\xi}^{X})^{-1}\Sigma_{n}(\partial_{i}s_{\xi}^{X})\Sigma_{n}(s_{\xi}^{X})^{-1}\Sigma_{n}(\partial_{j}s_{\xi}^{X})\Sigma_{n}(s_{\xi}^{X})^{-1} 
        \left(\mathbf{X}_{n}-\mu_{0}\mathbf{1}_{n}\right) \\
        &\hspace{2cm} + \frac{2}{n}\left(\mathbf{X}_{n}-\mu_{0}\mathbf{1}_{n}\right)^{\top} 
        \Sigma_{n}(s_{\xi}^{X})^{-1}\Sigma_{n}(\partial_{i,k}^{2}s_{\xi}^{X})\Sigma_{n}(s_{\xi}^{X})^{-1}\Sigma_{n}(\partial_{j}s_{\xi}^{X})\Sigma_{n}(s_{\xi}^{X})^{-1} \left(\mathbf{X}_{n}-\mu_{0}\mathbf{1}_{n}\right) \\
        &\hspace{2cm} -\frac{2}{n}\left(\mathbf{X}_{n}-\mu_{0}\mathbf{1}_{n}\right)^{\top} 
        \Sigma_{n}(s_{\xi}^{X})^{-1}\Sigma_{n}(\partial_{i}s_{\xi}^{X})\Sigma_{n}(s_{\xi}^{X})^{-1}\Sigma_{n}(\partial_{k}s_{\xi}^{X})\Sigma_{n}(s_{\xi}^{X})^{-1}\Sigma_{n}(\partial_{j}s_{\xi}^{X})\Sigma_{n}(s_{\xi}^{X})^{-1} 
        \left(\mathbf{X}_{n}-\mu_{0}\mathbf{1}_{n}\right) \\
        &\hspace{2cm} + \frac{2}{n}\left(\mathbf{X}_{n}-\mu_{0}\mathbf{1}_{n}\right)^{\top} 
        \Sigma_{n}(s_{\xi}^{X})^{-1}\Sigma_{n}(\partial_{i}s_{\xi}^{X})\Sigma_{n}(s_{\xi}^{X})^{-1}\Sigma_{n}(\partial_{j,k}^{2}s_{\xi}^{X})\Sigma_{n}(s_{\xi}^{X})^{-1} \left(\mathbf{X}_{n}-\mu_{0}\mathbf{1}_{n}\right) \\
        &\hspace{2cm} -\frac{2}{n}\left(\mathbf{X}_{n}-\mu_{0}\mathbf{1}_{n}\right)^{\top} 
        \Sigma_{n}(s_{\xi}^{X})^{-1}\Sigma_{n}(\partial_{i}s_{\xi}^{X})\Sigma_{n}(s_{\xi}^{X})^{-1}\Sigma_{n}(\partial_{j}s_{\xi}^{X})\Sigma_{n}(s_{\xi}^{X})^{-1}\Sigma_{n}(\partial_{k}s_{\xi}^{X})\Sigma_{n}(s_{\xi}^{X})^{-1} 
        \left(\mathbf{X}_{n}-\mu_{0}\mathbf{1}_{n}\right) \\
        &\hspace{2cm} +\frac{1}{n}\left(\mathbf{X}_{n}-\mu_{0}\mathbf{1}_{n}\right)^{\top} 
        \Sigma_{n}(s_{\xi}^{X})^{-1} \Sigma_{n}(\partial_{k}s_{\xi}^{X})\Sigma_{n}(s_{\xi}^{X})^{-1} \Sigma_{n}(\partial_{i,j}^{2}s_{\xi}^{X})\Sigma_{n}(s_{\xi}^{X})^{-1} \left(\mathbf{X}_{n}-\mu_{0}\mathbf{1}_{n}\right) \\
        &\hspace{2cm} -\frac{1}{n}\left(\mathbf{X}_{n}-\mu_{0}\mathbf{1}_{n}\right)^{\top} 
        \Sigma_{n}(s_{\xi}^{X})^{-1} \Sigma_{n}(\partial_{i,j,k}^{3}s_{\xi}^{X})\Sigma_{n}(s_{\xi}^{X})^{-1} 
        \left(\mathbf{X}_{n}-\mu_{0}\mathbf{1}_{n}\right) \\
        &\hspace{2cm} +\frac{1}{n}\left(\mathbf{X}_{n}-\mu_{0}\mathbf{1}_{n}\right)^{\top} 
        \Sigma_{n}(s_{\xi}^{X})^{-1} \Sigma_{n}(\partial_{i,j}^{2}s_{\xi}^{X})\Sigma_{n}(s_{\xi}^{X})^{-1} \Sigma_{n}(\partial_{k}s_{\xi}^{X}) \Sigma_{n}(s_{\xi}^{X})^{-1} \left(\mathbf{X}_{n}-\mu_{0}\mathbf{1}_{n}\right),
\end{align*}
which imply
\begin{align*}
    \mbox{$\partial_{i}\E_{\vartheta_{0}}^{n}[\tilde{\sigma}^{2}_{n}(\xi)]=\E_{\vartheta_{0}}^{n}[\partial_{i}\tilde{\sigma}^{2}_{n}(\xi)]$, $\partial_{i,j}^{2}\E_{\vartheta_{0}}^{n}[\tilde{\sigma}^{2}_{n}(\xi)]=\E_{\vartheta_{0}}^{n}[\partial_{i,j}^{2}\tilde{\sigma}^{2}_{n}(\xi)]$ and $\partial_{i,j,k}^{3}\E_{\vartheta_{0}}^{n}[\tilde{\sigma}^{2}_{n}(\xi)]=\E_{\vartheta_{0}}^{n}[\partial_{i,j,k}^{3}\tilde{\sigma}^{2}_{n}(\xi)]$}.
\end{align*}
Then we can show that the quantities in the RHS of the inequalities \eqref{Sobolev-en3-dv1} and \eqref{Sobolev-en3-dv2} vanish as $n\to\infty$ using Lemma~\ref{Lemma:trace-app-inv} similarly to the proof of \eqref{Sobolev-en3} in the proof of \eqref{unifconv-sig2-bar}. 
Therefore, we finish the proof of Lemma~\ref{Lemma:unifconv-sig2-dv1-dv2}.
\subsubsection{Proof of \eqref{unifconv-sig2-bar}}\label{Sec:proof-unifconv-sig2-bar}
Fix $\iota\in(0,1)$. Recall that
    \begin{align*}
     \tilde{\sigma}^{2}_{n}(\xi) := \sigma_{n}^{2}((\xi,\mu_{0})^{\top})
     = \frac{1}{n}\left(\mathbf{X}_{n}-\mu_{0}\mathbf{1}_{n}\right)^{\top}\Sigma_{n}(s_{\xi}^{X})^{-1}\left(\mathbf{X}_{n}-\mu_{0}\mathbf{1}_{n}\right).
    \end{align*}
    We decompose the error $\bar{\sigma}^{2}_{n}(\xi)-\sigma^{2}(\xi)$ into the following three terms:
    \begin{align*}
        e_{n,1}(\xi):=\bar{\sigma}^{2}_{n}(\xi)-\tilde{\sigma}^{2}_{n}(\xi),\ \ 
        e_{n,2}(\xi):=\E_{\vartheta_{0}}^{n}[\tilde{\sigma}^{2}_{n}(\xi)]-\sigma^{2}(\xi),\ \ 
        \mbox{and}\ \ e_{n,3}(\xi):=\tilde{\sigma}^{2}_{n}(\xi)-\E_{\vartheta_{0}}^{n}[\tilde{\sigma}^{2}_{n}(\xi)].
    \end{align*}
    In the rest of the proof, we will show that the error terms $e_{n,i}(\xi)$, $i=1,2,3$, vanish uniformly on $\Theta_{\xi}(\iota)$ as $n\to\infty$.\\
    
    We first consider the first term $e_{n,1}(\xi)$. 
    Notice that the error term $e_{n,1}(\xi)$ is written as
\begin{align}
    e_{n,1}(\xi) =& -2n^{-1}(\mu_{n}(\xi)-\mu_{0}) \mathbf{1}_{n}^{\top}\Sigma_{n}(s_{\xi}^{X})^{-1}(\mathbf{X}_{n}-\mu_{0}\mathbf{1}_{n}) 
    +n^{-1}(\mu_{n}(\xi)-\mu_{0})^{2} \mathbf{1}_{n}^{\top}\Sigma_{n}(s_{\xi}^{X})^{-1}\mathbf{1}_{n} 
    \nonumber \\
    =& -n^{-1}(\mu_{n}(\xi)-\mu_{0})^{2} \mathbf{1}_{n}^{\top}\Sigma_{n}(s_{\xi}^{X})^{-1}\mathbf{1}_{n}
    \label{expr:en1}
\end{align}
so that, using the chain rule, its first-order derivatives with respect to $\xi$ are expressed by
\begin{align}\label{expr:en1-dv1}
    \partial_{\xi}e_{n,1}(\xi)
    = -2n^{-1}(\mu_{n}(\xi)-\mu_{0}) \partial_{\xi}\mu_{n}(\xi) \mathbf{1}_{n}^{\top}\Sigma_{n}(s_{\xi}^{X})^{-1}\mathbf{1}_{n} 
    -n^{-1}(\mu_{n}(\xi)-\mu_{0})^{2} \mathbf{1}_{n}^{\top}\partial_{\xi}\Sigma_{n}(s_{\xi}^{X})^{-1}\mathbf{1}_{n}.
\end{align}
Moreover, the Sobolev inequality in Lemma~\ref{Lemma:Sobolev} yields that for any $\iota\in(0,1)$ and $2q>p-1$, there exists a positive constant $C_{2q}$ such that 
\begin{align}\label{Sobolev-en1}
    \E_{\vartheta_{0}}^{n}\left[ \sup_{\xi\in\Theta_{\xi}(\iota)}|e_{n,1}(\xi)|^{2q} \right] 
    \leq C_{2q}
    \sup_{\xi\in\Theta_{\xi}(\iota)}\left( \E_{\vartheta_{0}}^{n}\left[|e_{n,1}(\xi)|^{2q}\right] 
    + \sum_{j=1}^{p-1}\E_{\vartheta_{0}}^{n}\left[|\partial_{j}e_{n,1}(\xi)|^{2q}\right] \right).
\end{align}
Then, using the above expressions of $e_{n,1}(\xi)$ and $\partial_{\xi}e_{n,1}(\xi)$, Lemmas~\ref{Lemma:Key-Ineq-mu} and \ref{Lemma:moment-dv-mu-xi} and the Cauchy-Schwarz inequality, we can show that the quantity in the RHS of the inequality \eqref{Sobolev-en1} converges to zero as $n\to\infty$ so that we conclude $e_{n,1}(\xi)$ vanishes uniformly on $\Theta_{\xi}(\iota)$ as $n\to\infty$.\\

Next, we consider the second term $e_{n,2}(\xi)$. 
Notice that we have the equality $\E_{\vartheta_{0}}^{n}[\tilde{\sigma}^{2}_{n}(\xi)] = \sigma^{2}_{0}n^{-1}\mathrm{Tr}[\Sigma_{n}(s_{\xi_{0}}^{X})\Sigma_{n}(s_{\xi}^{X})^{-1}]$ so that, using Lemma~\ref{Lemma:trace-app-inv}, we conclude $e_{n,2}(\xi)$ vanishes uniformly on $\Theta_{\xi}(\iota)$ as $n\to\infty$.\\ 

Finally, we consider the third term $e_{n,3}(\xi)$. 
The Sobolev inequality in Lemma~\ref{Lemma:Sobolev} yields that for any $\iota\in(0,1)$ and $2q>p-1$, there exists a positive constant $C_{2q}$ such that 
\begin{align}\label{Sobolev-en3}
    \E_{\vartheta_{0}}^{n}\left[ \sup_{\xi\in\Theta_{\xi}(\iota)}|e_{n,3}(\xi)|^{2q} \right] 
    \leq C_{2q}
    \sup_{\xi\in\Theta_{\xi}(\iota)}\left( \E_{\vartheta_{0}}^{n}\left[|e_{n,3}(\xi)|^{2q}\right] 
    + \sum_{j=1}^{p-1}\E_{\vartheta_{0}}^{n}\bigl[|\partial_{j}e_{n,3}(\xi)|^{2q}\bigr] \right).
\end{align}
Then we can show that the quantity in the RHS of the inequality \eqref{Sobolev-en3} vanishes as $n\to\infty$ using Lemma~\ref{Lemma:trace-app-inv} because we know that 1)~the moments $\E_{\vartheta_{0}}^{n}\left[|e_{n,3}(\xi)|^{2q}\right]$ and $\E_{\vartheta_{0}}^{n}[|\partial_{j}e_{n,3}(\xi)|^{2q}] $ can be expressed by linear combinations of cumulants up to the order $2q$ using the Leonov-Shiryaev formula, 2)~$e_{n,3}(\xi)$ and $\partial_{j}e_{n,3}(\xi)$ are centralized quadratic forms of Gaussian vector so that for each $r\geq 2$, its $r$ th order cumulants can be expressed by $\mathrm{cum}_{r}[e_{n,3}(\xi)]=n^{-r}c_{r}\mathrm{Tr}[(\Sigma_{n}(s_{\xi}^{X})^{-1}\Sigma_{n}(s_{\xi_0}^{X}))^r]$ and $\mathrm{cum}_{r}[\partial_{j}e_{n,3}(\xi)]=n^{-r}c_{r}\mathrm{Tr}[(\Sigma_{n}(s_{\xi}^{X})^{-1}\Sigma_{n}(\partial_\xi s_{\xi}^{X})\Sigma_{n}(s_{\xi}^{X})^{-1}\Sigma_{n}(s_{\xi_0}^{X}))^r]$ for some positive constant $c_r$, and 3)~the first order cumulants of $e_{n,3}(\xi)$ and $\partial_{j}e_{n,3}(\xi)$ are equal to zero. 
Therefore, we conclude $e_{n,3}(\xi)$ also vanishes uniformly on $\Theta_{\xi}(\iota)$ as $n\to\infty$.
This completes the proof of \eqref{unifconv-sig2-bar}. 

\subsection{Proof of \eqref{unifconv-sig2-bar}}\label{Sec:proof-unifconv-sig2-bar}
Fix $\iota\in(0,1)$. Recall that
    \begin{align*}
     \tilde{\sigma}^{2}_{n}(\xi) := \sigma_{n}^{2}((\xi,\mu_{0})^{\top})
     = \frac{1}{n}\left(\mathbf{X}_{n}-\mu_{0}\mathbf{1}_{n}\right)^{\top}\Sigma_{n}(s_{\xi}^{X})^{-1}\left(\mathbf{X}_{n}-\mu_{0}\mathbf{1}_{n}\right).
    \end{align*}
    We decompose the error $\bar{\sigma}^{2}_{n}(\xi)-\sigma^{2}(\xi)$ into the following three terms:
    \begin{align*}
        e_{n,1}(\xi):=\bar{\sigma}^{2}_{n}(\xi)-\tilde{\sigma}^{2}_{n}(\xi),\ \ 
        e_{n,2}(\xi):=\E_{\vartheta_{0}}^{n}[\tilde{\sigma}^{2}_{n}(\xi)]-\sigma^{2}(\xi),\ \ 
        \mbox{and}\ \ e_{n,3}(\xi):=\tilde{\sigma}^{2}_{n}(\xi)-\E_{\vartheta_{0}}^{n}[\tilde{\sigma}^{2}_{n}(\xi)].
    \end{align*}
    In the rest of the proof, we will show that the error terms $e_{n,i}(\xi)$, $i=1,2,3$, vanish uniformly on $\Theta_{\xi}(\iota)$ as $n\to\infty$.\\
    
    We first consider the first term $e_{n,1}(\xi)$. 
    Note that the error term $e_{n,1}(\xi)$ is written as
\begin{align}
    e_{n,1}(\xi) =& -2n^{-1}(\mu_{n}(\xi)-\mu_{0}) \mathbf{1}_{n}^{\top}\Sigma_{n}(s_{\xi}^{X})^{-1}(\mathbf{X}_{n}-\mu_{0}\mathbf{1}_{n}) 
    +n^{-1}(\mu_{n}(\xi)-\mu_{0})^{2} \mathbf{1}_{n}^{\top}\Sigma_{n}(s_{\xi}^{X})^{-1}\mathbf{1}_{n} 
    \nonumber \\
    =& -n^{-1}(\mu_{n}(\xi)-\mu_{0})^{2} \mathbf{1}_{n}^{\top}\Sigma_{n}(s_{\xi}^{X})^{-1}\mathbf{1}_{n},
    \label{expr:en1}
\end{align}
so that, by the chain rule, its first order derivatives with respect to $\xi$ is expressed by
\begin{align}\label{expr:en1-dv1}
    \partial_{\xi}e_{n,1}(\xi)
    = -2n^{-1}(\mu_{n}(\xi)-\mu_{0}) \partial_{\xi}\mu_{n}(\xi) \mathbf{1}_{n}^{\top}\Sigma_{n}(s_{\xi}^{X})^{-1}\mathbf{1}_{n} 
    -n^{-1}(\mu_{n}(\xi)-\mu_{0})^{2} \mathbf{1}_{n}^{\top}\partial_{\xi}\Sigma_{n}(s_{\xi}^{X})^{-1}\mathbf{1}_{n}.
\end{align}
Moreover, the Sobolev inequality in Lemma~\ref{Lemma:Sobolev} yields that for any $\iota\in(0,1)$ and $2q>p-1$, there exists a positive constant $C_{2q}$ such that 
\begin{align}\label{Sobolev-en1}
    \E_{\vartheta_{0}}^{n}\left[ \sup_{\xi\in\Theta_{\xi}(\iota)}|e_{n,1}(\xi)|^{2q} \right] 
    \leq C_{2q}
    \sup_{\xi\in\Theta_{\xi}(\iota)}\left( \E_{\vartheta_{0}}^{n}\left[|e_{n,1}(\xi)|^{2q}\right] 
    + \sum_{j=1}^{p-1}\E_{\vartheta_{0}}^{n}\left[|\partial_{j}e_{n,1}(\xi)|^{2q}\right] \right).
\end{align}
Then, using the above expressions of $e_{n,1}(\xi)$ and $\partial_{\xi}e_{n,1}(\xi)$, Lemmas~\ref{Lemma:Key-Ineq-mu} and \ref{Lemma:moment-dv-mu-xi} and the Cauchy-Schwarz inequality, we can show that the quantity in the RHS of the inequality \eqref{Sobolev-en1} converges to zero as $n\to\infty$. Hence, we conclude $e_{n,1}(\xi)$ vanishes uniformly on $\Theta_{\xi}(\iota)$ as $n\to\infty$.\\

Next, we consider the second term $e_{n,2}(\xi)$. 
Notice that we have the equality $\E_{\vartheta_{0}}^{n}[\tilde{\sigma}^{2}_{n}(\xi)] = \sigma^{2}_{0}n^{-1}\mathrm{Tr}[\Sigma_{n}(s_{\xi_{0}}^{X})\Sigma_{n}(s_{\xi}^{X})^{-1}]$ so that, using Lemma~\ref{Lemma:trace-app-inv}, we conclude $e_{n,2}(\xi)$ vanishes uniformly on $\Theta_{\xi}(\iota)$ as $n\to\infty$.\\ 

Finally, we consider the third term $e_{n,3}(\xi)$. 
The Sobolev inequality in Lemma~\ref{Lemma:Sobolev} yields that for any $\iota\in(0,1)$ and $2q>p-1$, there exists a positive constant $C_{2q}$ such that 
\begin{align}\label{Sobolev-en3}
    \E_{\vartheta_{0}}^{n}\left[ \sup_{\xi\in\Theta_{\xi}(\iota)}|e_{n,3}(\xi)|^{2q} \right] 
    \leq C_{2q}
    \sup_{\xi\in\Theta_{\xi}(\iota)}\left( \E_{\vartheta_{0}}^{n}\left[|e_{n,3}(\xi)|^{2q}\right] 
    + \sum_{j=1}^{p-1}\E_{\vartheta_{0}}^{n}\bigl[|\partial_{j}e_{n,3}(\xi)|^{2q}\bigr] \right).
\end{align}
Then we can show that the quantity in the RHS of the inequality \eqref{Sobolev-en3} vanishes as $n\to\infty$ using Lemma~\ref{Lemma:trace-app-inv} because we know that 1)~the moments $\E_{\vartheta_{0}}^{n}\left[|e_{n,3}(\xi)|^{2q}\right]$ and $\E_{\vartheta_{0}}^{n}[|\partial_{j}e_{n,3}(\xi)|^{2q}] $ can be expressed by linear combinations of cumulants up to the order $2q$ using the Leonov-Shiryaev formula, 2)~$e_{n,3}(\xi)$ and $\partial_{j}e_{n,3}(\xi)$ are centralized quadratic forms of Gaussian vector so that for each $r\geq 2$, its $r$th order cumulants can be expressed by $\mathrm{cum}_{r}[e_{n,3}(\xi)]=n^{-r}c_{r}\mathrm{Tr}[(\Sigma_{n}(s_{\xi}^{X})^{-1}\Sigma_{n}(s_{\xi_0}^{X}))^r]$ and $\mathrm{cum}_{r}[\partial_{j}e_{n,3}(\xi)]=n^{-r}c_{r}\mathrm{Tr}[(\Sigma_{n}(s_{\xi}^{X})^{-1}\Sigma_{n}(\partial_\xi s_{\xi}^{X})\Sigma_{n}(s_{\xi}^{X})^{-1}\Sigma_{n}(s_{\xi_0}^{X}))^r]$ for some positive constant $c_r$, and 3)~the first order cumulants of $e_{n,3}(\xi)$ and $\partial_{j}e_{n,3}(\xi)$ are equal to zero. 
Therefore, we conclude $e_{n,3}(\xi)$ also vanishes uniformly on $\Theta_{\xi}(\iota)$ as $n\to\infty$.
This completes the proof of \eqref{unifconv-sig2-bar}. 

\subsubsection{Proof of \eqref{unifconv-log-sig2-bar}} \label{Sec:proof-unifconv-log-sig2-bar}
Using the Taylor theorem, we can write
\begin{align*}
    \log{\bar{\sigma}^{2}_{n}(\xi)} - \log{\sigma^{2}(\xi)}
    =  ( \bar{\sigma}^{2}_{n}(\xi)-\sigma^{2}(\xi) )
    \int_{0}^{1}\Bigl( \sigma^{2}(\xi) + u(\bar{\sigma}^{2}_{n}(\xi)-\sigma^{2}(\xi)) \Bigr)^{-1}\dd u
\end{align*}
so that we obtain
\begin{align*}
    |\log{\bar{\sigma}^{2}_{n}(\xi)} - \log{\sigma^{2}(\xi)}| \leq |\bar{\sigma}^{2}_{n}(\xi)-\sigma^{2}(\xi)| \Bigl| \sigma^{2}(\xi) - |\bar{\sigma}^{2}_{n}(\xi)-\sigma^{2}(\xi)| \Bigr|^{-1}. 
\end{align*}
Set $\sigma^{2}(\xi_{\ast}(\iota)):=\inf_{\xi\in\Theta_{\xi}(\iota)} \sigma^{2}(\xi)>0$ so that we can take $\epsilon_{1}\in(0,\sigma^{2}(\xi_{\ast}(\iota)))$.  
Then, for any $\epsilon>0$, we can show the inequality
\begin{align*}
    \P_{\vartheta_{0}}^{n}\left[ \sup_{\xi\in\Theta_{\xi}(\iota)}|\log{\bar{\sigma}^{2}_{n}(\xi)} - \log{\sigma^{2}(\xi)}| > \epsilon \right]
    \leq& \P_{\vartheta_{0}}^{n}\left[ \sup_{\xi\in\Theta_{\xi}(\iota)}|\bar{\sigma}^{2}_{n}(\xi)-\sigma^{2}(\xi)| > \epsilon_{1} \right] \\
    &+ \P_{\vartheta_{0}}^{n}\left[ \sup_{\xi\in\Theta_{\xi}(\iota)}|\bar{\sigma}^{2}_{n}(\xi)-\sigma^{2}(\xi)| > ( \sigma^{2}(\xi_{\ast}(\iota)) - \epsilon_{1}) \epsilon \right],
\end{align*}
which concludes \eqref{unifconv-log-sig2-bar} using \eqref{unifconv-sig2-bar}.

\begin{proof}[Proof of Proposition~\ref{Thm:MLE-mu}]
    Under Assumption~\ref{Assump:DSPD2}, Theorems~4.1 and 5.2 in \cite{Adenstedt-1974} yield the convergence
    \begin{align*}
        n^{\frac{1}{2}(1-\alpha_{X}(\xi_{0}))} ( \mu_{n}(\xi_{0}) - \mu_{0}) \to \mathcal{N}\left( 0,\frac{2\pi\sigma^2_0c_{X}(\xi_{0})\Gamma(1-\alpha_{X}(\xi_{0}))}{B(1-\alpha_{X}(\xi_{0})/2,1-\alpha_{X}(\xi_{0})/2)} \right) 
    \end{align*}
    in law under the distribution $\P_{\vartheta_{0}}^{n}$ as $n\to\infty$ for any interior point $\vartheta_{0}$ of $\Theta$. 
    Thus it suffices to prove that
    \begin{align}\label{Thm:MLE-mu_Suff1}
        n^{\frac{1}{2}(1-\alpha_{X}(\xi_{0}))} ( \mu_{n}(\widehat{\xi}_{n}) - \mu_{n}(\xi_{0}) ) = o_{\P_{\vartheta_{0}}^{n}}(1)\ \ \mbox{as $n\to\infty$}.
    \end{align}
    Notice that the sequence $\{\sqrt{n}(\hat{\xi}_{n}-\xi_{0})\}_{n\in\N}$ is stochastically bounded from the assumption.
    Then, using the Cauchy-Schwarz inequality and the Chebyshev inequality, for any $\epsilon>0$ and $M>0$, we obtain
    \begin{align*}
        &\P_{\vartheta_{0}}^{n}\left[ \left| n^{\frac{1}{2}(1-\alpha_{X}(\xi_{0}))}( \mu_{n}(\widehat{\xi}_{n}) - \mu_{n}(\xi_{0}) ) \right| \geq \varepsilon \right] \\
        &\leq \P_{\vartheta_{0}}^{n}[\|\overline{\xi}_{n}\|_{\R^{p-1}}\geq M] 
        + \P_{\vartheta_{0}}^{n}\left[ \sup_{\xi\in B_{n^{-1/2}M}(\xi_{0})} \left| n^{\frac{1}{2}(1-\alpha_{X}(\xi_{0}))}( \mu_{n}(\xi) - \mu_{n}(\xi_{0}) ) \right| \geq \varepsilon \right] \\
        &\leq \P_{\vartheta_{0}}^{n}[\|\overline{\xi}_{n}\|_{\R^{p-1}}\geq M] 
        + \varepsilon^{-q}\E_{\vartheta_{0}}^{n}\left[ \sup_{\xi\in B_{n^{-1/2}M}(\xi_{0})} \left| n^{\frac{1}{2}(1-\alpha_{X}(\xi_{0}))}( \mu_{n}(\xi) - \mu_{n}(\xi_{0}) ) \right|^{q} \right] 
    \end{align*}
    so that \eqref{Thm:MLE-mu_Suff1} follows once we have proved that for any $M>0$ and $q>p-1$, 
    \begin{align}\label{Thm:MLE-mu_Suff2}
        \E_{\vartheta_{0}}^{n}\left[ \sup_{\xi\in B_{n^{-1/2}M}(\xi_{0})} \left| n^{\frac{1}{2}(1-\alpha_{X}(\xi_{0}))}( \mu_{n}(\xi) - \mu_{n}(\xi_{0}) ) \right|^{q} \right] = o(1)\ \ \mbox{as $n\to\infty$}.
    \end{align}

    Notice that Lemma~\ref{Lemma:Sobolev} and the Fubini theorem yield that for any $q>p-1$, it holds
\begin{align}
     &\E_{\vartheta_{0}}^{n}\left[ \sup_{\xi\in B_{n^{-1/2}M}(\xi_{0})} \left| n^{\frac{1}{2}(1-\alpha_{X}(\xi_{0}))}( \mu_{n}(\xi) - \mu_{n}(\xi_{0}) ) \right|^{q} \right] \nonumber \\
     &\leq C_{q,1} (n^{-1/2}M)^{q-1} \E_{\vartheta_{0}}^{n}\left[ \left( \sum_{j=1}^{p-1} \left\| n^{\frac{1}{2}(1-\alpha_{X}(\xi_{0}))} \partial_{j}\mu_{n}(\cdot) \right\|_{L^{q}(B_{n^{-1/2}M}(\xi_{0}))} \right)^{q} \right] \nonumber \\
     &\leq C_{q,2} (n^{-1/2}M)^{q} \sum_{j=1}^{p-1} \sup_{\xi\in B_{n^{-1/2}M}(\xi_{0})} \E_{\vartheta_{0}}^{n}\left[\left| n^{\frac{1}{2}(1-\alpha_{X}(\xi_{0}))} \partial_{j}\mu_{n}(\xi) \right|^{q}\right]
     \label{error-mu-estfunc-xi-ineq1}
\end{align}
and Lemma~\ref{Lemma:moment-dv-mu-xi} gives the inequality
\begin{align}
    &n^{-\frac{q}{2}} \sum_{j=1}^{p-1} \sup_{\xi\in B_{n^{-1/2}M}(\xi_{0})} \E_{\vartheta_{0}}^{n}\left[| n^{\frac{1}{2}(1-\alpha_{X}(\xi_{0}))} \partial_{j}\mu_{n}(\xi) |^{q}\right] \nonumber \\
    &\leq C_{q,3} n^{-\frac{q}{2}} \sum_{j=1}^{p-1} \sup_{\xi\in B_{n^{-1/2}M}(\xi_{0})} 
    n^{\frac{q}{2}(1-\alpha_{X}(\xi_{0}))} (\mathbf{1}_{n}^{\top}\Sigma_{n}(s_{\xi}^{X})^{-1}\mathbf{1}_{n})^{-\frac{q}{2}} n^{\frac{q}{2}(\alpha_{X}(\xi_{0})-\alpha_{X}(\xi))_{+}+\varepsilon}.
    \label{Thm:MLE-mu_ineq1}
\end{align}
Since we can show the inequality 
\begin{align*}
    \mathbf{1}_{n}^{\top}\Sigma_{n}(s_{\xi_{0}}^{X})^{-1}\mathbf{1}_{n}
    =& \left\| \Sigma_{n}(s_{\xi_{0}}^{X})^{-\frac{1}{2}} \Sigma_{n}(s_{\xi}^{X})^{\frac{1}{2}} \Sigma_{n}(s_{\xi}^{X})^{-\frac{1}{2}}\mathbf{1}_{n} \right\|_{\R^{n}}^{2} \\
    \leq& \left\| \Sigma_{n}(s_{\xi_{0}}^{X})^{-\frac{1}{2}} \Sigma_{n}(s_{\xi}^{X})^{\frac{1}{2}} \right\|_{\mathrm{op}}^{2} 
    \left\| \Sigma_{n}(s_{\xi}^{X})^{-\frac{1}{2}}\mathbf{1}_{n} \right\|_{\R^{n}}^{2},
\end{align*}
which follows from the definition of the operator norm, we can further evaluate the last quantity in the inequality \eqref{Thm:MLE-mu_ineq1} up to a constant multiplication by
\begin{align*}
    &n^{-\frac{q}{2}} \sum_{j=1}^{p-1} \sup_{\xi\in B_{n^{-1/2}M}(\xi_{0})} 
    n^{\frac{q}{2}(1-\alpha_{X}(\xi_{0}))} (\mathbf{1}_{n}^{\top}\Sigma_{n}(s_{\xi}^{X})^{-1}\mathbf{1}_{n})^{-\frac{q}{2}} n^{\frac{q}{2}(\alpha_{X}(\xi_{0})-\alpha_{X}(\xi))_{+}+\varepsilon}  \\
    &\leq n^{-\frac{q}{2}} \sum_{j=1}^{p-1} \sup_{\xi\in B_{n^{-1/2}M}(\xi_{0})} 
    n^{\frac{q}{2}(1-\alpha_{X}(\xi_{0}))} (\mathbf{1}_{n}^{\top}\Sigma_{n}(s_{\xi_{0}}^{X})^{-1}\mathbf{1}_{n})^{-\frac{q}{2}} n^{\frac{3}{2}q(\alpha_{X}(\xi_{0})-\alpha_{X}(\xi))_{+}+\varepsilon},
\end{align*}
where we also used Assumption~\ref{Assump:DSPD1}, Lemma~5.3 in \cite{Dahlhaus-1989} and Lemma~6 in the full version of \cite{Lieberman-2012} in the last inequality. 
Then we conclude \eqref{Thm:MLE-mu_Suff2} using Theorems~4.1 and 5.2 in \cite{Adenstedt-1974}. 
Therefore, the proof is complete. 
\end{proof}  

\subsection{Proof of \eqref{key:LAN:ratio-rate-mat}}
Notice that we have $R_{n}(\vartheta,\vartheta^\prime) = \diag(I_{p},n^{-\frac{1}{2}(\alpha_X(\xi+\xi^\prime)-\alpha_X(\xi)})$
for any $\vartheta^\prime=(\xi^{\prime},\sigma^{\prime},\mu^{\prime})^{\top}$, and 
\begin{align*}
    \|\Phi_{n}(\vartheta)\|_{\mathrm{op}}=\max\{ n^{-\frac{1}{2}},n^{-\frac{1}{2}(1-\alpha_X(\xi))} \} = n^{-\frac{1}{2}\min\{1,1-\alpha_{X}(\xi)\}},
\end{align*}
son that we conclude \eqref{key:LAN:ratio-rate-mat} using the continuously differentiability of the function $\alpha_X(\xi)$.

\subsection{Proof of \eqref{key:LAN:conv-Fisher}}
Recall that $\sigma^{2}_{n}(\xi,\mu) = \frac{1}{n}\left(\mathbf{X}_{n}-\mu\mathbf{1}_{n}\right)^{\top}\Sigma_{n}(s_{\xi}^{X})^{-1}\left(\mathbf{X}_{n}-\mu\mathbf{1}_{n}\right)$ and $\tilde{\sigma}^{2}_{n}(\xi)=\sigma^{2}_{n}(\xi,\mu_{0})$. 
Using the expressions of the score function in \eqref{expr:score}, the second order derivatives of the log-likelihood function $\ell_{n}(\vartheta)$ are given by
\begin{align}\label{expr:2nd-order-dv-log-likelihood}
    \begin{cases}
    &\partial_{\xi}^{2}\ell_{n}(\vartheta)= \frac{n}{2\sigma^{2}}\left( \partial_{\xi}^{2}\sigma^{2}_{n}(\xi,\mu)
    -\frac{\sigma^{2}}{n}\mathrm{Tr}\left[\Sigma_{n}(s_{\xi})^{-1}\Sigma_{n}(\partial_{\xi}^{2}s_{\xi})\right] \right)  
    +\frac{1}{2}\mathrm{Tr}\left[\Sigma_{n}(s_{\xi})^{-1}\Sigma_{n}(\partial_{\xi}s_{\xi})\Sigma_{n}(s_{\xi})^{-1}\Sigma_{n}(\partial_{\xi}s_{\xi})\right], \\
    &\partial_{\xi}\partial_{\sigma}\ell_{n}(\vartheta) = \partial_{\sigma}\partial_{\xi}\ell_{n}(\vartheta)
    = \frac{n}{\sigma^3}\partial_{\xi}\sigma^{2}_{n}(\xi,\mu),\ \ 
    \partial_{\sigma}^{2}\ell_{n}(\vartheta) = -\frac{n}{\sigma^{4}}\left( 3\sigma^{2}_{n}(\xi,\mu) - \sigma^{2} \right), \\ 
    &\partial_{\xi}\partial_{\mu}\ell_{n}(\vartheta) = \partial_{\mu}\partial_{\xi}\ell_{n}(\vartheta)
    = -\frac{1}{\sigma^{2}}\mathbf{1}_{n}^{\top}\partial_{\xi}\Sigma_{n}(s_{\xi}^{X})^{-1}\left(\mathbf{X}_{n}-\mu\mathbf{1}_{n}\right),\ \ 
    \partial_{\mu}^{2}\ell_{n}(\vartheta) = -\frac{1}{\sigma^{2}}(\mathbf{1}_{n}^{\top}\Sigma_{n}(s_{\xi}^{X})^{-1}\mathbf{1}_{n}), \\
    &\partial_{\sigma}\partial_{\mu}\ell_{n}(\vartheta) = \partial_{\mu}\partial_{\sigma}\ell_{n}(\vartheta)
    = -\frac{2}{\sigma^{3}}\mathbf{1}_{n}^{\top}\Sigma_{n}(s_{\xi}^{X})^{-1}\left(\mathbf{X}_{n}-\mu\mathbf{1}_{n}\right).
    \end{cases}
\end{align}
Then we can show \eqref{key:LAN:conv-Fisher} in the similar ways to the proof of Lemma~2.6 in \cite{Cohen-Gamboa-Lacaux-Loubes-2013} and Lemma~3.4 in \cite{Kawai-2013} using Lemma~\ref{Lemma:trace-app-inv} so we omit their detailed proofs. 

\subsection{Proof of \eqref{key:LAN:reminder}}
Using the Taylor theorem, it suffices to prove that
\begin{align}\label{key:LAN:reminder-suff1}
    n^{-\frac{1}{2}(r_1+r_2)-\frac{1}{2}(1-\alpha_{X}(\xi))r_3}\sup_{u\in\mathbb{U}_{n,c}(\vartheta)} 
    \biggl|\partial_{\xi_i}^{r_1}\partial_{\sigma}^{r_2}\partial_{\mu}^{r_3}\ell_{n}(\vartheta+\Phi_{n}(\vartheta)u)\biggr| = o_{\P_\vartheta^n}(1)\ \ \mbox{as $n\to\infty$}
\end{align}
for any $c>0$, $i=1,\ldots,p-1$ and $r_1,r_2,r_3\in\{1,2,3\}$ with $r_1+r_2+r_3=3$. 
Using the expressions of the second order derivatives of $\ell_n(\vartheta)$ in \eqref{expr:2nd-order-dv-log-likelihood}, we get the expressions of the third order derivatives of $\ell_n(\vartheta)$ by
\begin{align*}
    \partial_{\xi}^{3}\ell_{n}(\vartheta) =& \frac{n}{2\sigma^{2}}\partial_{\xi}^{3}\sigma^{2}_{n}(\xi,\mu)
    -\frac{1}{2}\mathrm{Tr}\left[\Sigma_{n}(s_{\xi})^{-1}\Sigma_{n}(\partial_{\xi}^{3}s_{\xi})\right]  \\
    &+ \frac{1}{2}\mathrm{Tr}\left[\Sigma_{n}(s_{\xi})^{-1}\Sigma_{n}(\partial_{\xi}s_{\xi})\Sigma_{n}(s_{\xi})^{-1}\Sigma_{n}(\partial_{\xi}^{2}s_{\xi})\right] +\frac{1}{2}\partial_{\xi}\mathrm{Tr}\left[\Sigma_{n}(s_{\xi})^{-1}\Sigma_{n}(\partial_{\xi}s_{\xi})\Sigma_{n}(s_{\xi})^{-1}\Sigma_{n}(\partial_{\xi}s_{\xi})\right]
\end{align*}
and
\begin{align*}
    \partial_{\xi}^{2}\partial_{\sigma}\ell_{n}(\vartheta) =& \frac{n}{\sigma^3}\partial_{\xi}^{2}\sigma^{2}_{n}(\xi,\mu),\ \ 
    \partial_{\xi}^{2}\partial_{\mu}\ell_{n}(\vartheta) = -\frac{1}{\sigma^{2}}\mathbf{1}_{n}^{\top}\partial_{\xi}^{2}\Sigma_{n}(s_{\xi}^{X})^{-1}\left(\mathbf{X}_{n}-\mu\mathbf{1}_{n}\right),\\ 
    \partial_{\sigma}^{2}\partial_{\xi}\ell_{n}(\vartheta) =& -\frac{3n}{\sigma^{4}}\partial_{\xi}\sigma^{2}_{n}(\xi,\mu),\ \ 
    \partial_{\sigma}^{2}\partial_{\mu}\ell_{n}(\vartheta) = \frac{2}{\sigma^{3}}(\mathbf{1}_{n}^{\top}\Sigma_{n}(s_{\xi}^{X})^{-1}\mathbf{1}_{n}), \\ 
    \partial_{\mu}^{2}\partial_{\xi}\ell_{n}(\vartheta) =& -\frac{1}{\sigma^{2}}(\mathbf{1}_{n}^{\top}\partial_{\xi}\Sigma_{n}(s_{\xi}^{X})^{-1}\mathbf{1}_{n}),\ \ \partial_{\mu}^{2}\partial_{\sigma}\ell_{n}(\vartheta) = \frac{2}{\sigma^{3}}(\mathbf{1}_{n}^{\top}\partial_{\xi}\Sigma_{n}(s_{\xi}^{X})^{-1}\mathbf{1}_{n}), \\
    \partial_{\xi}\partial_{\sigma}\partial_{\mu}\ell_{n}(\vartheta) =& -\frac{2}{\sigma^{3}}\mathbf{1}_{n}^{\top}\partial_{\xi}\Sigma_{n}(s_{\xi}^{X})^{-1}\left(\mathbf{X}_{n}-\mu\mathbf{1}_{n}\right),\ \ \partial_{\sigma}^{3}\ell_{n}(\vartheta)=\frac{2n}{\sigma^{5}}\left( 6\sigma^{2}_{n}(\xi,\mu) - \sigma^{3} \right),\ \ \partial_{\mu}^{3}\ell_{n}(\vartheta)=0
\end{align*}
so that \eqref{key:LAN:reminder-suff1} follows from the similar arguments to the proofs of Lemma~2.7 in \cite{Cohen-Gamboa-Lacaux-Loubes-2013} using Lemmas~\ref{Lemma:Key-Ineq-mu} and \ref{Lemma:trace-app-inv} as well as Theorems~4.1 and 5.2 in \cite{Adenstedt-1974}. This completes the proof of Theorem~\ref{Thm:LAN} as well as that of \eqref{key:LAN:reminder}.

\subsection{Alternative expression of MLE}
\label{Sec:mle_implement}
In this section, we derive an easily tractable alternative expression of the log-likelihood function $\ell_{n}(\vartheta)$, which is useful to quickly compute the exact MLE. 
Denote by $p_{\vartheta}(x_{1},\cdots,x_{n})$ the Gaussian likelihood function of the distribution $\P_{\vartheta}^{n}$ and by $p_{\vartheta}(x_{j}|x_{1},\cdots,x_{j-1})$ the conditional likelihood function of the distribution of $X_{j}^{\vartheta}$ conditional on the $j$-dimensional vector $(X_{1}^{\vartheta},X_{2}^{\vartheta},\cdots,X_{j-1}^{\vartheta})$. 
By expressing the likelihood function $p_{\vartheta}(x_{1},\cdots,x_{n})$ of the joint distribution as a product of the conditional likelihood functions $p_{\vartheta}(x_{j}|x_{1},\cdots,x_{j-1})$ and using the closed-form expression of their conditional Gaussian likelihood functions $p_{\vartheta}(x_{j}|x_{1},\cdots,x_{j-1})$, 
the log-likelihood function $\ell_{n}(\vartheta)$ can be expressed by
\begin{align*}
    \ell_{n}(\vartheta)=&\log p_{\vartheta}(X_{1},...,X_{n}),\notag \\
    =& \log p_{\vartheta}(X_{1}) \prod_{j=2}^n \log p_{\vartheta}(X_{j}|X_1,...,X_{j-1})\notag \\
    =&	-\frac{1}{2}\sum_{j=1}^{n}\log{v_{j}(\theta)}
        -\frac{1}{2}\sum_{j=1}^{n} \frac{(X_{j}-\eta_{j}(\vartheta))^{2}}{v_{j}(\theta)},
\end{align*}
	\bigskip where $\eta_{1}(\vartheta):=\mu$, $v_{1}(\theta):=\Var[X_{1}^{\vartheta}]=\sigma^{2}$, $\eta_{j}(\vartheta):=\mathbb{E}[X_{j}^{\vartheta}|\mathbf{X}_{j-1}]$ and  $%
	v_{j}(\theta) := \Var[X_{j}^{\vartheta}|\mathbf{X}_{j-1}]$ for $j\in\{2,3,\cdots,n\}$, which can be written as
	\begin{equation*}
		\eta_{j}(\vartheta) = \sum_{i=1}^{j-1}\phi _{j,i}(\xi)X_{j-i} + w_j(\xi) \mu
        \ \ \mbox{and}\ \ 
        v_{j}(\theta) = \gamma_{\theta}^{X}(0) \Pi _{i=1}^{j-1}\left( 1-\phi_{i,i}(\xi)^{2}\right) ,
	\end{equation*} 
    where $w_{1}(\xi):=1$, $w_{j}(\xi):=\left(1-\sum_{i=1}^{j-1} \phi_{j,i}(\xi)\right)$, and
	\begin{eqnarray*}
		\phi_{1,1}(\xi) &:=& \gamma_{\theta}^{X}(1)/\gamma_{\theta}^{X}(0),  \\
		\phi_{j,j}(\xi) &:=&\left[ \gamma_{\theta}^{X}\left( j\right) -\sum_{i=1}^{j-1}\phi_{j-1,i}(\xi)\gamma_{\theta}^{X}\left( j-i\right) \right] v_{j-1}(\theta)^{-1}, \\
		\phi_{j,i}(\xi) &:=&\phi_{j-1,i}(\xi)-\phi_{j,j}(\xi)\phi_{j-1,j-i}(\xi)
	\end{eqnarray*}
	for $i\in\{1,2,\ldots ,j-1\}$ and $j\in\{2,3,\cdots,n\}$. 
 Here, $\phi_{j,i}(\xi)$ are called the partial linear regression coefficients. 
 Notice that the above expression of $\ell_{n}(\vartheta)$ can be rewritten as
 \begin{align*}
     \ell_{n}(\vartheta) = -\frac{n}{2}\log{\sigma^{2}} -\frac{1}{2}\sum_{j=1}^{n}\log{\bar{v}_{j}(\xi)}
        -\frac{1}{2\sigma^{2}}\sum_{j=1}^{n} \frac{(Z_{j}(\xi)-w_{j}(\xi)\mu)^{2}}{\bar{v}_{j}(\xi)}
 \end{align*}
 using the notation $\bar{v}_{j}(\xi):=\sigma^{-2}v_{j}(\theta)$ and $Z_{1}(\xi):=X_{1}$, $Z_{j}(\xi):=X_{j}- \sum_{i=1}^{j-1}\phi _{j,i}(\xi)X_{j-i}$ for $j\in\{2,3,\cdots,n\}$. 
 Then we can see that the MLE also satisfies the estimating equations
 \begin{align*}
    \mu = \left( \sum_{j=1}^{n} \frac{ w_{j}(\xi)^{2} }{ \bar{v}_{j}(\xi) } \right)^{-1} 
    \sum_{j=1}^{n} \frac{ w_{j}(\xi) }{ \bar{v}_{j}(\xi) } Z_{j}(\xi)
    \ \ \mbox{and}\ \ 
    \sigma^{2} = \frac{1}{n}\sum_{j=1}^{n} \frac{ ( Z_{j}(\xi) - w_{j}(\xi) \mu ) ^{2} }{ \bar{v}_{j}(\xi) }
\end{align*}
for any $(\xi,\sigma,\mu)\in\Theta_{\xi}\times(0,\infty)\times\R$. 
Therefore, from the uniqueness of the maximum values of $\sigma$ and $\mu$ on $(0,\infty)$ and $\R$ respectively, we obtain the equalities
\begin{align*}
    \mu_{n}(\xi) = \left( \sum_{j=1}^{n} \frac{ w_{j}(\xi)^{2} }{ \bar{v}_{j}(\xi) } \right)^{-1} 
    \sum_{j=1}^{n} \frac{ w_{j}(\xi) }{ \bar{v}_{j}(\xi) } Z_{j}(\xi)
    \ \ \mbox{and}\ \ 
    \sigma_{n}^{2}(\xi,\mu) = \frac{1}{n}\sum_{j=1}^{n} \frac{ ( Z_{j}(\xi) - w_{j}(\xi) \mu ) ^{2} }{ \bar{v}_{j}(\xi) }.
\end{align*}
Notice that the coefficients $\phi_{j,i}(\xi)$ and the corresponding conditional variance $\bar{v}_{j}(\xi)$ can be calculated by the Durbin-Lenvinson recursive algorithm \citep[Chapter 5]{brockwell1987}.

\subsection{Robustness check: $\mu \neq 0$}
\label{Sec:robust_mu}
\begin{table}[htbp]
\caption{Bias and Std of alternative MLEs for $ARFIMA(0,d,0)$: $\mu=1$ and $\sigma=1$.}
\centering
\scalebox{0.82}{
    \begin{tabular}{cc|ccc|ccc|ccc}
        \hline \hline
        & & MLE1 & MLE2 & MLE3  & MLE1 & MLE2 & MLE3  & MLE1 & MLE2 & MLE3 \\
         \hline
         \multicolumn{11}{c}{$n=250$}\\
         \hline
        & & \multicolumn{3}{c|}{$d=$ -0.40} & \multicolumn{3}{c|}{$d=$ -0.30}  & \multicolumn{3}{c}{$d=$ -0.20}  \\
        \hline
        $\mu$ & Bias & 0.0000 & 0.0002 & 0.0000 & 0.0000 & 0.0007 & 0.0004 & 0.0000 & 0.0003 & 0.0000 \\
            & Std  & 0.0000 & 0.0096 & 0.0115 & 0.0000 & 0.0153 & 0.0165 & 0.0000 & 0.0235 & 0.0241 \\
            \multicolumn{11}{c}{}\\
        $d$ & Bias & -0.0021 & -0.0135 & -0.0074 & -0.0045 & -0.0175 & -0.0140 & -0.0056 & -0.0185 & -0.0170 \\
            & Std  & 0.0488 & 0.0502 & 0.0494 & 0.0507 & 0.0534 & 0.0520 & 0.0519 & 0.0545 & 0.0538 \\
            \multicolumn{11}{c}{}\\
      $\sigma$ & Bias & -0.0029 & -0.0051 & -0.0042 & -0.0024 & -0.0047 & -0.0043 & -0.0022 & -0.0044 & -0.0042 \\
         & Std  & 0.0445  & 0.0444  & 0.0444  & 0.0418  & 0.0420  & 0.0420  & 0.0454  & 0.0453  & 0.0453  \\
            \hline
        & & \multicolumn{3}{c|}{$d=$ -0.10} & \multicolumn{3}{c|}{$d=$ 0.00}  & \multicolumn{3}{c}{$d=$ 0.10}  \\
        \hline
        $\mu$ & Bias & 0.0000 & 0.0004 & 0.0003 & 0.0000 & -0.0006 & -0.0006 & 0.0000 & -0.0028 & -0.0027 \\
            & Std  & 0.0000 & 0.0380 & 0.0384 & 0.0000 & 0.0617 & 0.0616 & 0.0000 & 0.1082 & 0.1086 \\
            \multicolumn{11}{c}{}\\
        $d$ & Bias & -0.0044 & -0.0174 & -0.0168 & -0.0056 & -0.0194 & -0.0193 & -0.0045 & -0.0193 & -0.0193 \\
            & Std  & 0.0525 & 0.0548 & 0.0544 & 0.0498 & 0.0531 & 0.0529 & 0.0495 & 0.0528 & 0.0528 \\
            \multicolumn{11}{c}{}\\
       $\sigma$ & Bias & -0.0044 & -0.0065 & -0.0065 & -0.0013 & -0.0035 & -0.0035 & -0.0010 & -0.0033 & -0.0033 \\
         & Std  & 0.0440  & 0.0441  & 0.0441  & 0.0449  & 0.0450  & 0.0450  & 0.0453  & 0.0451  & 0.0451  \\
         \hline
        & & \multicolumn{3}{c|}{$d=$ 0.20} & \multicolumn{3}{c|}{$d=$ 0.30}  & \multicolumn{3}{c}{$d=$ 0.40}  \\
        \hline
        $\mu$ & Bias & 0.0000 & 0.0029 & 0.0035 & 0.0000 & -0.0248 & -0.0269 & 0.0000 & 0.0334 & 0.0359 \\
            & Std  & 0.0000 & 0.1948 & 0.1959 & 0.0000 & 0.3532 & 0.3551 & 0.0000 & 0.6428 & 0.6440 \\
            \multicolumn{11}{c}{}\\
        $d$ & Bias & -0.0031 & -0.0193 & -0.0193 & -0.0083 & -0.0255 & -0.0254 & -0.0135 & -0.0293 & -0.0291 \\
            & Std  & 0.0504 & 0.0536 & 0.0537 & 0.0492 & 0.0532 & 0.0532 & 0.0425 & 0.0474 & 0.0474 \\
            \multicolumn{11}{c}{}\\
     $\sigma$ & Bias & -0.0045 & -0.0067 & -0.0067 & -0.0050 & -0.0071 & -0.0071 & -0.0032 & -0.0048 & -0.0048 \\
         & Std  & 0.0443  & 0.0445  & 0.0445  & 0.0437  & 0.0438  & 0.0438  & 0.0432  & 0.0433  & 0.0433  \\
          \hline
         \multicolumn{11}{c}{$n=1000$}\\
         \hline
        & & \multicolumn{3}{c|}{$d=$ -0.40} & \multicolumn{3}{c|}{$d=$ -0.30}  & \multicolumn{3}{c}{$d=$ -0.20}  \\
        \hline
        $\mu$ & Bias & 0.0000 & -0.0000 & -0.0001 & 0.0000 & 0.0001 & 0.0003 & 0.0000 & 0.0004 & 0.0004 \\
            & Std  & 0.0000 & 0.0029 & 0.0035 & 0.0000 & 0.0051 & 0.0054 & 0.0000 & 0.0091 & 0.0093 \\
            \multicolumn{11}{c}{}\\
        $d$ & Bias & -0.0014 & -0.0052 & -0.0029 & -0.0001 & -0.0040 & -0.0030 & -0.0010 & -0.0048 & -0.0045 \\
            & Std  & 0.0249 & 0.0253 & 0.0252 & 0.0249 & 0.0254 & 0.0252 & 0.0246 & 0.0249 & 0.0248 \\
            \multicolumn{11}{c}{}\\
 $\sigma$ & Bias & -0.0008 & -0.0013 & -0.0011 & -0.0016 & -0.0021 & -0.0020 & 0.0000 & -0.0005 & -0.0005 \\
         & Std  & 0.0222  & 0.0222  & 0.0223  & 0.0222  & 0.0222  & 0.0222  & 0.0220 & 0.0221  & 0.0221  \\
             \hline
        & & \multicolumn{3}{c|}{$d=$ -0.10} & \multicolumn{3}{c|}{$d=$ 0.00}  & \multicolumn{3}{c}{$d=$ 0.10}  \\
        \hline
        $\mu$ & Bias & 0.0000 & 0.0001 & 0.0001 & 0.0000 & -0.0009 & -0.0010 & 0.0000 & -0.0026 & -0.0025 \\
            & Std  & 0.0000 & 0.0164 & 0.0164 & 0.0000 & 0.0311 & 0.0311 & 0.0000 & 0.0620 & 0.0622 \\
            \multicolumn{11}{c}{}\\
        $d$ & Bias & -0.0017 & -0.0055 & -0.0054 & -0.0027 & -0.0068 & -0.0068 & -0.0011 & -0.0055 & -0.0055 \\
            & Std  & 0.0240 & 0.0246 & 0.0245 & 0.0241 & 0.0248 & 0.0248 & 0.0236 & 0.0243 & 0.0243 \\
            \multicolumn{11}{c}{}\\
    $\sigma$ & Bias & -0.0006 & -0.0011 & -0.0011 & -0.0006 & -0.0011 & -0.0011 & -0.0009 & -0.0014 & -0.0014 \\
         & Std  & 0.0226  & 0.0226  & 0.0226  & 0.0229  & 0.0229  & 0.0229  & 0.0227  & 0.0227  & 0.0227  \\
            \hline
        & & \multicolumn{3}{c|}{$d=$ 0.20} & \multicolumn{3}{c|}{$d=$ 0.30}  & \multicolumn{3}{c}{$d=$ 0.40}  \\
        \hline
        $\mu$ & Bias & 0.0000 & -0.0092 & -0.0098 & 0.0000 & 0.0058 & 0.0047 & 0.0000 & -0.0328 & -0.0328 \\
            & Std  & 0.0000 & 0.1196 & 0.1204 & 0.0000 & 0.2468 & 0.2505 & 0.0000 & 0.5437 & 0.5466 \\
            \multicolumn{11}{c}{}\\
        $d$ & Bias & -0.0023 & -0.0067 & -0.0067 & -0.0014 & -0.0056 & -0.0056 & -0.0044 & -0.0087 & -0.0087 \\
            & Std  & 0.0256 & 0.0264 & 0.0264 & 0.0244 & 0.0251 & 0.0251 & 0.0226 & 0.0236 & 0.0236 \\
            \multicolumn{11}{c}{}\\
  $\sigma$ & Bias & -0.0001 & -0.0006 & -0.0006 & -0.0013 & -0.0017 & -0.0017 & -0.0011 & -0.0014 & -0.0014 \\
         & Std  & 0.0234  & 0.0234  & 0.0234  & 0.0215  & 0.0215  & 0.0215  & 0.0224  & 0.0224  & 0.0224  \\
            \hline
        \hline
    \end{tabular}
}
\end{table}

\begin{table}[htbp]
\caption{Bias, Std and RMSE of alternative MLEs for $ARFIMA(0,d,0)$: $\mu=-1$ and $\sigma=1$.}
\centering
\scalebox{0.82}{
    \begin{tabular}{cc|ccc|ccc|ccc}
        \hline \hline
        & & MLE1 & MLE2 & MLE3  & MLE1 & MLE2 & MLE3  & MLE1 & MLE2 & MLE3 \\
        \hline
          \multicolumn{11}{c}{$n=250$}\\
          \hline
        & & \multicolumn{3}{c|}{$d=$ -0.40} & \multicolumn{3}{c|}{$d=$ -0.30}  & \multicolumn{3}{c}{$d=$ -0.20}  \\
        \hline
        $\mu$ & Bias & 0.0000 & -0.0005 & -0.0006 & 0.0000 & 0.0007 & 0.0007 & 0.0000 & 0.0003 & 0.0003 \\
            & Std  & 0.0000 & 0.0097 & 0.0118 & 0.0000 & 0.0150 & 0.0159 & 0.0000 & 0.0241 & 0.0249 \\
            \multicolumn{11}{c}{}\\
        $d$ & Bias & -0.0055 & -0.0170 & -0.0109 & -0.0044 & -0.0166 & -0.0134 & -0.0046 & -0.0178 & -0.0163 \\
            & Std  & 0.0500 & 0.0518 & 0.0512 & 0.0538 & 0.0558 & 0.0546 & 0.0529 & 0.0555 & 0.0548 \\
            \multicolumn{11}{c}{}\\
    $\sigma$ & Bias & -0.0031 & -0.0054 & -0.0045 & -0.0032 & -0.0054 & -0.0050 & -0.0008 & -0.0031 & -0.0029 \\
         & Std  & 0.0452  & 0.0451  & 0.0452  & 0.0462  & 0.0460  & 0.0460  & 0.0439  & 0.0440  & 0.0440  \\
            \hline
        & & \multicolumn{3}{c|}{$d=$ -0.10} & \multicolumn{3}{c|}{$d=$ 0.00}  & \multicolumn{3}{c}{$d=$ 0.10}  \\
        \hline
        $\mu$ & Bias & 0.0000 & 0.0013 & 0.0012 & 0.0000 & -0.0017 & -0.0016 & 0.0000 & 0.0030 & 0.0028 \\
            & Std  & 0.0000 & 0.0395 & 0.0397 & 0.0000 & 0.0623 & 0.0622 & 0.0000 & 0.1116 & 0.1120 \\
            \multicolumn{11}{c}{}\\
        $d$ & Bias & -0.0006 & -0.0148 & -0.0143 & -0.0052 & -0.0192 & -0.0191 & -0.0039 & -0.0197 & -0.0198 \\
            & Std  & 0.0518 & 0.0557 & 0.0553 & 0.0485 & 0.0525 & 0.0523 & 0.0501 & 0.0538 & 0.0538 \\
            \multicolumn{11}{c}{}\\
       $\sigma$ & Bias & -0.0014 & -0.0038 & -0.0037 & -0.0006 & -0.0029 & -0.0029 & -0.0027 & -0.0051 & -0.0051 \\
         & Std  & 0.0436  & 0.0436  & 0.0436  & 0.0446  & 0.0447  & 0.0447  & 0.0454  & 0.0453  & 0.0453  \\
            \hline
        & & \multicolumn{3}{c|}{$d=$ 0.20} & \multicolumn{3}{c|}{$d=$ 0.30}  & \multicolumn{3}{c}{$d=$ 0.40}  \\
        \hline
        $\mu$ & Bias & 0.0000 & 0.0066 & 0.0063 & 0.0000 & -0.0070 & -0.0048 & 0.0000 & 0.0471 & 0.0471 \\
            & Std  & 0.0000 & 0.1900 & 0.1906 & 0.0000 & 0.3520 & 0.3565 & 0.0000 & 0.6664 & 0.6697 \\
            \multicolumn{11}{c}{}\\
        $d$ & Bias & -0.0047 & -0.0213 & -0.0213 & -0.0062 & -0.0235 & -0.0234 & -0.0129 & -0.0297 & -0.0295 \\
            & Std  & 0.0484 & 0.0535 & 0.0536 & 0.0453 & 0.0498 & 0.0498 & 0.0415 & 0.0464 & 0.0464 \\
            \multicolumn{11}{c}{}\\
    $\sigma$ & Bias & -0.0028 & -0.0051 & -0.0051 & -0.0044 & -0.0065 & -0.0065 & -0.0021 & -0.0038 & -0.0037 \\
         & Std  & 0.0444  & 0.0446  & 0.0446  & 0.0446  & 0.0446  & 0.0446  & 0.0446  & 0.0446  & 0.0446  \\
            \hline
  \multicolumn{11}{c}{$n=1000$}\\
  \hline
        & & \multicolumn{3}{c|}{$d=$ -0.40} & \multicolumn{3}{c|}{$d=$ -0.30}  & \multicolumn{3}{c}{$d=$ -0.20}  \\
        \hline
        $\mu$ & Bias & 0.0000 & -0.0000 & 0.0000 & 0.0000 & -0.0002 & -0.0002 & 0.0000 & -0.0003 & -0.0003 \\
            & Std  & 0.0000 & 0.0028 & 0.0034 & 0.0000 & 0.0051 & 0.0055 & 0.0000 & 0.0090 & 0.0093 \\
            \multicolumn{11}{c}{}\\
        $d$ & Bias & -0.0036 & -0.0074 & -0.0052 & -0.0019 & -0.0058 & -0.0048 & -0.0011 & -0.0048 & -0.0045 \\
            & Std  & 0.0263 & 0.0270 & 0.0266 & 0.0251 & 0.0256 & 0.0254 & 0.0249 & 0.0252 & 0.0251 \\
            \multicolumn{11}{c}{}\\
      $\sigma$ & Bias & 0.0001 & -0.0005 & -0.0002 & -0.0009 & -0.0015 & -0.0014 & -0.0007 & -0.0012 & -0.0011 \\
         & Std  & 0.0232 & 0.0232  & 0.0232  & 0.0224  & 0.0224  & 0.0224  & 0.0223  & 0.0223  & 0.0223  \\
           \hline
        & & \multicolumn{3}{c|}{$d=$ -0.10} & \multicolumn{3}{c|}{$d=$ 0.00}  & \multicolumn{3}{c}{$d=$ 0.10}  \\
        \hline
        $\mu$ & Bias & 0.0000 & 0.0007 & 0.0007 & 0.0000 & -0.0023 & -0.0023 & 0.0000 & -0.0021 & -0.0020 \\
            & Std  & 0.0000 & 0.0169 & 0.0169 & 0.0000 & 0.0314 & 0.0314 & 0.0000 & 0.0608 & 0.0608 \\
            \multicolumn{11}{c}{}\\
        $d$ & Bias & -0.0010 & -0.0051 & -0.0050 & -0.0030 & -0.0071 & -0.0071 & -0.0016 & -0.0058 & -0.0058 \\
            & Std  & 0.0252 & 0.0256 & 0.0256 & 0.0237 & 0.0243 & 0.0243 & 0.0252 & 0.0260 & 0.0260 \\
            \multicolumn{11}{c}{}\\
     $\sigma$ & Bias & -0.0005 & -0.0011 & -0.0011 & -0.0002 & -0.0007 & -0.0007 & -0.0019 & -0.0024 & -0.0024 \\
         & Std  & 0.0219  & 0.0219  & 0.0219  & 0.0221  & 0.0220  & 0.0220  & 0.0230  & 0.0229  & 0.0229  \\
             \hline
        & & \multicolumn{3}{c|}{$d=$ 0.20} & \multicolumn{3}{c|}{$d=$ 0.30}  & \multicolumn{3}{c}{$d=$ 0.40}  \\
        \hline
        $\mu$ & Bias & 0.0000 & -0.0075 & -0.0081 & 0.0000 & -0.0035 & -0.0022 & 0.0000 & 0.0050 & 0.0069 \\
            & Std  & 0.0000 & 0.1234 & 0.1242 & 0.0000 & 0.2510 & 0.2534 & 0.0000 & 0.5516 & 0.5593 \\
            \multicolumn{11}{c}{}\\
        $d$ & Bias & -0.0005 & -0.0050 & -0.0050 & -0.0027 & -0.0073 & -0.0072 & -0.0034 & -0.0077 & -0.0077 \\
            & Std  & 0.0244 & 0.0251 & 0.0251 & 0.0243 & 0.0253 & 0.0253 & 0.0232 & 0.0243 & 0.0243 \\
            \multicolumn{11}{c}{}\\
       $\sigma$ & Bias & -0.0012 & -0.0017 & -0.0017 & -0.0004 & -0.0008 & -0.0008 & 0.0001 & -0.0002 & -0.0002 \\
         & Std  & 0.0217  & 0.0217  & 0.0217  & 0.0227  & 0.0227  & 0.0227  & 0.0216 & 0.0216  & 0.0216  \\
            \hline
        \hline
    \end{tabular}
}
\end{table} 
\newpage
\subsection{An Financial Application}
\label{Sec:fore_fou}

Fractional models have been employed to model realized volatility (RV) \citep{Andersen-Bollerslev-Diebold-Labys-2003,  Bennedsen-Lunde-Pakkanen-2022, Bennedsen-2024} and trading volume \citep{Shi-Yu-Zhang-2024-b, Wang-Xiao-Yu-Zhang-2023}. One recently introduced model to the volatility literature is the fOU process \citep{Gatheral-Jaisson-Rosenbaum-2018, Wang-Xiao-Yu-2023-JoE}. For instance, \cite{Wang-Xiao-Yu-2023-JoE} demonstrate that fOU outperforms traditional discrete-time fractional models such as ARFIMA$(1,d,0)$ and fBm. In this section, we demonstrate that our exact MLE further enhances the forecasting accuracy of fOU. Compared to the existing literature, we also incorporate the CoF method from \cite{Wang-Xiao-Yu-2023-JoE}. However, since the CoF method is less efficient than the AWML approach proposed by \cite{Shi-Yu-Zhang-2024} and \cite{Wang-Xiao-Yu-Zhang-2023}, we anticipate that forecasting accuracy will rank as follows: MLE2 (exact MLE), followed by MLE3 (plug-in MLE), and then CoF. 

We apply our method to  Dow Jones 30 (DJ30) stocks. The daily realized volatilities of these DJ30 stocks, spanning September 15, 2012, to August 28, 2021, are sourced from Risk Lab. We assume that the log RV follows a fOU process and set $\Delta = 1/250$ to reflect 250 trading days per year. A four-year rolling window is employed to fit the model, estimate the parameters, and generate $h$-day-ahead forecasts of RV. The results, presented in Tables~\ref{fou_fore_com1}-\ref{fou_fore_com3}, align with our expectations. Both MLE approaches outperform the CoF method, with improvements ranging from approximately 2\% to 15\%. The exact MLE slightly enhances the performance of the plug-in MLE. It is not surprising that we find the plug-in MLE exhibits good finite sample performance compared to our exact MLE.

\begin{table}[H]
\centering
\caption{RMSE of the alternative estimation methods for h-day-ahead forecasts for realized volatility (RV) with four-year rolling window between September 15, 2012 and August 28, 2021.}
\begin{tabular}{ccccccc}
   \hline \hline
     &            & $h=1$    & $h=2$    & $h=3$    & $h=4$   & $h=5$    \\
        \hline
\multirow{3}{*}{AAPL}  & MLE2  & 0.0611 & 0.0715 & 0.0782 & 0.0836 & 0.0877 \\
     & MLE3 & 0.0612 & 0.0716 & 0.0783 & 0.0836 & 0.0878 \\
     & CoF        & 0.0639 & 0.0779 & 0.0874 & 0.0950 & 0.1007 \\
     &            &        &        &        &        &        \\
\multirow{3}{*}{ALD}  &   MLE2          & 0.0543 & 0.0635 & 0.0712 & 0.0770 & 0.0816 \\
     &   MLE3         & 0.0545 & 0.0638 & 0.0716 & 0.0774 & 0.0820 \\
     &  CoF           & 0.0555 & 0.0661 & 0.0749 & 0.0813 & 0.0863\\
          &            &        &        &        &        &        \\
\multirow{3}{*}{AMGN}                  & MLE2  & 0.0691 & 0.0770 & 0.0819 & 0.0871 & 0.0909 \\
& MLE3 & 0.0692 & 0.0771 & 0.0820 & 0.0872 & 0.0909 \\
& CoF        & 0.0703 & 0.0796 & 0.0853 & 0.0911 & 0.0954 \\
          &            &        &        &        &        &        \\
\multirow{3}{*}{AXP}                & MLE2  & 0.0577 & 0.0693 & 0.0774 & 0.0842 & 0.0895 \\
 & MLE3 & 0.0580 & 0.0697 & 0.0778 & 0.0847 & 0.0900 \\
 & CoF        & 0.0605 & 0.0757 & 0.0861 & 0.0944 & 0.1006 \\
         &            &        &        &        &        &        \\
 \multirow{3}{*}{BA}                & MLE2  & 0.0922 & 0.1081 & 0.1190 & 0.1287 & 0.1371 \\
   & MLE3 & 0.0925 & 0.1084 & 0.1194 & 0.1292 & 0.1376 \\
 & CoF        & 0.0965 & 0.1192 & 0.1352 & 0.1483 & 0.1587 \\
       &            &        &        &        &        &        \\
\multirow{3}{*}{BEL} & MLE2 & 0.0485 & 0.0560 & 0.0616 & 0.0665 & 0.0706 \\
                     & MLE3 & 0.0486 & 0.0561 & 0.0618 & 0.0667 & 0.0707 \\
                     & CoF  & 0.0497 & 0.0587 & 0.0654 & 0.0708 & 0.0750\\
                         &            &        &        &        &        &        \\
\multirow{3}{*}{CAT} & MLE2 & 0.0595 & 0.0675 & 0.0733 & 0.0783 & 0.0822 \\
                     & MLE3 & 0.0596 & 0.0677 & 0.0735 & 0.0785 & 0.0824 \\
                     & CoF  & 0.0594 & 0.0687 & 0.0754 & 0.0813 & 0.0858\\
                         &            &        &        &        &        &        \\
\multirow{3}{*}{CHV} & MLE2 & 0.0525 & 0.0638 & 0.0725 & 0.0789 & 0.0847 \\
                     & MLE3 & 0.0527 & 0.0642 & 0.0730 & 0.0795 & 0.0853 \\
                     & CoF  & 0.0532 & 0.0650 & 0.0742 & 0.0810 & 0.0868\\
                         &            &        &        &        &        &        \\
\multirow{3}{*}{CRM} & MLE2 & 0.0630 & 0.0739 & 0.0807 & 0.0861 & 0.0905 \\
                     & MLE3 & 0.0631 & 0.0739 & 0.0808 & 0.0861 & 0.0905 \\
                     & CoF  & 0.0663 & 0.0817 & 0.0927 & 0.1015 & 0.1086\\
                         &            &        &        &        &        &        \\
\multirow{3}{*}{CSCO} & MLE2 & 0.0537 & 0.0635 & 0.0714 & 0.0774 & 0.0821 \\
                      & MLE3 & 0.0538 & 0.0637 & 0.0715 & 0.0775 & 0.0822 \\
                      & CoF  & 0.0554 & 0.0675 & 0.0771 & 0.0842 & 0.0896\\
     \hline \hline
\end{tabular}
\label{fou_fore_com1}
\end{table}

      \begin{table}[H]
\centering
\caption{RMSE of the alternative estimation methods for h-day-ahead forecasts for realized volatility (RV) with four-year rolling window between September 15, 2012 and August 28, 2021.}
\begin{tabular}{ccccccc}
   \hline \hline
     &            & $h=1$    & $h=2$    & $h=3$    & $h=4$   & $h=5$    \\
        \hline                
\multirow{3}{*}{DIS} & MLE2 & 0.0556 & 0.0657 & 0.0739 & 0.0801 & 0.0853 \\
                     & MLE3 & 0.0558 & 0.0660 & 0.0743 & 0.0805 & 0.0857 \\
                     & CoF  & 0.0579 & 0.0705 & 0.0804 & 0.0876 & 0.0934\\
                         &            &        &        &        &        &        \\
\multirow{3}{*}{GS} & MLE2 & 0.0513 & 0.0630 & 0.0709 & 0.0772 & 0.0821 \\
                    & MLE3 & 0.0513 & 0.0631 & 0.0710 & 0.0773 & 0.0822 \\
                    & CoF  & 0.0526 & 0.0660 & 0.0751 & 0.0821 & 0.0873\\
                        &            &        &        &        &        &        \\
\multirow{3}{*}{HD} & MLE2 & 0.0538 & 0.0628 & 0.0695 & 0.0754 & 0.0804 \\
                    & MLE3 & 0.0540 & 0.0630 & 0.0697 & 0.0757 & 0.0807 \\
                    & CoF  & 0.0549 & 0.0656 & 0.0736 & 0.0804 & 0.0859\\
                        &            &        &        &        &        &        \\
\multirow{3}{*}{IBM} & MLE2 & 0.0499 & 0.0583 & 0.0642 & 0.0691 & 0.0732 \\
                     & MLE3 & 0.0500 & 0.0585 & 0.0644 & 0.0694 & 0.0734 \\
                     & CoF  & 0.0519 & 0.0632 & 0.0713 & 0.0777 & 0.0826\\
                         &            &        &        &        &        &        \\
\multirow{3}{*}{INTC} & MLE2 & 0.0634 & 0.0742 & 0.0823 & 0.0884 & 0.0930 \\
                      & MLE3 & 0.0635 & 0.0744 & 0.0824 & 0.0885 & 0.0931 \\
                      & CoF  & 0.0648 & 0.0777 & 0.0872 & 0.0943 & 0.0994\\
                          &            &        &        &        &        &        \\
\multirow{3}{*}{JNJ} & MLE2 & 0.0533 & 0.0605 & 0.0665 & 0.0714 & 0.0751 \\
                     & MLE3 & 0.0534 & 0.0606 & 0.0667 & 0.0716 & 0.0753 \\
                     & CoF  & 0.0546 & 0.0620 & 0.0680 & 0.0729 & 0.0765\\
                         &            &        &        &        &        &        \\
\multirow{3}{*}{JPM} & MLE2 & 0.0519 & 0.0639 & 0.0730 & 0.0803 & 0.0858 \\
                     & MLE3 & 0.0521 & 0.0641 & 0.0732 & 0.0806 & 0.0861 \\
                     & CoF  & 0.0532 & 0.0668 & 0.0768 & 0.0846 & 0.0903\\
                         &            &        &        &        &        &        \\
\multirow{3}{*}{KO} & MLE2 & 0.0444 & 0.0529 & 0.0597 & 0.0648 & 0.0689 \\
                    & MLE3 & 0.0446 & 0.0531 & 0.0600 & 0.0650 & 0.0691 \\
                    & CoF  & 0.0458 & 0.0558 & 0.0637 & 0.0693 & 0.0738\\
                        &            &        &        &        &        &        \\
\multirow{3}{*}{MCD} & MLE2 & 0.0485 & 0.0583 & 0.0662 & 0.0725 & 0.0776 \\
                     & MLE3 & 0.0488 & 0.0586 & 0.0666 & 0.0728 & 0.0779 \\
                     & CoF  & 0.0509 & 0.0629 & 0.0722 & 0.0791 & 0.0845\\
                         &            &        &        &        &        &        \\
\multirow{3}{*}{MMM} & MLE2 & 0.0492 & 0.0571 & 0.0628 & 0.0675 & 0.0709 \\
                     & MLE3 & 0.0493 & 0.0572 & 0.0629 & 0.0677 & 0.0711 \\
                     & CoF  & 0.0496 & 0.0592 & 0.0664 & 0.0723 & 0.0766\\
                          \hline \hline
\end{tabular}
\label{fou_fore_com2}
\end{table}

      \begin{table}[H]
\centering
\caption{RMSE of the alternative estimation methods for h-day-ahead forecasts for realized volatility (RV) with four-year rolling window between September 15, 2012 and August 28, 2021.}
\begin{tabular}{ccccccc}
   \hline \hline
        &            & $h=1$    & $h=2$    & $h=3$    & $h=4$   & $h=5$    \\
        \hline    
\multirow{3}{*}{MRK} & MLE2 & 0.0565 & 0.0648 & 0.0714 & 0.0760 & 0.0797 \\
                     & MLE3 & 0.0566 & 0.0650 & 0.0716 & 0.0762 & 0.0799 \\
                     & CoF  & 0.0584 & 0.0694 & 0.0776 & 0.0830 & 0.0871\\
                         &            &        &        &        &        &        \\
 \multirow{3}{*}{MSFT} & MLE2 & 0.0527 & 0.0631 & 0.0711 & 0.0770 & 0.0819 \\
                      & MLE3 & 0.0528 & 0.0633 & 0.0713 & 0.0771 & 0.0821 \\
                      & CoF  & 0.0531 & 0.0646 & 0.0737 & 0.0802 & 0.0857\\
                          &            &        &        &        &        &        \\
\multirow{3}{*}{NIKE} & MLE2 & 0.0565 & 0.0671 & 0.0746 & 0.0808 & 0.0854 \\
                      & MLE3 & 0.0566 & 0.0673 & 0.0749 & 0.0811 & 0.0857 \\
                      & CoF  & 0.0582 & 0.0717 & 0.0810 & 0.0882 & 0.0932\\
                          &            &        &        &        &        &        \\
\multirow{3}{*}{PG} & MLE2 & 0.0547 & 0.0649 & 0.0737 & 0.0806 & 0.0861 \\
                    & MLE3 & 0.0549 & 0.0651 & 0.0740 & 0.0809 & 0.0864 \\
                    & CoF  & 0.0560 & 0.0673 & 0.0769 & 0.0841 & 0.0897\\
                        &            &        &        &        &        &        \\
\multirow{3}{*}{SPC} & MLE2 & 0.0546 & 0.0643 & 0.0715 & 0.0770 & 0.0818 \\
                     & MLE3 & 0.0548 & 0.0645 & 0.0718 & 0.0774 & 0.0822 \\
                     & CoF  & 0.0559 & 0.0665 & 0.0743 & 0.0803 & 0.0853\\
                         &            &        &        &        &        &        \\
\multirow{3}{*}{UNH} & MLE2 & 0.0570 & 0.0665 & 0.0728 & 0.0786 & 0.0834 \\
                     & MLE3 & 0.0571 & 0.0667 & 0.0731 & 0.0790 & 0.0838 \\
                     & CoF  & 0.0577 & 0.0682 & 0.0752 & 0.0816 & 0.0869\\
                         &            &        &        &        &        &        \\
\multirow{3}{*}{V} & MLE2 & 0.0468 & 0.0570 & 0.0648 & 0.0709 & 0.0760 \\
                   & MLE3 & 0.0470 & 0.0572 & 0.0651 & 0.0712 & 0.0763 \\
                   & CoF  & 0.0476 & 0.0586 & 0.0669 & 0.0733 & 0.0786\\
                       &            &        &        &        &        &        \\
  \multirow{3}{*}{WAG} & MLE2 & 0.0727 & 0.0826 & 0.0886 & 0.0937 & 0.0983 \\
                     & MLE3 & 0.0727 & 0.0827 & 0.0887 & 0.0938 & 0.0984 \\
                     & CoF  & 0.0757 & 0.0893 & 0.0980 & 0.1047 & 0.1104\\
                         &            &        &        &        &        &        \\
 \multirow{3}{*}{WMT} & MLE2 & 0.0500 & 0.0580 & 0.0635 & 0.0677 & 0.0716 \\
                     & MLE3 & 0.0502 & 0.0582 & 0.0637 & 0.0680 & 0.0718 \\
                     & CoF  & 0.0518 & 0.0623 & 0.0692 & 0.0742 & 0.0783\\
                         &            &        &        &        &        &        \\
\multirow{3}{*}{XOM} & MLE2 & 0.0520 & 0.0619 & 0.0698 & 0.0763 & 0.0810 \\
                     & MLE3 & 0.0522 & 0.0622 & 0.0702 & 0.0768 & 0.0816 \\
                     & CoF  & 0.0534 & 0.0655 & 0.0752 & 0.0832 & 0.0891\\            
     \hline \hline
\end{tabular}
\label{fou_fore_com3}
\end{table}
\end{document}